\documentclass[11pt,reqno]{amsart}

\usepackage[utf8]{inputenc}
\usepackage[T1]{fontenc}
\usepackage{mathpazo}

\usepackage{geometry}

\usepackage{amsmath}
\usepackage{amssymb}
\usepackage{amsthm}
\usepackage{mathtools}
\usepackage{mathrsfs}
\usepackage{stmaryrd}
\usepackage{dsfont}
\usepackage{upgreek}

\usepackage{enumitem}
\usepackage{longtable}

\usepackage{xcolor}
\usepackage{tikz}
\usetikzlibrary{
    calc,
    intersections,
    patterns
}
\usepgflibrary{shadings}
\usepackage[percent]{overpic}

\usepackage{verbatim}
\usepackage{marginnote}
\usepackage{lipsum}

\usepackage[
    colorinlistoftodos,
    backgroundcolor=green!20!white,
    bordercolor=green!75!black,
    linecolor=green!75!black
]{todonotes}

\usepackage[
    colorlinks,
    citecolor=blue,
    linkcolor=blue,
    urlcolor=green
]{hyperref}
\usepackage{aliascnt}
\usepackage{cleveref}

\numberwithin{equation}{section}

\theoremstyle{plain}

\newtheorem{theorem}{Theorem}[section]

\newaliascnt{lemma}{theorem}
\newtheorem{lemma}[lemma]{Lemma}
\aliascntresetthe{lemma}

\newaliascnt{proposition}{theorem}
\newtheorem{proposition}[proposition]{Proposition}
\aliascntresetthe{proposition}

\newaliascnt{corollary}{theorem}
\newtheorem{corollary}[corollary]{Corollary}
\aliascntresetthe{corollary}

\newtheorem{claim}{Claim}

\theoremstyle{definition}

\newaliascnt{definition}{theorem}
\newtheorem{definition}[definition]{Definition}
\aliascntresetthe{definition}

\newaliascnt{remark}{theorem}
\newtheorem{remark}[remark]{Remark}
\aliascntresetthe{remark}

\crefname{theorem}{theorem}{theorems}
\Crefname{theorem}{Theorem}{Theorems}

\crefname{lemma}{lemma}{lemmas}
\Crefname{lemma}{Lemma}{Lemmas}

\crefname{proposition}{proposition}{propositions}
\Crefname{proposition}{Proposition}{Propositions}

\crefname{corollary}{corollary}{corollaries}
\Crefname{corollary}{Corollary}{Corollaries}

\crefname{definition}{definition}{definitions}
\Crefname{definition}{Definition}{Definitions}

\crefname{remark}{remark}{remarks}
\Crefname{remark}{Remark}{Remarks}

\crefname{claim}{claim}{claims}
\Crefname{claim}{Claim}{Claims}

\newcommand{\N}{\mathbf{N}}
\newcommand{\Z}{\mathbf{Z}}
\newcommand{\C}{\mathbf{C}}
\newcommand{\R}{\mathbf{R}}

\DeclareMathOperator{\spt}{spt}
\DeclareMathOperator{\dist}{dist}

\newcommand{\out}{\mathrm{out}}
\newcommand{\h}{\boldsymbol{h}}
\newcommand{\U}{\mathbf{U}}
\newcommand{\X}{\R^2_*}
\newcommand{\p}{\boldsymbol{p}}

\newcommand{\Ical}{\mathds{1}}
\renewcommand{\L}{\mathscr{L}}

\newcommand{\eps}{\varepsilon}

\definecolor{forestgreen}{rgb}{0.0,0.733,0.408}

\title[A two-dimensional Allen--Cahn theory]
{A two-dimensional Allen--Cahn theory for interfaces with boundary}

\author{Marco Badran}
\address{
Department of Mathematics,
ETH Zürich,
Rämistrasse 101,
8092 Zürich,
Switzerland
}
\email{marco.badran@math.ethz.ch}

\author{Manuel del Pino}
\address{
Department of Mathematical Sciences,
University of Bath,
4 West, Claverton Down,
Bath BA2 7AY,
United Kingdom
}
\email{m.delpino@bath.ac.uk}

\author{Marco A. M. Guaraco}
\address{
Department of Mathematics,
Imperial College London,
Huxley Building,
180 Queen's Gate,
London SW7 2AZ,
United Kingdom
}
\email{guaraco@imperial.ac.uk}

\subjclass[2020]{Primary 35J91; Secondary 35B25, 53A10}

\keywords{
Allen--Cahn equation,
interfaces with boundary,
line bundles,
singular perturbation,
Lyapunov--Schmidt reduction
}

\begin{document}

\begin{abstract}
We develop a two-dimensional Allen--Cahn theory for interfaces with
boundary in the line-bundle framework introduced by Fr\"ohlich and
Struwe. The central new object is a model solution on the punctured
plane whose nodal set is a half-line. Near the boundary of an
interface, this solution plays the role of the heteroclinic profile in
the classical interior theory. However, unlike the interior setting,
where the analysis in directions normal to the interface reduces to
the ODE satisfied by the heteroclinic, the boundary model has non-flat
level sets and must be studied as a genuinely two-dimensional elliptic
solution. In the first part of this paper, we construct this model
solution and develop its stability and invertibility theory.

In the second part of this paper, we present the main application: we
construct Allen--Cahn sections whose nodal sets concentrate on any
prescribed finite collection of disjoint line segments in the plane.
The construction uses a Lyapunov--Schmidt reduction, but the boundary
introduces a new difficulty: it creates large error terms along the
interior of the interface. Controlling these terms requires a refined
ansatz and new gluing arguments beyond the standard interior
Allen--Cahn theory.
\end{abstract}

\maketitle

\tableofcontents

\section{Introduction}

The connection between the theory of phase transitions and minimal hypersurfaces
is by now well established. The basic object is the Allen--Cahn equation
\begin{equation}\label{eq:intro-allen-cahn}
    \eps^2\Delta u-W'(u)=0.
\end{equation}
This is the equation satisfied by critical points of the Allen--Cahn energy
\begin{equation}\label{eq:intro-allen-cahn-energy}
    E_\eps(u)
    \coloneqq
    \int_{\Omega}\eps\frac{|\nabla u|^2}{2}
    +\frac{W(u)}{\eps}.
\end{equation}
Here $W$ is an even double-well potential with nondegenerate wells at $\pm1$;
the standard example is $W(s)=(1-s^2)^2/4$.
For small $\eps$, solutions look like two phases separated by a thin transition
layer. Roughly speaking, as $\eps\to0$, the nodal set $\{u=0\}$ converges to a
minimal hypersurface. In recent years, this picture has been developed in several
directions, enriching both the theory of PDEs and minimal surface theory. We
recall some of these developments later in this introduction.

The point of departure for this paper is that the usual Allen--Cahn equation
is limited to modelling separating interfaces: the nodal set of a real-valued function separates
the phases $\{u>0\}$ and $\{u<0\}$. This is natural for closed minimal
hypersurfaces, but not for the important set of classical Plateau-type problems, whose limits have prescribed
boundary and are nonseparating. The idea for bypassing this
restriction was introduced by Fr\"ohlich and Struwe
\cite{Frohlich-Struwe90} and recently extended by Guaraco and Lynch \cite{Guaraco-Lynch}. In this framework,
Allen--Cahn critical sections over $\R^n\setminus\Gamma$ converge to stationary
hypersurfaces with boundary $\Gamma$, while the elliptic structure, i.e, equation
\eqref{eq:intro-allen-cahn} is retained.

The present paper takes a first step in understanding new phenomena
appearing near the prescribed boundary. Away from the boundary, a local
trivialization of the line bundle leads back to the classical interior
Allen--Cahn theory. By contrast, near the boundary, the topology of the bundle is
visible, and the natural local model becomes a branched-cover elliptic problem,
which is genuinely two-dimensional rather than the classical one-dimensional
heteroclinic ODE.

\subsection{The interior theory}

To understand this contrast better, we now recall the interior local model.
At a smooth interior point of a limiting
hypersurface, the solution has a distinguished normal profile: in the normal
direction it resembles the centered, increasing one-dimensional heteroclinic
$H$, a smooth step from $-1$ to $1$. For the standard potential
$W(s)=(1-s^2)^2/4$, one has $H(t)=\tanh(t/\sqrt2)$; see
\Cref{fig:normal-profile}.

\begin{figure}[ht]
    \centering
    \begin{tikzpicture}[x=0.55cm,y=0.85cm]
        \draw[thin,->] (-4.4,0) -- (4.4,0) node[right] {$t$};
        \draw[thin,->] (0,-1.35) -- (0,1.35) node[above] {$H(t)$};
        \draw[densely dashed,gray] (-4.2,1) -- (4.2,1);
        \draw[densely dashed,gray] (-4.2,-1) -- (4.2,-1);
        \node[left] at (0,1) {$1$};
        \node[left] at (0,-1) {$-1$};
        \draw[thick,domain=-4:4,samples=120,smooth]
            plot ({\x},{(exp(2*\x/sqrt(2))-1)/(exp(2*\x/sqrt(2))+1)});
    \end{tikzpicture}
    \hspace{0.08\textwidth}
    \begin{tikzpicture}[x=1cm,y=1cm]
        \path[use as bounding box] (-2.9,-1.55) rectangle (2.9,1.55);
        \draw[very thick]
            (-2.55,-0.25)
            .. controls (-1.75,0.55) and (-0.95,-0.55) .. (-0.10,-0.05)
            .. controls (0.75,0.45) and (1.55,-0.20) .. (2.45,0.20)
            node[right] {$\Sigma$};
        \draw[blue!70!black,->,thick] (-1.25,-0.03) -- (-1.05,0.89);
        \draw[blue!70!black,->,thick] (0,0) -- (0,1.15)
            node[above] {$\nu$};
        \draw[blue!70!black,->,thick] (1.25,0.12) -- (1.50,1.02);
        \fill (0,0) circle (1.2pt) node[below right] {$p$};
        \node[fill=white,inner sep=1pt] at (-2.25,1.05) {$u\approx 1$};
        \node[fill=white,inner sep=1pt] at (-2.25,-1.05) {$u\approx -1$};
        \node[fill=white,inner sep=1pt] at (0.90,-1.05)
            {$u(p+t\nu)\approx H(t/\eps)$};
    \end{tikzpicture}
    \caption{The heteroclinic as a normal profile. Left: the standard profile
    $H(t)=\tanh(t/\sqrt2)$. Right: near a smooth interior point of the limiting
    hypersurface $\Sigma$, the leading-order approximation varies in the normal
    coordinate $t=d(x,\Sigma)$.}
    \label{fig:normal-profile}
\end{figure}

Thus, near a smooth point of an interface, a solution is expected to look to
first order like a perturbation of $H(d(x,\Sigma)/\eps)$, where $d(x,\Sigma)$ is the signed
distance to the limiting hypersurface. This local picture is one of the central
mechanisms behind the classical interior theory: it appears in gluing
constructions near prescribed minimal hypersurfaces
\cite{PacardRitore,delPino-Kowalczyk-Wei13,delPino-Kowalczyk-Wei11}, in
Allen--Cahn min-max constructions and applications to minimal hypersurfaces
\cite{Guaraco18,Gaspar-Guaraco18,Gaspar-Guaraco19,Guaraco-Marques-Neves19,Chodosh-Mantoulidis20,Chodosh-Mantoulidis23PWidths},
and in classification and regularity questions for the level sets of solutions
\cite{Ambrosio-Cabre00,Ghoussoub-Gui03,Savin09Flat,Hutchinson-Tonegawa00,Tonegawa-Wic12,Wickramasekera14,Wang17Savin,WangWei19a,WangWei19b}.

The shared analytical point is that, after linearization, the normal direction is
governed by the one-dimensional heteroclinic operator $\partial_t^2-W''(H)$,
whose kernel is generated by $H'$, while the tangential directions lead to the
Jacobi operator of the limiting hypersurface. This separation is used directly in
gluing constructions, and in regularity theory it is used in the reverse
direction: once the normal profile is identified, the separate analysis of the
normal and tangential operators gives quantitative control on the level sets.

\subsection{The Line Bundle Framework}

The line-bundle framework is perhaps best understood with the aid of
a double cover. This construction is carried out in
more detail in \Cref{subs: line bundle}, but for the purposes
of this introduction we now briefly describe it in the two-dimensional case that
concerns us in this paper.

Let $\R^2_{\p}\coloneqq\R^2\setminus\p$, where
$\p=\{p_1,\ldots,p_k\}\subset\R^2$ is a finite set of points. There is a unique
double cover
$\pi\colon\widetilde X_{\p}\to \R^2_{\p}$ characterized by the following local
picture: if $B_r(p_i)$ contains no point of $\p$ other than $p_i$, then over
$B_r(p_i)\setminus\{p_i\}$ the cover is topologically equivalent to the complex square-root cover
$\zeta\mapsto p_i+\zeta^2$. Writing $\iota$ for the deck involution of $\widetilde X_{\p}$, we define a line bundle by
\[
    \L_{\p}\coloneqq
    (\widetilde X_{\p}\times\R)/\{(\widetilde x,t)\sim(\iota\widetilde x,-t)\}
\]
Thus, sections of $\L_{\p}$ correspond to real-valued functions $\widetilde u$ on the double cover which are \emph{odd} in the sense that
$\widetilde u\circ\iota=-\widetilde u$, and conversely every function with this
oddness property descends to a section.

\subsection{The Model Boundary Solution}

Our first goal in this paper is to establish the existence and properties of the
model boundary solution on the punctured plane. This is the
solution whose nodal set is a half-line, and it is the profile expected to arise
by blowing up an Allen--Cahn section near a boundary point of the limiting
interface. It is therefore the boundary analogue of the one-dimensional
heteroclinic in the classical interior theory, and we expect it to have a
central role in perturbative and regularity results even in higher dimensions;
see \Cref{fig:model-boundary-solution}.

\begin{figure}[ht]
    \centering
    \begin{tikzpicture}[x=1cm,y=1cm]
        \path[use as bounding box] (-2.35,-1.45) rectangle (2.35,1.45);
        \draw[blue!60!black,semithick] (-0.95,-0.95) rectangle (0.95,0.95);
        \draw[very thick,red!70!black] (0,0) -- (1.55,0);
        \fill (0,0) circle (1.2pt) node[below left] {$0$};
        \node[fill=white,inner sep=1pt] at (1.00,0.27) {$\{U=0\}$};
        \node[fill=white,inner sep=1pt] at (0,-1.25) {};
    \end{tikzpicture}
    \hspace{0.08\textwidth}
    \begin{tikzpicture}[x=1cm,y=1cm]
        \path[use as bounding box] (-2.55,-1.45) rectangle (2.75,1.45);
        \begin{scope}[rotate=10]
            \draw[very thick]
                (0,0)
                .. controls (0.70,0.00) and (1.35,0.20) .. (2.35,0.10)
                node[right] {$\Sigma$};
            \draw[blue!60!black,semithick] (-0.85,-0.85) rectangle (0.85,0.85);
            \draw[very thick,red!70!black] (0,0) -- (0.95,0);
        \end{scope}
        \fill (0,0) circle (1.2pt) node[below left] {$p$};
    \end{tikzpicture}
    \caption{The model boundary solution. Left: on the punctured plane, the
    nodal set of the model solution is a half-line issuing from the puncture.
    Right: after blowing up near a boundary point $p$ of the limiting interface
    $\Sigma$, we expect to see the local model as an analogue to a half-line tangent to
    $\Sigma$ at $p$.}
    \label{fig:model-boundary-solution}
\end{figure}

The setting is the following: let $\p=\{(0,0)\}$ and set
$\R^2_*\coloneqq\R^2_{\p}$. We denote by $\L=\L_{\p}$ the corresponding line bundle.

\noindent\textbf{Theorem A.} \emph{There exists a section, unique up to sign,
$U$ of $\L\to\R^2_*$ satisfying
\[
    \Delta U-W'(U) =0
\]
whose nodal set is the positive half-line $\{(x,0):x>0\}$.}

A more precise statement can be found in \Cref{prop: existence} below. A key
point in the construction is the following monotonicity: if $\U$ denotes the odd
scalar lift of $U$ under the square-root cover $\xi\mapsto \xi^2$, then
$\partial_{\xi_2}\U>0$. This inequality is far from evident, given that, in contrast with the interior theory, $|U|$ and $\U$ do not have any flat level sets other than the nodal set. The inequality is obtained through
the careful construction carried out in \Cref{subs: construct U}. This is the
two-dimensional analogue of the positivity of $H'$ in the interior theory and is
also reminiscent of the monotonicity hypothesis in De Giorgi-type problems. As
such, it has a central role in the linear theory.

\subsection{The Linear Theory at the Boundary}
One of the main analytic contributions of this paper is the development of a
linear theory around the model boundary solution. In the interior Allen--Cahn theory,
the relevant linear analysis is ultimately governed by the invertibility properties
of the linearized ODE
\begin{equation}\label{eq:linearized-heteroclinic}
    \bigl(\partial_t^2-W''(H)\bigr)H'=0.
\end{equation}
In the boundary case, the corresponding operator is a two-dimensional elliptic
operator on a line bundle, and its invertibility properties are more subtle. First,
we prove that the model solution is stable and that the bounded kernel of its
linearized operator is trivial.

\noindent\textbf{Theorem B.} \emph{The model solution $U$ is stable. Moreover, if
$\phi$ is a bounded section of $\L\to\R^2_*$ satisfying
\[
    \Delta\phi-W''(U)\phi=0,
\]
then $\phi\equiv0$. }

A precise statement of Theorem B can be found in
\Cref{prop: stability of L,cor: nondegeneracy-section} below. The
nondegeneracy of Theorem B,
however, is not enough for the estimates needed later. More precisely, along the end of the
half-line, i.e. when $x\to\infty$, and under a suitable trivialisation, the section $U(x,z)$
approaches $H(z)$. In particular, the operator approaches the linearized
heteroclinic operator in \eqref{eq:linearized-heteroclinic}; since the kernel of
this limiting operator is generated by $H'$, it produces an almost-kernel at
infinity for the linearization around the boundary solution. This means that the boundary operator is nondegenerate in the bounded sense, but it is still
asymptotically degenerate. We develop a fine invertibility theory with
estimates that keep track of the asymptotic heteroclinic component.

Given a bounded section $f$ of $\L\to\R^2_*$, we define the \emph{tangential part of $f$} as
\[
    f^\top(x)\coloneqq\int_{\R} f(x,z)H'(z)\,dz
\]
where $f$ is understood in the scalar trivialization obtained by removing the
negative half-line $\{(x,0):x<0\}$. Given a decay profile \(g\) on \(\R^+\),
for example \(g(x)=(1+x)^{2-\tau}\) with \(\tau>2\), the tangential part of the
source is measured by \(A_g(f)\), the infimum of the constants \(A\geq0\) such
that \(|f^\top(x)|\leq A g''(x)\) for every \(x>0\), with
\(A_g(f)=+\infty\) if no such constant exists; see \Cref{def:Ag-norm}.

\noindent\textbf{Theorem C.} \emph{
Let $g\colon\R^+\to\R$ be an admissible decay profile as above.
Then the equation
\[
    \Delta\phi-W''(U)\phi=f
\]
has a unique bounded solution $\phi$ for every bounded H\"older section $f$ of $\L\to\R^2_*$ satisfying
\[
    A_g(f)<+\infty.
\]
Moreover, there exists a constant
$\alpha_\infty$ such that
\[
    \phi^\top(x)\longrightarrow \alpha_\infty
    \qquad\text{as }x\to+\infty.
\]
The constant $\alpha_\infty$ is given explicitly by
\begin{equation*}
    \alpha_\infty
    =\lim_{R\to\infty}\int_{(-R,R)^2}
    f(x,z)(-x\partial_zU+z\partial_xU)\,dx\,dz,
\end{equation*}
Moreover, both $\phi$ and the convergence rate of its tangential part are controlled by
\(\lVert f\rVert_{L^\infty}\), \(A_g(f)\), and \(g\).}

A more precise statement of Theorem C can be found in \Cref{lem: boundary invertibility} below.

\subsection{Line-Segment Interfaces}

The second part of the paper gives a first application of this linear theory: the
construction of Allen--Cahn interfaces concentrating on any prescribed finite
collection of disjoint line segments in the plane.

\noindent\textbf{Theorem D.} \emph{Let $I_1,\dots,I_N$ be a finite collection of
pairwise disjoint line segments in $\R^2$, and let
$\p=\bigcup_{j=1}^N\partial I_j$. Then, for all sufficiently small
$\eps>0$, there exists a section $u=u_\eps$ of
$\L_{\p}\to\R^2_{\p}$ satisfying
\[
    \eps^2\Delta u-W'(u)=0
\]
such that the nodal sets $\{u=0\}$ converge to
$I_1\cup\cdots\cup I_N$ as $\eps\to0$.}

This result should be understood as a two-dimensional version of a Plateau
problem: the prescribed boundary is the finite set $\p$, and the limiting
interface spans it by the chosen collection of line segments. The precise
statement of Theorem D, including the topology of convergence and the estimates
satisfied by the solutions, will be stated in \Cref{sec: LS}.

Working with straight segments removes the additional terms that would come from
curvature or higher dimensional geometry. This lets us isolate the new
difficulty introduced by the line-bundle framework in constructions in the
spirit of classical Allen--Cahn gluing
\cite{PacardRitore,delPino-Kowalczyk-Wei11,delPino-Kowalczyk-Wei-Yang10}. Near each endpoint the natural
profile is a rotated copy of the two-dimensional boundary solution $U$, while
along the interior of the segment the natural profile is the one-dimensional
heteroclinic $H$, perturbed by translations in the normal direction. These two
adjustments are different: the boundary ansatz is adjusted by rotations, whereas
the interior ansatz is adjusted by vertical shifts. Matching them creates both a
coordinate mismatch and a solution mismatch near the endpoints. The resulting
errors are not purely local; they propagate into the interior problem along the
segment, and some of their components are not captured by the usual Jacobi
operator governing translations of the heteroclinic. The construction therefore
requires a careful balance of the system, between the boundary operator, the inner operator
over the segments, and the outer operator on the complement.

We would like to emphasize that perturbative results of this kind are important
for two related reasons. First, they complete one direction of the
Allen--Cahn/minimal-surface correspondence: prescribed geometric objects can be
realized as limits of solutions. Second, and more importantly for the present
theory, gluing constructions reveal the analytic mechanisms that should also
appear when studying the regularity of general solutions. The choice of coordinates, the decomposition into
boundary, inner, and outer operators, the relevant almost-kernels, and the error
terms that must be balanced, are not merely artifacts of the ansatz; they are
expected to be generic features of solutions whose nodal sets have suitable
geometric control, for instance under stability hypotheses. In the classical
theory, insights of this kind have later been reverse-engineered into regularity
and existence results
\cite{WangWei19a,WangWei19b,Chodosh-Mantoulidis20}.

\subsection{Key ideas and organisation of the paper}

We close the introduction by indicating where the main ideas enter the proofs.
Theorems A--C develop the boundary model and its linear theory, while Theorem D
uses that theory in a perturbative construction of Allen--Cahn sections
concentrating on line segments.

For Theorem A, we work on the square-root double cover. A section $U$ is
represented by an odd scalar function $\U$ through the change of variables
$\pi(\xi)=\xi^2$. In these variables the puncture is removed: $\U$ is an
ordinary map $\R^2\to\R$, and the Allen--Cahn equation becomes
\begin{equation}\label{eq:lifted-weighted-equation}
    \Delta\U=4|\xi|^2W'(\U).
\end{equation}
The solution is obtained as a limit of minimizers for Dirichlet problems on
expanding rectangles. The finite-domain boundary data is chosen so that the
minimizers are monotone in the $\xi_2$ direction, and this monotonicity passes to
the limit. This is the construction behind the property
$\partial_{\xi_2}\U>0$ emphasized above. The construction and asymptotics of
$U$ are proved in \Cref{subs: construct U,subs: asymp U}.

The proof of Theorem B starts from this monotonicity. Differentiating the
lifted equation shows that $\partial_{\xi_2}\U$ is a positive supersolution for
the lifted linearized operator, and the ground-state identity gives stability.
For bounded nondegeneracy, a bounded kernel section is lifted to an odd bounded
function on the cover. After dividing by a positive Jacobi solution, logarithmic
cutoffs force the quotient to be constant; the oddness then forces this constant
to be zero. These arguments are carried out in \Cref{subs: linearised op}.

For Theorem C, as mentioned above, bounded nondegeneracy is not enough: along
the positive end the boundary operator approaches the linearized heteroclinic
operator in \eqref{eq:linearized-heteroclinic}. The proof isolates the
corresponding almost-kernel by projecting the right-hand side and the solution
onto the $H'$ direction. The projected equation has the form
$\alpha''=f^\top+E$, where $E$ is exponentially small because $U$ converges to
$H$ along the end. The convex function $g$ provides a barrier for this ODE, and
the remaining a priori estimate follows by contradiction: any loss of compactness
must drift out along the end and limit to a multiple of $H'$, which the
projected estimates rule out. This refined invertibility theory is proved in
\Cref{subs: linearised op}.

For Theorem D, the first step is the construction of the approximate solution in
\Cref{subs: approximate solution}. It is built from three pieces: rotated copies
of $U$ near the endpoints, translated heteroclinic profiles along the interiors
of the segments, and the constant states $\pm1$ away from the interface. The
nonclassical point is the endpoint matching. A Fermi-coordinate heteroclinic
ansatz would fit the interior transition well but create large errors at the
boundary points; instead we keep a rigid rotated copy of $U$ near each endpoint
and choose the transition scale so that the boundary and interior errors are
balanced.

This choice of ansatz determines the Lyapunov--Schmidt reduction carried out in
\Cref{subs: correction-splitting}. Since the heteroclinic ansatz only takes over away from the
endpoints, the Jacobi equation for the translation parameter is supported on an
interior portion of each segment and does not reach the boundary. At the same
time, the boundary correction has an asymptotic $H'$ component, and this is the
datum through which it couples to the inner equation. The estimates must
therefore treat the boundary, inner, and outer problems as a coupled system and
track the $H'$ projection; the estimates needed to close the argument are proved
in \Cref{sec: proofs}.

\subsection{Open questions and future directions}

The two-dimensional theory developed here raises several natural questions. The
first concerns classification. In the classical planar Allen--Cahn theory,
De Giorgi-type statements, stability, and finite Morse index are closely tied to
the asymptotic structure of level sets. In the line-bundle setting it is natural
to ask whether the model solution $U$ is the only stable solution on
$\L\to\R^2_*$, and whether the same conclusion holds under a
one-directional monotonicity assumption on the lift. More generally, one expects
finite-index solutions to have a finite, necessarily odd, number of ends, in
analogy with the classical results of \cite{WangWei19a}.

A second direction is regularity up to the prescribed boundary. In the classical
case, the nodal sets of stable solutions satisfy $C^{2,\theta}$ estimates
\cite{WangWei19a,WangWei19b}. It is natural to ask whether analogous second
order estimates hold uniformly up to boundary points for stable sections in the
line-bundle problem. From this point of view, the model solution $U$ should be
the blow-up profile governing the endpoint behavior of the interface.

The two-dimensional model is also intended as a starting point for higher
dimensional Plateau problems. The techniques developed here, together with the
line-bundle compactness framework of \cite{Guaraco-Lynch}, should be refined and
extended to approximate Plateau solutions by Allen--Cahn sections of suitable
line bundles. We plan to pursue this in future work.

It would also be interesting to understand how far this viewpoint extends beyond
the Allen--Cahn equation. Analogous boundary-profile phenomena may appear in
other elliptic theories, such as Ginzburg--Landau or Yang--Mills--Higgs-type
equations. In the planar setting, one could also explore whether the
line-bundle Allen--Cahn flow gives a useful diffuse-interface perspective on
geometric matching problems for finite sets of points
\cite{Vaidya89,Cook-Rohe99}.

\subsection{Notation guide}

This guide is meant only as a quick reference. The precise definitions and
normalizations are given at the indicated points in the text.

\begingroup
\small
\renewcommand{\arraystretch}{1.15}
\begin{longtable}{@{}p{0.24\textwidth}p{0.70\textwidth}@{}}
\textit{Notation} & \textit{Meaning}\\
\hline
\endfirsthead
\textit{Notation} & \textit{Meaning}\\
\hline
\endhead
\multicolumn{2}{@{}l}{\textit{Domains, covers, and sections}}\\
$\p$ &
Finite set of punctures. In the line-segment problem,
$\p=\bigcup_j\partial I_j$; see \Cref{sec: preliminaries-gluing}.\\
$\Omega_{\p}$, $\R^2_{\p}$ &
Punctured domains, $\Omega_{\p}=\Omega\setminus\p$ and
$\R^2_{\p}=\R^2\setminus\p$.\\
$\R^2_*$ &
The punctured plane with one puncture at $(0,0)$.\\
$\ell_+,\ell_-$ & The positive and negative half-lines, defined by $\{(x,0)\in \R^2: x>0\}$ and $\{(x,0)\in \R^2: x<0\}$.\\
$\pi$, $\iota$ &
The sign double cover $\pi\colon\widetilde\Omega_{\p}\to\Omega_{\p}$ and its
deck involution; see \Cref{sec: preliminaries}.\\
$\L_{\p}$ &
The associated real line bundle; scalar representatives change sign after one
turn around a puncture; see \Cref{subs: line bundle}.\\
$\L$ &
The one-puncture line bundle over $\R^2_*$.\\
$\Gamma(V,\L_{\p})$, $\Gamma(\R^2_*,\L)$ &
Spaces of smooth sections of the corresponding line bundles over the indicated
domains; see \Cref{subs: line bundle}.\\

\multicolumn{2}{@{}l}{\textit{Model sections and equations}}\\
$H$ &
The normalized one-dimensional heteroclinic; see
\Cref{subs:allen-cahn-on-sections}.\\
$U$, $\U$ &
The model boundary section on $\L\to\R^2_*$ and its odd scalar lift under
$\xi\mapsto\xi^2$; see \Cref{sec: 2d sol}.\\
$\sigma$ &
The parameter dictating the exponential rate of decay of $|U-H|$, see
\Cref{cor: decay U_1}.\\
$\Ical$ &
The outer section determined by a choice of sheet over the complement of
the line segments; see \Cref{def: outer section}.\\
$E_\eps$, $\widetilde E$ &
The Allen--Cahn energy for sections and the lifted energy on the square-root
cover; see \eqref{eq:section-allen-cahn-energy}.\\
$S(u)$ &
The error operator $S(u)=\eps^2\Delta u-W'(u)$ used in the gluing argument; see
\eqref{eq: EoA}.\\

\multicolumn{2}{@{}l}{\textit{Cut-offs and coordinates}}\\
$\delta$ &
The fixed separation scale for the line segments, chosen so that
\[
    0<\delta<\frac{1}{24}
    \min\Big\{\min_j |I_j|,
    \min_{j\ne k}\dist(I_j,I_k)\Big\},
\]
with the second minimum omitted when $N=1$; see
\Cref{sec: preliminaries-gluing}.\\
$\alpha$ &
The exponent defining the gluing scale $\eps^\alpha$, chosen with
\[
    \alpha\in\left(\frac{1}{2+\gamma},\frac12\right);
\]
see \Cref{sec: preliminaries-gluing}.\\
$\chi_{j,m}$, $\chi_{\Ical,m}$ &
Cut-offs near the segment $I_j$ and in the complement of all segments.\\
$X(q,z)$ &
Fermi coordinates along a segment; see \eqref{eq: coordinates Xj}.\\
$Y_p(x,z)$ &
Boundary coordinates near $p\in\partial I_j$; see \eqref{eq: coordinates Yp}.\\
$\zeta_m$, $\eta_m$ &
Boundary and interior cut-offs along a segment, with $\eta_m=1-\zeta_m$.\\

\multicolumn{2}{@{}l}{\textit{Norms and gluing parameters}}\\
$\lambda$, $\gamma$ &
The H\"older exponent and exponential weight. The exponent
\(\gamma\in(0,1)\) is fixed in \eqref{eq: exponential-decay-params}, and the
gluing weight is fixed in \Cref{subs: gluing-conventions}.\\
$\tau$, $R_\eps$ &
The boundary decay exponent and correction radius fixed in
\Cref{subs: gluing-conventions}; see \eqref{eq: boundary-decay-exponent-range} and
\eqref{eq: correction-radius}.\\
$\beta$ &
The smallness exponent for vertical shifts and corrections, taken with
$\beta\geq2$ in the gluing construction; see
\Cref{subs: gluing-conventions}.\\
$C_{*,\eps}^{k,\gamma}$ &
The scaled section norm adapted to the square-root cover at punctures.\\
$C_{A,\eps,\lambda}^{k,\gamma}$ &
The exponentially weighted section norm about a set $A$.\\
$\mathsf C_{\eps,\lambda}^{k,\gamma}$,
$\mathcal C_{\eps,\lambda}^{k,\gamma}$ &
The weighted product-space norms in physical variables \((x,z)\) and stretched
interior variables \((x,t)\), respectively; see
\Cref{subs: gluing-conventions}.\\
$\h=(h_1,\dots,h_N)$, $\theta_j$ &
Vertical shifts along the intervals and endpoint rotation parameters, with
$\theta_j(p)=h_j'(p)$ in the gluing construction.\\
$\omega$ &
The approximate solution built from $\Ical$, copies of $U$, and heteroclinic
profiles; see \eqref{eq: section-valued-global-ansatz}.\\
$A\lesssim B$ &
There is a constant $C>0$, independent of $\eps$, such that $A\leq CB$. The
constant may depend on the fixed geometric data and auxiliary parameters, once
these have been chosen.\\
\end{longtable}
\endgroup

\section{Line bundles}\label{sec: preliminaries}

This section fixes the basic language used throughout the paper. We first
construct the sign line bundle associated with a finite set of punctures and
explain how the Allen--Cahn energy and equation are interpreted for its
sections. We then record the square-root description near a single puncture,
which is the local model used for the boundary profile and for the section
norms. Finally, we introduce the relevant
norms used in the gluing construction.

Let $\Omega\subset\R^2$ be a simply connected open set and let
\[
    \p=\{p_1,\dots,p_n\}\subset\Omega
\]
be a finite set of points, with $n\geq1$. We write
\[
    \Omega_{\p}\coloneqq \Omega\setminus\p .
\]

The construction of the line bundle follows naturally from a double cover of
$\Omega_{\p}$, so we begin by describing this cover and its uniqueness.

\subsection{The double cover}

The goal is to construct the unique connected double cover whose monodromy is
nontrivial around each puncture.

For each $i$, let $\gamma_i \subset \Omega$ be a positively oriented simple loop around
$p_i$ which encloses no other point of $\p$. Let $D_i$ be pairwise disjoint
disks centered at the points $p_i$, chosen so that
$\overline{D_i}\subset\Omega$, and set
$A\coloneqq \bigcup_{i=1}^n D_i$ and $B\coloneqq \Omega_{\p}$.
Then $\Omega=A\cup B$, while $A$ is a disjoint union of disks and
$A\cap B$ is a disjoint union of punctured disks. In particular,
$H_1(A;\Z)=0$ and
$H_1(A\cap B;\Z)\simeq \bigoplus_{i=1}^n \Z[\gamma_i]$.
The Mayer--Vietoris sequence \cite[p.~149]{Hatcher2002}, together with
$H_2(\Omega;\Z)=H_1(\Omega;\Z)=0$ for our simply connected planar domain, gives
the exact sequence
\[
    0
    \longrightarrow H_1(A\cap B;\Z)
    \longrightarrow H_1(B;\Z)
    \longrightarrow 0.
\]
Thus
\[
    H_1(\Omega_{\p};\Z)\simeq \bigoplus_{i=1}^n \Z[\gamma_i].
\]

A connected double cover of $\Omega_{\p}$ is determined by an index-two
subgroup of $\pi_1(\Omega_{\p})$; see the classification of covering spaces in
\cite[Theorem~1.38]{Hatcher2002}. Since every index-two subgroup is normal,
\cite[Proposition~1.39]{Hatcher2002} shows that this is equivalently the
kernel of a surjective homomorphism
\[
    \rho\colon \pi_1(\Omega_{\p})\longrightarrow \Z_2.
\]
Because the target is abelian, $\rho$ factors through the abelianization
$H_1(\Omega_{\p};\Z)$. We want the cover to see every puncture, in the sense
that going once around any $p_i$ exchanges the two sheets. Thus we impose
\[
    \rho([\gamma_i])=1\in\Z_2,
    \qquad i=1,\dots,n.
\]
Since the classes $[\gamma_i]$ generate $H_1(\Omega_{\p};\Z)$, this condition
determines $\rho$ uniquely. We denote the corresponding double cover by
\[
    \pi\colon \widetilde\Omega_{\p}\longrightarrow \Omega_{\p}
\]
and write $\iota$ for its deck involution.

The construction above determines $\widetilde\Omega_{\p}$ as a topological
double cover, uniquely up to isomorphism over $\Omega_{\p}$. Whenever a metric
on $\widetilde\Omega_{\p}$ is needed, unless otherwise stated, we use the
pullback of the Euclidean metric on $\Omega_{\p}$,
\[
    \widetilde g_{\p}\coloneqq \pi^*(dx^2).
\]
With this choice, $\pi$ is a local isometry: around every point of
$\widetilde\Omega_{\p}$ there is a neighborhood on which $\pi$ is an isometry
onto its image. This metric is a convention, not part of the topological
classification of the cover. Near the punctures it is often useful to use
different local metrics or coordinates, as in the norm definitions we
introduce later on.

\subsection{The line bundle}\label{subs: line bundle}

\begin{figure}
    \centering
    \begin{overpic}[width=.5\textwidth]{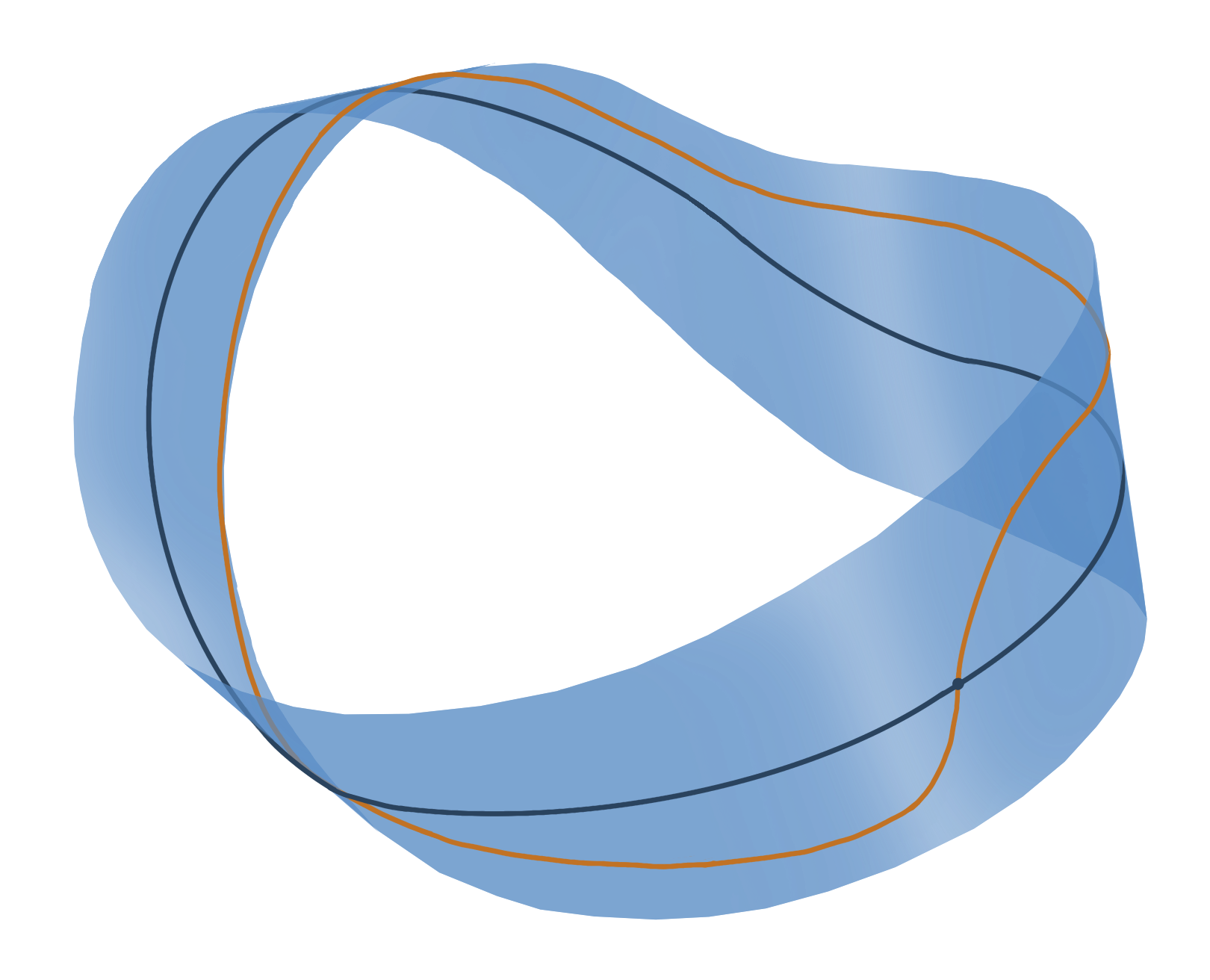}
    \end{overpic}
    \caption{The M\"obius bundle around a circle, with a continuous section crossing zero only once. The topology of the bundle allows for the nodal set of the section to be a single point, which is not separating.}
    \label{fig: mobius}
\end{figure}

The double cover $\widetilde\Omega_{\p}$ gives a natural way to build the line
bundle over $\Omega_{\p}$. We start upstairs from the trivial line bundle
$\widetilde\Omega_{\p}\times\R\to\widetilde\Omega_{\p}$. A section of this
trivial bundle is just the graph of a function
$\widetilde u\colon\widetilde\Omega_{\p}\to\R$. We now quotient this trivial
bundle to obtain a line bundle over $\Omega_{\p}$ in such a way that the graphs
which become sections downstairs are exactly the graphs of odd functions on the
cover.

The definition is as follows. We identify the two points lying over the same
point of $\Omega_{\p}$ with opposite signs in the real factor:
\[
    \L_{\p}\coloneqq
    (\widetilde\Omega_{\p}\times\R)/\sim,
    \qquad
    (\widetilde x,t)\sim(\iota(\widetilde x),-t).
\]
The projection is induced by $(\widetilde x,t)\mapsto\pi(\widetilde x)$.
Thus the fibre over $x\in\Omega_{\p}$ is a one-dimensional real vector space.
Going once around any puncture exchanges the two sheets of the cover and hence
changes the sign in this real factor.
One way to picture the twist is to restrict the bundle to a small circle
centered at one of the punctures. Over such a circle, the bundle is a M\"obius
line bundle, see \Cref{fig: mobius}: after one turn around the puncture, the two local scalar
representatives differ by a sign.

Thus a section of $\L_{\p}$ can be described by an odd function upstairs. More
precisely, if
$\widetilde u\colon\widetilde\Omega_{\p}\to\R$ satisfies
\[
    \widetilde u(\iota\widetilde x)=-\widetilde u(\widetilde x),
\]
then it defines a section of $\L_{\p}$ by
\[
    u(\pi(\widetilde x))=[\widetilde x,\widetilde u(\widetilde x)],
\]
because replacing $\widetilde x$ by $\iota\widetilde x$ changes both the sheet
and the sign. Conversely, pulling back a section to the double cover gives a
function with this oddness property. Hence sections of $\L_{\p}$ are
equivalently odd real-valued functions on $\widetilde\Omega_{\p}$.

If $V\subset\Omega_{\p}$ is open, we write
\[
    \Gamma(V,\L_{\p})
\]
for the space of smooth sections of $\L_{\p}$ over $V$. When a different
regularity or support condition is needed, it will be indicated explicitly.
In the one-puncture case $\Omega=\R^2$ and $\p=\{(0,0)\}$, we write
\[
    \R^2_*\coloneqq\R^2\setminus\{(0,0)\},
    \qquad
    \L\coloneqq\L_{\p}=\L_{\{(0,0)\}},
\]
and hence write $\Gamma(\R^2_*,\L)$ for the corresponding space of smooth
sections.

\subsection{The Allen--Cahn equation on sections of the line bundle}
\label{subs:allen-cahn-on-sections}

The previous construction applies over any simply connected planar domain with
punctures. For the analytic problems considered below, we will mostly work on
punctured planes, and we now specialize the notation accordingly.

Let $\eps>0$ and let $W\colon \R\to \R^+$ be a double-well potential satisfying
$W(\pm1)=0$ and $W>0$ elsewhere. We assume that $W$ is smooth, even, and has
nondegenerate minima at $\pm1$. We write
\[
    \kappa_W\coloneqq W''(1)=W''(-1)>0
\]
for the pure-phase linearized constant. We also assume
\begin{equation}\label{eq: assumption W}
     W'(s)<0 \quad \text{for }0<s<1,
    \qquad
    \frac{d}{ds}\left(\frac{-W'(a+s)}{s}\right)< 0
    \quad \forall a\in[0,1]\text{ and }s>0.
\end{equation}

The second monotonicity condition is used below when applying the uniqueness
result of Brezis--Oswald. The standard example is
$W(u)=\frac14(1-u^2)^2$.

We henceforth let $H\colon\R\to(-1,1)$ be the unique increasing solution of
$H''-W'(H)=0$ satisfying $H(0)=0$ and
$H(t)\to\pm1$ as $t\to\pm\infty$.

For a collection of points $\p\subset\R^2$, write
\[
    \R^2_{\p}\coloneqq\R^2\setminus\p .
\]
For $U\in\Gamma(\R^2_{\p},\L_{\p})$, the Allen--Cahn energy is
\begin{equation}\label{eq:section-allen-cahn-energy}
    E_\eps(U)
    \coloneqq
    \int_{\R^2_{\p}}
    \eps\frac{|\nabla U|^2}{2}
    +\frac{W(U)}{\eps}.
\end{equation}
This expression is well defined on sections. Indeed, after choosing a local
trivialization of $\L_{\p}$, the section is represented by a scalar function
$u$. On overlaps two such representatives differ by a sign. Since the transition
functions are constant signs, $|\nabla u|^2$ is unchanged by changing
trivialization; since $W$ is even, $W(u)$ is unchanged as well. Thus the
integrand in \eqref{eq:section-allen-cahn-energy} is independent of all local
choices.

We say that $U\in\Gamma(\R^2_{\p},\L_{\p})$ solves the Allen--Cahn equation,
and write
\begin{equation}\label{eq:section-allen-cahn}
    \eps\Delta U=\frac{1}{\eps}W'(U),
\end{equation}
if the same identity holds in any local trivialization for the scalar
representative $u$ of $U$.
Equivalently, for every relatively compact open set
$\Omega'\Subset\R^2_{\p}$, the restriction of $U$ is a critical point of
the energy \eqref{eq:section-allen-cahn-energy} restricted to $\Omega'$, with
respect to compactly supported section variations in $\Omega'$. Thus differential
expressions involving a section are always understood through local
trivializations. The convention is
unambiguous: if one representative is replaced by $-u$, then both $\Delta u$ and
$W'(u)$ change sign, because $W'$ is odd. Hence the two sides of
\eqref{eq:section-allen-cahn}
transform as the same section. This is the same as writing the equation using
the flat connection induced by the double cover, but in the present setting the
local formulation is enough.

\subsection{The one-puncture square-root model}
\label{subs: plane-minus-one-point}

In this subsection we describe how the complex square map provides a concrete local model for the line bundle near isolated punctures. From this point of view, the metric of the associated double cover is different than the pull-back of the euclidean metric introduced above. In fact, both metrics are related via a conformal change which will be central in the study of the boundary model solution.

We first describe it globally in the one-puncture case
$\Omega=\R^2$ and $\p=\{(0,0)\}$. With the notation $\R^2_*$ and $\L$ fixed
above, the double cover is represented by
\[
    \pi_{\mathrm{sq}}\colon \R^2_*\longrightarrow\R^2_*,
    \qquad
    \pi_{\mathrm{sq}}(\xi)=\xi^2,
\]
where we identify $\R^2$ with $\C$ in the usual way. A section
$U\in\Gamma(\R^2_*,\L)$ is represented on this cover by an odd function $\U$
through
\[
    U(\pi_{\mathrm{sq}}(\xi))=[\xi,\U(\xi)],
    \qquad
    \U(-\xi)=-\U(\xi).
\]

We next record the corresponding local statement near an arbitrary puncture.
Let $\p\subset\Omega$ be a finite set and let
$p_i\in\p$. Choose $r>0$ so that $\overline{B_r(p_i)}\subset\Omega$ and
$B_r(p_i)\cap\p=\{p_i\}$. We also choose an orthonormal basis
$(e,\nu)$ of $\R^2$, with no orientation condition, and use it to write
coordinates near $p_i$ by
\[
    Y_{i,e,\nu}\colon B_r(0)\longrightarrow B_r(p_i),
    \qquad
    Y_{i,e,\nu}(x,z)=p_i+xe+z\nu .
\]
The associated square-root parametrization of the punctured disk is
\[
    D_i^*=B_r(p_i)\setminus\{p_i\},
    \qquad
    \sigma_{i,e,\nu}\colon B_{\sqrt r}(0)\setminus\{0\}\longrightarrow D_i^*,
    \qquad
    \sigma_{i,e,\nu}(\xi)
    =
    Y_{i,e,\nu}\bigl(\pi_{\mathrm{sq}}(\xi)\bigr).
\]
Equivalently, if $\xi=(\xi_1,\xi_2)$, then
\[
    \sigma_{i,e,\nu}(\xi)
    =
    p_i+(\xi_1^2-\xi_2^2)e+2\xi_1\xi_2\nu .
\]
Changing the orientation of the basis reverses the orientation of the base
coordinates, but does not change the fact that a loop around the origin
corresponds to a loop around $p_i$ which exchanges the two sheets.

The restriction
\[
    \pi\colon \pi^{-1}(D_i^*)\longrightarrow D_i^*
\]
of the global double cover is isomorphic, as a double cover of $D_i^*$, to
$\sigma_{i,e,\nu}$. More precisely, there exists a diffeomorphism
\[
    \Phi_{i,e,\nu}\colon B_{\sqrt r}(0)\setminus\{0\}
    \longrightarrow \pi^{-1}(D_i^*)
\]
such that
\[
    \pi\circ\Phi_{i,e,\nu}=\sigma_{i,e,\nu}.
\]
Indeed, both covers are connected double covers of the punctured disk for which
a loop going once around $p_i$ exchanges the two sheets, and this property
characterizes the cover up to isomorphism. However, $\Phi_{i,e,\nu}$ is not
canonical: there are two choices which differ only by the deck involution.
Consequently, the induced identification between
$\L|_{B_r(0)\setminus\{0\}}$ and $\L_{\p}|_{D_i^*}$ is also determined only up
to multiplication by $-1$ on the fibres.

We also record how this identification interacts with the corresponding metrics.
If $\pi^{-1}(D_i^*)$ is equipped with the pullback Euclidean metric
$\pi^*|dx|^2$ and $B_{\sqrt r}(0)\setminus\{0\}$ is equipped with
$\sigma_{i,e,\nu}^*|dx|^2$, then $\Phi_{i,e,\nu}$ is an isometry. However, we
will most often use the Euclidean metric of the square-root coordinate $\xi$, in
which case
\[
    \sigma_{i,e,\nu}^*|dx|^2=4|\xi|^2|d\xi|^2
\]
i.e., the metrics differ by a conformal factor. In particular,  the Allen--Cahn
equation is written differently on the square-root cover variable $\xi$. More precisely, on any open set where a sheet of the cover has been chosen, if $u$ is the
corresponding scalar representative of $U$, then $\U$ is, up to a fixed sign,
the pullback of $u$. Thus the conformal change of variables gives
\[
    \Delta_\xi \U(\xi)
    =
    4|\xi|^2(\Delta_x u)(\pi_{\mathrm{sq}}(\xi)).
\]
Thus, if $U$ solves \eqref{eq:section-allen-cahn} with $\eps=1$, then its lift
$\U$ satisfies
\begin{equation*}
    \Delta\U=4|\xi|^2W'(\U)\quad \text{in }\R^2_*.
\end{equation*}
This is the analytic reason for passing to the square-root cover. The monodromy
of the bundle is absorbed by the oddness of the lift, while the branch point
appears in the scalar equation through the smooth vanishing factor $4|\xi|^2$.
In the bounded finite-energy setting considered by Fr\"ohlich and Struwe
\cite{Frohlich-Struwe90}, this is the mechanism by which the lifted solution has
a removable singularity after setting its value to be zero at the origin.
Under the same change of variables, for $\eps=1$,
\[
    E(U)=\frac12\widetilde E(\U),
    \qquad
    \widetilde E(\U)
    \coloneqq
    \int_{\R^2_*}
    \frac{|\nabla \U|^2}{2}+4|\xi|^2W(\U).
\]
The factor $\frac12$ comes from the fact that $\pi$ covers the punctured plane
twice. The positive factor relating $E$ and $\widetilde E$ is irrelevant for
criticality and stability, so we use $\widetilde E$ on the space of odd
functions when working on the cover.

\subsection{Local norms for sections}

In this subsection we define local norms for sections on two types of balls:
trivializing balls and singular balls. These norms are the building blocks for
several specialised weighted norms used throughout the paper. They depend on a
fixed parameter $R_0>1$ and on the scaling parameter $\eps>0$, which determines
both the relevant physical radii and the scaling of derivatives.

Roughly speaking, trivializing balls are positioned away from the punctures. On
such balls, after choosing one sheet of the double cover, sections are viewed as
scalar functions. Singular balls are centered at the punctures; on these balls,
we use local coordinates induced by the square-root map around the puncture to
define the norm.

From now on, we assume that $\eps>0$ is small enough so that, when $n\geq2$,
\[
    R_0\eps<\frac1{20}\min_{i\ne j}|p_i-p_j|,
\]
and so that
\[
    \overline{B_{R_0\eps}(p_i)}\subset \Omega
    \qquad\text{for every }i.
\]
The first condition is omitted when there is only one puncture.

\begin{definition}[Trivializing balls]
\label{def:trivializing-ball}
A ball $B_\eps(q)$ is called trivializing for $\L_{\p}$ over $\Omega_{\p}$ if
\[
    \overline{B_\eps(q)}\subset\Omega_{\p},
    \qquad
    \dist(q,\p)\geq R_0\eps .
\]
We denote by $\mathcal B_\eps$ the collection of all trivializing balls.
\end{definition}

\begin{definition}[Local norm on a trivializing ball]
\label{def:trivializing-ball-norm}
Let $B_\eps(q)\in\mathcal B_\eps$. The inverse image
$\pi^{-1}(B_\eps(q))$ has two components. Choosing one of them identifies
$\L_{\p}|_{B_\eps(q)}$ with the product bundle $B_\eps(q)\times\R$ and
represents a section $u$ by a scalar function
$u_{q,\eps}\colon B_\eps(q)\to\R$. We rescale this representative at scale
$\eps$ by
\[
    u_{q,\eps}^\sharp(y)\coloneqq u_{q,\eps}(q+\eps y),
    \qquad y\in B_1(0).
\]
For $k\geq0$ and $0<\gamma<1$, we define the $\eps$-scaled local norm by
\[
    \lVert u\rVert_{C_\eps^{k,\gamma}(B_\eps(q),\L_{\p})}
    \coloneqq
    \lVert u_{q,\eps}^\sharp\rVert_{C^{k,\gamma}(B_1(0))}.
\]
Choosing the other component of $\pi^{-1}(B_\eps(q))$ replaces $u_{q,\eps}$ by
$-u_{q,\eps}$, so the norm is independent of this choice.
\end{definition}

\begin{definition}[Singular balls]
\label{def:singular-ball}
For each puncture $p_i$, the corresponding singular ball is
\[
    B_{R_0\eps}(p_i).
\]
\end{definition}

\begin{definition}[Local norm on a singular ball]
\label{def:singular-ball-norm}
On the singular ball $B_{R_0\eps}(p_i)$ we work over
$B_{R_0\eps}(p_i)\setminus\{p_i\}$. We identify $B_{R_0\eps}(p_i)$ with
$B_{R_0\eps}(0)\subset\C$ by subtracting $p_i$ and then use the local double
cover
\[
    B_{\sqrt{R_0\eps}}(0)\setminus\{0\}
    \longrightarrow
    B_{R_0\eps}(p_i)\setminus\{p_i\},
    \qquad
    \zeta\longmapsto p_i+\zeta^2 .
\]
If $u$ is a section of $\L_{\p}$ over
$B_{R_0\eps}(p_i)\setminus\{p_i\}$, this chart represents it by an odd function
\[
    \widetilde u_{i,\eps}\colon
    B_{\sqrt{R_0\eps}}(0)\setminus\{0\}\longrightarrow\R,
    \qquad
    \widetilde u_{i,\eps}(-\zeta)=-\widetilde u_{i,\eps}(\zeta),
\]
unique up to an overall sign. Define its zero extension by
\[
    \widehat u_{i,\eps}(\zeta)
    =
    \begin{cases}
    \widetilde u_{i,\eps}(\zeta), & \zeta\ne0,\\
    0, & \zeta=0.
    \end{cases}
\]
Rescaling the upstairs disk at the square-root scale $\sqrt\eps$ gives
\[
    \widehat u_{i,\eps}^\sharp(\eta)
    \coloneqq
    \widehat u_{i,\eps}(\sqrt\eps\eta),
    \qquad \eta\in B_{\sqrt{R_0}}(0).
\]
Equivalently, one may first rescale the downstairs ball by writing
$x=p_i+\eps y$, and then take the square-root cover $y=\eta^2$. The resulting
map to the original ball is again
\[
    \eta\longmapsto p_i+\eps\eta^2.
\]
Thus taking the square-root cover first and then rescaling by $\sqrt\eps$ is
the same as first rescaling downstairs by $\eps$ and then passing to the
square-root cover.
For $k\geq0$ and $0<\gamma<1$, we define the $\eps$-scaled local norm by
\[
    \lVert u\rVert_{C_\eps^{k,\gamma}(B_{R_0\eps}(p_i),\L_{\p})}
    \coloneqq
    \lVert\widehat u_{i,\eps}^\sharp\rVert_{C^{k,\gamma}(B_{\sqrt{R_0}}(0))}.
\]
If the zero extension is not of class $C^{k,\gamma}$, this norm is infinite.
\end{definition}

\begin{remark}
The singular norm is measured in the square-root scale near the puncture. Thus
it is not the ordinary Euclidean H\"older norm in the original $x$-coordinate on
$B_{R_0\eps}(p_i)$, but the ordinary H\"older norm of the zero extension in the
coordinate $\eta$ with $x=p_i+\eps\eta^2$.
\end{remark}

\begin{remark}
The following is only a compatibility observation. There is a small annular
region around each puncture where both descriptions can be used. In such a
region the square-root chart stays a fixed scaled distance away from the branch
point, and therefore the singular and trivializing norms measure equivalent
scaled quantities. More precisely, if
\[
    B_{r\eps}(q)\subset B_{R_0\eps}(p_i)\setminus B_{c\eps}(p_i)
\]
for some fixed \(c>1\) and \(r>0\), then, after the downstairs rescaling
\(x=p_i+\eps y\), the map \(y=\eta^2\) restricts on each sheet over
\(B_r((q-p_i)/\eps)\) to a smooth change of coordinates with \(C^{k+1}\) bounds
for the map and its inverse depending only on \(c,R_0,r,k\), and not on
\(\eps\).
Consequently, on such a ball the ordinary scaled \(C^{k,\gamma}\) norm of a
local scalar representative is equivalent to the norm obtained by restricting
the singular lift to the corresponding sheet, with constants independent of
\(\eps\).
\end{remark}

\subsection{The global scaled section norm}

During the gluing argument we measure sections at the scale $\eps>0$ with
respect to several norms. The value of $R_0$ is part of the choice of norm, and
all constants in the estimates below are allowed to depend on it. We define the
global section norm directly from the $\eps$-scaled local norms on the singular
balls $B_{R_0\eps}(p_i)$ and on the balls in $\mathcal B_\eps$.

\begin{definition}[The $C_{*,\eps}^{k,\gamma}$ section norm]
\label{def:scaled-c-star-holder-norm}
Let $k$ be a nonnegative integer and let $0<\gamma<1$. For a section $u$ of
$\L_{\p}$ over $\Omega_{\p}$, define
\[
    \lVert u\rVert_{C_{*,\eps}^{k,\gamma}(\Omega_{\p},\L_{\p})}
    \coloneqq
    \sup_{i=1,\dots,n}
    \lVert u\rVert_{C_\eps^{k,\gamma}(B_{R_0\eps}(p_i),\L_{\p})}
    +
    \sup_{B_\eps(q)\in\mathcal B_\eps}
    \lVert u\rVert_{C_\eps^{k,\gamma}(B_\eps(q),\L_{\p})}.
\]
\end{definition}

In this definition, the first supremum is taken over the singular balls
centered at the punctures, while the second is taken over trivializing balls
whose centers are at distance at least $R_0\eps$ from the punctures.

\subsection{Weighted section norms}\label{subs: weighted-section-norms}

The exponential decay norms below depend on the parameters
\begin{equation}\label{eq: exponential-decay-params}
    \lambda>0,
    \qquad
    \gamma\in(0,1).
\end{equation}
We display $\lambda$ in the notation for the weighted norms.

\begin{definition}[Exponential decay norm relative to a set]\label{def: exp decay rel set}
Let $A\subset\R^2$ be a nonempty set, let $D\subset\R^2_{\p}$, and let
$\varphi$ be a section of $\L_{\p}$ over an open neighborhood of $D$ in
$\R^2_{\p}$.
Set
\[
    \mathcal S_\eps(D)
    \coloneqq
    \left\{i:\overline{B_{R_0\eps}(p_i)}\setminus\{p_i\}
    \subset D\right\},
    \qquad
    \mathcal B_\eps(D)
    \coloneqq
    \left\{B_\eps(q)\in\mathcal B_\eps:
    \overline{B_\eps(q)}\subset D\right\}.
\]
We define
\begin{multline*}
    \lVert\varphi\rVert_{C^{k,\gamma}_{A,\eps,\lambda}(D,\L_{\p})}
    \coloneqq
    \sup_{i\in\mathcal S_\eps(D)}
    e^{\lambda\dist(p_i,A)/\eps}
    \lVert\varphi\rVert_{C_\eps^{k,\gamma}(B_{R_0\eps}(p_i),\L_{\p})}\\
    {}+
    \sup_{B_\eps(q)\in\mathcal B_\eps(D)}
    e^{\lambda\dist(q,A)/\eps}
    \lVert\varphi\rVert_{C_\eps^{k,\gamma}(B_\eps(q),\L_{\p})}.
\end{multline*}
An empty supremum is interpreted as zero.
Equivalently, $C^{k,\gamma}_{A,\eps,\lambda}(D,\L_{\p})$ is the space of
sections defined near $D$ for which this quantity is finite. We will use this
norm mainly when \(A\) is a line segment or a half-line based at a puncture. In
the one-puncture case $\p=\{(0,0)\}$, we write $\L$ in place of $\L_{\p}$, as
before.
When $\eps=1$, we suppress it from the notation and write
\[
    C^{k,\gamma}_{A,\lambda}(D,\L_{\p})
    \coloneqq
    C^{k,\gamma}_{A,1,\lambda}(D,\L_{\p}).
\]
\end{definition}

With the same indexing sets, deleting the exponential factors defines
\(\lVert\varphi\rVert_{C_{*,\eps}^{k,\gamma}(D,\L_{\p})}\). For
\(D=\Omega_{\p}\), this agrees with
\Cref{def:scaled-c-star-holder-norm}. For a general \(D\), both quantities are
interior seminorms on the region covered by the singular and trivializing balls
contained in \(D\).

\section{The model boundary solution}\label{sec: 2d sol}

Recall the one-puncture notation introduced in
\Cref{subs: line bundle,subs: plane-minus-one-point}: we write
\[
    \R^2_*=\R^2\setminus\{(0,0)\},
    \qquad
    \L=\L_{\{(0,0)\}}\to\X .
\]
In this section we write points of $\X$ as $(x,z)$; the second coordinate is
denoted $z$ in preparation for the notation we will use for higher-dimensional
interfaces in future work. We use the global square-root cover
\[
    \pi\colon\X\longrightarrow\X,
    \qquad
    \pi(\xi)=\xi^2,
\]
and identify sections of $\L$ with odd scalar functions upstairs.

Let \(\ell_+\coloneqq\{(x,0)\in\X:x>0\}\) be the positive half-line.
The goal of this section is to construct a model section $U$ whose zero set is
$\ell_+$. Since $\ell_+$ is non-separating in $\X$, this model is naturally a
section of $\L$, or equivalently an odd function on the double cover.

\subsection{Construction of the model solution U}\label{subs: construct U}

We construct $U\colon\X\to\L$ by solving the lifted equation. From now on, we
denote the coordinates of the cover by $\xi=(\xi_1,\xi_2)$, so that
$\pi(\xi)=(\xi_1^2-\xi_2^2,2\xi_1\xi_2)$. The nontrivial element of the group
of deck transformations of the cover is $\iota(\xi)=-\xi$, and a section $U$
corresponds
to an odd function $\U\colon\X\to\R$ through the relation
\begin{equation}\label{eq: COV lift}
    U(\pi(\xi))=[\xi,\U(\xi)],
    \qquad
    \U(-\xi)=-\U(\xi).
\end{equation}

With the conventions of \Cref{subs: plane-minus-one-point}, the Allen--Cahn
equation \eqref{eq:section-allen-cahn} with $\eps=1$ becomes
\(\Delta U-W'(U)=0\).
By the square-root computation in \Cref{subs: plane-minus-one-point}, solving
this equation is equivalent, on the cover, to solving
\[
    -\Delta\U+4|\xi|^2W'(\U)=0\quad \text{in }\X .
\]
We use this lifted equation below. The corresponding lifted energy was fixed in
\Cref{subs: plane-minus-one-point}.

\begin{proposition}\label{prop: lifted-existence}
    There exists a smooth function $\U\colon\R^2\to\R$, unique up to sign among
    bounded solutions satisfying the properties below, solving
    \[
        -\Delta\U+4|\xi|^2W'(\U)=0\quad \text{in }\R^2,
    \]
    whose nodal set is $\R\times\{0\}$. Moreover, $\U$ is odd with respect to the
    origin and satisfies $\partial_{\xi_2}\U>0$.
\end{proposition}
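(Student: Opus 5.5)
We would construct $\U$ as a limit of positive solutions on expanding half-rectangles, reflected oddly across $\xi_2=0$. The nodal line upstairs is chosen to be $\R\times\{0\}$: its image under $\pi(\xi)=\xi^2$ is $\{(\xi_1^2,0)\}$, the positive half-line $\ell_+$. The required oddness $\U(-\xi)=-\U(\xi)$ is equivalent to two conditions: $\U$ is odd in $\xi_2$ and even in $\xi_1$.

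\textbf{Step 1 (finite problems).} Fix $R$ and set $Q_R^+=(-R,R)\times(0,R)$. We would solve
\[
-\Delta u_R+4|\xi|^2W'(u_R)=0 \ \text{in } Q_R^+,\qquad u_R=0 \text{ on } \{\xi_2=0\}.
\]
On the remaining sides we would prescribe data $\psi_R$ with values in $[0,1]$ that are even in $\xi_1$ and nondecreasing in $\xi_2$. A natural choice is $\psi_R(\xi)=H(2|\xi_1|\xi_2)$, which is the heteroclinic profile in the base variable $z=2\xi_1\xi_2$, cut to $1$ on the top side.

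A minimizer of $\widetilde E$ with these data exists. By truncation it satisfies $0\le u_R\le 1$, and by the strong maximum principle $0<u_R<1$ inside. The second condition in \eqref{eq: assumption W} at $a=0$ says that $s\mapsto -W'(s)/s$ is decreasing. With the weight $4|\xi|^2$ this is exactly the Brezis--Oswald setting, so the positive solution is unique. In particular $u_R$ coincides with its reflection $\xi_1\mapsto-\xi_1$, so it is even in $\xi_1$.

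\textbf{Step 2 (monotonicity).} Set $v=\partial_{\xi_2}u_R$. Differentiating the equation gives
\[
-\Delta v+4|\xi|^2W''(u_R)v=-8\xi_2W'(u_R)\ \ge 0 \quad\text{in } Q_R^+,
\]
because $W'<0$ on $(0,1)$. On the boundary we have $v\ge0$: on $\xi_2=0$ since $u_R\ge0=u_R|_{\xi_2=0}$, on the top since $u_R\le1=u_R|_{\xi_2=R}$, and on the lateral sides from the monotonicity of $\psi_R$. Since $W''$ may be negative, we cannot apply the maximum principle directly.

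Instead, the monotonicity assumption also gives $W''(s)>W'(s)/s$ for $s\in(0,1)$. Moreover, $u_R>0$ solves $-\Delta u_R+4|\xi|^2\frac{W'(u_R)}{u_R}u_R=0$. Writing $w=v/u_R$ and using the ground-state identity, a negative minimum of $w$ is excluded, so $v\ge0$. Corner and boundary behaviour of the quotient will need care: $u_R$ vanishes on $\xi_2=0$, and there we would use the Hopf lemma. The strong maximum principle then upgrades this to $v>0$.

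\textbf{Step 3 (limit and nontriviality).} We would extend $u_R$ to $Q_R=(-R,R)^2$ by setting $u_R(\xi_1,-\xi_2)=-u_R(\xi_1,\xi_2)$. This extension solves the equation because $W'$ is odd. Elliptic estimates on compact sets, uniform in $R$, give a subsequence converging in $C^2_{\rm loc}$ to a solution $\U$ with the following properties:
\begin{itemize}
\item $\U$ is odd in $\xi_2$ and even in $\xi_1$;
\item $|\U|\le1$ and $\partial_{\xi_2}\U\ge0$.
\end{itemize}

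The main obstacle is excluding the trivial limit $\U\equiv0$. We would rule it out with a uniform lower barrier. Away from the origin the weight $4|\xi|^2$ is large, so a compactly supported subsolution placed in a fixed ball $B$ far above the axis, such as a small multiple of the first Dirichlet eigenfunction, lies below $u_R$ by the sweeping principle. An alternative is to compare $u_R$ with the profile $H(2\xi_1\xi_2)$, which is a solution upstairs up to an error controlled by $|\nabla(\xi_1\xi_2)|^2=|\xi|^2$.

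Once $\U\not\equiv0$, the function $\partial_{\xi_2}\U\ge0$ solves a linear equation, so the strong maximum principle gives $\partial_{\xi_2}\U>0$. Hence the nodal set is exactly $\{\xi_2=0\}$. Smoothness across the origin is automatic: the lifted equation has smooth coefficients on all of $\R^2$.

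\textbf{Step 4 (uniqueness).} Let $\U_1,\U_2$ be two bounded solutions with these properties. Both are positive in the upper half-plane. We would apply a Brezis--Oswald or sliding-type comparison on $\{\xi_2>0\}$, using the quotient $\U_1/\U_2$. To control it at infinity we would first show that both tend to $1$ away from the axis, with rate governed by the weight; this is the delicate part of the uniqueness argument. The comparison then gives $\U_1\le\U_2$, and by symmetry $\U_1=\U_2$. Uniqueness up to sign then follows, since $-\U$ is the only other candidate compatible with the nodal set.
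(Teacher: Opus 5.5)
Your existence argument (Steps 1--3) works after routine repairs. Step 2 takes a different route from the paper's. The paper gets a maximum principle for $-\Delta+4|\xi|^2W''(\U_R)$ from strict stability of the minimizer on a slightly shrunken domain. You instead note that \eqref{eq: assumption W} gives $sW''(s)-W'(s)>0$ on $(0,1)$, so $u_R>0$ is itself a positive supersolution of the linearized operator, and the quotient $\partial_{\xi_2}u_R/u_R$ cannot attain a negative interior minimum. Your version does not use minimality at all. The repairs are these:
\begin{enumerate}
\item $u_R\ge0$ does not follow from truncating at $0$, since $W(0)>W(s)$ for $s\in(-1,0)$. It follows instead from the fact that $|u_R|$ is also a minimizer.
\item As written, your data jumps at $(\pm R,R)$ from $H(2R^2)$ to $1$, so no competitor has finite Dirichlet energy. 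Make $\psi_R$ continuous there.
\item Since $\psi_R\ne0$ on part of the boundary, Brezis--Oswald does not apply verbatim. You must rerun its identity with test functions $(u_1^2-u_2^2)/u_1$ and $(u_2^2-u_1^2)/u_2$, using Hopf near $\xi_2=0$.
\end{enumerate}
The paper avoids the last two issues by taking harmonic affine data $\xi_2/R$ and applying Brezis--Oswald to $\U_R-\xi_2/R\ge0$ with the shifted hypothesis $a=\xi_2/R$.

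The genuine gap is Step 4. You defer uniqueness to first proving that every bounded solution tends to $1$ away from the axis, and then running a comparison; neither step is carried out. That information would not suffice anyway. Near the axis at infinity, where $\xi_2|\xi|$ stays bounded, solutions do not tend to $1$. To start a sliding argument you would need a uniform positive lower bound for $\U_1/\U_2$ there. That means precise asymptotics near the nodal line for an \emph{arbitrary} bounded solution, which nothing in your proposal provides.

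The missing idea is that in two dimensions no information at infinity is needed. Take $\U_1,\U_2>0$ on $\{\xi_2>0\}$ and $\eta$ a cutoff on $\R^2$. Test the two equations on the upper half-plane with $\eta^2(\U_1^2-\U_2^2)/\U_1$ and $\eta^2(\U_2^2-\U_1^2)/\U_2$. These vanish on the axis, and they are admissible because $\partial_{\xi_2}\U_i>0$ there makes $\U_1/\U_2$ continuous and positive up to $\xi_2=0$. Adding the two identities, the potential term is nonpositive because $-W'(s)/s$ is decreasing, and Young's inequality yields
\[
    \int_{\{\xi_2>0\}}\eta^2(\U_1^2+\U_2^2)\left|\nabla\log\frac{\U_1}{\U_2}\right|^2
    \le C\left(\lVert\U_1\rVert_\infty^2+\lVert\U_2\rVert_\infty^2\right)\int_{\R^2}|\nabla\eta|^2 .
\]
For logarithmic cutoffs the right-hand side is $O(1/\log R)$, so $\U_1=c\U_2$. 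Strict monotonicity of $W'(s)/s$ in the equation then forces $c=1$. This argument uses only boundedness, which is exactly the class in which uniqueness is claimed.
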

\begin{proof}
    We construct $\U$ variationally. Let $R>0$, set $\Omega_R\coloneqq (-R,R)\times(0,R)$, define $\varphi_R(\xi)=\xi_2/R$ and consider the problem
\begin{equation}\label{eq: problem on Omega R}
	\begin{cases}
		-\Delta \U_R+4|\xi|^2W'(\U_R)=0&\text{in }\Omega_R,\\
		\U_R=\varphi_R &\text{on }\partial\Omega_R.
	\end{cases}
\end{equation}
Existence of a solution to \eqref{eq: problem on Omega R} follows by the direct method of calculus of variations, by minimising the coercive energy
\begin{equation*}
    \widetilde E(\U,\Omega_R)\coloneqq
    \int_{\Omega_R}\frac{|\nabla \U|^2}{2}+4|\xi|^2W(\U)
\end{equation*}
in the space $\varphi_R+H^1_0(\Omega_R)$.

We claim that $\U_R\geq 0$ in $\Omega_R$. Indeed, $|\U_R|$ is a minimiser as
well and elliptic regularity in the interior ensures that $\U_R$ cannot change
sign by the maximum principle. Since \(0\leq\U_R\leq1\) on
\(\partial\Omega_R\), the upper bound \(\U_R\leq1\) in \(\Omega_R\) follows
from the maximum principle as well. In particular, this means that $\U_R$ is
superharmonic in $\Omega_R$ since
\begin{equation*}
    \Delta\U_R=4|\xi|^2W'(\U_R)\leq 0
\end{equation*}
and by comparison principle $\U_R\geq \varphi_R$ in $\Omega_R$, since
$\varphi_R$ is harmonic. Our next claim is that
$\partial_{\xi_2}\U_R>0$ in $\Omega_R$. Observe that, by differentiating the
equation, we have
\begin{equation*}
	-\Delta\partial_{\xi_2}\U_R+4|\xi|^2W''(\U_R)\partial_{\xi_2}\U_R=-8\xi_2W'(\U_R)\geq 0
\end{equation*}
which follows from the fact that $W'(s)$ is negative when $s$ is positive, and
$\U_R\geq 0$. Thus, $\partial_{\xi_2}\U_R$ is a supersolution for the operator
$-\Delta+4|\xi|^2W''(\U_R)$. On the vertical sides the boundary condition gives
$\partial_{\xi_2}\U_R=1/R$, while the Hopf lemma applied to $\U_R$ on the bottom
side and to $1-\U_R$ on the top side gives $\partial_{\xi_2}\U_R>0$ there. At
the corners this boundary argument is justified by a local reflection: after
subtracting the affine boundary value $\varphi_R=\xi_2/R$, the function
$\U_R-\varphi_R$ vanishes on the two perpendicular sides meeting at the corner,
and odd reflection across both sides gives an interior equation with bounded
right-hand side. Interior $W^{2,p}$ regularity, for $p>2$, gives a continuous
gradient up to the corner, so the adjacent side traces agree. Hence
$\partial_{\xi_2}\U_R>0$ in a tiny neighbourhood of $\partial\Omega_R$ in
$\Omega_R$. This means that we can find a slightly smaller
domain $\widetilde\Omega_R\subset\Omega_R$ such that
$\partial_{\xi_2}\U_R>0$ on $\partial\widetilde\Omega_R$. Moreover, $\U_R$ is
energy minimising in $\Omega_R$ which implies that it is strictly stable in
$\widetilde\Omega_R$, meaning that there exists a positive solution
$\varphi_1>0$ to
\begin{equation*}
-\Delta\phi_1+4|\xi|^2W''(\U_R)\phi_1=\lambda_1\phi_1
\end{equation*}
for $\lambda_1>0$, i.e.~ $\phi_1$ is a positive supersolution, which in particular implies that the operator satisfies maximum principle in $\widetilde\Omega_R$, see \cite[Theorem 2.11]{HanLin2011}. In particular, $\partial_{\xi_2}\U_R$ cannot attain its nonpositive minimum in the interior, thus $\partial_{\xi_2}\U_R>0$.

By elliptic regularity we argue that the sequence $\{\U_R\}$ converges uniformly over compact subsets of $\{\xi_2\geq 0\}$ to a function $\U_+$ satisfying the same equation and such that $\U_+(\xi_1,0)=0$ for every $\xi_1\in\R$. We claim that $\U_+>0$ on $\{\xi_2>0\}$. To rule out the zero limit, choose a sufficiently large ball $B_*\Subset\{\xi_2>0\}$ such that the zero-Dirichlet problem for $\widetilde E(\cdot,B_*)$ has a positive minimiser $\Phi_*>0$; this is the Brezis--Oswald criterion, applied using \eqref{eq: assumption W}. For every large $R$, the ball $B_*$ lies in $\Omega_R$, and the usual sliding comparison with $\vartheta\Phi_*$, $\vartheta\in(0,1)$, gives $\U_R\geq\Phi_*$ on $B_*$. Passing to the limit gives $\U_+\geq\Phi_*$ on $B_*$, so $\U_+$ is not identically zero. Since $\U_+\ge0$ and solves the equation in the connected half-plane, the strong maximum principle gives $\U_+>0$ on $\{\xi_2>0\}$.

Next, we show that $\U_+$ is even in $\xi_1$. To do so, note that
$\mathbf{V}_R\coloneqq\U_R-\varphi_R$ solves
\begin{equation*}
\begin{cases}
	-\Delta \mathbf{V}_R=f(\xi,\mathbf{V}_R)&\text{in }\Omega_R\\
		\mathbf{V}_R=0 &\text{on }\partial\Omega_R.
\end{cases}
\end{equation*}
where $f(\xi,s)\coloneqq -4|\xi|^2W'(s+\varphi_R(\xi))$, which is bounded in
$\xi$ and continuous in $s$. Assumption \eqref{eq: assumption W}, with
$a=\varphi_R(\xi)$, gives $\tfrac{d}{ds}(f(\xi,s)/s)\leq 0$ for all $s>0$.
By \cite[Section 2]{BrezisOswald1986}, we have that $\mathbf{V}_R$
(and thus $\U_R$) is unique. By uniqueness and the fact that the boundary datum
is invariant under $\xi_1$ reflection we deduce that $\U_R$ is even in $\xi_1$,
and so is $\U_+$.

Next, define $\U$ as the $\xi_2$-odd extension of $\U_+$ to the entire plane. It holds
\begin{equation*}
	-\U(\xi_1,\xi_2)=\U(\xi_1,-\xi_2)=\U(-\xi_1,-\xi_2),
\end{equation*}
namely, $\U$ is odd with respect to the origin. Moreover,
$\partial_{\xi_2}\U\geq0$ everywhere, and since $\partial_{\xi_2}\U$ is a non-zero supersolution for the operator $-\Delta+4|\xi|^2W''(\U)$ we have $\partial_{\xi_2}\U>0$ by maximum principle.

Lastly, uniqueness follows from a modification of \cite[Section 2]{BrezisOswald1986}, which we sketch now. Suppose that there are two bounded functions $\U_1$ and $\U_2$ satisfying the thesis and assume up to changing sign that they are both nonnegative in $\{\xi_2>0\}$. Pick a cut-off function $\eta\in C^\infty_c(\{\xi_2>0\})$ to be chosen later and test the equations for $\U_1$ and $\U_2$ with
\begin{equation*}
    \phi_1\coloneqq \eta^2\frac{\U_1^2-\U_2^2}{\U_1},\quad \text{and}\quad \phi_2\coloneqq \eta^2\frac{\U_2^2-\U_1^2}{\U_2}
\end{equation*}
respectively. Adding the two test yields, setting $f(\xi,s)=-4|\xi|^2W'(s)$
\begin{align*}
\int_{\{\xi_2>0\}}
\eta^2(\U_1^2+\U_2^2)
\left|\nabla \log \frac{\U_1}{\U_2}\right|^2
+2\int_{\{\xi_2>0\}}
\eta(\U_1^2-\U_2^2)
\nabla\eta\cdot \nabla \log \frac{\U_1}{\U_2}\\
=\int_{\{\xi_2>0\}}
\eta^2
\left(
\frac{f(\zeta,\U_1)}{\U_1}
-
\frac{f(\zeta,\U_2)}{\U_2}
\right)
(\U_1^2-\U_2^2).
\end{align*}
By \eqref{eq: assumption W}, the right-hand side is nonpositive. Using Young's inequality, we get
\begin{equation*}
    \int_{\{\xi_2>0\}}
\eta^2(\U_1^2+\U_2^2)
\left|\nabla \log \frac{\U_1}{\U_2}\right|^2\leq C\int_{\{\xi_2>0\}}
(\U_1^2+\U_2^2)
\left|\nabla \eta\right|^2.
\end{equation*}
Picking $\eta$ to be a log-cutoff and sending its support to $+\infty$ we conclude that
\begin{equation*}
    \nabla \log \frac{\U_1}{\U_2}\equiv 0\iff \U_1=c\U_2.
\end{equation*}
and using the equation we find $c=1$.
\end{proof}

\begin{corollary}\label{prop: existence}
    The function $\U$ constructed in \Cref{prop: lifted-existence} descends to a
    section $U\colon\R^2_*\to\L$ satisfying
    \eqref{eq:section-allen-cahn} with $\eps=1$. Its nodal set is the
    positive half-line $\ell_+$, and its lift satisfies
    $\partial_{\xi_2}\U>0$.
\end{corollary}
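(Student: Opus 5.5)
The plan is to read off the corollary from \Cref{prop: lifted-existence} by means of the correspondence \eqref{eq: COV lift} between odd functions on the square-root cover and sections of $\L$. There are three things to check: the section is well defined, it solves the equation, and its nodal set is $\ell_+$.

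\emph{Well-definedness.} Since $\U$ is odd, $\U(-\xi)=-\U(\xi)$. The deck involution of $\pi(\xi)=\xi^2$ is $\iota(\xi)=-\xi$. Hence the assignment $U(\pi(\xi))\coloneqq[\xi,\U(\xi)]$ is independent of the choice of preimage, because $[\iota\xi,-\U(\xi)]=[\xi,\U(\xi)]$ in $\L$. We therefore restrict $\U$ to $\R^2_*=\R^2\setminus\{0\}$, and it defines a smooth section $U\in\Gamma(\R^2_*,\L)$.

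\emph{The equation.} Near any point of $\R^2_*$, pick a sheet of the cover, that is, a local inverse branch $\sqrt{\cdot}$ of $\pi$. The scalar representative is then $u=\U\circ\sqrt{\cdot}$. The conformal identity from \Cref{subs: plane-minus-one-point} gives $\Delta_\xi\U=4|\xi|^2(\Delta_x u)\circ\pi$. On $\R^2_*$ we have $|\xi|\neq0$, so the lifted equation $\Delta\U=4|\xi|^2W'(\U)$ becomes $(\Delta_x u-W'(u))\circ\pi=0$. Thus $u$ solves $\Delta u=W'(u)$ locally. By the sign-invariance discussion in \Cref{subs:allen-cahn-on-sections}, this is exactly \eqref{eq:section-allen-cahn} with $\eps=1$, independently of the chosen trivialization.

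\emph{Nodal set.} The zero set of a section is well defined, since $[\xi,0]$ is the zero of the fibre, and it equals $\pi(\{\U=0\}\setminus\{0\})$. By \Cref{prop: lifted-existence} this is the image under $\pi$ of $\{(\xi_1,0):\xi_1\neq0\}$. Now $\pi(\xi_1,0)=(\xi_1^2,0)$, so the image is exactly $\{(x,0):x>0\}=\ell_+$. The monotonicity $\partial_{\xi_2}\U>0$ is inherited directly from \Cref{prop: lifted-existence}.

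No step is a genuine obstacle. The only point requiring care is the bookkeeping of signs in the local trivializations, and that is already handled by the evenness of $W$ and the oddness of $W'$, as discussed in \Cref{subs:allen-cahn-on-sections}.
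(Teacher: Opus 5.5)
Your proposal is correct and follows the paper's own argument. Oddness of $\U$ lets \eqref{eq: COV lift} define the section, the conformal identity $\Delta_\xi\U=4|\xi|^2(\Delta_x u)\circ\pi$ (divided by $4|\xi|^2\neq0$ on $\R^2_*$) transfers the equation, and the image of $\R\times\{0\}\setminus\{0\}$ under $\xi\mapsto\xi^2$ is $\ell_+$; you simply spell out details that the paper's three-line proof leaves implicit.
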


\begin{proof}
    Since $\U$ is odd with respect to the origin, \eqref{eq: COV lift} defines a
    section $U$ of $\L$. The equivalence between the Allen--Cahn equation with
    $\eps=1$ and the lifted equation was recalled above. Finally, the projection of
    $\R\times\{0\}\setminus\{0\}$ under $\pi(\xi)=\xi^2$ is the positive
    half-line $\ell_+$.
\end{proof}

\subsection{Trivialisations of the model solution}\label{subs: trivialisations-model-solution}

\begin{figure}
    \centering
    \begin{minipage}[t]{0.45\textwidth}
        \centering
        \begin{overpic}[width=1\textwidth]{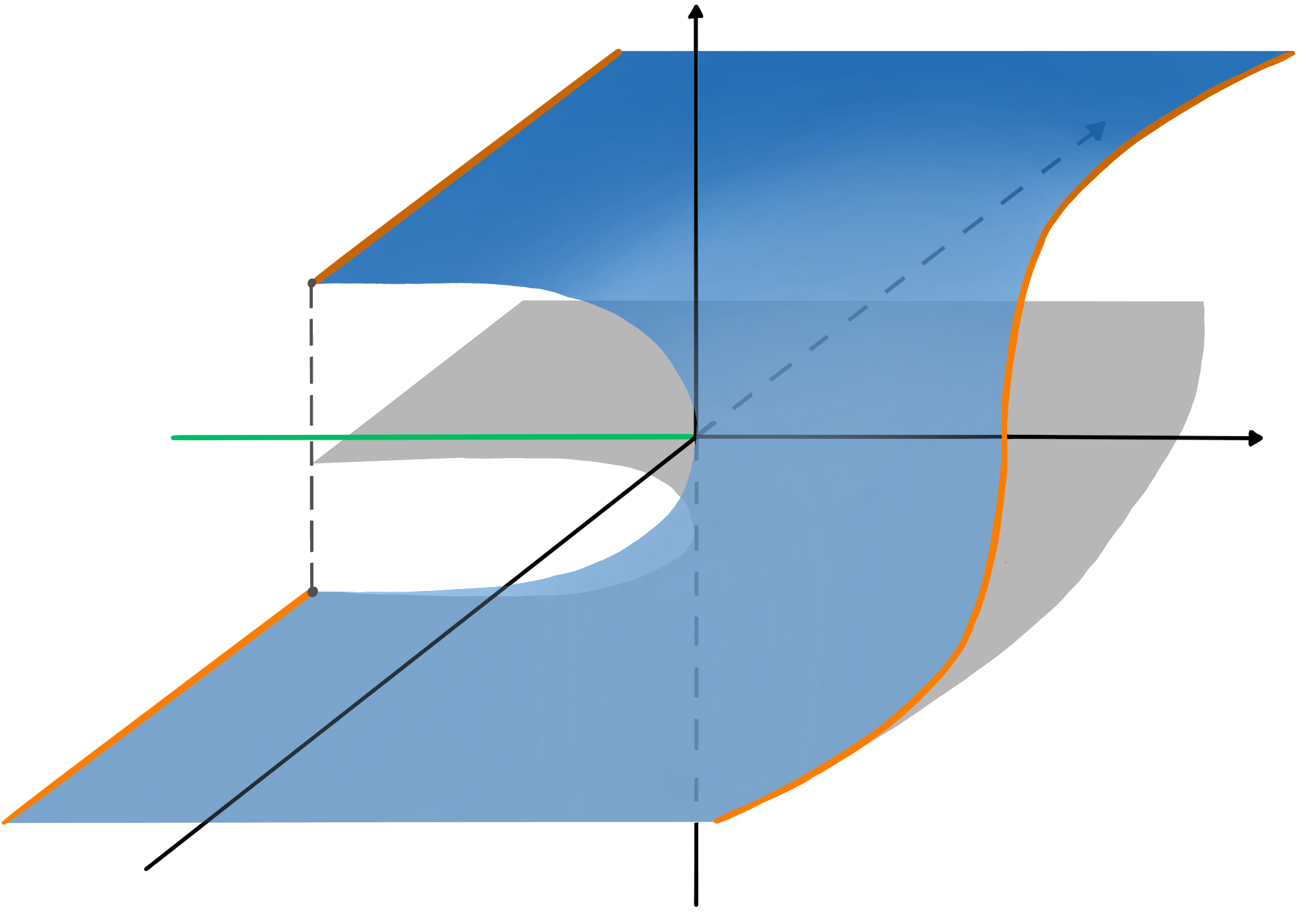}
        \put(27,58){\color{orange} +1}
        \put(7,18){\color{orange} -1}
        \put(9,31){\color{forestgreen} $\ell_-$}
        \put(84,52){\color{orange} $H(x_2)$}
    \end{overpic}
    \end{minipage}
    \begin{minipage}[t]{0.45\textwidth}
        \centering
        \begin{overpic}[width=1\textwidth]{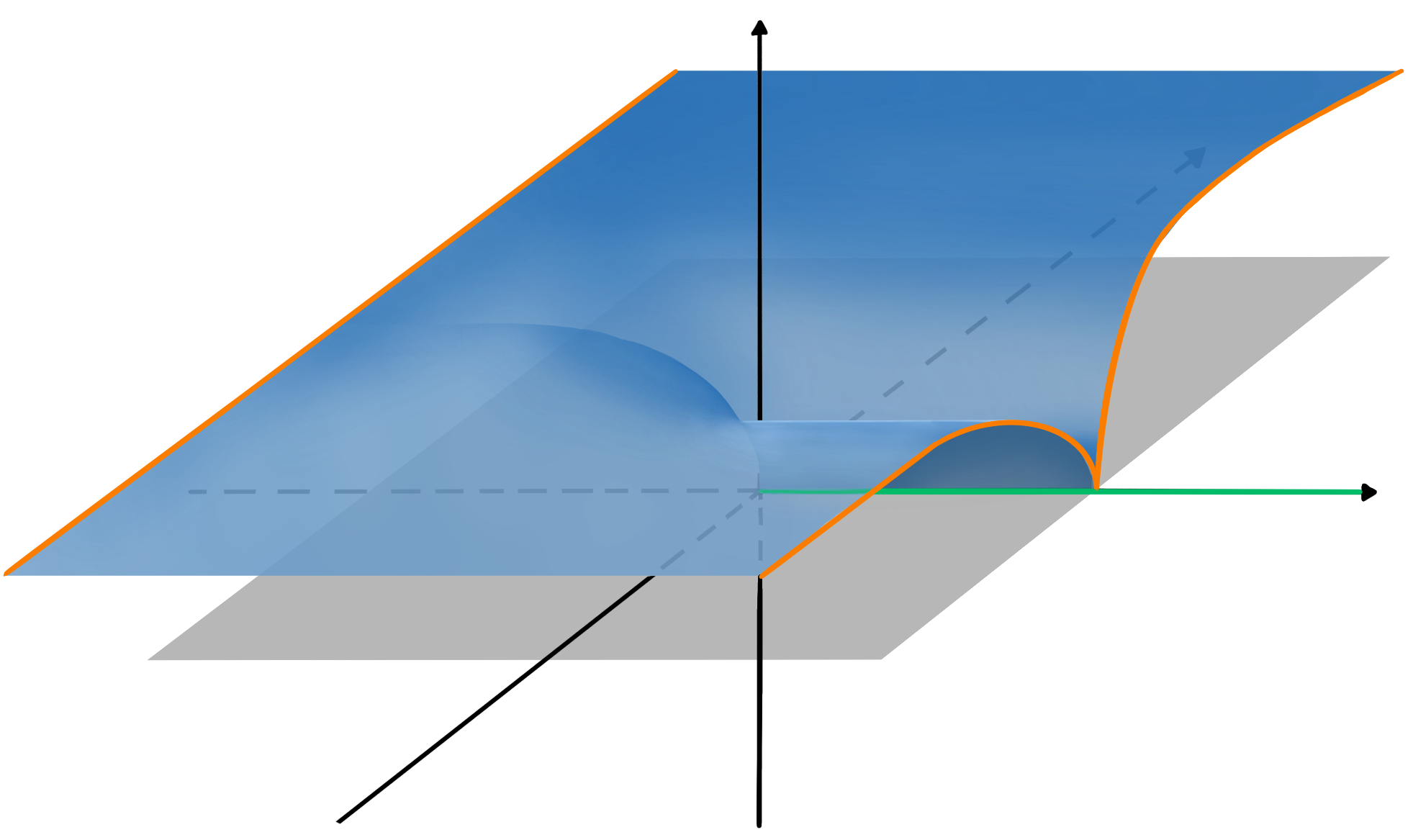}
        \put(30,48){\color{orange} +1}
        \put(90,18){\color{forestgreen} $\ell_+$}
        \put(92,45){\color{orange} $|H(x_2)|$}
    \end{overpic}
    \end{minipage}
    \caption[Trivialisations of the model solution]{The two trivialisations of the model solution obtained by removing the negative half line (left) and the positive half line (right).}
    \label{fig: U1}
\end{figure}

It will be practical to consider some canonical trivialisations of the model solution. By removing any half line $\ell$ based at the origin,
the set $\R^2_*\setminus\ell$ becomes trivialising for $\L$, in the sense that
$\pi^{-1}({\R^2_*\setminus\ell})\simeq (\R^2_*\setminus\ell)\times\R$. In
particular, $U\vert_{\R^2_*\setminus\ell}$ can be represented by a standard
function $\R^2_*\setminus\ell \to \R$. We describe now two trivialisations of $U$.

The first trivialisation is obtained by removing the negative half line
$\ell_-\coloneqq \{(x,0):x<0\}$. In this trivialisation the section is
represented by a function $U_-$ with a jump discontinuity at $\ell_-$, positive
in $\{z>0\}$, negative in $\{z<0\}$, and approximating $H(z)$ as
$x\to +\infty$; see \Cref{fig: U1,cor: decay U_1}.

The second trivialisation is obtained by removing the positive half-line
$\ell_+$. Then $U$ can be represented by a function
$U_+$, continuous over $\R^2$ and positive outside of $\ell_+$, where it
vanishes. Moreover, $U_+=|U_-|$ outside of $\{x=0\}$.

\subsection{Asymptotics of the model solution}\label{subs: asymp U}

In what follows we give a precise description of the asymptotic behaviour of the trivialisations $U_-$ and $U_+$.

\begin{lemma}\label{lem: qualitative positive end}
    The positive-end trivialisation satisfies
    \[
        \lim_{x\to+\infty}
        \lVert U_-(x,\cdot)-H\rVert_{L^\infty(\R)}
        =0 .
    \]
\end{lemma}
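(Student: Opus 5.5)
We argue by compactness and translation along the positive end, then identify the limit through a one-dimensional classification.

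\emph{Setup.} Let $x_k\to+\infty$ and set $V_k(x,z)\coloneqq U_-(x_k+x,z)$. For any fixed $R$, the ball $B_R(0)$ translated to $(x_k,0)$ stays at distance $\sim x_k$ from the puncture and from $\ell_-$. So on $B_{x_k/2}(0)$ the function $V_k$ is a smooth scalar solution of $\Delta V_k=W'(V_k)$ with $|V_k|\le1$. Interior elliptic estimates give $C^{2,\gamma}_{\mathrm{loc}}(\R^2)$ bounds. Along a subsequence $V_k\to V_\infty$ locally uniformly, where $V_\infty$ is an entire bounded solution of $\Delta V=W'(V)$.

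\emph{Properties of the limit.} We transfer monotonicity and sign information from the lift.

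\begin{itemize}
\item \emph{Sign and nodal set.} $V_k>0$ for $z>0$ and $V_k<0$ for $z<0$, and the nodal set is $\{z=0\}$.
\item \emph{Monotonicity.} Near $\ell_+$ the lift is taken on the sheet $\xi_1>0$, where $x+iz=\xi^2$. There one has $\partial_z=\frac{1}{2|\xi|^2}(\xi_2\partial_{\xi_1}+\xi_1\partial_{\xi_2})$. This does not directly give $\partial_zU_->0$ from $\partial_{\xi_2}\U>0$, so the monotonicity claim is used in a different form. The nodal-set information is cleaner: $V_\infty\ge0$ on $\{z>0\}$, $V_\infty\le0$ on $\{z<0\}$, and $V_\infty=0$ on $\{z=0\}$.
\item \emph{Nontriviality.} Apply the Brezis--Oswald barrier from the proof of \Cref{prop: lifted-existence}, now downstairs. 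A ball $B_*$ of fixed large radius inside $\{z>0\}$, translated far along the end, has $U_-\ge\Phi_*$ there by the same sliding argument; it stays inside the domain for large $x_k$ since the region is conformally a half-plane upstairs. Hence $V_\infty\ge\Phi_*$ on some ball in $\{z>0\}$. By the strong maximum principle, $V_\infty>0$ on $\{z>0\}$, and it is odd in $z$ by reflection uniqueness.
\end{itemize}

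\emph{Identification of the limit.} A positive bounded solution in the half-plane $\{z>0\}$ vanishing on the boundary must be $H(z)$. This is the half-space rigidity result of Berestycki--Caffarelli--Nirenberg (or Angenent) for nonlinearities with $W'<0$ on $(0,1)$. Therefore $V_\infty=H(z)$, independently of the subsequence, so $V_k\to H$ locally uniformly.

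\emph{Upgrade to uniform convergence in $z$ (the main obstacle).} Local convergence does not control large $|z|$, so we need $U_-\to\pm1$ uniformly for $|z|\ge M$ with $x$ large. The same blow-up argument handles this. Take points $(x_k,z_k)$ with $x_k\to\infty$, $z_k\ge M$, and allow $z_k$ unbounded. Translating to $(x_k,z_k)$ gives a limit $V$ that is either $H(z+c)$ with $c\ge M$, or a positive entire bounded solution. In the latter case, the half-plane result applied on larger and larger half-planes (using the barrier $\Phi_*$ at every far point away from the nodal set) forces $V\equiv1$. Hence $|U_-(x,z)-\operatorname{sign}(z)|\le\sup_{|t|\ge M}|H(t)-\operatorname{sign}(t)|+o(1)$, and the same bound holds for $H$.

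\emph{Conclusion.} Combining this tail estimate with the locally uniform convergence on $[-M,M]$ and letting $M\to\infty$ gives the claim. The delicate point throughout is that the barrier and the far-field region must avoid the cut $\ell_-$. Since we only consider $x\to+\infty$, the distance of $(x,z)$ to $\ell_-$ and to the puncture is at least $x$, so for the barrier-ball argument we restrict to $|z|\lesssim x$. For $|z|\gtrsim x$ we instead use the sectorial behaviour of the lift: far away from the nodal line, $\U\to\pm1$ by the same positive-solution rigidity applied upstairs.
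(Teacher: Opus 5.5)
Your proof is correct, but it follows a different route from the paper.

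\textbf{The paper's route.} The paper works with one family of barriers: for each $\eta>0$, a positive Dirichlet solution $v_\eta$ on a ball $B_{R_\eta}$ with $v_\eta(0)>1-\eta$. Sliding it inside $\{z>0\}$ gives $U_-\geq 1-\eta$ on $\{z>R_\eta\}$ for every $x$, and symmetrically below. This single estimate gives both the nontriviality of the blow-up limit and the uniform tail control. The limit is then identified through the Hamel--Liu--Sicbaldi--Wang--Wei classification of entire solutions with flat nodal set, and $L^\infty(\R)$ convergence follows at once.

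\textbf{Your route.} You use a fixed barrier $\Phi_*$ only for nontriviality. You identify the limit by half-plane rigidity (Berestycki--Caffarelli--Nirenberg/Angenent) on $\{z>0\}$, and on $\{z<0\}$ via oddness or a second application. You then get the tail from a second compactness argument plus the Liouville fact that a positive bounded entire solution is $\equiv 1$.

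\textbf{Comparison.} Your identification uses a more classical input. Under the KPP-type hypothesis \eqref{eq: assumption W}, half-plane uniqueness can even be obtained from the Brezis--Oswald log-cutoff argument already in the paper. The paper's device is shorter, and it yields a tail estimate uniform in all $x$, which \Cref{lem: decay U_1 in z} later relies on. Note also that your Liouville step is most naturally justified by sliding the same barriers $v_\eta$. Once you have those, you can apply them to $U_-$ directly on $\{z>R_\eta\}$ and skip the second blow-up entirely.

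\textbf{Your final paragraph.} The worry there is misplaced. The open upper half-plane contains neither the puncture nor $\ell_-$, so $U_-$ is a smooth positive scalar solution there. Any ball contained in $\{z>R\}$ is therefore admissible for the sliding comparison, whatever $x$ is. The relevant distance is $z$, not $x$. Translates at $(x_k,z_k)$ with $z_k\geq M$ are defined on the whole half-plane $\{z>-z_k\}$. So no restriction $|z|\lesssim x$ is needed, and the appeal to the ``sectorial behaviour of the lift'' can simply be dropped; as written it is not an argument, so it is good that it is unnecessary.

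\textbf{Minor points.} The oddness of the limit in $z$ does not need ``reflection uniqueness''. It is inherited from $U_-(x,-z)=-U_-(x,z)$, which holds because $\U$ is odd in $\xi_2$ and conjugation preserves the sheet $\{\xi_1>0\}$. Your monotonicity bullet is abandoned and plays no role, so it can be deleted.
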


\begin{proof}
We first record a uniform tail non-collapse estimate. Given \(\eta>0\), the
Brezis--Oswald criterion gives \(R_\eta>0\) and a positive zero-Dirichlet
solution \(v_\eta\) of
\[
    \Delta v_\eta-W'(v_\eta)=0\quad\text{in }B_{R_\eta}(0),
    \qquad
    v_\eta=0\quad\text{on }\partial B_{R_\eta}(0),
\]
such that \(v_\eta(0)>1-\eta\). Translating this ball to any point \((x,z)\)
with \(z>R_\eta\) and using the same sliding comparison as above gives
\[
    U_-(x,z)\geq 1-\eta\qquad\text{for }z>R_\eta.
\]
Applying the same argument to \(-U_-\) gives
\[
    U_-(x,z)\leq -1+\eta\qquad\text{for }z<-R_\eta.
\]
Let \(x_n\to+\infty\) and define
\[
    V_n(x,z)\coloneqq U_-(x_n+x,z).
\]
On every fixed compact subset of \(\R^2\), the functions \(V_n\) are eventually
defined in the same scalar trivialisation and solve the Allen--Cahn equation.
The uniform \(L^\infty\) bound and Schauder estimates therefore give, after
passing to a subsequence, convergence in \(C^2_{\mathrm{loc}}(\R^2)\) to a bounded
entire solution \(V\). Moreover,
\[
    V(x,0)=0,\qquad V\geq0\text{ in }\{z>0\},\qquad V\leq0\text{ in }\{z<0\}.
\]
The non-collapse estimate above shows that \(V\) is not identically zero in
either half-plane; hence the strong maximum principle gives \(V>0\) in
\(\{z>0\}\) and \(V<0\) in \(\{z<0\}\). Thus the nodal set of \(V\) is the line
\(\{z=0\}\). By the standard flat-zero-set classification for entire
Allen--Cahn solutions (e.g.~ \cite[Corollary 5]{HamelLiuSicbaldiWangWei2019}), \(V\) is a one-dimensional heteroclinic. The sign and
translation are fixed by the inequalities above and by \(V(x,0)=0\), so
\(V=H(z)\). Since every sequence \(x_n\to+\infty\) has the same subsequential limit on
compact sets, \(U_-(x,\cdot)\to H\) locally uniformly in \(z\). Combining this
local convergence with the uniform upper/lower tail estimate above proves the
\(L^\infty(\R_z)\) convergence.
\end{proof}

The following lemma will be used to prove that the decay from the previous lemma is exponential.

\begin{lemma}\label{lem: decay U_1}
  There exists $R_{\mathrm{dec}}>0$ such that for all balls
  $B_R(p)\subset \{x>0\}$ with $R\geq R_{\mathrm{dec}}$ we have:
  \begin{equation*}
    \begin{split}
      |U_--H|(p) \leq \frac{1}{2}\lVert U-H\rVert_{L^\infty(B_R(p))}
    \end{split}
  \end{equation*}
\begin{proof}
We drop the minus subscript from $U_-$ for brevity. First, note that \(U-H\)
solves the equation
      \[
        \Delta (U-H)-W''(H)(U-H)=f,
        \qquad
        f=(W'(U)-W'(H))-W''(H)(U-H).
      \]
      Since \(U\) and \(H\) are bounded and \(W\) is smooth, Taylor's theorem
      gives
      \[
        |f(x,z)|\leq C|U(x,z)-H(z)|^2 .
      \]

We can prove the claim arguing via a contradiction. Assume there exists a
sequence of points $(x_n,z_n)$ and $R_n\to\infty$ such that
$B_{R_n}(x_n,z_n)\subset\{x>0\}$ and
  \begin{equation*}
    \begin{split}
      |U(x_n,z_n) - H(z_n)| > \frac{1}{2}\sup_{B_{R_n}(x_n,z_n)}|U_--H|.
    \end{split}
  \end{equation*}

      Since \(B_{R_n}(x_n,z_n)\subset\{x>0\}\), we have \(x_n\to+\infty\). By
      \Cref{lem: qualitative positive end},
      \[
          |U(x_n,z_n)-H(z_n)|\to0,
      \]
      and therefore \(c_n\coloneqq |U(x_n,z_n)-H(z_n)|^{-1}\to+\infty\).
Let $\psi_n(x,z) = c_n \big(U(x+x_n,z+z_n) - H(z+z_n)\big)$. By construction,
$\psi_n$ satisfies:
  \begin{equation*}
    \begin{cases}
      \Delta \psi_n - W''(H(z+z_n))\psi_n = f_n & \text{in } B_{R_n}(0,0) \\
      |\psi_n|_\infty \leq 2 & \text{in } B_{R_n}(0,0) \\
      |\psi_n(0,0)| = 1 & \text{for all }n\geq 0
    \end{cases}
  \end{equation*}
where \(f_n(x,z)=c_n f(x+x_n,z+z_n)\).
      Moreover, the contradictory assumption gives
      \[
        \sup_{B_{R_n}(x_n,z_n)}|U-H|
        <2|U(x_n,z_n)-H(z_n)|=\frac{2}{c_n}.
      \]
      Hence, using the quadratic Taylor estimate for \(f\),
      \[
        \lVert f_n\rVert_{L^\infty(B_{R_n}(0,0))}
        =
        \lVert c_n f(x+x_n,z+z_n)\rVert_{L^\infty(B_{R_n}(0,0))}
        \leq \frac{C}{c_n}\to0 .
      \]
      We have two limiting cases depending on whether the sequence $z_n$ is bounded or not:

\begin{itemize}
  \item If $\{z_n\}$ is bounded, then after passing to a subsequence $z_n\to z_\infty$ and $\psi_n$ converges to a bounded solution $\psi_\infty$ of the equation
  \[
    \Delta \psi_\infty-W''(H(z+z_\infty))\psi_\infty=0
    \quad\text{in }\R^2
  \]
  with $|\psi_\infty|_\infty \leq 2$ and $|\psi_\infty(0,0)|=1$. By \cite[Corollary 7.5]{PacardRitore} the only possibility is that $\psi_\infty(x,z)=c H'(z+z_\infty)$ for some constant $c\in \R$. Since \(U(x,0)-H(0)=0\), we must have $\psi_\infty(x,-z_\infty)=0$, for all $x\in\R$. Since $H'(z)>0$ for all $z\in\R$, this implies $c=0$, which contradicts $|\psi_\infty(0,0)|=1$.

  \item If $\{z_n\}$ is unbounded, then after passing to a subsequence, $\psi_n$ converges to a bounded solution $\psi_\infty$ of the equation $\Delta \psi_\infty - \kappa_W\psi_\infty=0$ in $\R^2$ with $|\psi_\infty|_\infty \leq 2$ and $|\psi_\infty(0,0)|=1$. Since \(\kappa_W>0\), the only bounded solution is $\psi_\infty\equiv 0$, which contradicts $|\psi_\infty(0,0)|=1$.
\end{itemize}
This exhausts the possible limiting cases.
\end{proof}
\end{lemma}

Applying \Cref{lem: decay U_1} successively in regions farther into $\{x>0\}$
gives the following asymptotics for $U_--H$. Indeed, if
\(M(s)\coloneqq\sup_{\{x\ge s,\ z\in\R\}}|U_-(x,z)-H(z)|\), then
\Cref{lem: decay U_1} gives \(M(s+R_{\mathrm{dec}})\le M(s)/2\), and iteration gives
\(M(s)\le Ce^{-\sigma s}\) for some \(\sigma>0\), after increasing \(C\).
\begin{corollary}\label{cor: decay U_1}
  There exist positive constants $C,\sigma>0$ such that for all $x>0$ and $z\in\R$ we have
  \begin{equation*}
    |U_-(x,z) - H(z)| \leq C e^{-\sigma x}.
  \end{equation*}
\end{corollary}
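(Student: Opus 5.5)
The plan is to turn the halving property of \Cref{lem: decay U_1} into exponential decay by iterating it along the positive end. Write
\[
    M(s)\coloneqq\sup_{\{x\ge s,\ z\in\R\}}|U_-(x,z)-H(z)|, \qquad s>0.
\]
This is finite, with $M(s)\le 2$, because $|U_-|\le1$ and $|H|\le1$. It is nonincreasing in $s$. Set $R\coloneqq R_{\mathrm{dec}}$.

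The first step is to prove $M(s+R)\le \tfrac12 M(s)$ for every $s>0$. Take any point $p=(x,z)$ with $x\ge s+R$; if necessary enlarge $R$ slightly so the inequality $x-s\geq R$ can be used with an open ball. Then the ball $B_R(p)$ lies in $\{x'>s\}\subset\{x'>0\}$, so \Cref{lem: decay U_1} applies and gives
\[
    |U_--H|(p)\le\tfrac12\sup_{B_R(p)}|U_--H|\le \tfrac12 M(s).
\]
Here we use that every point of $B_R(p)$ has first coordinate at least $s$. Taking the supremum over all such $p$ yields the claim. The closedness issue at the boundary of the ball is harmless: one can use $\overline{B_R(p)}$ together with continuity, or simply replace $R$ by $R+1$.

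The second step is the iteration. For $x\ge R$, write $x=s_0+kR$ with $s_0\in[R,2R)$ and $k\ge0$. Iterating the first step gives
\[
    M(x)\le 2^{-k}M(s_0)\le 2\cdot 2^{-k}.
\]
Since $k\ge x/R-2$, this becomes $M(x)\le 8e^{-(\log 2/R)x}$. For $0<x<R$ the trivial bound $|U_--H|\le 2\le 2e^{\sigma R}e^{-\sigma x}$ holds. Taking $\sigma=\log2/R_{\mathrm{dec}}$ and $C=\max\{8,2e^{\sigma R}\}$ then gives the corollary.

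There is no real obstacle here. The only points to check are that the balls used stay inside $\{x>0\}$, so that \Cref{lem: decay U_1} is applicable, and that $M$ is finite. Both follow directly from the geometry of the balls and from the uniform bound $|U_-|\le 1$ established in the construction.
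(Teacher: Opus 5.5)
Your proof is correct and is the same argument the paper gives: define $M(s)$ as the supremum of $|U_--H|$ over $\{x\ge s\}$, apply \Cref{lem: decay U_1} on balls of radius $R_{\mathrm{dec}}$ centred in $\{x\ge s+R_{\mathrm{dec}}\}$ to get $M(s+R_{\mathrm{dec}})\le \tfrac12 M(s)$, and iterate. You supply the bookkeeping that the paper leaves implicit: the open balls lie in $\{x>s\}$, $M\le 2$ follows from $|U_-|\le1$, and the explicit constants are $\sigma=\log 2/R_{\mathrm{dec}}$ and $C=8$.
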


\begin{lemma}\label{lem: decay U_1 higher}
  After possibly decreasing the constant \(\sigma\) from \Cref{cor: decay U_1}, for any $k\in \mathbb{N}$, there exist constants $C_k>0$ such that for all $x\geq 1$ and $z\in\R$ we have
  \begin{equation*}
    |\nabla_{x,z}^k \big(U_-(x,z) -  H(z)\big)| \leq C_k e^{-\sigma x}.
  \end{equation*}
  In particular, $|\nabla_{x}^k U_-(x,z) | \leq C_k e^{-\sigma x}$ on the same region.
\end{lemma}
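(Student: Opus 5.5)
The plan is to bootstrap \Cref{cor: decay U_1} to derivatives with interior Schauder estimates on unit balls, applied to the linear equation satisfied by the difference. Set \(w\coloneqq U_--H\) on \(\{x>0\}\). This is the same quantity used in the proof of \Cref{lem: decay U_1}. There \(U_-\) is a genuine scalar smooth solution of \(\Delta U_--W'(U_-)=0\), because the trivialisation only removes \(\ell_-\subset\{x<0\}\). We write
\[
    \Delta w - c(x,z)\,w=0,\qquad c(x,z)\coloneqq\int_0^1 W''\bigl(H(z)+s\,w(x,z)\bigr)\,ds .
\]
This is a linear equation whose coefficient \(c\) is bounded, since \(U_-\) and \(H\) take values in \([-1,1]\) and \(W\) is smooth.

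First we need uniform bounds on the coefficient. All derivatives of \(U_-\) are bounded on \(\{x\ge 1/2\}\) by interior Schauder estimates on unit balls \(B_{1/2}(p)\subset\{x>0\}\). These balls stay a fixed distance from the puncture and from \(\ell_-\). All derivatives of \(H\) are bounded as well. Hence \(\lVert c\rVert_{C^{k,\gamma}(B_{1/2}(p))}\le C_k\) uniformly for \(p\) with \(x\)-coordinate at least \(1\).

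Next, for \(p=(x,z)\) with \(x\ge1\), we apply the interior Schauder estimate for \(\Delta - c\) on \(B_{1/2}(p)\subset B_{1/4}(p)\) and iterate over nested balls to reach order \(k\). This gives
\[
    \lVert w\rVert_{C^{k,\gamma}(B_{1/4}(p))}\le C_k\,\lVert w\rVert_{L^\infty(B_{1/2}(p))}\le C_k\,C\,e^{-\sigma(x-1/2)}\le C_k' e^{-\sigma x},
\]
where the middle inequality is \Cref{cor: decay U_1}. The constants are uniform because the coefficient bounds from the previous step are uniform. This proves the main estimate with the same \(\sigma\); the allowance to decrease \(\sigma\) is not needed, but it costs nothing. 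The statement about \(\nabla_x^k U_-\) then follows at once, since \(H\) is independent of \(x\), so \(\partial_x^k U_-=\partial_x^k w\) for \(k\ge1\).

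The one point needing care is the region \(1/2\le x<1\) near the \(z\)-axis, where the balls must stay away from the puncture. This is harmless: the constraint \(x\ge1\) keeps \(B_{1/2}(p)\) inside \(\{x\ge 1/2\}\), which avoids both the origin and \(\ell_-\). So there is no real obstacle, only bookkeeping of uniform constants.
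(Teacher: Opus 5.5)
Your argument is correct and is essentially the paper's proof: both apply interior Schauder estimates on unit balls centred in $\{x\ge 1\}$ to the equation for $U_--H$ and then feed in \Cref{cor: decay U_1}. The only difference is that you absorb the nonlinearity into the coefficient $c=\int_0^1 W''(H+s\,w)\,ds$, using a priori derivative bounds on $U_-$, instead of treating the quadratic remainder as a source and inducting on $k$; this also makes it clear that $\sigma$ need not be decreased (the nested balls should read $B_{1/4}(p)\subset B_{1/2}(p)$).
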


\begin{proof}
Dropping again the subscript from $U$, remember $U-H$ satisfies the equation
\[
    \Delta (U-H)-W''(H)(U-H)=f,
    \qquad
    f=(W'(U)-W'(H))-W''(H)(U-H).
\]
By Taylor's theorem and the boundedness of \(U\) and \(H\), the right-hand side
satisfies \(|f|\leq C|U-H|^2\). Thus, by the previous corollary, there exist
positive constants \(C,\sigma>0\), after possibly decreasing \(\sigma\), such that
\(|f(x,z)|\leq C e^{-\sigma x}\) for all \(x\geq0\) and \(z\in\R\). The result
follows from a simple induction argument iteratively differentiating the
equation for \(U_--H\) and applying Schauder estimates in regions contained in
\(\{x>0\}\), using unit balls centered in \(\{x\ge1\}\). At each step, the differentiated
Taylor remainder contains only lower-order derivatives, together with possible
highest-order terms multiplied by the exponentially small factor \(U-H\), which
can be absorbed after increasing the constants.
\end{proof}

Next, we study the asymptotics of $U(x,z)$ in the $z$ direction. We begin with $L^\infty$ estimates:

\begin{lemma}\label{lem: decay U_1 in z}
  There exist positive constants $C,\sigma>0$ such that for all $x\geq 1$ and $z\in\R$ we have:
  \begin{equation*}
    |U_-(x,z)-\mathrm{sgn}(z)| \leq C e^{-\sigma |z|}.
  \end{equation*}
\end{lemma}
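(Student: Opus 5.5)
We will prove the estimate in two steps: first show that $|U_-|$ is uniformly close to $1$ far from the nodal line, and then upgrade this closeness to exponential decay with a barrier, using the nondegeneracy $\kappa_W=W''(\pm1)>0$. By the odd symmetry $U_-(x,-z)=-U_-(x,z)$ (inherited from $\U(\xi_1,-\xi_2)=-\U(\xi_1,\xi_2)$ together with evenness in $\xi_1$), it suffices to treat $z>0$ and show $0\le 1-U_-(x,z)\le Ce^{-\sigma z}$ for $x\ge1$.

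\emph{Step 1 (uniform closeness).} Fix $\eta>0$ small, so that $W''(s)\ge \kappa_W/2$ for $s\in[1-\eta,1]$. In the proof of \Cref{lem: qualitative positive end} we already obtained $R_\eta$ with $U_-(x,z)\ge 1-\eta$ for all $z>R_\eta$ and all $x$. That argument only used that the ball $B_{R_\eta}(x,z)$ avoids the nodal set and the cut, so it applies for every $x\in\R$ with $z>R_\eta$. Moreover $U_-\le1$ in $\{z>0\}$ by the maximum principle, since $\U\le1$ follows from the $\U_R\le1$ bounds. Hence $w\coloneqq 1-U_-$ satisfies $0\le w\le\eta$ on the half-plane $\{z>R_\eta\}$.

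\emph{Step 2 (barrier).} On $\{z>R_\eta\}$ we have
\[
\Delta w = -\Delta U_- = -W'(U_-) = W'(1)-W'(1-w) = c(x,z)\,w,
\]
with $c=\int_0^1 W''(1-sw)\,ds\ge\kappa_W/2$. Thus $w\ge0$ satisfies $\Delta w-\tfrac{\kappa_W}{2}w\ge0$, i.e.\ it is a subsolution, on the half-plane $P=\{z>R_\eta\}$ (which contains $\{x\ge1,\,z>R_\eta\}$), and $w\le\eta$ on $\partial P$. We compare $w$ with
\[
v(x,z)=\eta e^{-\sigma(z-R_\eta)}+\delta\cosh(\mu x),
\qquad \sigma=\mu=\tfrac12\sqrt{\kappa_W/2},
\]
which satisfies $\Delta v-\tfrac{\kappa_W}{2}v<0$. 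The term $\delta\cosh(\mu x)$ makes $v\ge w$ for $|x|$ large. At infinity in $z$, where $v$ does not blow up, we use the bound $w\le\eta$ together with the fact that $\sup_P(w-v)$ cannot be attained in the interior at a positive value; if needed, we also add $\delta\cosh(\mu z)$ to the barrier. Letting $\delta\to0$ gives $w\le\eta e^{-\sigma(z-R_\eta)}$ on $P$. For $1\le x$ and $0\le z\le R_\eta$ the estimate holds trivially after enlarging $C$, since $|U_--1|\le 2$.

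The main obstacle is Step 1: justifying that the non-collapse comparison is uniform in $x$, including near the origin and across the line $x=0$. Since the estimate is only claimed for $x\ge1$ and the balls sit in $\{z>R_\eta\}$, away from $\ell_\pm$, the sliding argument of \Cref{lem: qualitative positive end} applies verbatim. The uniformity in $x$ is exactly what the barrier in Step 2 needs; without it, the half-plane $P$ would have to be replaced by a smaller region with an extra boundary piece.
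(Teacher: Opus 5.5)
Your proposal is correct and follows the paper's route. The uniform non-collapse bound from the proof of \Cref{lem: qualitative positive end} puts $1-U_-$ (and, via your odd symmetry $U_-(x,-z)=-U_-(x,z)$ in place of the paper's separate treatment of $1+U_-$) in the regime $W''\geq c>0$, and an exponential barrier in $z$ then gives the decay. One caveat: you should include the $\delta\cosh(\mu z)$ term unconditionally rather than ``if needed''. Without a barrier that blows up as $z\to+\infty$, the supremum of $w-v$ could escape to infinity instead of being attained, and the maximum principle would not close.
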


\begin{proof}
    By the uniform tail estimate in \Cref{lem: qualitative positive end}, for
    \(Z\) sufficiently large the functions \(q_+=1-U_-\) on \(\{z\ge Z\}\) and
    \(q_-=1+U_-\) on \(\{z\le -Z\}\) satisfy equations
    \((\Delta-a_\pm)q_\pm=0\) with \(a_\pm\ge c_W>0\). Comparing on horizontal
    half-strips with the barriers \(A e^{-\mu(z-Z)}\) and
    \(A e^{-\mu(-z-Z)}\), for any \(\mu<\sqrt{c_W}\), gives the claim after
    enlarging \(C\) on \(|z|\le Z\).
\end{proof}

Combining \Cref{lem: decay U_1}, \Cref{lem: decay U_1 in z} and Schauder theory we can improve \Cref{lem: decay U_1 higher}.

\begin{corollary}\label{cor: estim U-}
    There exist positive constants $C_k,\sigma>0$ such that
  \begin{equation*}
    |\nabla^k_{x,z}(U_-(x,z)-H(z))| \leq C_k e^{-\sigma \sqrt{x^2+z^2}}.
  \end{equation*}
  on $\{(x,z)\in \R^2:x\geq 1\}$.
\end{corollary}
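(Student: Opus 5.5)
The plan is to combine the two one-directional decay estimates into a radial one for the zeroth-order term, and then upgrade to all derivatives by Schauder estimates on unit balls.

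\textbf{Zeroth order.} Set $w\coloneqq U_--H$ on $\{x\ge1\}$. \Cref{cor: decay U_1} gives $|w|\le Ce^{-\sigma x}$. \Cref{lem: decay U_1 in z} gives $|U_--\mathrm{sgn}(z)|\le Ce^{-\sigma|z|}$. The heteroclinic satisfies $|H(z)-\mathrm{sgn}(z)|\le Ce^{-\sqrt{\kappa_W}|z|}$, so after decreasing $\sigma$ the triangle inequality yields $|w|\le Ce^{-\sigma|z|}$. Taking the minimum of the two bounds gives
\[
|w(x,z)|\le Ce^{-\sigma\max\{x,|z|\}}\le Ce^{-\frac{\sigma}{\sqrt2}\sqrt{x^2+z^2}}.
\]
Renaming $\sigma$ gives the case $k=0$. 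This step is elementary once one notes that $\max\{x,|z|\}\ge r/\sqrt2$.

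\textbf{Higher derivatives.} I would use the equation $\Delta w-W''(H)w=f$ with $|f|\le C|w|^2$, as in the proof of \Cref{lem: decay U_1 higher}.

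For points with $x\ge2$, apply interior Schauder estimates on unit balls $B_1(p)\subset\{x\ge1\}$. Since $w$ has uniformly bounded $C^{k}$ norms there (\Cref{lem: decay U_1 higher}), the coefficient $W''(H)$ and $f$ are controlled. Inductively, $\|w\|_{C^{k,\gamma}(B_{1/2}(p))}\lesssim \|w\|_{L^\infty(B_1(p))}+\|f\|_{C^{k-2,\gamma}(B_1(p))}$. By the induction hypothesis the right-hand side is $\lesssim e^{-\sigma(|p|-1)}$, which is the desired bound with an adjusted constant. One should write $f$ as $w$ times a smooth bounded function of $(U,H)$, so that each differentiation produces derivatives of $w$ multiplied by bounded factors.

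\textbf{The strip near $x=1$.} This is the main obstacle: unit balls centered at $1\le x<2$ leave $\{x>0\}$ only if the radius exceeds $x$. I would use balls of radius $1/2$, which remain inside $\{x\ge1/2\}$. There the trivialisation $U_-$ is smooth, and the same zeroth-order bound holds by \Cref{cor: decay U_1} together with \Cref{lem: decay U_1 in z}. \Cref{lem: decay U_1 in z} was stated for $x\ge1$, but its barrier argument works equally on $\{x\ge1/2\}$; on the bounded range $x\le2$ the distance $r$ is comparable to $|z|$ anyway. The Schauder constants then depend only on the fixed radius $1/2$, and the estimate closes with constants $C_k$ after a final decrease of $\sigma$.
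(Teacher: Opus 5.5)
Your proposal is correct. It uses the same ingredients as the paper: \Cref{cor: decay U_1}, the barrier from \Cref{lem: decay U_1 in z}, the decay of $H-\mathrm{sgn}$, the inequality $\max\{x,|z|\}\ge \sqrt{x^2+z^2}/\sqrt2$, and local Schauder estimates. The difference is that you apply them in the opposite order.

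The paper first upgrades each one-directional bound to all derivatives separately. The $x$-direction comes from \Cref{lem: decay U_1 higher}, and the $|z|$-direction from Schauder estimates for $q_\pm=1\mp U_-$ on the vertical ends. Only then does it take the minimum of the two derivative bounds.

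You instead take the minimum at the $L^\infty$ level and then run a single Schauder upgrade for $w=U_--H$. This works because the radial weight changes by at most a bounded factor $e^{\sigma}$ across a unit ball. Your ordering needs only one elliptic argument and only the $L^\infty$ content of the two directional lemmas. It also treats the strip $1\le x<2$ explicitly, where the balls must stay away from the puncture; the paper leaves this implicit.

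One point to tidy: as written, your induction for the strip needs lower-order bounds on $\{1/2\le x<1\}$, which lies outside the region being estimated. The cleanest fix is the one you hint at. Write $\Delta w=a\,w$ with $a=\int_0^1W''\bigl((1-s)H+sU_-\bigr)\,ds$, whose derivatives of all orders are uniformly bounded on $\{x\ge1/2\}$ by interior estimates for the bounded solution $U_-$ away from the puncture. Interior Schauder then gives directly $\lVert w\rVert_{C^{k,\gamma}(B_{1/4}(p))}\le C_k\lVert w\rVert_{L^\infty(B_{1/2}(p))}$ for every $p$ with $x\ge1$. No induction is needed, and the estimate closes with your zeroth-order radial bound on $\{x\ge1/2\}$.
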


\begin{proof}
    After decreasing \(\sigma\), \Cref{lem: decay U_1 higher} gives
    \(C_ke^{-\sigma x}\) decay for all derivatives of \(U_--H\). The barrier in
    the proof of \Cref{lem: decay U_1 in z}, the exponential decay of
    \(H-\mathrm{sgn}\), and local Schauder estimates applied to
    \(q_+=1-U_-\) and \(q_-=1+U_-\) give \(C_ke^{-\sigma |z|}\) decay for the same
    derivatives. The result follows from
    \[
        \min\{e^{-\sigma x},e^{-\sigma |z|}\}
        \leq e^{-\sigma\sqrt{x^2+z^2}/\sqrt2},
    \]
    after decreasing \(\sigma\) once more.
\end{proof}

A completely analogous argument in the trivialisation away from \(\ell_+\)
establishes the asymptotics when \(x\to-\infty\).
\begin{lemma}\label{lem: estim U+}
    There exist positive constants $C_k,\sigma>0$ such that
  \begin{equation*}
    |\nabla^k_{x,z}(U_+(x,z)-1)| \leq C_k e^{-\sigma \sqrt{x^2+z^2}}.
  \end{equation*}
  whenever \(\dist((x,z),\ell_+)\ge 1\).
\end{lemma}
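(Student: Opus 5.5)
The plan mirrors the argument for $U_-$ in \Cref{lem: qualitative positive end}, \Cref{lem: decay U_1}, \Cref{cor: decay U_1}, \Cref{lem: decay U_1 higher}, \Cref{lem: decay U_1 in z} and \Cref{cor: estim U-}. It is carried out in the trivialisation $U_+$, which is continuous, nonnegative, and vanishes only on $\ell_+$. There are four steps. The fourth step does not go through in one subregion of the stated set, and I believe the estimate as worded is false there; details are at the end.

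\emph{Qualitative step.} For $\eta>0$, take the Brezis--Oswald ball solution $v_\eta$ on $B_{R_\eta}$ used in the proof of \Cref{lem: qualitative positive end}. Sliding it inside $\R^2\setminus\ell_+$ gives $U_+\ge 1-\eta$ whenever $\dist((x,z),\ell_+)>R_\eta$. This holds in particular for $x<-R_\eta$ and for $|z|>R_\eta$. Since $0\le U_+\le 1$ and $W'<0$ on $(0,1)$, the function $q\coloneqq 1-U_+$ satisfies, on this set,
\[
    \Delta q-a(x,z)q=0,
    \qquad
    a=\frac{W'(1)-W'(U_+)}{1-U_+}\ge c_W>0 .
\]

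\emph{Linear decay.} Compare $q$ with the barrier $A e^{-\mu(\dist(\cdot,\ell_+)-R_\eta)}$, where $\mu<\sqrt{c_W}$. The distance to a half-line is a convex, $1$-Lipschitz function, so the barrier is a supersolution of $\Delta-c_W$; near the endpoint one smooths it, if needed, by replacing the distance with $\sqrt{d^2+1}$. Its decay at infinity lets the maximum principle apply on the unbounded region. This yields
\[
    q\le Ce^{-\mu\,\dist((x,z),\ell_+)} .
\]

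\emph{Higher derivatives.} Apply interior Schauder estimates on unit balls, whose lower-order coefficients are bounded because $U_+$ is smooth away from $\ell_+$. This upgrades the bound on $q$ to all derivatives wherever $\dist\ge 1$.

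\emph{Conversion to the radial rate.} On the part of the region where $x\le 0$, one has $\dist((x,z),\ell_+)=\sqrt{x^2+z^2}$, so the estimate follows immediately. On the part where $x>0$, one has $\dist=|z|$, and $\sqrt{x^2+z^2}\lesssim |z|$ only inside a fixed sector $|z|\ge c\,x$. In the remaining zone $x\gg |z|\ge 1$, the identity $U_+=|U_-|$ together with \Cref{cor: estim U-} gives
\[
    U_+-1=|H(z)|-1+O\!\left(e^{-\sigma\sqrt{x^2+z^2}}\right).
\]
Here the term $|H(z)|-1$ does not decay in $x$: at $z=1$ it stays at $|H(1)|-1\ne 0$ as $x\to+\infty$.

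This last step is the main obstacle. The first three steps hold uniformly on the stated set $\{\dist(\cdot,\ell_+)\ge 1\}$ with the rate $e^{-\sigma\dist(\cdot,\ell_+)}$. The radial rate $e^{-\sigma\sqrt{x^2+z^2}}$, however, holds only away from a sector around $\ell_+$, for example on $\{x\le 0\}\cup\{|z|\ge c\,x\}$. I would first check whether the later gluing uses the estimate only on such a set. If it does, I would state and prove the lemma there. Otherwise, I would use the distance rate and subtract $|H(z)|$ along the positive end.
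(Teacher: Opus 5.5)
Your objection is correct: the lemma is false as stated, and your counterexample works. For every $x>0$ one has $\dist((x,1),\ell_+)=1$. On $\{z>0\}$ the representatives $U_+$ and $U_-$ coincide, so \Cref{lem: qualitative positive end} gives $U_+(x,1)\to H(1)\in(0,1)$ as $x\to+\infty$. Hence $|U_+(x,1)-1|\to 1-H(1)>0$, while $C_0e^{-\sigma\sqrt{x^2+1}}\to0$; already the case $k=0$ fails.

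The paper's proof breaks exactly where you say. It obtains $e^{-\sigma|x|}$ decay only on the left end and $e^{-\sigma|z|}$ decay on the vertical ends. It then asserts that these combine to the radial rate ``away from a fixed tube around $\ell_+$''. The device $\min\{e^{-\sigma x},e^{-\sigma|z|}\}\le e^{-\sigma\sqrt{x^2+z^2}/\sqrt2}$ from \Cref{cor: estim U-} needs both bounds at the same point. On the positive end only the $|z|$-bound is available, because there $U_+\to|H(z)|$, not $1$. So the fixed tube would have to be replaced by a sector $\{|z|\le cx\}$.

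Your first three steps are the paper's intended argument: slide the Brezis--Oswald ball solution, use an exponential barrier for $1-U_+$, then apply interior Schauder estimates. They correctly give
\[
|\nabla^k(U_+-1)|\le C_ke^{-\sigma\dist((x,z),\ell_+)}\quad\text{on }\{\dist(\cdot,\ell_+)\ge1\}.
\]
This is the radial bound on $\{x\le0\}$ and is equivalent to it on $\{|z|\ge cx\}$.

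This distance version is also all the paper uses later, which settles your hedge:
\begin{itemize}
\item the barrier in Step~2 of \Cref{prop:bounded-solvability-boundary} lives on $\{z\ge L\}$ and $\{x\le -L\}$, where the distance dominates $z$ and $|x|$ respectively;
\item the top, left and bottom sides in \Cref{prop:tangential-limit} lie at distance at least $R$ from $\ell_+$;
\item \Cref{cor: schauder-tan} needs closeness to a pure phase only off a fixed tube;
\item the outer-transition bounds in \Cref{lem: approximation-error-size} are phrased through the distance to the transition sets.
\end{itemize}
So the right repair is to restate the lemma with $\dist((x,z),\ell_+)$ in the exponent and leave the positive end to \Cref{cor: estim U-}.

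One justification in your barrier step should change. The maximum principle on the unbounded set $\{\dist>R_\eta\}$ does not follow from the decay of the barrier, since you do not yet know that $q$ decays. Use instead that $q$ is bounded and $a\ge c_W$ there. For instance, also subtract $b\cosh(\nu x)\cosh(\nu z)$ with $2\nu^2<c_W$, which is a supersolution tending to infinity in every direction, and let $b\to0$. No smoothing of the distance is needed: the set $\{\dist\ge R_\eta\}$ stays away from $\overline{\ell_+}$, and there $\dist(\cdot,\ell_+)$ is $C^{1,1}$ and subharmonic.
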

\begin{proof}
    In the positive trivialisation, \(U_+\) is a positive scalar solution on the
    region under consideration. Repeating the preceding qualitative compactness,
    barrier, and Schauder argument with \(1-U_+\) in place of \(U_--H\) gives
    \(C_ke^{-\sigma |x|}\) decay in the left end and \(C_ke^{-\sigma |z|}\) decay
    in the vertical ends. Combining the two estimates as above gives the stated
    radial bound away from a fixed tube around \(\ell_+\).
\end{proof}

After decreasing \(\sigma\) if necessary, we henceforth fix a single
\(\sigma>0\) for which all the preceding exponential decay estimates hold.

\subsection{Stability of the linearised operator}\label{subs: linearised op}
Compactly supported perturbations through sections lift to odd compactly
supported functions on the square-root cover. We use the natural
completed-cover class, which is slightly larger because the support may meet
the branch point. For \(\phi\in C_c^\infty(\R^2,\R)\), set
\begin{equation}\label{eq: stability lifting}
\begin{aligned}
    \mathbf L[\phi]
    &\coloneqq
    -\Delta\phi+4|\xi|^2W''(\U)\phi,\\
    Q_\U(\phi)
    &\coloneqq
    \int_{\R^2}
    \left(
    |\nabla\phi|^2+4|\xi|^2W''(\U)\phi^2
    \right)
    =
    \int_{\R^2}\mathbf L[\phi]\phi.
\end{aligned}
\end{equation}
We say that \(U\) is \emph{stable} if \(Q_\U(\phi)\geq0\) for every
\(\phi\in C_{c,\mathrm{odd}}^\infty(\R^2,\R)\), and \emph{strongly stable}
if the same inequality holds for every \(\phi\in C_c^\infty(\R^2,\R)\).

\begin{proposition}\label{prop: stability of L}
    The model solution \(U\) is strongly stable.
\end{proposition}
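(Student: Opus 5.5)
The natural approach is the ground-state (Picone) identity, using the positive Jacobi-type function $w\coloneqq\partial_{\xi_2}\U$. By \Cref{prop: lifted-existence}, $w>0$ on all of $\R^2$, and $w$ is smooth since $\U$ is smooth on $\R^2$. Differentiating the lifted equation $-\Delta\U+4|\xi|^2W'(\U)=0$ in $\xi_2$ gives
\[
    \mathbf L[w]=-\Delta w+4|\xi|^2W''(\U)w=-8\xi_2W'(\U).
\]
Because $\U$ has the sign of $\xi_2$ and $W'$ is odd with $W'(s)<0$ for $0<s<1$ (and $|\U|<1$ by the construction, since $0\le\U_R\le1$ and the strong maximum principle applies), we get $\xi_2W'(\U)\le0$ everywhere. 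Hence $\mathbf L[w]\ge0$, so $w$ is a positive supersolution on the whole plane, including the branch point. This is where working upstairs pays off: there is no puncture, only the smooth weight $4|\xi|^2$.

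Next, fix an arbitrary $\phi\in C_c^\infty(\R^2,\R)$, not necessarily odd, and write $\phi=w\psi$ with $\psi=\phi/w\in C_c^\infty(\R^2)$. This is legitimate because $w>0$ and is smooth. The standard computation (integrate $|\nabla(w\psi)|^2$, then integrate by parts the cross term $2w\psi\nabla w\cdot\nabla\psi=\tfrac12\nabla w^{\,}\cdot\nabla(\psi^2)\,w\cdot 2/w$, i.e. $\int w\nabla w\cdot\nabla\psi^2=-\int \psi^2(|\nabla w|^2+w\Delta w)$) yields
\[
    Q_\U(\phi)=\int_{\R^2}w^2|\nabla\psi|^2+\int_{\R^2}\psi^2\,w\,\mathbf L[w].
\]
Both terms are nonnegative, so $Q_\U(\phi)\ge0$, which is strong stability. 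No boundary terms appear because $\psi$ has compact support.

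The only point requiring care is the positivity $w>0$ up to and including the origin, together with the sign of the inhomogeneous term. Both are already supplied by \Cref{prop: lifted-existence}: $\partial_{\xi_2}\U>0$ everywhere, and $\U\ge0$ on $\{\xi_2>0\}$ by oddness in $\xi_2$. So I do not expect a real obstacle; the step to check is only that $|\U|<1$, which is needed to get the sign of $W'(\U)$. As a remark for later use, equality $Q_\U(\phi)=0$ forces $\psi$ to be constant, and hence $\phi$ to be a multiple of $w$. This is incompatible with compact support unless $\phi=0$, which hints at the nondegeneracy argument to follow.
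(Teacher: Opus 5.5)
Your proposal is correct and matches the paper's proof: the ground-state identity with the positive supersolution $w=\partial_{\xi_2}\U$, where $\mathbf L[w]=-8\xi_2W'(\U)\geq0$ uses $|\U|\leq1$ and the sign of $\U$. One small slip: your intermediate expression for the cross term is off by a factor of $w$, but the integrated identity $\int w\nabla w\cdot\nabla\psi^2=-\int\psi^2(|\nabla w|^2+w\Delta w)$ that you actually use is correct.
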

\begin{proof}
Set \(\omega\coloneqq\partial_{\xi_2}\U>0\). Differentiating the lifted
equation gives
\[
    \mathbf L[\omega]=-8\xi_2W'(\U)\geq0.
\]
For \(\phi\in C_c^\infty(\R^2,\R)\), writing
\(\phi=\omega\psi\) and integrating by parts gives the ground-state identity
\[
    Q_\U(\phi)
    =
    \int_{\R^2}\frac{\mathbf L[\omega]}{\omega}\phi^2
    +
    \int_{\R^2}\omega^2
    \left|\nabla\left(\frac{\phi}{\omega}\right)\right|^2
    \geq0.
\]
\end{proof}

\subsection{The kernel of the linearised operator}

\begin{proposition}[Upstairs odd non-degeneracy in $L^\infty$]\label{prop: nondegeneracy R2}
Let $g\in L^\infty(\R^2)$ be odd with respect to the origin and satisfy
\begin{equation*}
    -\Delta g+4|\xi|^2W''(\U)g=0\quad\text{in }\R^2.
\end{equation*}
Then $g=0$.
\end{proposition}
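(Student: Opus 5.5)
The plan follows the outline in the introduction: divide by the positive Jacobi field $\omega=\partial_{\xi_2}\U$, use logarithmic cutoffs to show the quotient is constant, and let oddness kill the constant.

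\textbf{Setup.} Set $\omega\coloneqq\partial_{\xi_2}\U>0$, so that $\mathbf L[\omega]=-8\xi_2W'(\U)\geq0$ (see the proof of \Cref{prop: stability of L}). Let $g$ be a bounded odd solution of $\mathbf L[g]=0$. By elliptic regularity $g$ is smooth, and we write $g=\omega\psi$.

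\textbf{Step 1: the weighted identity.} For a cutoff $\eta\in C_c^\infty(\R^2)$, test the equation $\mathbf L[g]=0$ against $\eta^2\psi\,\omega=\eta^2 g$. Using the ground-state computation, which in divergence form reads
\[
    \operatorname{div}(\omega^2\nabla\psi)=\psi\,\mathbf L[\omega]\,\omega ,
\]
we obtain
\[
    \int\eta^2\omega^2|\nabla\psi|^2+\int\eta^2\frac{\mathbf L[\omega]}{\omega}g^2
    =-2\int\eta\,\omega^2\psi\,\nabla\eta\cdot\nabla\psi .
\]
Young's inequality then gives
\[
    \int\eta^2\omega^2|\nabla\psi|^2+2\int\eta^2\frac{\mathbf L[\omega]}{\omega}g^2\leq 4\int g^2|\nabla\eta|^2 .
\]
Choose $\eta$ to be the standard logarithmic cutoff, equal to $1$ on $B_R$ and to $0$ outside $B_{R^2}$, with $|\nabla\eta|\lesssim 1/(|\xi|\log R)$. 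Since $g$ is bounded, the right-hand side is $\lesssim \lVert g\rVert_\infty^2/\log R\to0$. Hence $\nabla\psi\equiv0$, so $g=c\,\omega$. The remaining term also vanishes, which is consistent with this.

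\textbf{Step 2: oddness.} Since $\U$ is odd, $\omega=\partial_{\xi_2}\U$ is even with respect to the origin. Because $g$ is odd, $g=c\omega$ forces $c\omega=-c\omega$, and $\omega>0$ gives $c=0$, so $g=0$.

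\textbf{Main obstacle.} The delicate point is justifying the cutoff argument with $\psi=g/\omega$ when only $g$ is known to be bounded. The right-hand side bound uses only $g^2=\omega^2\psi^2$ and the boundedness of $g$; it does not require $\omega$ to be bounded below at infinity, so decay of $\omega$ along the ends is harmless. One must still check that the integration by parts is legitimate. On the compact support of $\eta$ this holds because $\omega>0$ is smooth there, so $\psi$ is smooth and all terms are finite.

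The log-cutoff trick works precisely because we are in dimension two, where $\int|\nabla\eta|^2\to0$. This requires the weighted Dirichlet term to be controlled by $\int g^2|\nabla\eta|^2$ rather than by something like $\int\omega^2|\nabla\eta|^2\psi^2$ with $\psi$ unbounded, and these two quantities coincide. I therefore expect the argument to go through directly.
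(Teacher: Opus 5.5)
Your proof is correct. It differs from the paper's only in the choice of positive weight.

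\textbf{The paper's route.} The paper first invokes \cite{Fischer-ColbrieSchoen1980}, so that stability produces a positive \emph{solution} $\varphi$ of $\mathbf L[\varphi]=0$ on $\R^2$. Writing $g=\varphi\psi$ then gives $\operatorname{div}(\varphi^2\nabla\psi)=0$ exactly. The same logarithmic cutoff yields $g=c\varphi$, and the oddness of $g$ against the positivity of $\varphi$ forces $c=0$.

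\textbf{Your route.} You divide instead by the explicit positive supersolution $\omega=\partial_{\xi_2}\U$, which is the very function the paper uses to prove stability. The defect $\mathbf L[\omega]\geq0$ only adds the nonnegative term $\int\eta^2g^2\mathbf L[\omega]/\omega$ to the left-hand side, so the Liouville argument goes through unchanged; this is the form used in \cite{Ambrosio-Cabre00}. The argument is more elementary, since it bypasses the exhaustion construction behind Fischer-Colbrie--Schoen.

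\textbf{What your route actually gives.} Since $0<|\U|<1$ off the axis $\{\xi_2=0\}$ (strong maximum principle) and $W'(s)<0$ for $0<s<1$, one has $\mathbf L[\omega]=-8\xi_2W'(\U)>0$ on $\{\xi_2\neq0\}$. Your inequality, with $R\to\infty$, gives $\int_{\R^2}g^2\,\mathbf L[\omega]/\omega=0$. This already forces $g\equiv0$ off the axis, hence everywhere by continuity. So your Step 2, the evenness/oddness argument, is unnecessary, and your route shows that \emph{every} bounded solution on the cover vanishes, odd or not.

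\textbf{Comparison.} The paper's route needs only abstract stability, so it would work where no explicit positive supersolution is available. In exchange, it uses the oddness of $g$ to exclude $g=c\varphi$.
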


\begin{proof}
    By \Cref{prop: stability of L} and
    \cite[Theorem 1]{Fischer-ColbrieSchoen1980}, the equation
    \begin{equation*}
        -\Delta\varphi+4|\xi|^2W''(\U)\varphi=0\quad\text{on }\R^2
    \end{equation*}
    admits a positive solution $\varphi>0$. Write $g=\varphi\psi$. Then
    \begin{align*}
        0&=\Delta g-4|\xi|^2W''(\U)g\\
        &=\varphi\Delta\psi+2\nabla\varphi\cdot\nabla\psi
        +\psi\bigl(\Delta\varphi-4|\xi|^2W''(\U)\varphi\bigr)\\
        &=\varphi^{-1}\mathrm{div}(\varphi^2\nabla\psi).
    \end{align*}
    Let now $\eta$ be a compactly supported cut-off function and test the above
    against $\varphi\psi\eta^2$. Integrating  by parts gives
    \begin{equation*}
        0=\int\varphi^2|\nabla\psi|^2\eta^2
        +2\int \varphi^2\psi\eta\nabla\psi\cdot\nabla\eta.
    \end{equation*}
    Next,
    \begin{equation*}
        \left|2\int\varphi^2\psi\eta\nabla\psi\cdot\nabla\eta\right|
        \leq
        \frac12\int\varphi^2|\nabla\psi|^2\eta^2
        +C\int\varphi^2\psi^2|\nabla\eta|^2,
    \end{equation*}
    and therefore
    \begin{equation*}
        \frac12\int\varphi^2|\nabla\psi|^2\eta^2
        \leq C\lVert g\rVert^2_\infty\int|\nabla\eta|^2.
    \end{equation*}
    We choose
    \begin{equation*}
        \eta=\eta_R(\xi)\coloneqq \begin{cases}
            1&|\xi|<\sqrt{R}\\
            2\frac{\log R-\log|\xi|}{\log R}&\sqrt{R}\leq |\xi|\leq R\\
            0&|\xi|>R
        \end{cases}
    \end{equation*}
    and observe that
    \begin{equation*}
        \int|\nabla\eta_R|^2\leq \frac{C}{\log R}\to 0.
    \end{equation*}
    Sending $R\to +\infty$, we obtain
    \begin{equation*}
        \int_{\R^2}\varphi^2|\nabla\psi|^2=0,
    \end{equation*}
    and therefore $\psi$ is constant. Since $g$ is odd and $\varphi$ is
    positive, this constant must be zero. Hence $g=0$.
\end{proof}

\begin{corollary}[Non-degeneracy for sections]\label{cor: nondegeneracy-section}
Let $h$ be a bounded section of $\L\to\R^2_*$ satisfying
\begin{equation*}
    -\Delta h+W''(U)h=0\quad\text{in }\R^2_*.
\end{equation*}
Then $h=0$.
\end{corollary}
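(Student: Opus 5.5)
The plan is to reduce the statement to \Cref{prop: nondegeneracy R2} by lifting $h$ to the square-root cover. First, I would pull back $h$ under $\pi(\xi)=\xi^2$, as in \eqref{eq: COV lift}. This gives an odd function $g\colon\R^2_*\to\R$ with $g(-\xi)=-g(\xi)$. Boundedness of the lift follows directly, since $|g(\xi)|=|h(\pi(\xi))|$ in any trivialization. By the conformal change of variables recorded in \Cref{subs: plane-minus-one-point}, $\Delta_\xi g=4|\xi|^2(\Delta_x h)\circ\pi$ on each sheet. Hence
\[
    -\Delta g+4|\xi|^2W''(\U)g=0\quad\text{in }\R^2_*,
\]
where we use that $W''$ is even, so $W''(U)$ is a well-defined function that pulls back to $W''(\U)$.

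The remaining issue, which I expect to be the only real obstacle, is that \Cref{prop: nondegeneracy R2} is stated on all of $\R^2$, while $g$ is so far only a solution on $\R^2\setminus\{0\}$. I would remove the singularity using boundedness. The coefficient $c(\xi)=4|\xi|^2W''(\U)$ is smooth and bounded near $0$. Let $v$ solve $-\Delta v+cv=0$ in $B_1$ with $v=g$ on $\partial B_1$; this is solvable after shrinking the ball so the operator is coercive, which is possible since $c$ is bounded. Then $w=g-v$ is bounded, solves the equation in $B_1\setminus\{0\}$, and vanishes on $\partial B_1$. Comparing $\pm w$ with barriers $\delta\log(1/|\xi|)+$ (a fixed positive supersolution correction) and sending $\delta\to0$ shows $w\equiv0$. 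This is the standard removable-singularity argument for bounded solutions in two dimensions, since points have zero capacity. Alternatively, I would check the weak formulation directly: test against $\phi(1-\eta_\rho)$ with logarithmic cutoffs $\eta_\rho$ around $0$, whose Dirichlet energy tends to zero, and conclude that $g$ is a weak, hence classical, solution on $\R^2$. Setting $g(0)=0$ is consistent with oddness.

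Finally, $g\in L^\infty(\R^2)$ is odd and solves the lifted linearized equation on $\R^2$. \Cref{prop: nondegeneracy R2} then gives $g\equiv0$, and therefore $h\equiv0$.
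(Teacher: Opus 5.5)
Your proposal is correct and follows the paper's proof. Both lift $h$ to a bounded odd function on the square-root cover, note that it solves $-\Delta g+4|\xi|^2W''(\U)g=0$, and apply \Cref{prop: nondegeneracy R2}. The paper takes the extension across the branch point for granted, whereas your log-barrier/capacity argument for bounded solutions makes that step explicit, which is a worthwhile addition.
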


\begin{proof}
    Let $g$ be the odd lift of $h$ to the square-root cover. Then $g$ is bounded
    and satisfies
    \begin{equation*}
        -\Delta g+4|\xi|^2W''(\U)g=0\quad\text{in }\R^2.
    \end{equation*}
    The result follows from \Cref{prop: nondegeneracy R2}.
\end{proof}

By \Cref{cor: nondegeneracy-section}, $0$ is not an eigenvalue of $L$ in $L^\infty$. On the other hand $0$ can be approximated arbitrarily well in $L^\infty$, in the Rayleigh quotient sense. Let
\begin{equation*}
    L[\phi]\coloneqq -\Delta\phi+W''(U)\phi.
\end{equation*}
\begin{proposition}\label{prop: continuous spectrum}
    There is a sequence of sections $\{\varphi_k\}\subset L^2(\R^2,\L)$ such that
    \begin{equation*}
        \frac{\int_{\R^2}\varphi_kL\varphi_k}{\int_{\R^2}|\varphi_k|^2}\to 0
    \end{equation*}
    as $k\to +\infty$.
    \begin{proof}
        The idea is that near $x\approx +\infty$ the operator $L$ approaches the linearised operator of $2d$ Allen--Cahn about the heteroclinic solution, whose point spectrum contains 0 and the kernel is spanned by $H'(z)$. For $k\geq 0$, let
        \begin{equation*}
    \varphi_k\coloneqq \chi_kH'\left(z\right)
\end{equation*}
where $\chi_k=\chi_k(x)$ is a smooth cutoff supported on the strip
\begin{equation*}
    S_k\coloneqq \{x\in\R^2:|x-k|<\tfrac{k}{10}\}
\end{equation*}
satisfying $|\chi_k'|\leq C/k$ and $|\chi_k''|\leq C/k^2$ for some $C>0$. By
the discussion in \Cref{subs: trivialisations-model-solution}, $\varphi$ can be
regarded as a section. Then
\begin{align*}
    L\varphi_k&=-\chi_kH'''(z)-H'(z)\Delta \chi_k+W''(U)\chi_kH'\\
    &=-H'(z)\Delta\chi_k+[W''(U)-W''(H)]\chi_kH'.
\end{align*}
We compute, using also \Cref{cor: decay U_1}
\begin{equation*}
    \left|\int_{\R^2}\varphi_k L\varphi_k\right|\leq\left( \frac{C}{k^2}+Ce^{-k}\right)\int_{S_k}H'(z)^2dxdz\leq \frac{C'}k.
\end{equation*}
Noting that $\lVert\varphi_k\rVert_{L^2(\R^2)}^2\geq ak$, the conclusion follows.
\end{proof}
\end{proposition}

\begin{remark}\label{rem: rotation mode}
If the boundedness condition is removed a natural kernel does appear:
the one associated with rotations of the model solution. We denote this section
by
\begin{equation*}
    D_R U\coloneqq -x\partial_z U + z\partial_x U.
\end{equation*}
Here derivatives are understood as covariant derivatives of sections. In
practice, this means one chooses a local scalar representative of $U$ in a trivialisation
and differentiates that representative. Changing the representative by a sign
changes the derivative by the same sign, so the expression above defines a
section independently of the trivialisation. Differentiating the family of
rotated model solutions gives
\begin{equation*}
    (\Delta-W''(U))D_R U=0,
\end{equation*}
where we understand $\Delta$ again in the sense of sections. Thus $D_R U$ lies in the kernel of the
linearised operator. This rotation mode grows linearly, so it does not
contradict the bounded non-degeneracy above. The key point is that,
nevertheless, $D_R U$ will play a central role in the linearised theory
developed below.
\end{remark}

\section{Invertibility of the linearised operator}\label{sec: invert linear bdy}

Let $L\coloneqq-\Delta+W''(U)$ be the linearised operator about $U$. In this
section we study the solvability and a priori estimates for the inhomogeneous
equation
\begin{equation}\label{eq: boundary lin}
    \Delta\phi-W''(U)\phi=f
\end{equation}
with bounded right-hand side $f$, or equivalently $L\phi=-f$.

The main difficulty is that the bounded non-degeneracy proved in
\Cref{prop: nondegeneracy R2} is not uniform at infinity, as shown in
\Cref{prop: continuous spectrum}. In the trivialisation $U_-$, the operator
approaches, as
$x\to+\infty$, the classical Allen--Cahn operator linearised at the
heteroclinic $H$. The limiting one-dimensional operator has kernel generated by
$H'$, and the classical theory gives $L^\infty$ estimates only after imposing
$L^2$-orthogonality to this kernel. Without this orthogonality, bounded
right-hand sides can produce unbounded solutions. Thus $H'$ reappears
at infinity as an almost-kernel and must be separated from the equation.

\begin{definition}[$H'$-component]\label{def: Hprime-component}
Let $\varphi \in (L^\infty\cap C)(\R^2_*, \L)$. For $x>0$, we use the trivialisation obtained by
removing $\ell_-$, recalled in \Cref{subs: trivialisations-model-solution}, and
write
\begin{equation*}
    \varphi^\top(x)\coloneqq \int_{\R}\varphi(x,z)H'(z)\,dz.
\end{equation*}
The representative is chosen with the same sign convention as $U_-$, namely so
that, on the end $x>0$, the model solution has the orientation of the increasing
heteroclinic $H(z)$. We will use this notation for both the right-hand side
$f$ and the solution $\phi$.
\end{definition}

\begin{definition}[Admissible decay function]\label{def: adm decay}
We say that $g\in C^2_{\mathrm{loc}}(\R^+)\cap L^\infty(\R^+)$ is an admissible decay function if
\[
    g''\geq0,
    \qquad
    \lim_{x\to+\infty}g(x)=0,
    \qquad
    \lim_{x\to+\infty}xg'(x)=0.
\]  In particular, $g\geq0$ and $g'\leq0$.
\end{definition}

\begin{remark}
The main example relevant for us is
\(
g(x)=(1+x)^{2-\tau}.
\) with $\tau>2$.
\end{remark}

\begin{definition}[$g$-weighted tangential projection norm]\label{def:Ag-norm}
Let $g$ be an admissible decay function. For any bounded section \(f\) for which
the \(H'\)-component \(f^\top\) is defined on the positive end, set
\[
    A_g(f)\coloneqq
    \inf\left\{
        A\geq0:\ |f^\top(x)|\leq A g''(x)\text{ for all }x>0
    \right\},
\]
with the convention that \(A_g(f)=+\infty\) if no such \(A\) exists.
\end{definition}

The main result of this section is the following invertibility statement for the linearised operator, which takes into account the presence of the almost-kernel at infinity and gives decay estimates for the parts of the solution tangential to this kernel.

\begin{theorem}[Boundary invertibility]\label{lem: boundary invertibility}
	Let $g$ be an admissible decay function according to \Cref{def: adm decay}. For some $\gamma\in (0,1)$ and
	\(0<\lambda<\min\{\sigma,\sqrt{\kappa_W}\}\), let
	\(f\in C_{\ell_+,\lambda}^{0,\gamma}(\R^2_*,\L)\) satisfy \(A_g(f)<+\infty\).
    Then there exists a unique bounded section $\phi$ satisfying
		\begin{equation*}
			\Delta\phi-W''(U)\phi=f\quad \text{on }\R^2_*.
		\end{equation*}
		Moreover, there exists a constant \(C_g>0\), depending only on \(g\), such that
		\begin{equation*}
			\lVert\phi\rVert_{L^\infty(\R^2_*)}
			\leq C_g\left(\lVert f\rVert_{L^\infty(\R^2_*)}+A_g(f)\right).
		\end{equation*}
		Finally, the $H'$-component of $\phi$ at infinity is exactly given by the
		improper square-limit
		\begin{equation}\label{eq:phi-top-infty-formula}
			\phi^\top_\infty
            \coloneqq
            \lim_{R\to\infty}\int_{Q_R} fD_R U,
            \qquad Q_R=(-R,R)^2,
		\end{equation}
		and we have the following estimates for the convergence $\phi^\top(x)\to \phi^\top_\infty$ as $x\to +\infty$:
		\begin{equation}
        \begin{split}
            \label{eq: decay tangential part}
			|\phi^\top(x)-\phi^\top_\infty|&\leq c\left(e^{-\sigma x}\lVert\phi\rVert_{L^\infty}+A_g(f)g(x)\right)\\
            |\partial_x\phi^\top(x)|&\leq c\left(e^{-\sigma x}\lVert\phi\rVert_{L^\infty}+A_g(f)|g'(x)|\right)\\
            |\partial_x^2\phi^\top(x)|&\leq c\left(e^{-\sigma x}\lVert\phi\rVert_{L^\infty}+A_g(f)g''(x)\right)
        \end{split}
		\end{equation}
		for all $x>0$ and some $c>0$.
	\end{theorem}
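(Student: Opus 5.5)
The proof has three parts: an a priori estimate on the \(H'\)-component, an a priori \(L^\infty\) bound proved by contradiction, and existence obtained by exhaustion. Uniqueness is immediate from \Cref{cor: nondegeneracy-section}, since the difference of two bounded solutions is a bounded kernel section.

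\emph{Projected ODE.} Let \(\phi\) be a bounded solution. On \(x>1\), in the trivialisation of \Cref{def: Hprime-component}, project the equation onto \(H'\). Integrating by parts in \(z\) and using \(H'''=W''(H)H'\) gives
\[
\partial_x^2\phi^\top=f^\top+E,\qquad E(x)=\int_\R\bigl(W''(U_-)-W''(H)\bigr)\phi H'\,dz .
\]
By \Cref{cor: decay U_1} the error satisfies \(|E|\le Ce^{-\sigma x}\|\phi\|_{L^\infty}\). Since \(\phi^\top\) is bounded and \(|f^\top|\le A_g(f)g''\), we integrate twice from \(+\infty\). Boundedness forces \(\partial_x\phi^\top(x)=-\int_x^\infty(f^\top+E)\), which is integrable because \(g'\to0\). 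Hence \(\phi^\top\) has a limit \(\phi^\top_\infty\), and comparing with \(A_g(f)g\) as a barrier yields \eqref{eq: decay tangential part}.

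\emph{A priori estimate.} Suppose \(\|\phi_n\|_\infty=1\) while \(\|f_n\|_\infty+A_g(f_n)\to0\). Pick points \(p_n\) with \(|\phi_n(p_n)|\ge 1/2\).
\begin{itemize}[label={}]
\item \emph{Bounded points.} If \(p_n\) stays bounded, lift to the cover. Elliptic estimates (the lifted equation is regular across the branch point) give a bounded odd kernel element, which vanishes by \Cref{prop: nondegeneracy R2}. This is a contradiction.
\item \emph{Escape to the left or vertically.} If \(p_n\) escapes away from the tube around \(\ell_+\), the limit operator is \(\Delta-\kappa_W\) by \Cref{lem: estim U+}, whose only bounded solution is zero. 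This is a contradiction.
\item \emph{Escape along \(\ell_+\).} If \(p_n\) drifts along \(\ell_+\) with bounded \(z\), then as in \Cref{lem: decay U_1} and \cite[Corollary 7.5]{PacardRitore} the limit is \(cH'(z)\) with \(c\neq0\). This forces \(|\phi_n^\top(x_n)|\ge c'>0\).
\end{itemize}
The last case is the main obstacle. I would handle it by showing \(\phi_n^\top\to0\) uniformly on \(x\ge X\). The ODE bounds give \(|\phi_n^\top(x)-\phi^\top_{n,\infty}|\le c(e^{-\sigma x}+A_g(f_n)g(x))\), which is small for large \(x\). It therefore suffices to prove \(\phi^\top_{n,\infty}\to0\), and for this I use the explicit formula.

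\emph{Formula for \(\phi^\top_\infty\).} Test the equation against the rotation mode \(D_RU\) of \Cref{rem: rotation mode} on \(Q_R\). Since \(D_RU\) is in the kernel, the bulk terms cancel and
\[
\int_{Q_R}fD_RU=\int_{\partial Q_R}\bigl(\partial_\nu\phi\,D_RU-\phi\,\partial_\nu D_RU\bigr).
\]
On the left and horizontal sides both \(\phi\) and \(D_RU\) decay exponentially away from \(\ell_+\), because \(f\) decays at rate \(\lambda<\min\{\sigma,\sqrt{\kappa_W}\}\) and barrier arguments apply there. On the right side \(x=R\), \(D_RU\approx -RH'(z)\) and \(\partial_xD_RU\approx -H'(z)\). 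The boundary term is then
\[
-R\,\partial_x\phi^\top(R)+\phi^\top(R)+o(1).
\]
By the ODE bounds, \(R|\partial_x\phi^\top(R)|\lesssim Re^{-\sigma R}+A_g(f)R|g'(R)|\to0\), which is exactly where the admissibility condition \(xg'\to0\) enters. This gives \eqref{eq:phi-top-infty-formula} with the correct sign.

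The same identity bounds \(|\phi^\top_\infty|\) by the weighted norm of \(f\). Note that \(|D_RU|\lesssim|x|e^{-\sigma\operatorname{dist}}\) and \(f\) decays exponentially off \(\ell_+\). Along \(\ell_+\), the growing factor \(x\) is compensated by splitting \(f=f^\top H'/\|H'\|_{L^2}^2+f^\perp\); the component \(f^\top\) is integrable against \(x\) since \(\int xg''<\infty\), by integration by parts using \(xg'\to0\). I expect this to be the delicate point, so at worst I would run the contradiction directly on the boundary identity in place of the explicit bound. Once \(\phi^\top_{n,\infty}\to0\), the last case is excluded and the estimate follows.

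\emph{Existence.} Solve Dirichlet problems on the cover over large balls, where the equation is coercive by the strong stability of \Cref{prop: stability of L} after a small perturbation. The a priori argument above is uniform in the domain, so the solutions are bounded independently of the radius. Passing to the limit gives a bounded solution satisfying all the stated estimates.
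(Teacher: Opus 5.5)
Your uniqueness argument, the projected ODE, and the rotation identity are sound. Your whole-plane a priori bound is also correct, and it takes a genuinely different route from the paper.

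\emph{Where your route differs.} You control \(\phi^\top_\infty\) directly through the pairing \(\lim_{R\to\infty}\int_{Q_R}fD_RU\). This pairing is bounded by \(C_g(\lVert f\rVert_{L^\infty}+A_g(f))\): on the end \(D_RU=-xH'(z)+r\) with \(\int_\R|r|\,dz\lesssim e^{-\sigma x/2}\), and \(\int_1^\infty xg''=g(1)-g'(1)<\infty\). Together with the ODE bounds, this rules out the \(cH'\) limit along \(\ell_+\). (In the Green identity you should also excise a small disk at the puncture; its boundary term vanishes because the lifts are regular.) The paper never uses the formula as an estimate. It rules out that limit with a Dirichlet condition on a vertical edge.

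\emph{The gap is the existence step.} The claim that ``the a priori argument above is uniform in the domain'' is exactly what has to be proved, and it does not follow from your argument. Every tool you used against the \(cH'\) escape lives at \(x=+\infty\):
\begin{itemize}
\item integrating the projected ODE from infinity, where boundedness forces \(\partial_x\phi^\top\to0\);
\item the existence of \(\phi^\top_\infty\);
\item the vanishing of the right-edge term via \(R\,\partial_x\phi^\top(R)\to0\).
\end{itemize}
A Dirichlet solution \(\psi_R\) on \(B_R\) has none of these. Worse, for balls the projection \(\int\psi_R(x,z)H'(z)\,dz\) over the chord \(|z|<\sqrt{R^2-x^2}\) picks up boundary fluxes \(\partial_\nu\psi_R\,H'\) of unit size concentrated near \(x=R\), and these would need separate control. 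So for the approximations you have not excluded the case \(x_n\to\infty\), \(z_n\) bounded, \(\dist(p_n,\partial B_{R_n})\to\infty\), with limit \(cH'(z)\), \(c\neq0\). The uniform \(L^\infty\) bound on the approximations is therefore missing, and that bound is the actual content of the paper's bounded-solvability proposition.

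\emph{How to repair it.} Approximate by domains with a vertical right edge, as the paper does with \(\{x<R\}\). (The paper obtains these from rectangles \([-L,R]\times[-L,L]\), on which escape along \(\ell_+\) is impossible.)
\begin{itemize}
\item \emph{Convexity barrier.} Here \(\alpha(R)=0\) and \(\alpha''=E+F\) on \([L,R]\). Take \(B=C\sigma^{-2}\lVert\psi_R\rVert_{L^\infty}e^{-\sigma x}+A_g(f)g\), which is decreasing. The convex function \(\alpha+B-(B(L)+|\alpha(L)|)\) is nonpositive at both ends, so \(|\alpha|\le B(L)+|\alpha(L)|\) uniformly in \(R\).
\item \emph{Your identity also adapts.} On \(Q_R\) with zero Dirichlet data it reads \(\int_{Q_R}fD_RU=-R\,\alpha'(R)+O(R^2e^{-\sigma R}\lVert\psi_R\rVert_{C^1})\). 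Hence \(R|\alpha'(R)|\lesssim\lVert f\rVert_{L^\infty}+A_g(f)+o(1)\). Integrating the ODE backwards from \(\alpha(R)=0\) then bounds \(|\alpha(x)|\), up to constants, by \(\lVert f\rVert_{L^\infty}+A_g(f)+e^{-\sigma x}\lVert\psi_R\rVert_{L^\infty}\) on \([1,R]\).
\end{itemize}
In either version you must add the limit case of points at bounded distance from the vertical edge. There, odd reflection across the edge yields \(cH'\) vanishing on a line, hence \(c=0\).
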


To avoid the use of the superscript $\top$ along the proof, we introduce the notation
\begin{equation}\label{eq: alpha F}
	\alpha(x)\coloneqq \int_{\R}\phi(x,z)H'(z)dz,\quad F(x)\coloneqq \int_{\R}f(x,z)H'(z)dz.
\end{equation}
By the definition of \(A_g(f)\), the tangential projection satisfies
\begin{equation}\label{eq: assumption g}
    |F(x)|\leq A_g(f)g''(x),\qquad x>0.
\end{equation}

\begin{proposition}[Existence of bounded solutions]\label{prop:bounded-solvability-boundary}
Assuming \(f\in L^\infty(\R^2_*,\L)\) and \(A_g(f)<+\infty\), there exists a
unique bounded solution $\phi$ to the equation $\Delta\phi-W''(U)\phi=f$.
Moreover,
\[
    \lVert\phi\rVert_{L^\infty(\R^2_*)}
    \leq C_g\left(\lVert f\rVert_{L^\infty(\R^2_*)}+A_g(f)\right).
\]
\end{proposition}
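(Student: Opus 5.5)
The plan is to prove the a priori estimate first, obtain existence by approximation on bounded domains, and deduce uniqueness from \Cref{cor: nondegeneracy-section}.

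\emph{Uniqueness.} The difference of two bounded solutions is a bounded section in the kernel of $\Delta-W''(U)$. By \Cref{cor: nondegeneracy-section} it vanishes.

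\emph{Approximation.} For $R$ large, let $\phi_R$ solve $\Delta\phi_R-W''(U)\phi_R=f$ on the disk $B_R\setminus\{0\}$ with zero Dirichlet data. On the cover this is the problem $-\Delta\Phi+4|\xi|^2W''(\U)\Phi=-4|\xi|^2\widetilde f$ on $B_{\sqrt R}$ for odd functions $\Phi$. It is uniquely solvable because the quadratic form is coercive on $H^1_0$ of a bounded domain. Indeed, strong stability (\Cref{prop: stability of L}) holds with the strict supersolution $\omega=\partial_{\xi_2}\U$, so the first Dirichlet eigenvalue on $B_{\sqrt R}$ is positive; one can also use $\omega$ directly with the ground-state identity.

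\emph{Uniform bound.} The heart of the argument is the estimate
\[
\lVert\phi_R\rVert_\infty\le C_g\left(\lVert f\rVert_\infty+A_g(f)\right),
\]
with $C_g$ independent of $R$. Once this holds, Schauder estimates give a locally convergent subsequence whose limit is a bounded solution satisfying the same bound.

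The bound has two parts. First, I would control the $H'$-component. On the positive end, project the equation onto $H'$ in the $U_-$ trivialisation. Since $(\partial_z^2-W''(H))H'=0$, this gives $\alpha''=F+E$, where $E(x)=\int(W''(U)-W''(H))\phi H'\,dz$ plus boundary terms from truncation; hence $|E|\le Ce^{-\sigma x}\lVert\phi\rVert_\infty$. The convex function $g$ is a barrier: $\alpha\mp A_g(f)g\mp C\lVert\phi\rVert_\infty e^{-\sigma x}$ is sub/super-convex, so $\alpha$ differs from an affine function by at most $A_g(f)g+Ce^{-\sigma x}\lVert\phi\rVert_\infty$. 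Boundedness of $\alpha$ then kills the linear part. This yields the $\alpha$-control needed, uniformly in $R$ (the disk truncation only affects $x\sim R$, where $\alpha_R(R)=0$ also serves as a boundary value).

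Second, I would run a contradiction argument. Suppose $\lVert\phi_n\rVert_\infty=1$ while $\lVert f_n\rVert_\infty+A_g(f_n)\to0$, with $R_n$ possibly tending to infinity. Pick points $p_n$ with $|\phi_n(p_n)|\ge1/2$. Away from $\ell_+$, the potential satisfies $W''(U)\ge\kappa_W/2$, so the maximum principle with exponential barriers forces $p_n$ to stay within bounded distance of $\ell_+\cup\{0\}$. I would then split into three cases.
\begin{itemize}
\item[(i)] If $p_n$ stays bounded, the limit is a bounded kernel element (boundaries at distance $R_n\to\infty$, or a half-space limit handled similarly), contradicting \Cref{cor: nondegeneracy-section}.
\item[(ii)] If $p_n$ stays near a disk boundary, the limit is a Dirichlet half-plane problem for the heteroclinic operator or for $\Delta-\kappa_W$. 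Both have trivial bounded kernel by odd reflection and the classification \cite[Corollary 7.5]{PacardRitore}, since reflection of $H'$ is not a solution.
\item[(iii)] If $p_n$ drifts out along the end, the translated limit is $cH'(z)$ by \cite[Corollary 7.5]{PacardRitore}. The projection then gives $\alpha_n(x_n+\cdot)\to c\lVert H'\rVert_2^2$. But the barrier estimate gives $\alpha_n=o(1)$ wherever $e^{-\sigma x}$ is small relative to $\lVert\phi_n\rVert_\infty=1$, which forces $c=0$ and the orthogonal part to be small.
\end{itemize}
Each case is a contradiction. The third case is exactly where the barrier for $\alpha''$ is needed.

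\emph{Main obstacle.} I expect the hardest step to be the $H'$-drift case (iii), combined with making the $\alpha$ barrier uniform in the truncation radius. The error $E$ is only $O(e^{-\sigma x})$ multiplied by the unknown sup norm, and the affine part must be excluded via boundedness and the boundary condition at $x=R$.
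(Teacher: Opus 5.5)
Your architecture matches the paper's: a normalized contradiction argument, exponential barriers off $\ell_+$, the projected ODE $\alpha''=F+E$ with the $g$-barrier, the limit $cH'$ along the end, and odd reflection at the Dirichlet boundary. Your single limit over disks is also a reasonable alternative to the paper's rectangles-then-half-planes scheme. However, the step you yourself identify as the crux does not close as written.

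\textbf{The gap in case (iii).} You propose to exclude the affine part of $\alpha_n$ ``via boundedness and the boundary condition at $x=R$''. On $[L,R_n]$ the barrier gives $\alpha_n=a_n+b_nx+O(c_ng+Ce^{-\sigma x})$ with $c_n=A_g(f_n)\to0$. The two facts you invoke, $\alpha_n(R_n)=0$ and $|\alpha_n|\le\lVert H'\rVert_{L^1}$, only yield $a_n+b_nR_n\approx0$ and $|b_n|\lesssim(R_n-L)^{-1}$. So $\alpha_n\approx b_n(x-R_n)$ is still allowed, and it is of order one at, say, $x_n=R_n/2$. A drift limit $cH'$ with $c\neq0$ is then consistent with everything in your case analysis. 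This is not an artefact: the profile is the trace on the end of the rotation mode $D_RU\approx -xH'(z)$, which is exactly the almost-kernel the proposition is about. Even on a half-line, boundedness removes only the slope and never the constant. For genuine bounded solutions that constant is $\phi^\top_\infty$ from \Cref{lem: boundary invertibility}, which is generally nonzero.

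\textbf{The missing ingredient.} You need a second pin at the left end, and this is where the nondegeneracy of \Cref{cor: nondegeneracy-section} must enter the tangential estimate.
\begin{itemize}
\item Run your case (i) for arbitrary sequences of points in a fixed compact set, not only near-maximum points. Use the uniform $C^1$ bound of the odd lift at the branch point. This shows $\phi_n\to0$ uniformly on compacts.
\item Since $|\phi_n|\le1$ and $H'\in L^1$, this gives $\alpha_n(L)\to0$ for each fixed $L$.
\item Fix $L$ so that $B_n(L)\coloneqq C\sigma^{-2}e^{-\sigma L}+c_ng(L)\le\delta$ for all $n$, and only then take $n$ large. The order of quantifiers matters.
\item The function $\alpha_n+B_n-B_n(L)-|\alpha_n(L)|$ is convex and nonpositive at $L$ and at $R_n$, hence on $[L,R_n]$. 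The same holds for $-\alpha_n$.
\end{itemize}
Hence $\sup_{[L,R_n]}|\alpha_n|\le2\delta$ eventually, which is what forces $c=0$. This is the paper's Step 3.

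\textbf{A secondary point on disks.} Your claim $|E|\le Ce^{-\sigma x}\lVert\phi\rVert_\infty$ is false near $x=R_n$. The slices $\{|z|<\sqrt{R_n^2-x^2}\}$ shrink there, and $\alpha_n''$ picks up a boundary measure $\nu_n$ of mass $O(\lVert\nabla\phi_n\rVert_\infty)$, concentrated within $O(R_n^{-1})$ of $x=R_n$ with weight $H'(z)$. This is harmless but must be handled. Split off the solution of $\beta_n''=\nu_n$ with $\beta_n(L)=\beta_n(R_n)=0$. Since the Dirichlet Green function satisfies $|G(x,y)|\le R_n-y$, you get $|\beta_n|=O(R_n^{-1})$. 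The paper avoids this by working on half-planes $\{x<R\}$, where every slice is a full line.
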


\begin{proof}
  First, we solve the problem on a half-plane with the intention of later constructing an entire solution as a limit of solutions on half-planes.
 Let $R>0$ and consider the Dirichlet problem on $\{x<R\}$:
  \begin{equation}\label{eq: half-plane problem}
    \begin{cases}
      \Delta \phi_R - W''(U)\phi_R = f & \text{in } \{x<R\} \\
      \phi_R = 0 & \text{on } \{x=R\}.
    \end{cases}
  \end{equation}
	  The equations in this proof are equations for sections. On each bounded
	  exhaustion domain \(\Omega_L=[-L,R]\times[-L,L]\), we pass to the square-root
	  cover \(\widehat\Omega_L\), fill in the branch point, and solve the lifted
	  Dirichlet problem
	  \[
	      \bigl(-\Delta_\xi+4|\xi|^2W''(\U)\bigr)\widehat\psi_L
	      =
	      -4|\xi|^2\widehat f,
	      \qquad
	      \widehat\psi_L=0\quad\text{on }\partial\widehat\Omega_L .
	  \]
	  The positive supersolution used in \Cref{prop: stability of L} implies that
	  the Dirichlet first eigenvalue of this lifted operator is positive on
	  \(\widehat\Omega_L\). Hence the finite-domain problem has a unique solution,
	  and the maximum principle gives
	  \(\lVert\widehat\psi_L\rVert_\infty\le C(L,R)\lVert f\rVert_\infty\). Since the lifted
	  operator commutes with the deck transformation and the lifted datum is odd,
	  uniqueness gives \(\widehat\psi_L\circ\iota=-\widehat\psi_L\), so the solution
	  descends to a section. Bounded odd lifted limits extend removably across the
	  branch point, with value zero there.
	  In order to solve \eqref{eq: half-plane problem}, we start by using these bounded-domain solutions. For each $L>0$, there exists a section \(\psi_L\) over \(\Omega_L\setminus\{0\}\), with \(\Omega_L=[-L,R]\times [-L,L]\), such that \begin{equation*}
    \begin{cases}
      \Delta \psi_L - W''(U)\psi_L = f & \text{in } \Omega_L \\
      \psi_L = 0 & \text{on } \partial \Omega_L
    \end{cases}
  \end{equation*} with $\lVert\psi_L\rVert_\infty\leq C \lVert f\rVert_\infty$ for some constant $C=C(L)>0$. As $L \to \infty$, two things can happen: either the $\psi_L$'s remain uniformly bounded in $L^\infty$ norm, or they do not. In the first case, standard elliptic estimates imply that a subsequence of $\psi_L$ converges to the wanted bounded solution $\phi_R$ on the half-plane $\{x<R\}$. In the second case, we can find a sequence $L_n\to\infty$ such that $\lVert\psi_{L_n}\rVert_\infty = M_n \to \infty$. Defining ${\psi}_n = \psi_{L_n}/M_n$ we see that $\psi_n$ satisfies:
\begin{equation*}
  \begin{cases}
    \Delta \psi_n - W''(U)\psi_n = f_n & \text{in } \Omega_{L_n} \\
    \psi_n = 0 & \text{on } \partial \Omega_{L_n} \\
    \lVert\psi_n\rVert_{\infty} = 1 & \text{for all } n\\
    \lVert f_n\rVert_\infty \to 0 & \text{as } n\to\infty
  \end{cases}
	\end{equation*} where $f_n = f/M_n$. The pure-phase barriers used in Step 2
	below prevent the points where \(|\psi_n|\) is close to one from escaping to
	\(x=-\infty\) or to \(|z|=\infty\). Hence, after passing to a subsequence,
	local elliptic estimates give a non-trivial bounded solution \(\phi_R\) of
	\(\Delta \phi_R-W''(U)\phi_R=0\) in the half-plane \(\{x<R\}\), with zero
	Dirichlet data on \(\{x=R\}\).

Therefore, we have shown the following: for each $R>0$, there exists a non-trivial bounded function
\[
    \phi_R: \{x\leq R\}\to \R
\]
with zero Dirichlet condition on $\{x=R\}$ and satisfying one of the following equations:
      \begin{itemize}
        \item $\Delta \phi_R - W''(U)\phi_R = f$ in $\{x<R\}$, or
        \item $\Delta \phi_R - W''(U)\phi_R = 0$ in $\{x<R\}$.
      \end{itemize}

In order to prove the main claim, we would like to show that only the first case happens and, moreover, that $\lVert\phi_R\rVert_\infty$ remains uniformly bounded as $R\to\infty$. To this end, we argue by contradiction. This means  that we can make one of the following two assumptions: either the first case happens for a sequence $R_n\to\infty$ with $\lVert\phi_{R_n}\rVert_\infty \to \infty$, or the second case happens for a sequence $R_n\to\infty$ with $\lVert\phi_{R_n}\rVert_\infty=1$ for all $n$. Fortunately, we can treat both cases together: define $\psi_n = \phi_{R_n}/\lVert\phi_{R_n}\rVert_\infty$ which, in both cases, satisfies:
\begin{equation*}
  \begin{cases}
    \Delta \psi_n - W''(U)\psi_n = f_n & \text{in } \{x<R_n\} \\
    \psi_n = 0 & \text{on } \{x=R_n\} \\
    \lVert\psi_n\rVert_{\infty} = 1 & \text{for all } n\\
    \lVert f_n\rVert_\infty \to 0 & \text{as } n\to\infty
  \end{cases}
\end{equation*} where $f_n = f/\lVert\phi_{R_n}\rVert_\infty$ in the first case and $f_n=0$ in the second case. Moreover, we can assume there is a sequence $c_n\downarrow 0$, such that $F_n(x)=\int_\R f_n(x,z)H'(z)dz$ satisfies $|F_n(x)|\leq c_n g''(x)$ for all $x>0$. We will show, in four steps, that this leads to a contradiction.

\noindent \textbf{Step 1: Estimates on compact sets.} Fix \(L>0\). We first
claim that
\[
    \lVert\psi_n\rVert_{L^\infty(Q_L)}\to0,\qquad Q_L=[-L,L]\times[-L,L].
\]
Suppose not. After passing to a subsequence, there are points
\(q_n\in Q_L\setminus\{0\}\) such that \(|\psi_n(q_n)|\ge\delta>0\). If
\(q_n\to q_\infty\neq0\), local Schauder estimates near \(q_\infty\), followed
by a diagonal argument on compact subsets of \(\R^2_*\), produce a non-trivial
bounded section \(\psi\) satisfying
\[
    \Delta\psi-W''(U)\psi=0\quad\text{on }\R^2_*,
\]
contradicting \Cref{cor: nondegeneracy-section}. If \(q_n\to0\), pass to the
square-root coordinate. The odd lifts \(\widehat\psi_n\) extend across the
branch point with \(\widehat\psi_n(0)=0\), and the lifted equations have
uniformly bounded coefficients and right-hand sides on each fixed upstairs
ball. Hence interior estimates give a uniform \(C^1\) bound near the branch
point, so
\[
    |\psi_n(q_n)|=|\widehat\psi_n(\xi_n)|\le C|\xi_n|\to0,
    \qquad \pi(\xi_n)=q_n,
\]
again a contradiction. This proves the claim.

\noindent \textbf{Step 2: Estimates on $\{x\leq -L\}\cup \{|z|\geq L\}$}. We focus our argument on the region $\{z\geq L\}$, since on the remaining regions $\{z\leq -L\}$ and $\{x\leq -L\}$ the argument is the same, using the corresponding end asymptotic of \(U\). On this set, it is convenient to rewrite the equation for $\psi_n$ as:
\begin{equation*}
  \Delta \psi_n - \kappa_W\psi_n = h_n
\end{equation*}
where, by the end asymptotics of \(U\), $h_n = (W''(U)-\kappa_W)\psi_n + f_n$ satisfies $|h_n(x,z)|\leq K e^{-\sigma z} + |f_n(x,z)|$ for some positive constants $K,\sigma>0$ independent of $n$. Set \(c=K e^{-\sigma L}+\lVert f_n\rVert_\infty\). Then the function \(\psi_n-c/\kappa_W\) satisfies:
\[
    \Delta (\psi_n - \tfrac{c}{\kappa_W}) - \kappa_W(\psi_n - \tfrac{c}{\kappa_W}) = h_n + c\geq 0
\]
on the region $\{z>L\}$. Given $a>0$ define $v(z)=a e^{\sqrt{\kappa_W}z} + e^{\sqrt{\kappa_W}L}e^{-\sqrt{\kappa_W}z}$. Note $\Delta v(z)-\kappa_Wv(z)= 0$. Since $v(z)>1$ on $\{z=L\}$ and $\lim_{z\to\infty  }v(z)=\infty$, it follows that the function $\big(\psi_n(x,z)-\tfrac{c}{\kappa_W}\big)-v$ cannot attain a positive maximum in the region $\{z>L\}$, otherwise, there would be an interior positive maximum in which $0\geq \Delta (\psi_n - \tfrac{c}{\kappa_W}-v)-\kappa_W(\psi_n - \tfrac{c}{\kappa_W}-v)\geq 0$. Therefore:
\begin{equation*}
  \psi_n(x,z) \leq \tfrac{c}{\kappa_W} + v(z) = \tfrac{c}{\kappa_W} + a e^{\sqrt{\kappa_W}z} +  e^{\sqrt{\kappa_W}L}e^{-\sqrt{\kappa_W}z}
\end{equation*}
on the region $\{z>L\}$. By taking $a\to 0$ we get:
\begin{equation*}
  \begin{split}
    \psi_n(x,z) & \leq \tfrac{c}{\kappa_W} +  e^{\sqrt{\kappa_W}L} e^{-\sqrt{\kappa_W}z} \\
    & \leq \kappa_W^{-1} \Big( K e^{-\sigma L} + \lVert f_n\rVert_\infty \Big) + e^{\sqrt{\kappa_W}L} e^{-\sqrt{\kappa_W}z}
  \end{split}
\end{equation*}
For \(z>2L\) we obtain the better estimate:
\begin{equation*}
  \psi_n(x,z) \leq \kappa_W^{-1} \Big( K e^{-(\sigma/2)(2L)} + |f_n|_\infty \Big) + e^{-(\sqrt{\kappa_W}/2)(2L)}
\end{equation*}
which can be made arbitrarily small by choosing $L$ large and then $n$ large enough.

\begin{remark} We have obtained a one sided estimate on $\psi_n$, however note that our assumption is on the absolute value of the right-hand side, i.e.~ $|h_n|$, so the same estimate holds for $-\psi_n$, and together imply the estimate for $|\psi_n|$. Finally, we repeat the argument in the regions $\{x\leq -L\}$ and $\{z\leq -L\}$ using the same barrier with decay in the corresponding end variable.\end{remark}
In other words, we have that for any $\delta>0$, there exists $L>0$ such that for all $n$ large enough, $|\psi_n|\leq \delta$ on the region $\{x\leq -L\}\cup \{|z|\geq L\}$.

\noindent \textbf{Step 3: Tangential estimates on $\{x\geq L\}$}. Let $\delta>0$, $L>0$ to be chosen later and $n$ be large enough so that $R_n>L$. Let $\alpha_n(x)=\int_\R \psi_n(x,z)H'(z)dz$. The differentiations below, and the integration by parts in \(z\), are justified by local elliptic estimates and the exponential decay of \(H'\). Differentiating with respect to $x$ we get:
\begin{equation*}
\begin{split}
	\alpha_n''(x) &= \int_\R \partial_{xx}\psi_n(x,z)H'(z)dz \\
	&= \int_\R \Big(W''(U(x,z))\psi_n(x,z) + f_n(x,z) - \partial_{zz}\psi_n(x,z)\Big)H'(z) dz
\end{split}
\end{equation*}
Integrating by parts in $z$ and using the fact that $H'$ is in the kernel of the linearized operator $\partial_{zz} - W''(H(z))$ we obtain:
\begin{equation*}
  \begin{split}
  \alpha_n''(x) & = \int_\R \Big(W''(U(x,z)) - W''(H(z))\Big)\psi_n(x,z)H'(z) dz + \int_\R f_n(x,z)H'(z) dz
  \end{split}
\end{equation*} By the definition of \(A_g\) for \(f_n\) and the positive-end convergence of \(U(x,z)\) to \(H(z)\), we have:
\begin{equation*}
  |\alpha_n''(x)| \leq C e^{-\sigma x} + c_n g''(x)
\end{equation*} for some positive constant $C>0$ independent of $n$ and $f$.

Next, we use a barrier argument to derive precise asymptotics for $\alpha_n(x)$ for $x\geq L$. With
\[
    B_n(x)=C\sigma^{-2}e^{-\sigma x} + c_n g(x)
\]
consider the function:
\[
    G(x)=\alpha_n(x)+B_n(x)-\Big(B_n(L)+|\alpha_n(L)| \Big)
\]
Since \(g\) is admissible, \(B_n\) is decreasing. It follows automatically that $G(L)\leq 0$. Similarly, since $\alpha_n(R_n)=0$ we have $G(R_n)\leq 0$. In addition, $G$ satisfies the ODE:
\begin{equation*}
  G''(x) = \alpha_n''(x) + C e^{-\sigma x} + c_n g''(x) \geq 0
\end{equation*} This implies that $G(x)\leq 0$ for all $x\in[L,R_n]$. The same bound holds for $-\alpha_n(x)$ using a similar argument. Therefore, for all $x\in[L,R_n]$:
      \begin{equation*}
        |\alpha_n(x)| \leq |B_n(L)-B_n(x)|+|\alpha_n(L)|\leq |B_n(L)|+|\alpha_n(L)|
      \end{equation*}

Now, fix $\delta>0$. First choose \(L\), independently of \(n\) and \(x\), so that \(B_n(L)-B_n(x)\leq B_n(L)\leq \delta/2\) for all \(n\). Combining the results from Step 1 and Step 2, for this fixed \(L\) we can then take \(n\) large enough so that $|\alpha_n(L)|\leq \delta/2$. Therefore, we have shown that given $\delta>0$, there exists $L>0$ such that, $n$ large enough $\implies$
      $|\alpha_n(x)|\leq \delta$, for all $x\in[L,R_n]$

\noindent \textbf{Step 4: The contradiction.} Let $(x_n,z_n) \in \{x<R_n\}$ be such that $\psi_n(x_n,z_n)=1$. Combining the estimates from Steps 1 and 2, the only remaining possibility is that $x_n\to+\infty$ while $|z_n|$ remains bounded. If \(R_n-x_n\to+\infty\), recentering gives a bounded non-zero full-plane solution of \(\Delta\psi_\infty-W''(H(z))\psi_\infty=0\). By the standard spectral decomposition of \(L_H=-\partial_z^2+W''(H)\), whose bounded kernel is spanned by \(H'\) \cite[Corollary 7.5]{PacardRitore}, this solution is \(cH'\). Step 3 gives
\[
    0=\lim_{n\to\infty}\alpha_n(x_n)=c\int_\R (H')^2\,dz,
\]
so \(c=0\), a contradiction. If instead \(R_n-x_n\) remains bounded, the limit is a bounded solution on a half-plane with zero Dirichlet data on the vertical boundary; odd reflection across that boundary gives the same classification, and the boundary condition again forces \(c=0\).
The displayed \(L^\infty\) estimate follows from the same contradiction
argument: if no constant \(C_g\) existed, one could choose sources \(f_n\) with
\(\lVert f_n\rVert_{L^\infty}+A_g(f_n)=1\) whose bounded solutions satisfy
\(\lVert\phi_n\rVert_{L^\infty}\to\infty\); normalizing by
\(\lVert\phi_n\rVert_{L^\infty}\) gives the same four-step contradiction above.
Finally, uniqueness follows by \Cref{cor: nondegeneracy-section}. The proof is
concluded.

\end{proof}

\begin{proposition}[Asymptotics of the tangential part]\label{prop:tangential-asymptotics}
Let \(f\in C_{\ell_+,\lambda}^{0,\gamma}(\R^2_*,\L)\), let $\phi$ be a bounded
solution of \eqref{eq: boundary lin}, and let $\alpha$ and $F$ be defined by
\eqref{eq: alpha F}. Assume \(A_g(f)<+\infty\). Then the limit
$\alpha_\infty=\lim_{x\to+\infty}\alpha(x)$ exists,
$\lim_{x\to+\infty}x\alpha'(x)=0$, and the estimates
\eqref{eq: decay tangential part} hold with $\phi^\top$ replaced by $\alpha$ and $\phi_\infty^\top$ replaced by $\alpha_\infty$.
\end{proposition}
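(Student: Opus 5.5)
The plan is to repeat the projection computation of Step 3 in the proof of \Cref{prop:bounded-solvability-boundary}, now for the actual solution $\phi$. Differentiating $\alpha$ twice in $x$, integrating by parts in $z$, and using $(\partial_z^2-W''(H))H'=0$ gives, for $x>0$,
\[
    \alpha''(x)=E(x)+F(x),\qquad
    E(x)\coloneqq\int_\R\bigl(W''(U_-(x,z))-W''(H(z))\bigr)\phi(x,z)H'(z)\,dz .
\]
This requires justifying differentiation under the integral. I would use local Schauder estimates for $\phi$, which are uniform on $\{x\ge1\}$ because $f\in C^{0,\gamma}$, together with the exponential decay of $H'$. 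For $0<x<1$, the bounds can simply be absorbed into the constants, since $\alpha$ is bounded there and $\alpha',\alpha''$ are bounded by Schauder estimates on $x\ge 1/2$. Adjusting near the puncture costs only a constant, so I would first prove everything on $x\ge1$. By \Cref{cor: decay U_1} and the smoothness of $W$, we get $|E(x)|\le Ce^{-\sigma x}\lVert\phi\rVert_{L^\infty}$. By \eqref{eq: assumption g}, $|F|\le A_g(f)g''$. Altogether,
\[
    |\alpha''(x)|\le C e^{-\sigma x}\lVert\phi\rVert_{L^\infty}+A_g(f)g''(x)\eqqcolon k(x).
\]

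Next I would integrate this ODE inequality from infinity. Since $g''\ge0$, $g'\le 0$ and $g'\to0$ (this follows from $xg'\to0$), the function $g''$ is integrable on $[x,\infty)$ with $\int_x^\infty g''=-g'(x)=|g'(x)|$. The exponential term is integrable as well. Hence $\alpha'(\infty)\coloneqq\lim_{x\to\infty}\alpha'(x)$ exists, with
\[
    |\alpha'(x)-\alpha'(\infty)|\le \int_x^\infty k\le C\sigma^{-1}e^{-\sigma x}\lVert\phi\rVert_\infty+A_g(f)|g'(x)| .
\]
Because $\alpha$ is bounded (indeed $|\alpha|\le\lVert\phi\rVert_\infty\int H'=2\lVert\phi\rVert_\infty$), we must have $\alpha'(\infty)=0$; otherwise $\alpha$ would grow linearly. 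This gives the estimate on $\partial_x\phi^\top$.

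Integrating once more, the bound on $|\alpha'|$ is integrable on $[x,\infty)$, since $\int_x^\infty|g'|=g(x)-\lim g=g(x)$. Therefore $\alpha_\infty=\lim\alpha(x)$ exists and
\[
    |\alpha(x)-\alpha_\infty|\le C e^{-\sigma x}\lVert\phi\rVert_\infty+A_g(f)g(x).
\]
The second-derivative estimate is just the pointwise bound on $|\alpha''|$. Finally, $x\alpha'(x)\to0$ follows from $x e^{-\sigma x}\to 0$ and $x|g'(x)|\to0$, which is an admissibility assumption.

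The step I expect to need the most care is the legitimacy of the projected ODE. This means the uniform $C^2$ control of $\phi$ on $\{x\ge1\}$ in the scalar trivialisation $U_-$, together with the dominated differentiation in $z$. Everything after that is elementary one-dimensional integration. I would get that control from interior Schauder estimates on unit balls, giving $\lVert\phi\rVert_{C^{2,\gamma}}\lesssim\lVert\phi\rVert_\infty+\lVert f\rVert_{C^{0,\gamma}}$, combined with the Gaussian-type decay $H'(z)\lesssim e^{-\sqrt{\kappa_W}|z|}$.
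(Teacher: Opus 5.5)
Your proof is correct and follows the paper's argument. You use the same projected ODE $\alpha''=E+F$, with $|E|\lesssim e^{-\sigma x}\lVert\phi\rVert_{L^\infty}$ and $|F|\le A_g(f)g''$, and then integrate twice from infinity. The only difference is how you get $\alpha'(\infty)=0$: you read it off directly from the boundedness of $\alpha$, whereas the paper first shows that $G=\alpha+A_g(f)g+\sigma^{-2}Ke^{-\sigma x}\lVert\phi\rVert_\infty$ is bounded and convex to obtain the limit and $xG'\to0$. Your route is slightly more direct.

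One small remark: every vertical line $\{x\}\times\R$ with $x>0$ avoids the puncture, so the ODE and the bound $|\alpha''|\le k$ hold for every $x>0$. Your integration from infinity therefore gives the estimates on all of $(0,\infty)$ at once. You do not need the appeal to Schauder bounds on $x\ge 1/2$ to handle $0<x<1$, and that appeal is not quite accurate near the puncture anyway.
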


\begin{proof} We divide the proof into several steps. The first two steps establish the existence of the limit $\alpha_\infty$ and the decay of $\alpha'(x)$, while the last step proves the estimates in \eqref{eq: decay tangential part}.

\begin{claim}
The function $\alpha(x)$ defined in \eqref{eq: alpha F} is well-defined for $x>1$ and of class $C^2$.
\end{claim}

    Since \(f\in C_{\ell_+,\lambda}^{0,\gamma}(\R^2_*,\L)\), local Schauder
    estimates imply that \(\phi\in C_{\mathrm{loc}}^{2,\gamma}(\R^2_*,\L)\).
    In particular, on the region \(\{x>1\}\), which stays away from the puncture,
    the derivatives used below are uniformly bounded. Since $\phi$ is bounded and
    $H'(z)$ decays exponentially as $|z|\to \infty$, the integral defining
    $\alpha(x)$ converges for each $x>0$. To show that $\alpha(x)$ is $C^2$, we
    differentiate under the integral sign. The local Schauder estimates,
    together with the exponential decay of $H'(z)$, justify this
    differentiation. Thus, we have:
    \[
    \alpha'(x) = \int_{\R} \partial_x \phi(x,z) H'(z) dz,
    \]
    and
    \[
    \alpha''(x) = \int_{\R} \partial_x^2 \phi(x,z) H'(z) dz.
    \]
    Hence $\alpha(x)$ is indeed $C^2(\{x>1\})$.

\begin{claim}
The function $\alpha(x)$ defined above satisfies the following ODE:
\begin{equation*}
    \alpha''(x) = E(x) + F(x)
\end{equation*}
where
\begin{equation*}
  \begin{split}
    E(x) &= \int_{\R} \big( W''(U(x,z)) - W''(H(z)) \big) H'(z) \phi(x,z) dz
  \end{split}
\end{equation*}
and $F$ is as in \eqref{eq: alpha F}.
\end{claim}

    Using the equation for $\phi$ and the fact that $H''(z)=W'(H(z))$, we get:
    \begin{equation*}
      \begin{split}
        \alpha''(x) &= \partial_x^2 \alpha(x)\\
        &= \int_{\R} \partial_x^2 \phi(x,z) H'(z) dz \\
        &= \int_{\R} \big( W''(U(x,z))\phi(x,z) + f(x,z) \big) H'(z) dz - \int_{\R} \partial_z^2 \phi(x,z) H'(z) dz \\
        &= \int_{\R} W''(U(x,z)) H'(z) \phi(x,z) dz + \int_{\R} f(x,z) H'(z) dz - \int_{\R} \phi(x,z) H'''(z) dz \\
        &= \int_{\R} \big( W''(U(x,z)) - W''(H(z)) \big) H'(z) \phi(x,z) dz + \int_{\R} f(x,z) H'(z) dz,
      \end{split}
    \end{equation*}
    proving the claim.

\begin{claim}
  If \(A_g(f)<+\infty\), then $\alpha_\infty=\lim_{x\to\infty}\alpha(x)$ exists and $\lim_{x\to\infty}x \alpha'(x)=0.$
\end{claim}

There is $K>0$ such that $|E(x)|\leq K e^{-\sigma x}\lVert\phi\rVert_\infty$ for all $x>0$. Let
\[
    G(x) = \alpha(x) + (A_g(f)g(x)+\sigma^{-2}K e^{-\sigma x}|\phi|_\infty).
\]
Then:
\begin{equation*}
    \begin{split}
      G''(x)&= \alpha''(x) + (A_g(f)g''(x) + K e^{-\sigma x}\lVert\phi\rVert_\infty) \\
      &= (E(x) +  K e^{-\sigma x}|\phi|_\infty) + (F(x) + A_g(f)g''(x))\\
      &\geq 0
    \end{split}
\end{equation*}
By our assumptions on $\phi$ and $g$, it follows that $G$ is bounded and convex
on the interval $[1,\infty)$. The elementary convexity observation used in the
definition of admissible decay functions gives $\lim_{x\to\infty}G(x)$ and
$\lim_{x\to\infty}xG'(x)=0$. Since the same properties hold for $g$ and for the
exponential term, the formula for $G$ implies
$\lim_{x\to\infty}x\alpha'(x)=0$ and
$\lim_{x\to\infty}\alpha(x)$ exists. This finishes the proof of the claim.

Next, we estimate the rate of convergence for both the limits $\lim_{x\to\infty}\alpha(x)=\alpha_\infty$ and $\lim_{x\to\infty}\alpha'(x)=0$. More precisely, we have the following:

\begin{claim}
If \(A_g(f)<+\infty\), we have
  \begin{equation*}
  \begin{split}
      |\alpha(x)-\alpha_\infty| &\leq K\sigma^{-2}\lVert\phi\rVert_\infty e^{-\sigma x} + A_g(f)g(x) \\
      |\alpha'(x)| &\leq  K\sigma^{-1}\lVert\phi\rVert_\infty e^{-\sigma x} + A_g(f)|g'(x)|,\\
      |\alpha''(x)| &\leq K\lVert\phi\rVert_\infty e^{-\sigma x} + A_g(f)g''(x).
    \end{split}
  \end{equation*}
\end{claim}

  The estimate for \(\alpha''\) follows from Claim 2, the bound for \(E\) above,
  and \eqref{eq: assumption g}. For \(\alpha\) and \(\alpha'\), we use the
  formula below.

  From the formula $\alpha'(x) = -\int_x^\infty (E(t) + F(t)) dt$ and using the bounds on $E$ and $F$, as well as $\lim_{x\to\infty}g(x)=0$ we get:
  \begin{equation*}
    \begin{split}
      |\alpha'(x)| &\leq \int_x^\infty K e^{-\sigma t} |\phi|_\infty dt + A_g(f)\int_x^\infty g''(t) dt \\
      &\leq K\sigma^{-1} \lVert\phi\rVert_\infty e^{-\sigma x} - A_g(f)g'(x).
    \end{split}
  \end{equation*}
  Integrating again we get
  \begin{equation*}
    \begin{split}
      |\alpha(x) - \alpha_\infty| &\leq \int_x^\infty |\alpha'(t)| dt \\
      &\leq \int_x^\infty \sigma^{-1}K |\phi|_\infty e^{-\sigma t} dt - A_g(f)\int_x^\infty g'(t) dt \\
      &\leq K\sigma^{-2} \lVert\phi\rVert_\infty e^{-\sigma x} + A_g(f)g(x).
    \end{split}
  \end{equation*}
which proves the claim.
\end{proof}

\begin{proposition}[Closed formula for the tangential part at infinity]\label{prop:tangential-limit}
Under the assumptions of \Cref{prop:tangential-asymptotics}, we have
\begin{equation*}
    \alpha_\infty
    =
    \lim_{R\to\infty}\int_{Q_R} f(x,z)D_R U(x,z)\,dx\,dz
\end{equation*}
where $D_R U$ is the rotation kernel defined in \Cref{rem: rotation mode}.
\end{proposition}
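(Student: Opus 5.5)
The plan is a Green's identity argument that pairs the equation for $\phi$ against the rotation kernel $D_RU$ of \Cref{rem: rotation mode}. The product of two sections of $\L$ is a genuine scalar function on $\R^2_*$, because the sign ambiguities cancel. Likewise $\nabla\phi\, D_RU-\phi\,\nabla D_RU$ is a well-defined vector field. So for $0<r<R$ I will integrate over $Q_R\setminus B_r(0)$ and use $(\Delta-W''(U))D_RU=0$:
\[
    \int_{Q_R\setminus B_r} f\,D_RU
    =\int_{\partial(Q_R\setminus B_r)}\bigl(\partial_\nu\phi\,D_RU-\phi\,\partial_\nu D_RU\bigr).
\]
The proof then reduces to computing the limit of each boundary piece, first as $r\to0$ and then as $R\to\infty$.

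\emph{The puncture.} I will lift to the square-root cover. \Cref{prop:bounded-solvability-boundary} and local estimates give that the odd lift $\widehat\phi$ is $C^1$ across the branch point with $\widehat\phi(0)=0$, and the same holds for $\U$. Pushing these down gives $|\phi|\lesssim r^{1/2}$ and $|\nabla\phi|\lesssim r^{-1/2}$ near $0$, and also $|\nabla U|\lesssim r^{-1/2}$. Hence $|D_RU|\lesssim r\,|\nabla U|\lesssim r^{1/2}$ and $|\nabla D_RU|\lesssim r^{-1/2}$. The integral over $\partial B_r$ is therefore $O(r\cdot(r^{-1/2}r^{1/2}+r^{1/2}r^{-1/2}))=O(r)\to0$. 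These bounds also show that $fD_RU$ is locally integrable at $0$. This step needs some care with the lifted regularity, but it only uses facts already established.

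\emph{The three far sides.} On $\{x=-R\}$ and $\{|z|=R\}$ I will use \Cref{cor: estim U-} and \Cref{lem: estim U+}. Away from a unit neighbourhood of $\ell_+$, $\nabla U$ and $\nabla^2U$ decay like $e^{-\sigma R}$. Hence $|D_RU|+|\nabla D_RU|\lesssim R\,e^{-\sigma R}$ there, while $\phi$ and $\nabla\phi$ are bounded by the Schauder estimates. These sides therefore contribute $O(R^2e^{-\sigma R})\to0$.

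\emph{The right side $\{x=R\}$, which is the main point.} In the trivialisation $U_-$, \Cref{cor: estim U-} gives
\[
    D_RU=-RH'(z)+O(Re^{-\sigma R}),
    \qquad
    \partial_xD_RU=-H'(z)+O(Re^{-\sigma R}),
\]
uniformly in $z$. The second expansion follows from $\partial_xD_RU=-\partial_zU-x\partial_x\partial_zU+z\partial_x^2U$ together with the exponential decay of the $x$-derivatives. Substituting these, the right side contributes
\[
    -R\,\alpha'(R)+\alpha(R)+O(R^2e^{-\sigma R}).
\]
By \Cref{prop:tangential-asymptotics}, $x\alpha'(x)\to0$ and $\alpha(x)\to\alpha_\infty$, so this tends to $\alpha_\infty$. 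Consequently $\lim_{R\to\infty}\int_{Q_R}fD_RU$ exists and equals $\alpha_\infty$.

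The main obstacle I expect is the bookkeeping on this right side. The term $-R\alpha'(R)$ is the only place where the linear growth of $D_RU$ is not compensated by exponential decay, and it is controlled solely by the conclusion $x\alpha'(x)\to0$ of \Cref{prop:tangential-asymptotics}. For the same reason I will keep track of orientation signs carefully, since the outward normal on $\{x=R\}$ is $e_x$, and fix the sign convention of $U_-$ from \Cref{def: Hprime-component}.
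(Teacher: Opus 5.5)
Your proposal is correct and follows the paper's proof essentially step for step. Both use Green's identity against $D_RU$ on $Q_R\setminus B_\rho(0)$, remove the inner circle via the square-root bounds $|\phi|+|D_RU|\lesssim\rho^{1/2}$ and $|\nabla\phi|+|\nabla D_RU|\lesssim\rho^{-1/2}$, use exponential decay on the top, left and bottom sides, and identify the right-side flux as $\alpha(R)-R\alpha'(R)+O(R^2e^{-\sigma R})$, which converges to $\alpha_\infty$ by $x\alpha'(x)\to0$ from \Cref{prop:tangential-asymptotics}.
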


\begin{proof}
The local Schauder estimates in the trivializing balls and the square-root
chart give
\(
M_\phi\coloneqq
\lVert\phi\rVert_{C_{*,1}^{2,\gamma}(\R^2_*,\L)}<+\infty.
\)
  Multiplying the equation $\Delta \phi - W''(U)\phi=f$ by $D_R U$ and
  integrating over the punctured domain
  \(Q_{R,\rho}\coloneqq Q_R\setminus B_\rho(0)\), where
  \(Q_R\coloneqq (-R,R)^2\), we get:
  \begin{equation*}
    \begin{split}
     \int_{Q_{R,\rho}} f D_R U dx dz &= \int_{Q_{R,\rho}} (\Delta \phi - W''(U)\phi) D_R U dx dz \\
    \end{split}
  \end{equation*}

  Integrating by parts on the right-hand side we get:
  \begin{equation*}
    \begin{split}
     \int_{Q_{R,\rho}} f D_R U dx dz &= \int_{\partial Q_R} \partial_\nu \phi D_R U - \phi \partial_\nu D_R U dS + I_\rho\\
     &\qquad + \int_{Q_{R,\rho}} \phi (\Delta D_R U - W''(U) D_R U) dx dz\\
    &= \int_{\partial Q_R} \partial_\nu \phi D_R U - \phi \partial_\nu D_R U dS + I_\rho,
    \end{split}
  \end{equation*}
where $\nu$ is the outer normal to $\partial Q_R$ and
\[
    I_\rho
    =
    \int_{\partial B_\rho(0)}
    \partial_\nu \phi D_R U-\phi\partial_\nu D_R U\,dS,
\]
with \(\nu\) denoting the outer normal to \(Q_{R,\rho}\) on the inner boundary.
The last equality follows from the fact that $D_R U$ is in the kernel of the
linearized operator, i.e. $\Delta D_R U - W''(U) D_R U=0$.
It remains only to remove the artificial inner boundary. In the square-root
coordinate at the puncture, the lifts of \(\phi\) and \(D_R U\) are odd
regular functions, hence their downstairs representatives satisfy
\[
    |\phi|+|D_R U|\lesssim \rho^{1/2},
    \qquad
    |\nabla\phi|+|\nabla D_R U|\lesssim \rho^{-1/2}
    \quad\text{on }\partial B_\rho(0).
\]
Therefore \(|I_\rho|\lesssim \rho\), and \(I_\rho\to0\) as
\(\rho\to0\). Letting \(\rho\to0\) gives the displayed identity with the
integral over \(Q_R\).

  Next, we compute the boundary integral on each side of the square $Q_R$. On
  the top, left and bottom sides, \(D_RU\) and \(\nabla D_RU\) decay
  exponentially, so the boundary integrals are
  \(O(R^2e^{-\sigma R}M_\phi)\). On the right side we have:
  \begin{equation*}
    \begin{split}
     I_{\operatorname{right}}&=\int_{-R}^R \partial_x \phi(R,z) D_R U(R,z) - \phi(R,z) \partial_x D_R U(R,z) dz \\
     &= \int_{-R}^R \partial_x \phi(R,z) \bigg(-R \partial_z U(R,z) + z \partial_x U(R,z)\bigg) dz \\
     &- \int_{-R}^R \phi(R,z) \bigg(-\partial_zU(R,z)-R \partial_{xz} U(R,z) + z \partial_{xx} U(R,z)\bigg) dz\\
     &= \int_{-R}^R \phi(R,z)\partial_z U(R,z)dz - R\int_{-R}^{R}\partial_x\phi(R,z)\partial_z U(R,z) + D_1\\
    \end{split}
  \end{equation*}
  where $D_1=\int_{-R}^R \partial_x \phi(R,z)\Big(z\partial_x U(R,z)\Big) + \phi(R,z)\Big(R\partial_{xz} U(R,z)-z\partial_{xx} U (R,z) \Big)dz$ satisfies $|D_1|\leq C R^2 e^{-\sigma R}M_\phi$ for some positive constants $C,\sigma>0$. Next, we exploit the fact that $|\partial_z U(R,z)-H'(z)|$ decays exponentially in $R$ uniformly in $z$ to write:
  \begin{equation*}
    \begin{split}
     I_{\operatorname{right}}&= \int_{-R}^R \phi(R,z)H'(z)dz - R\int_{-R}^{R}\partial_x\phi(R,z)H'(z) dz + D_1 + D_2\\
    \end{split}
  \end{equation*}
  where $D_2=\int_{-R}^R \phi(R,z)\Big(\partial_z U(R,z)-H'(z)\Big) dz - R\int_{-R}^{R}\partial_x\phi(R,z)\Big(\partial_z U(R,z)-H'(z)\Big) dz$ satisfies $|D_2|\leq C R^2 e^{-\sigma R}M_\phi$ for some $C>0$. Finally, we compare the integrals with the definition of $\alpha(R)$ and $\alpha'(R)$ to get:
  \begin{equation*}
    \begin{split}
     I_{\operatorname{right}}&= \alpha(R) - R \alpha'(R) + D_1 + D_2+ D_3\\
    \end{split}
  \end{equation*}
  where
  \[
      D_3=
      -\int_{|z|>R} \phi(R,z)H'(z)\,dz
      + R\int_{|z|>R}\partial_x\phi(R,z)H'(z)\,dz
  \]
  satisfies $|D_3|\leq C R e^{-\sigma R}M_\phi$ for some positive constant $C>0$. Combining all the estimates we get:
  \begin{equation*}
    \begin{split}
     |I_{\operatorname{right}}-\alpha(R)| &\leq C R^2 e^{-\sigma R}M_\phi + R |\alpha'(R)|
    \end{split}
  \end{equation*} which together with our estimates for $R$ imply the result.
\end{proof}

Finally, we present the proof of the main theorem of this section, which is a consequence of the previous propositions.

\begin{proof}[Proof of \Cref{lem: boundary invertibility}]
The existence, uniqueness, and \(L^\infty\) estimate for the bounded solution
follow from \Cref{prop:bounded-solvability-boundary}. The convergence estimates
in \eqref{eq: decay tangential part} follow from
\Cref{prop:tangential-asymptotics}, and the formula for $\phi^\top_\infty$
follows from \Cref{prop:tangential-limit}.
\end{proof}

\begin{corollary}[Weighted Schauder estimates for the boundary inverse]\label{cor: schauder-tan}
Let \(g\) be an admissible decay function, let \(\gamma\in (0,1)\), and let
\(f\in C_*^{0,\gamma}(\R^2_*,\L)\) satisfy \(A_g(f)<+\infty\). Here
\(C_*^{k,\gamma}\) denotes the unweighted section norm
\(C_{*,1}^{k,\gamma}\) from \Cref{def:scaled-c-star-holder-norm}. If \(\phi\)
is the bounded solution of
\[
    \Delta\phi-W''(U)\phi=f
    \qquad\text{on }\R^2_*,
\]
then
\[
    \lVert\phi\rVert_{C_*^{2,\gamma}(\R^2_*,\L)}
    \leq
    C_g\left(
    \lVert f\rVert_{C_*^{0,\gamma}(\R^2_*,\L)}
    +
    A_g(f)
    \right).
\]
Moreover, if \(0<\lambda<\min\{\sigma,\sqrt{\kappa_W}\}\) and
\(f\in C_{\ell_+,\lambda}^{0,\gamma}(\R^2_*,\L)\),  then
\[
    \lVert\phi\rVert_{C_{\ell_+,\lambda}^{2,\gamma}(\R^2_*,\L)}
    \leq
    C_{g,\lambda}\left(
    \lVert f\rVert_{C_{\ell_+,\lambda}^{0,\gamma}(\R^2_*,\L)}
    +
    A_g(f)
    \right).
\]
\end{corollary}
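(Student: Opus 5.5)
The plan is to upgrade the \(L^\infty\) bound of \Cref{prop:bounded-solvability-boundary} to Schauder bounds, one ball at a time, and then add the exponential weight by a barrier argument.

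\textbf{Unweighted estimate.} By \Cref{prop:bounded-solvability-boundary},
\[
\lVert\phi\rVert_{L^\infty}\le C_g\bigl(\lVert f\rVert_{L^\infty}+A_g(f)\bigr),
\qquad
\lVert f\rVert_{L^\infty}\lesssim\lVert f\rVert_{C_*^{0,\gamma}}.
\]
The \(C_*^{2,\gamma}\) norm is a supremum over unit trivializing balls \(B_1(q)\) with \(\dist(q,0)\ge R_0\) and over the singular ball \(B_{R_0}(0)\), so it suffices to bound each local norm.

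On a trivializing ball, fix a sheet. The scalar equation \(\Delta\phi-W''(U)\phi=f\) has coefficient \(W''(U)\), which is bounded in \(C^{0,\gamma}\) uniformly in \(q\) by the smoothness of \(U\) and its decay estimates (\Cref{cor: estim U-}, \Cref{lem: estim U+}). Interior Schauder estimates on \(B_2(q)\supset B_1(q)\) then give
\[
\lVert\phi\rVert_{C^{2,\gamma}(B_1(q))}\lesssim\lVert f\rVert_{C^{0,\gamma}(B_2(q))}+\lVert\phi\rVert_{L^\infty}.
\]
The ball \(B_2(q)\) is covered by boundedly many trivializing balls, or touches the singular annulus, where the compatibility remark makes the norms equivalent.

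On the singular ball, lift to the square-root coordinate. The odd lift \(\widehat\phi\) solves
\[
-\Delta\widehat\phi+4|\xi|^2W''(\U)\widehat\phi=-4|\xi|^2\widehat f
\quad\text{on }B_{2\sqrt{R_0}},
\]
with smooth coefficients. The zero extension \(\widehat f\) lies in \(C^{0,\gamma}\) by definition of the singular norm, and \(\widehat\phi\) extends removably across the branch point. Interior Schauder estimates on the cover then bound \(\lVert\widehat\phi\rVert_{C^{2,\gamma}(B_{\sqrt{R_0}})}\) by \(\lVert\widehat f\rVert_{C^{0,\gamma}}+\lVert\phi\rVert_{L^\infty}\). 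Taking suprema gives the first inequality.

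\textbf{Weighted estimate.} By the same local Schauder argument applied on each ball \(B_1(q)\), it is enough to prove the pointwise decay
\[
|\phi(q)|\lesssim e^{-\lambda\dist(q,\ell_+)}\bigl(\lVert f\rVert_{C_{\ell_+,\lambda}^{0,\gamma}}+A_g(f)\bigr),
\]
and then multiply the local estimate by \(e^{\lambda\dist(q,\ell_+)}\). Near \(\ell_+\), and in a fixed compact neighbourhood of the puncture, the weight is bounded, so the unweighted bound already gives this.

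Away from \(\ell_+\), the three regions \(\{z\ge L\}\), \(\{z\le -L\}\) and \(\{x\le -L\}\cap\{\dist(\cdot,\ell_+)\ge L\}\) are handled as in Step 2 of \Cref{prop:bounded-solvability-boundary}. By the end asymptotics, \(W''(U)\ge\kappa_W-Ke^{-\sigma\dist(\cdot,\ell_+)}\) there, so for \(L\) large the potential exceeds \(\mu^2\) for some \(\mu\) with \(\lambda<\mu<\min\{\sigma,\sqrt{\kappa_W}\}\). I would then compare \(\pm\phi\) with a barrier of the form
\[
M\bigl(e^{-\lambda\dist(\cdot,\ell_+)}+\text{a growing exponential with a small coefficient}\bigr),
\]
with \(M\) a multiple of \(\lVert f\rVert_{C_{\ell_+,\lambda}^{0,\gamma}}+\lVert\phi\rVert_{L^\infty}\), exactly as in the Step 2 argument, letting the small coefficient tend to zero at the end. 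The function \(d=\dist(\cdot,\ell_+)\) is only Lipschitz, but it is smooth off \(\ell_+\) and the ray \(\{x=0\}\), with \(|\nabla d|=1\) and \(\Delta d\ge 0\) in the distributional sense (it is convex in these regions). Hence
\[
\bigl(\Delta-\mu^2\bigr)e^{-\lambda d}\le(\lambda^2-\mu^2)e^{-\lambda d}<0,
\]
so \(e^{-\lambda d}\) is a strict supersolution, and the condition \(\lambda<\sqrt{\kappa_W}\) is what makes it work.

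\textbf{Expected obstacle.} The main difficulty is that the barrier is non-smooth where the regions meet and near the puncture. I would try smoothing \(d\) by a regularized distance, \(\dist\) to a slightly enlarged tube around \(\ell_+\cup B_L(0)\), whose Laplacian has a favorable sign. The loss from \(\Delta\tilde d\) is \(O(1/d)\), which is absorbed because \(\mu>\lambda\) leaves a margin for \(d\ge L\). The constant then depends on \(\lambda\) through the gap \(\min\{\sigma,\sqrt{\kappa_W}\}-\lambda\), which explains the dependence \(C_{g,\lambda}\) in the statement.
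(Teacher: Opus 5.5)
Your proposal is correct and follows the paper's own proof. The ingredients match: the $L^\infty$ bound from \Cref{prop:bounded-solvability-boundary}; local Schauder estimates on trivializing balls and on the square-root lift at the puncture (removable singularity, with oddness forcing the value zero at the branch point); and a weighted $L^\infty$ bound from an exponential barrier on the complement of a tube around $\ell_+$, where $W''(U)$ is uniformly close to $\kappa_W$.

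Two minor remarks. First, no smoothing of $d=\dist(\cdot,\ell_+)$ is needed: it is the distance to the convex set $\overline{\ell_+}$, so it is $C^{1,1}$ with $\Delta d\ge 0$ on $\{d>L\}$. Second, the auxiliary growing term must blow up in every direction of $\{d>L\}$, because $d$ stays bounded as $x\to+\infty$ along $\{|z|=L+1\}$. A function of $d$ or of $z$ alone therefore does not work; use something like $a e^{\nu|y|}$ with $\nu<\mu$.
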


\begin{proof}
The \(L^\infty\) estimate in \Cref{prop:bounded-solvability-boundary} gives
\[
    \lVert\phi\rVert_{L^\infty(\R^2_*)}
    \leq
    C_g\left(\lVert f\rVert_{L^\infty(\R^2_*)}+A_g(f)\right).
\]
The unweighted \(C_*^{2,\gamma}\) estimate follows from this bound and the
standard local Schauder estimates in the trivializing balls. On the singular
ball at the puncture, one passes to the square-root coordinate. The lifted
equation has the form
\[
    \Delta\widetilde\phi
    -
    4|\xi|^2W''(\U)\widetilde\phi
    =
    4|\xi|^2\widetilde f,
\]
with smooth coefficients across \(\xi=0\). Bounded solutions have a removable
singularity on the cover, and oddness fixes the value at the branch point to be
zero; the same Schauder estimate therefore gives the singular part of the
\(C_*\)-norm.

If \(f\) is exponentially weighted away from \(\ell_+\), the same local
estimate gives the weighted Schauder bound once the corresponding weighted
\(L^\infty\) estimate is known. This \(L^\infty\) estimate is obtained by the
standard exponential barrier argument on the complement of a fixed tubular
neighborhood of \(\ell_+\), where \(W''(U)\) is uniformly positive because
\(U\) is exponentially close to a pure phase; compare
\cite[Lemma~8.3]{Guaraco-Marques-Neves19}. This proves the
\(C_{\ell_+,\lambda}^{2,\gamma}\) estimate.
\end{proof}

\section{Preliminaries for the gluing argument}
\label{sec: preliminaries-gluing}

\subsection{Geometric set up and coordinate systems}\label{subs: geometric-setup}
Let
$N\geq1$ and let $I_1,\dots,I_N$ be pairwise disjoint line segments in $\R^2$.
We write
\[
    \p\coloneqq \bigcup_{j=1}^N\partial I_j
    =\{p_1,\dots,p_{2N}\},
    \qquad
    \Gamma\coloneqq I_1\cup\cdots\cup I_N,
    \qquad
    \R^2_{\p}\coloneqq \R^2\setminus\p,
\]
and denote by $\L_{\p}\to\R^2_{\p}$ the line bundle constructed in \Cref{subs: line bundle}. We also fix a unit vector $\nu_j$ normal to the segment $I_j$, and, given $p\in \partial I_j$, we denote by $e_p$ the unit vector pointing towards the interior of $I_j$. In this context, we set the following boundary coordinates:
\begin{equation}\label{eq: coordinates Yp}
    Y_p\colon \R^2 \to \R^2,
    \qquad
    Y_p(x,z)=p+xe_p+z\nu_j.
\end{equation}
as well as the usual Fermi coordinates around the segment $I_j$:
\begin{equation}\label{eq: coordinates Xj}
    X_j\colon I_j\times \R \to \R^2,
    \qquad
    X_j(x,z)=x+z\nu_j.
\end{equation}
For \(\theta\in\R\), let
\[
    R_\theta
    \coloneqq
    \begin{pmatrix}
        \cos\theta&\sin\theta\\
        -\sin\theta&\cos\theta
    \end{pmatrix}
\]
denote clockwise rotation through the angle \(\theta\). Rotations about the
origin and positive dilations preserve the one-puncture double cover; using
their lifts from the identity and the induced identifications of \(\L\), write
\(U_\theta\coloneqq U\circ R_\theta\).

\subsection{Cut-off functions}
\label{subs: cutoff}

\begin{figure}
    \centering
    \begin{overpic}[width=.7\textwidth]{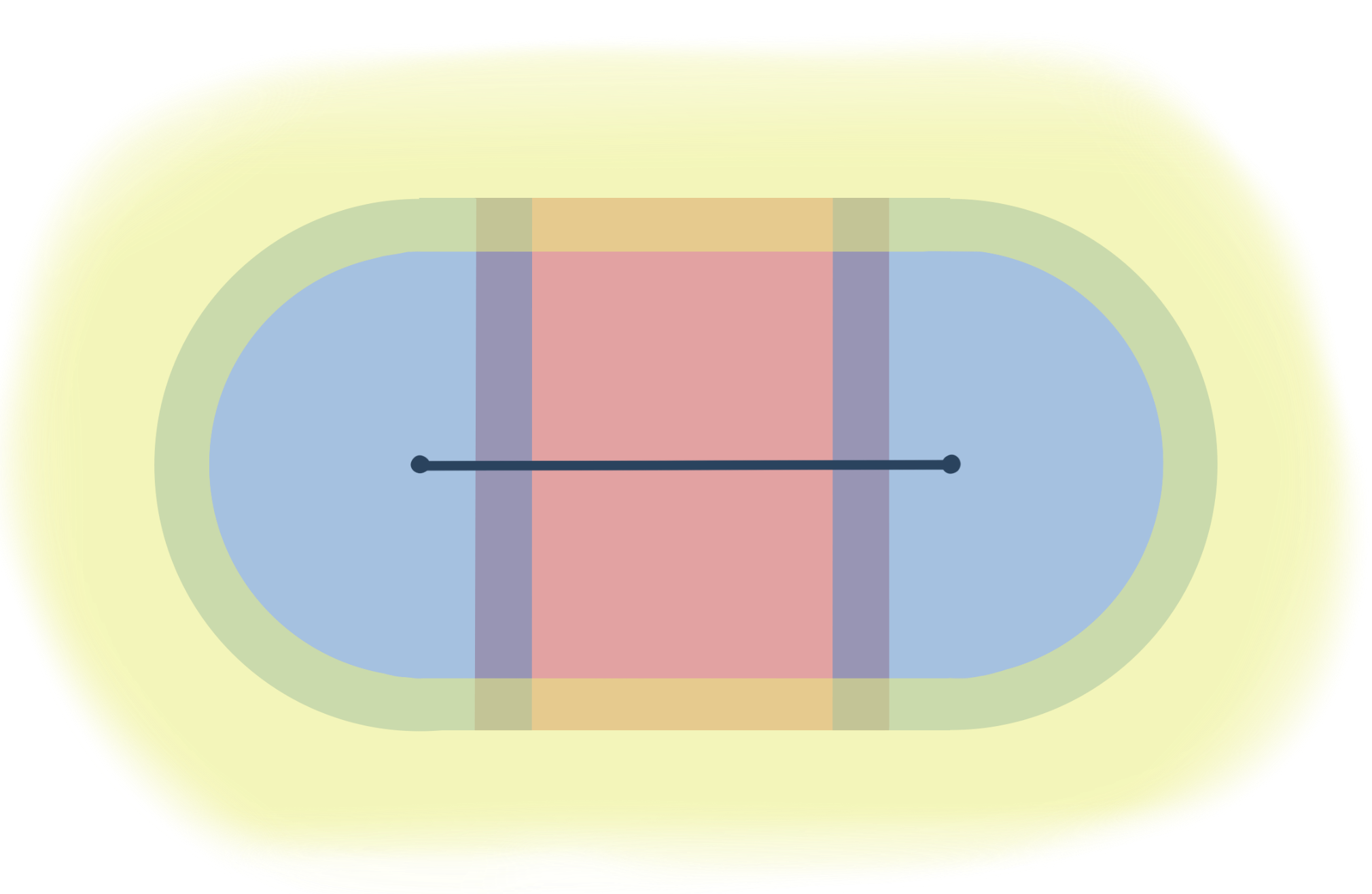}
    \put(65,7){$\{\chi_m=0\}$}
    \put(19,25){$\{\zeta_m=1\}$}
    \put(65,25){$\{\zeta_m=1\}$}
    \put(42,37){$\{\eta_m=1\}$}
    \end{overpic}
    \caption{A scheme of the cut-off regions near a single interval. Red represents the support of $\eta_m$, blue the support of $\zeta_m$ and yellow is the region where $\chi_m$ vanishes.}
    \label{fig: cutoff}
\end{figure}

Given $j=1,\dots,N$ and $m\in \N$ and $p\in \p$, we define four types of cut-off functions:
\begin{enumerate}
    \item $\chi_{j,m}$, which has support on a neighbourhood containing the segment $I_j$,
    \item $\chi_{\Ical,m}\coloneqq 1-\sum_{j=1}^N\chi_{j,m}$, which has support away from $\Gamma$,
    \item $\zeta_{p,m}$, which has support around the boundary point $p$, and
    \item $\eta_{j,m}:=\chi_{j,10} -\sum_{p\in \partial I_j}\zeta_{p,m}$, which has support in a tubular neighborhood around the interior of the interval $I_j$.
\end{enumerate}

To precisely define the functions in items (1) and (3) we need to fix a few
additional parameters. First, choose \(\delta>0\), using the convention that
\(\min_{j\ne k}\dist(I_j,I_k)=+\infty\) when \(N=1\), and, with \(\gamma\)
fixed as in \eqref{eq: exponential-decay-params}, choose \(\alpha>0\) such that
\begin{equation}\label{eq: segment-gluing-params}
    \delta<\frac{1}{24}
    \min\Big\{\min_j |I_j|,
    \min_{j\ne k}\dist(I_j,I_k)\Big\} \qquad \text{and} \qquad
    \alpha\in\left(\frac{1}{2+\gamma},\frac12\right).
\end{equation}
Roughly speaking, $\delta$ isolates the different segments, while $\alpha$
controls the order at which the support of the cut-off functions shrinks. The
lower bound on \(\alpha\) is the one used in the endpoint moment estimates
below. In addition, we need to fix a smooth non-increasing function $\varrho\colon\R\to[0,1]$ such that
$\varrho\equiv1$ on $(-\infty,1]$, $\varrho\equiv0$ on $[2,+\infty)$ and satisfying $|\varrho'|\leq1$.

For every integer \(m\geq1\), choose smooth cut-off functions with the support
and nesting properties described below. The following formulas should be
understood as schematic descriptions of the corresponding regions:
\[
    \chi_{j,m}(q)
    \sim
    \varrho\left(\frac{\dist(q,I_j)}{\delta\eps^\alpha}-m\right), \qquad \forall q \in \R^2_{\p}.
\]
and, given \(p\in \partial I_j\),
\[
    \zeta_{p,m}(q)\sim \chi_{j,10}(q)\cdot \varrho\left(\frac{x}{\delta\eps^\alpha}-m\right), \qquad \forall q=Y_p(x,z) \in \R^2_{\p}.
\]

Then
\begin{align*}
    \{\chi_{j,m}=1\}
    &=
    \{q \in \R^2_{\p}: \dist(q,I_j)\leq (m+1)\delta\eps^\alpha\}\\
    \{0<\chi_{j,m}<1\}
    &=
    \{q \in \R^2_{\p}: (m+1)\delta\eps^\alpha<\dist(q,I_j)<(m+2)\delta\eps^\alpha\}\\
    \{\zeta_{p,m}=1\}&=\{q =Y_p(x,z) \in \R^2_{\p}: x\leq (m+1)\delta\eps^\alpha\}\cap \{\chi_{j,10}=1\} \\
    \{0<\zeta_{p,m}<1\}
    &=
    \{q =Y_p(x,z) \in \R^2_{\p}: (m+1)\delta\eps^\alpha<x<(m+2)\delta\eps^\alpha\}\cap \{\chi_{j,10}=1\}
\end{align*}

We summarise a few immediate properties of these cut-off functions. The nesting
property below is used for \(m=1,\dots,9\), while the remaining properties are
used for \(m=1,\dots,10\).
\begin{itemize}
    \item For \(m=1,\dots,9\), $\spt\chi_{j,m}\subset\{\chi_{j,m+1}=1\}$ and
$\chi_{j,m}\chi_{j,m+1}=\chi_{j,m}$.
    \item By the choice of $\delta$, the supports
of $\chi_{j,m}$ and $\chi_{k,m}$ are disjoint for $j\ne k$
    \item The functions $\{\chi_{1,m},\dots,\chi_{N,m},\chi_{\Ical,m}\}$ form a partition of
unity in $\R^2$ while
\[
    \chi_{j,10}=\eta_{j,m}+\sum_{p\in \partial I_j}\zeta_{p,m}
\]
acts like a partition of unity on the region $\spt \chi_{j,9}$ surrounding the segment $I_j$.
    \item For \(k=1,2\), the cut-offs are chosen so that
    \[
        |\nabla^k\chi_{j,m}|+|\nabla^k\zeta_{p,m}|+|\nabla^k\eta_{j,m}|
        \lesssim
        \eps^{-k\alpha}.
    \]
\end{itemize}

For later use, choose fixed constants \(0<c_\delta<C_\delta\), depending only
on the cutoff profiles, such that, for every \(j\) and
\(p\in\partial I_j\),
\[
    \operatorname{pr}_x\left(
    \spt(\chi_{j,4}\nabla\zeta_{p,4})
    \cup\spt(\chi_{j,4}\Delta\zeta_{p,4})
    \right)
    \subset
    I_\eps\coloneqq
    [c_\delta\eps^\alpha,C_\delta\eps^\alpha].
\]
Here \(\operatorname{pr}_x\) denotes projection onto the inward boundary
coordinate in the chart \(Y_p\).

\subsection{Gluing conventions}
\label{subs: gluing-conventions}

Let \(\sigma>0\) be the common decay rate fixed after
\Cref{lem: estim U+}. Fix an exponential weight
\(0<\lambda<\min\{\sigma,\sqrt{\kappa_W}\}\) and a correction-size exponent
\(\beta\geq2\).
Since \(\alpha>1/(2+\gamma)>1/3\), we may choose \(\tau\) and define the
associated correction radius and boundary decay function by
\begin{align}
    2<\tau
    &<\frac{1+\alpha}{1-\alpha},
    \label{eq: boundary-decay-exponent-range}\\
    R_\eps
    &\coloneqq\eps^{\beta+1-\alpha},
    \label{eq: correction-radius}\\
    g_\tau(X)
    &\coloneqq(1+X)^{2-\tau},
    \qquad X\geq0.
    \label{eq: boundary-decay-function}
\end{align}
The condition \(\tau>2\) makes \(g_\tau\) admissible in the sense of
\Cref{def: adm decay}.

We now fix the bundle conventions used throughout the gluing construction.
Set
\[
    \Omega_\Gamma\coloneqq\R^2_{\p}\setminus\Gamma.
\]
This is a trivializing region for \(\L_{\p}\), and its inverse image in the
sign double cover has two connected components,
\[
    \pi^{-1}(\Omega_\Gamma)
    =
    \widetilde\Omega_+\sqcup\widetilde\Omega_-,
    \qquad
    \iota(\widetilde\Omega_+)=\widetilde\Omega_-.
\]
We fix the labels of these components once and for all.

\begin{definition}[The outer section]\label{def: outer section}
Let \(\widetilde\Ical\) be the odd function on the double cover given by
\(\widetilde\Ical=1\) on \(\widetilde\Omega_+\) and
\(\widetilde\Ical=-1\) on \(\widetilde\Omega_-\), and set
\(\widetilde\Ical=0\) over \(\Gamma\setminus\p\). We denote by \(\Ical\)
the corresponding pointwise section of \(\L_{\p}\).
\end{definition}

The section \(\Ical\) is discontinuous across \(\Gamma\setminus\p\). It will
only be used after multiplication by a cutoff that vanishes near \(\Gamma\).
In either outer trivialization it is represented by a constant pure phase.

For each \(j\in\{1,\dots,N\}\), set
\[
    T_j
    \coloneqq
    X_j\bigl(\operatorname{int}(I_j)\times(-\delta,\delta)\bigr).
\]
The tube \(T_j\) contains no point of \(\p\) and meets no other segment, so
\(\L_{\p}\) is trivial over \(T_j\). We choose the trivialization in which
the representative of \(\Ical\) is \(-1\) below \(I_j\) and \(+1\) above
\(I_j\), relative to the normal \(\nu_j\).

Fix \(p\in\partial I_j\). By the classification of connected double covers
of a punctured disk, there is, up to sign, a bundle identification
\begin{equation}\label{eq: boundary-bundle-identification}
    \mathcal T_p\colon
    \L|_{B_\delta(0)\setminus\{0\}}
    \longrightarrow
    \L_{\p}|_{B_\delta(p)\setminus\{p\}}
\end{equation}
covering \(Y_p\). We choose the sign so that \(U_+\) agrees with the positive
representative of \(\Ical\).

For \(\boldsymbol I\coloneqq(I_1,\dots,I_N)\) and
\(\h=(h_1,\dots,h_N)\), with \(h_j\colon I_j\to\R\), set
\[
    \lVert\h\rVert_{C^{k,\gamma}(\boldsymbol I)}
    \coloneqq
    \sup_{j=1,\dots,N}\lVert h_j\rVert_{C^{k,\gamma}(I_j)},
\]
and let \(C_0^{k,\gamma}(\boldsymbol I)\) be the subspace of tuples whose
components vanish on the boundaries of their intervals.

The shifts used in the gluing construction will be affine in a collar of every
boundary point. To encode this condition, we view \(\eta_{j,4}\) as a function
on \(I_j\) by restriction to the segment and set
\begin{equation}\label{eq: admissible-shift-space}
    \mathcal X_\eps
    \coloneqq
    \left\{
    \h\in C_0^{2,\gamma}(\boldsymbol I):
    \spt h_j''\subset\spt\eta_{j,4}
    \text{ for every }j
    \right\}.
\end{equation}
For \(r>0\), we denote by
\begin{equation}\label{eq: admissible-shift-ball}
    \mathcal B(r)
    \coloneqq
    \left\{
    \h\in\mathcal X_\eps:
    \lVert\h\rVert_{C^{2,\gamma}(\boldsymbol I)}\leq r
    \right\}
\end{equation}
the ball of radius \(r\) in \(\mathcal X_\eps\).

Let \(p\in\partial I_j\), and use the inward coordinate \(x\geq0\), so that
points of \(I_j\) near \(p\) are written as \(p+xe_p\). Since
\(h_j(p)=0\) and \(h_j''=0\) on the component of
\(I_j\setminus\spt\eta_{j,4}\) adjacent to \(p\), every
\(\h\in\mathcal X_\eps\) satisfies
\begin{equation}\label{eq: affine-boundary-collar}
    h_j(p+xe_p)=\theta_{\h}(p)x,
    \qquad
    \theta_{\h}(p)\coloneqq\partial_{e_p}h_j(p),
\end{equation}
throughout this boundary collar. In particular, if \(\h\in\mathcal B(r)\), then
\begin{equation}\label{eq: boundary-angle-bound}
    \lvert\theta_{\h}(p)\rvert\leq r
    \qquad\text{for every }p\in\p.
\end{equation}

The preceding coordinates will also be used after adapting them to a shift.

\begin{definition}[Adapted coordinate representatives]
\label{def: adapted-coordinate-representatives}
Let \(\h\in\mathcal X_\eps\), and let \(Q\) be a local section near
\(I_j\). Once a segment trivialization has been chosen, we denote its
representative in the shifted interior coordinates by
\begin{equation}\label{eq: interior-error-pullback}
    \widetilde Q_{j,\h}(x,t)
    \coloneqq
    Q\bigl(X_j(x,h_j(x)+\eps t)\bigr).
\end{equation}
Near \(p\in\partial I_j\), using the fixed bundle identification
\(\mathcal T_p\) covering \(Y_p\), we denote the representative
on the fixed boundary model by
\begin{equation}\label{eq: boundary-error-pullback}
    \widehat Q_{p,\h}(X,Z)
    \coloneqq
    \mathcal T_p^{-1}
    Q\bigl(Y_p(\eps R_{-\theta_{\h}(p)}(X,Z))\bigr).
\end{equation}
Here the second formula uses the lifted similarity convention above.
For a scalar coefficient, the same notation denotes ordinary composition,
without \(\mathcal T_p^{-1}\). Thus tildes with subscripts \(j,\h\) refer to
the shifted interior variables, whereas hats with subscripts \(p,\h\) refer
to the fixed, unrotated boundary variables. Whenever a localized
representative is measured on the full coordinate model, it is extended by
zero outside its coordinate domain.
\end{definition}

We use \(\mathsf C\) for physical Fermi norms and \(\mathcal C\) for stretched
norms, reserving \(C_\eps^{k,\gamma}\) for isotropic ball norms.

\begin{definition}[Physical product-space norms]
Let \(I\subset\R\) be an interval, let \(D\subset I\times\R\), and let
\(\psi=\psi(x,z)\) be defined on a neighborhood of \(D\). Set
\begin{align*}
    [\psi]_{\gamma,\eps;D}
    &\coloneqq
    \sup_{\substack{(x,z),(x',z')\in D\\(x,z)\ne(x',z')}}
    \frac{|\psi(x,z)-\psi(x',z')|}
    {(|x-x'|+\eps^{-1}|z-z'|)^\gamma},\\
    \lVert\psi\rVert_{\mathsf C^{k,\gamma}_\eps(D)}
    &\coloneqq
    \sum_{m+j\leq k}\eps^m
    \left(
    \lVert\partial_z^m\partial_x^j\psi\rVert_{L^\infty(D)}
    +
    [\partial_z^m\partial_x^j\psi]_{\gamma,\eps;D}
    \right),\\
    \lVert\psi\rVert_{\mathsf C^{k,\gamma}_{\eps,\lambda}(D)}
    &\coloneqq
    \lVert\cosh(\lambda z/\eps)\psi\rVert_{\mathsf C^{k,\gamma}_\eps(D)}.
\end{align*}
\end{definition}
Since \(\cosh(\lambda z/\eps)\asymp e^{\lambda|z|/\eps}\), the weighted
physical norm is the product-coordinate analogue of the section norm relative
to \(I\times\{0\}\) from \Cref{def: exp decay rel set}.

\begin{definition}[Stretched interior product-space norm]
\label{def: stretched interior norm}
Let \(I\subset\R\) be an interval, let \(D\subset I\times\R\), and let
\(v=v(x,t)\) be defined on a neighborhood of \(D\). Set
\begin{align*}
    d_\eps\bigl((x,t),(x',t')\bigr)
    &\coloneqq
    \eps^{-1}|x-x'|+|t-t'|,\\
    [v]_{\gamma,d_\eps;D}
    &\coloneqq
    \sup_{\substack{(x,t),(x',t')\in D\\(x,t)\ne(x',t')}}
    \frac{|v(x,t)-v(x',t')|}
    {d_\eps((x,t),(x',t'))^\gamma},\\
    \lVert v\rVert_{\mathcal C_\eps^{k,\gamma}(D)}
    &\coloneqq
    \sum_{a+b\leq k}\eps^a
    \left(
    \lVert\partial_x^a\partial_t^b v\rVert_{L^\infty(D)}
    +
    [\partial_x^a\partial_t^b v]_{\gamma,d_\eps;D}
    \right),\\
    \lVert v\rVert_{\mathcal C_{\eps,\lambda}^{k,\gamma}(D)}
    &\coloneqq
    \lVert\cosh(\lambda t)v\rVert_{\mathcal C_\eps^{k,\gamma}(D)}.
\end{align*}
\end{definition}
This norm is adapted to
\(\partial_{tt}+\eps^2\partial_{xx}-W''(H(t))\), since \(X=x/\eps\) gives
the usual uniformly elliptic variables \((X,t)\). When \(D\) is clear, we
suppress it from the notation for these norms and seminorms.

We also fix the normalization of the projection onto the heteroclinic kernel.
Set
\begin{equation}\label{eq: heteroclinic-projection}
    A_H\coloneqq\int_\R(H'(t))^2\,dt,
    \qquad
    \mathfrak p_j(G)(x)
    \coloneqq
    \frac{1}{A_H}\int_\R G(x,t)H'(t)\,dt .
\end{equation}

\section{Existence of sections modelling a family of line segments}\label{sec: LS}

In this section we carry out the gluing construction for a section $u$ solving
\cref{eq:section-allen-cahn} and whose nodal set is close to a finite family of
line segments. First, an approximate section $\omega_{\h}$ is obtained by placing a rotated boundary
model solution \(U\) at the endpoints and the heteroclinic \(H\) along the
interior of each segment, with its position determined by a shift \(\h\). The
main task is then to find a perturbation section \(\varphi\) such that
\(u=\omega_{\h}+\varphi\) is a genuine Allen--Cahn section on the associated
line bundle.

For the outer correction, fix a sufficiently small
\(\widetilde\lambda\in(0,\lambda)\) and set
\[
    r_{\mathrm{out}}(\eps)
    \coloneqq
    \exp\left(-\frac{\widetilde\lambda}{\eps^{1-\alpha}}\right),
\]
\[
    \mathscr B_{\mathrm{out}}(\eps)
    \coloneqq
    \left\{
        \psi\in
        C_{\Gamma,\eps,3\lambda/4}^{2,\gamma}
        (\R^2_{\p},\L_{\p}):
        \lVert\psi\rVert_{C_{\Gamma,\eps,3\lambda/4}^{2,\gamma}
        (\R^2_{\p},\L_{\p})}
        \le r_{\mathrm{out}}(\eps)
    \right\}.
\]
With these conventions, we prove in this section the following existence
result.

\begin{theorem}\label{thm: sol on intervals}
With the notation and parameters fixed above, there exists
\(\eps_*>0\), depending only on the fixed data and on
\(\delta,\alpha,\lambda,\gamma,\beta,\tau\), such that the following holds. For every
\(0<\eps\leq\eps_*\), there exists a smooth section
\[
    u=u_\eps\colon\R^2_{\p}\to\L_{\p}
\]
solving \eqref{eq:section-allen-cahn} whose nodal set is
close to the prescribed intervals in the Hausdorff distance. More precisely:
\[
    \dist_{\mathcal H}
    \left(\{u=0\},\Gamma\right)
    \lesssim \eps^\beta .
\]
Moreover, the following asymptotic estimates hold near the segments:
\begin{enumerate}
    \item For each \(j=1,\dots,N\), there is
    \(h_j\in C^{2,\gamma}_0(I_j,\R)\) such that
    \(\lVert h_j\rVert_{C^{2,\gamma}(I_j)}\lesssim \eps^\beta\) and
    \begin{equation*}
        \left\lVert
        u(x,z)
        -
        H\left(\frac{z-h_j(x)}{\eps}\right)
        \right\rVert_{C^{2,\gamma}_{I_j,\eps,\lambda}
        (\spt\eta_{j,1},\L_{\p})}
        \lesssim R_\eps,
    \end{equation*}
    where \((x,z)\) denote the Fermi coordinates from
    \eqref{eq: coordinates Xj}, and the \(H\)-term is interpreted as a section
    using the trivialization of \(\L_{\p}\) on \(\spt\eta_{j,1}\).

    \item For each \(j\) and each boundary point \(p\in\partial I_j\), define
    the angle
    \(
        \theta(p)\coloneqq \partial_{e_p}h_j(p).
    \)
    Then
    \begin{equation*}
        \sup_{p\in\p}|\theta(p)|
        \lesssim \eps^\beta
    \end{equation*} and
    writing \(R_{\theta(p)}\) for the clockwise rotation by angle \(\theta(p)\),
    \begin{equation*}
        \left\lVert
        u(x,z)
        -
        U\circ R_{\theta(p)} \left(\frac{x}{\eps},\frac{z}{\eps}\right)
        \right\rVert_{C_{*,\eps}^{2,\gamma}(\spt\zeta_{p,1},\L_{\p})}
        \lesssim R_\eps,
    \end{equation*}
    where \((x,z)\) are the boundary coordinates from
    \Cref{eq: coordinates Yp} around \(p\), and the model term is transported
    to \(\L_{\p}\) using the boundary identification \(\mathcal T_p\) from
    \eqref{eq: boundary-bundle-identification}.
\end{enumerate}
\end{theorem}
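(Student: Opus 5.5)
The plan is a Lyapunov--Schmidt reduction with a coupled system of boundary, inner and outer corrections, closed by a fixed-point argument in a product space.

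\textbf{Ansatz and error.} For \(\h\in\mathcal B(\eps^\beta)\) we build \(\omega_{\h}\) by gluing three pieces with the partition of unity \(\chi_{j,m},\chi_{\Ical,m}\) and \(\chi_{j,10}=\eta_{j,m}+\sum_p\zeta_{p,m}\):
\begin{itemize}
\item the outer section \(\chi_{\Ical,m}\Ical\) away from \(\Gamma\);
\item the shifted heteroclinic \(\eta_{j,m}H((z-h_j(x))/\eps)\) along each segment;
\item the rotated models \(\zeta_{p,m}\,\mathcal T_pU_{\theta_{\h}(p)}(\cdot/\eps)\) at each endpoint.
\end{itemize}
Because \(h_j\) is affine in the boundary collar, as in \eqref{eq: affine-boundary-collar}, the rotated \(U\) and the shifted heteroclinic differ there only by \(U_\theta-H\) evaluated at distance \(\sim\eps^{\alpha-1}\) in stretched variables, and by the rotation/shift coordinate mismatch of order \(\theta^2x\). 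The first term is exponentially small by \Cref{cor: estim U-}; the second is polynomial and sits on \(I_\eps\).

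We then estimate \(S(\omega_{\h})\) in the weighted norms, separating:
\begin{itemize}
\item the exponentially small pieces, which are supported away from the segments;
\item the interior piece \(\eps^2 h_j''H'\) together with the \(\theta^2\)-type mismatch terms, supported on \(I_\eps\);
\item the contribution of the cutoff derivatives on \(\spt\nabla\zeta_{p,4}\).
\end{itemize}
The lower bound \(\alpha>1/(2+\gamma)\) should make the mismatch \(O(R_\eps)\) in the relevant norms.

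\textbf{Coupled linear system.} We seek \(u=\omega_{\h}+\zeta\phi_b+\eta\phi_i+\psi_{\mathrm{out}}\) and split \(S(u)=0\) into three equations.
\begin{itemize}
\item A boundary equation \(\Delta\hat\phi-W''(U)\hat\phi=\hat f\) on the fixed model, solved in stretched variables by \Cref{lem: boundary invertibility} and \Cref{cor: schauder-tan} with \(g=g_\tau\). The \(H'\)-projection of the boundary source on \(I_\eps\) must obey \(|f^\top|\lesssim g_\tau''\). This is where \(\tau<(1+\alpha)/(1-\alpha)\) enters: the support extends to \(X\sim\eps^{\alpha-1}\), and the source size must be compared with \((1+X)^{-\tau}\) there.
\item An interior equation \(\partial_{tt}\phi+\eps^2\partial_{xx}\phi-W''(H)\phi=G-c_j(x)H'\), solved in \(\mathcal C^{2,\gamma}_{\eps,\lambda}\) with \(\mathfrak p_j\)-orthogonality, by the standard heteroclinic theory.
\item An outer equation \(\eps^2\Delta\psi-W''(\pm1)\psi=\dots\), invertible with exponential weights since \(W''(\Ical)=\kappa_W\). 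This gives the tiny size \(r_{\mathrm{out}}\).
\end{itemize}
The interaction terms produced by the cutoffs are exponentially small, except for one: the asymptotic \(H'\)-component \(\phi^\top_\infty\) of the boundary correction. This component, computed through \eqref{eq:phi-top-infty-formula}, is not small and enters the interior region as \(\phi^\top_\infty H'\). We absorb it into the interior problem, so that it becomes boundary data for the Jacobi equation.

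\textbf{Reduced equation and fixed point.} The Lagrange multipliers \(c_j\) vanish if and only if the reduced equation holds. This equation has the form
\[
\eps\,h_j''=\mathcal N_j(\h,\phi),\qquad h_j=0\text{ at }\partial I_j,\qquad \spt h_j''\subset\spt\eta_{j,4},
\]
with right-hand side of size \(\eps^{\beta+1}\)-type. The right-hand side includes the projection of the mismatch terms and of the boundary \(H'\)-mode \(\phi^\top_\infty\) cut off by \(\eta\). Since the limit segment is straight there is no geometric leading term, so the equation is solved by integrating twice on \(I_j\), and the support condition is enforced by construction. The angles \(\theta_{\h}(p)=h_j'(p)\) then feed back into the boundary ansatz.

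The full map \((\h,\phi_b,\phi_i,\psi)\mapsto(\dots)\) should be a contraction on \(\mathcal B(\eps^\beta)\times\{\|\phi\|\lesssim R_\eps\}\times\mathscr B_{\mathrm{out}}(\eps)\). Lipschitz dependence on \(\h\) comes through the rotations \(U_\theta\) and the shifted coordinates. Once the fixed point is found, the stated estimates follow from the norm bounds, and the Hausdorff statement follows from \(\partial_{\xi_2}\U>0\) and \(H'>0\) via the implicit function theorem.

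\textbf{Main obstacle.} The hardest step is the coupling through the boundary \(H'\)-mode. The boundary solution's tangential part tends to a nonzero \(\phi^\top_\infty\) only at the rate \(g_\tau\), so on the overlap \(I_\eps\) it is not exponentially small. We must verify that the resulting inner source and the Jacobi-equation data stay within the budget \(R_\eps\) and \(\eps^\beta\) for the chosen \(\alpha,\tau\). The first thing to try is to subtract the explicit corrector \(\phi^\top(x/\eps)H'\) from the boundary correction before cutting it off. This converts the mismatch into an equivalent small change of the shift \(h_j\), plus a remainder controlled by \(|\partial_x\phi^\top|\lesssim|g_\tau'|\) and \(|\partial_x^2\phi^\top|\lesssim g_\tau''\) from \eqref{eq: decay tangential part}.
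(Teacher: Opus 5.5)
Your architecture coincides with the paper's: the ansatz with $h_j$ affine in the boundary collar, the boundary/projected-interior/outer splitting, \Cref{lem: boundary invertibility} with $g=g_\tau$, the reduced equation $\eps h_j''=O(\eps^{1+\beta})$ inverted by the Dirichlet inverse of $d^2/dx^2$, and the nodal set via $\partial_{\xi_2}\U>0$ and $H'>0$. But the step you single out as the main obstacle is left unverified, and your account of it is not what actually happens, so there is a genuine gap.

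First, the boundary--interior cutoff interactions are not exponentially small apart from an $H'$-mode. The boundary source lives on the same $\eps^\alpha$-collar as $\nabla\zeta_{p,4}$: it consists of the approximation remainder, the commutators $\eps^2\nabla\eta_{j,1}\cdot\nabla v$ and $\eps^2\Delta\eta_{j,1}v$, and the potential mismatches. Hence $b_p$ is not small there. The couplings are only polynomially small: $\eps^{1-\alpha}\|v\|$ into the boundary equation, and $\eps^{1-\alpha}\|b_p\|$ back into the interior one. Worse, this source sits in $\{X\lesssim\eps^{\alpha-1}\}$, so its $A_{g_\tau}$-norm is $\eps^{-(1-\alpha)\tau}$ times its $H'$-projection. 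Suppose the $\nabla\eta_{j,1}$-commutators had a projection of their natural size $\eps^{1-\alpha}\|v\|$. Then the boundary estimate would only give $\|b_p\|\lesssim\eps^{-(1-\alpha)(\tau-1)}\|v\|$, and the loop $v\mapsto b_p\mapsto v$ would have gain of order $\eps^{(1-\alpha)(2-\tau)}\gg1$, since $\tau>2$. What closes it is the following cancellation. The condition $\int_\R vH'\,dt=0$, its $x$-derivative, and the horizontality of the cutoff gradients in the affine collar together kill this projection at leading order, leaving $O(\eps^{\beta+1-\alpha}\|v\|)$. Combined with the localized remainder and potential estimates, this gives a projected boundary source of size $O(\eps^{2\beta})$. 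Its $A_{g_\tau}$-norm is then $O(\eps^{2\beta-(1-\alpha)\tau})=o(R_\eps)$.

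Second, $b_{p,\infty}$ is not a datum that must be fed to the Jacobi equation as boundary data or removed by a separate corrector. The projected source above is supported in $\{|X|+|Z|\lesssim\eps^{\alpha-1}\}$. The closed formula \eqref{eq:phi-top-infty-formula}, with $D_RU=-XH'(Z)+O(e^{-\sigma X})$ on the positive end, then gives $|b_{p,\infty}|\lesssim\eps^{2\beta}\eps^{2\alpha-2}=o(R_\eps)$. In addition, \eqref{eq: decay tangential part} controls $b_p^\top$, $\partial_xb_p^\top$ and their H\"older seminorms on $I_\eps$ at scale $\eps^\alpha$. The mode reaches $c_j$ only through the $H'$-projections of $\eps^2\Delta\zeta_{p,4}\,b_p$ and $2\eps^2\nabla\zeta_{p,4}\cdot\nabla b_p$. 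These are $O(\eps^{2-\alpha(2+\gamma)}\eps^{2\beta-2+2\alpha})=O(\eps^{2\beta-\alpha\gamma})=o(\eps^{1+\beta})$. So the mode is simply part of the perturbation $R_j$ in $c_j=\eps h_j''+R_j$, and the shift renormalisation you have in mind is produced automatically by solving $\h=-\mathcal Q(\eps^{-1}R(\h))$.

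These sharp tangential bounds cannot be replaced by $\|b_p\|\lesssim R_\eps$. That bound only gives a H\"older seminorm of order $\eps^{1-\alpha-\gamma}R_\eps$ for the projected gradient commutator. This exceeds $\eps^{1+\beta}$ because $2\alpha+\gamma>1$ when $\alpha>1/(2+\gamma)$. For the same reason, your single contraction on the product space is not automatic. The $\h$-update divides by $\eps$, so in the natural norms its Lipschitz constant in the boundary variable is already of order $\eps^{-\alpha}$, whereas one needs $\mathrm{Lip}_{\h}R_j=o(\eps)$. The paper obtains this by first solving for all corrections at fixed $\h$, then using the tangential Lipschitz estimate \eqref{eq: tangential dependence phib} for the composed map.
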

The proof separates the correction into boundary, interior, and outer
responses, makes these responses compatible for each fixed shift, and finally
adjusts the shift to remove the remaining translational obstruction.

With this strategy in mind, we now define the approximate solution and state
the results used in the proof. Their proofs are given in
\Cref{sec: ansazt,sec: proofs}.

\subsection{The approximate solution and its error of approximation}\label{subs: error of approximation}

For a section \(w\) of \(\L_{\p}\), set
\begin{equation}\label{eq: EoA}
    S(w)\coloneqq \eps^2\Delta w-W'(w).
\end{equation}

Fix \(K>0\), independently of \(\eps\); its value will be chosen in the final
fixed-point argument. All smallness thresholds in the intermediate results
below may depend on this fixed \(K\). Let
\begin{equation}\label{eq: shift-size-compatibility}
    \h=(h_1,\dots,h_N)\in\mathcal B(K\eps^\beta).
\end{equation}
Using the fixed segment trivializations and the boundary identifications
\(\mathcal T_p\), define the approximate solution by
\begin{equation}\label{eq: section-valued-global-ansatz}
\begin{split}
    \omega_{\h}
    ={}&\chi_{\Ical,5}\Ical\\
    &+\sum_{j=1}^N\chi_{j,5}
    \left[
        \eta_{j,3}
        H\left(\frac{z-h_j(x)}{\eps}\right)
        +
        \sum_{p\in\partial I_j}
        \zeta_{p,3}
        \mathcal T_p
        \left(
            U_{\theta_{\h}(p)}
            \left(\frac{x_p}{\eps},\frac{z_p}{\eps}\right)
        \right)
    \right].
\end{split}
\end{equation}
Here \((x,z)\) are the Fermi coordinates on the \(j\)-th segment tube and
\((x_p,z_p)=Y_p^{-1}(q)\) are the boundary coordinates near \(p\). Each
localized term is extended by zero outside its coordinate neighborhood. The
cutoff nesting makes \eqref{eq: section-valued-global-ansatz} a smooth global
section of \(\L_{\p}\).

\begin{proposition}[The approximate solution]\label{prop: expansion error}
    The error of approximation \(S(\omega_{\h})\) is nonzero only in
    \[
        \bigcup_{j=1}^N
        \left(
            \{0<\chi_{j,5}<1\}\cup\spt\eta_{j,3}
        \right).
    \]
    Moreover,
    \begin{equation}\label{eq: estimate S away from Ij}
        \lVert S(\omega_{\h})\rVert_
        {C^{0,\gamma}_{I_j,\eps,\lambda}
        (\{0<\chi_{j,5}<1\},\L_{\p})}
        \lesssim 1,
    \end{equation}
    \begin{equation}\label{eq: Bj-error-expansion}
        \lVert S(\omega_{\h})\rVert_
        {C^{0,\gamma}_{I_j,\eps,\lambda}
        (\spt\eta_{j,3},\L_{\p})}
        \lesssim \eps^{1+\beta}.
    \end{equation}
    After removing the leading tangential term, one also has
    \begin{equation}\label{eq: improved Linfty error}
        \left\lVert
        S(\omega_{\h})
        +
        \eps\eta_{j,3} h_j''(x)H'\left(\frac{z-h_j(x)}{\eps}\right)
        \right\rVert_{L^\infty(\spt\eta_{j,3},\L_{\p})}
        \lesssim \eps^{1+\beta+\alpha\gamma}.
    \end{equation}
\end{proposition}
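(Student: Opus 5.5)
The plan is to split $\R^2_{\p}$ according to the nesting of the cutoffs in \eqref{eq: section-valued-global-ansatz}. On each region $\omega_{\h}$ is either an exact solution or a cutoff combination of two profiles that are exponentially or polynomially close, and I estimate $S(\omega_{\h})$ separately on each.

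\textbf{Support of the error.} On $\{\chi_{\Ical,5}=1\}$ the section is the pure phase $\Ical$, so $S=0$. On $\{\chi_{j,5}=1\}\cap\{\zeta_{p,3}=1\}$ it is $\mathcal T_p U_{\theta_{\h}(p)}(x_p/\eps,z_p/\eps)$. Rotations about the origin and dilations preserve the one-puncture cover and commute with $\Delta$, so this is an exact solution of $\eps^2\Delta u=W'(u)$ by \Cref{prop: existence}. Hence $S$ can only be nonzero on $\{0<\chi_{j,5}<1\}$ or on $\spt\eta_{j,3}$, which proves the first claim.

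\textbf{Estimate \eqref{eq: estimate S away from Ij}.} On $\{0<\chi_{j,5}<1\}$ we have $\dist(q,I_j)\sim\eps^\alpha$, so $|z|\gtrsim\eps^\alpha$, or $x_p\lesssim-\eps^\alpha$ near endpoints. Here both $H((z-h_j)/\eps)$ and the rotated $U$ are within $Ce^{-\sigma\dist(q,I_j)/\eps}$ of the same sign of $\Ical$, in $C^2$ scaled norms. This uses $|h_j|\lesssim\eps^\beta\ll\eps^\alpha$, together with \Cref{cor: estim U-}, \Cref{lem: estim U+}, and the exponential decay of $H\mp1$. Writing $\omega=\Ical+\chi(A-\Ical)$, $S(\omega)$ is a combination of $\eps^2\nabla\chi\cdot\nabla(A-\Ical)$, $\eps^2\Delta\chi\,(A-\Ical)$ and $W'(\omega)-\chi W'(A)$, each linear in $A-\Ical$ and its scaled derivatives. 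The cutoff derivatives contribute factors $\eps^{2-k\alpha}\le1$. Since $\lambda<\min\{\sigma,\sqrt{\kappa_W}\}$, multiplying by $e^{\lambda\dist/\eps}$ keeps the $C^{0,\gamma}_\eps$ norms bounded, which gives \eqref{eq: estimate S away from Ij}.

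\textbf{Interior region, where $\eta_{j,3}=1$.} Direct computation with $t=(z-h_j)/\eps$ gives
\[
S\bigl(H(t)\bigr)=h_j'^2H''(t)-\eps h_j''H'(t).
\]
Both terms decay like $e^{-\sqrt{\kappa_W}|t|}$. Since $|h_j'|^2\lesssim\eps^{2\beta}\le\eps^{1+\beta+\alpha\gamma}$ when $\beta\ge2$, this gives \eqref{eq: Bj-error-expansion} and \eqref{eq: improved Linfty error} there. The Hölder seminorm in the $\eps$-scaled metric costs nothing, because $h_j\in C^{2,\gamma}$ with norm $\lesssim\eps^\beta$.

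\textbf{Overlap region.} The main obstacle is $\{0<\zeta_{p,3}<1\}\cap\spt\eta_{j,3}$, where $x_p\in I_\eps$ and $h_j(x)=\theta x$ is affine by \eqref{eq: affine-boundary-collar}. There $\omega=\zeta A+\eta B$ with $\zeta+\eta=1$, $A=U_\theta(X,Z)$, $B=H(Z-\theta X)$ and $(X,Z)=(x_p,z_p)/\eps$. I will expand
\[
S(\omega)=\eta S(B)+2\eps^2\nabla\zeta\cdot\nabla(A-B)+\eps^2\Delta\zeta\,(A-B)+\bigl[W'(\eta B+\zeta A)-\eta W'(B)-\zeta W'(A)\bigr],
\]
and bound $A-B$ in two pieces. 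First, $A-H(-X\sin\theta+Z\cos\theta)$ is $O(e^{-\sigma\eps^{\alpha-1}})$ with all derivatives, by \Cref{cor: estim U-} applied at first coordinate $X\cos\theta+Z\sin\theta\gtrsim\eps^{\alpha-1}$. Second, the two heteroclinic arguments differ by
\[
O\bigl(\theta^3X+\theta^2|Z|\bigr)=O\bigl(\eps^{3\beta+\alpha-1}+\eps^{2\beta}|Z|\bigr),
\]
and this difference is multiplied by $H'$, which absorbs the factor $|Z|$ at the cost of a slightly smaller exponential weight, still above $\lambda$. With $\beta\ge2$ and $\alpha<1/2$, $\eps^{2\beta+\alpha-1}\le\eps^{1+\beta+\alpha\gamma}$, so the overlap contributions fit \eqref{eq: improved Linfty error}, and a fortiori \eqref{eq: Bj-error-expansion}. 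The bracketed nonlinear term is $O(|A-B|)$, and the cutoff factors $\eps^{2-k\alpha}$ only help. The delicate part is the bookkeeping of the exponential weight $e^{\lambda|z|/\eps}$ against the polynomial factors in $Z$ and the scaled Hölder seminorms. I would handle it by working in scaled unit balls, using local Schauder bounds on $A$ and $B$ together with the decay rates $\sigma,\sqrt{\kappa_W}>\lambda$.
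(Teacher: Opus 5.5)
Your route is the paper's. You use the same decomposition into pure-phase, exact-model, interior and affine-collar regions, and the same interior residual $h_j'^2H''(t)-\eps h_j''H'(t)$. In the collar you split $(U_-(r,s)-H(s))+(H(s)-H(t))$ with the phase gap $s-t=O(\theta^3X+\theta^2|Z|)$, exactly as the paper does. The paper proves these bounds in the stronger anisotropic norm $\mathsf C^{0,\gamma}_{\eps,\lambda}$, which it needs later. Working directly in the isotropic section norms, as you do, is enough for the statement and avoids the $\eps^{-\alpha\gamma}$ cutoff losses.

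\textbf{The gap.} One region is not covered. Since $\eta_{j,3}=\chi_{j,10}-\sum_p\zeta_{p,3}$, away from the endpoints $\spt\eta_{j,3}$ reaches distance about $12\delta\eps^\alpha$ from $I_j$. It therefore contains the interior part of $\{0<\chi_{j,5}<1\}$ and part of $\{\chi_{j,5}=0\}$. Your interior computation presumes $\omega_{\h}=H(t)$, i.e.\ $\chi_{j,5}=1$. On $\{0<\chi_{j,5}<1\}$ you only record the bound $\lesssim1$, which does not give \eqref{eq: Bj-error-expansion} or \eqref{eq: improved Linfty error} there.

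\textbf{The fix.} Two additions are needed.
\begin{enumerate}
\item Your outer argument in fact gives
\[
e^{\lambda d/\eps}\lVert S(\omega_{\h})\rVert_{C^{0,\gamma}_\eps(B_\eps(q))}\lesssim e^{-(\lambda_*-\lambda)d/\eps}\lesssim e^{-c/\eps^{1-\alpha}},
\]
with $d\simeq\eps^\alpha$ and $\lambda<\lambda_*<\min\{\sigma,\sqrt{\kappa_W}\}$. You should record this exponential bound rather than $\lesssim1$.
\item On this tail the residual contains $-\eps\chi_{j,5}h_j''H'(t)$, not $-\eps\eta_{j,3}h_j''H'(t)$. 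So for \eqref{eq: improved Linfty error} you must also bound the mismatch $\eps(\eta_{j,3}-\chi_{j,5})h_j''H'(t)$. It is exponentially small because $|t|\gtrsim\eps^{\alpha-1}$ there.
\end{enumerate}
This is precisely the remark in the paper's proof.

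\textbf{Minor slips.} Neither affects the argument. Your list of outer-transition terms omits $\chi_{j,5}S(\mathcal P_{\h,j})$; since $S(\Ical)=0$, it is also controlled by $\lVert\mathcal P_{\h,j}-\Ical\rVert_{C^{2,\gamma}_\eps}$. The interpolation bracket enters with a minus sign and is actually quadratic, $O(\zeta\eta|A-B|^2)$, though your linear bound suffices.
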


The proof is given in \Cref{sec: ansazt}.

For the remainder of the fixed-shift construction, we fix an admissible
\(\h\) and abbreviate \(\omega=\omega_{\h}\).

\subsection{Splitting the correction equation}
\label{subs: correction-splitting}
We look for a solution $u\colon\R^2_{\p}\to\L_{\p}$ of the form
$u=\omega+\varphi$,
where $\varphi$ is a section of $\L_{\p}$. In other words, we aim at solving $S(\omega+\varphi)=0$, where $S$ is given by \eqref{eq: EoA}.
From now on, we will write $S(\omega+\varphi)=0$ as the following linear-nonlinear scheme:
\begin{equation}\label{eq: nonlinear scheme}
	L\varphi=-S(\omega)-\mathcal N_\omega(\varphi)
\end{equation}
where, for any base section \(w\),
\begin{align*}
    \begin{split}
        L\varphi&\coloneqq \eps^2\Delta\varphi-W''(\omega)\varphi,\\
        \mathcal N_w(\varphi)&\coloneqq
        S(w+\varphi)-S(w)-\left[\eps^2\Delta\varphi-W''(w)\varphi\right].
    \end{split}
\end{align*}

This expression is analogous to a Taylor expansion of $S(\omega+\varphi)$ around $\omega$, in which $L$ is the linearised operator of $S$ about $\omega$ and $\mathcal N_\omega(\varphi)$ is a nonlinear remainder which is quadratic on $\varphi$.

Roughly speaking, one would like to solve \eqref{eq: nonlinear scheme} via a fixed-point argument, by first inverting the operator $L$ and then showing that the map $\varphi\mapsto -L^{-1}[S(\omega)+\mathcal N_\omega(\varphi)]$ is a contraction in a sufficiently small ball of some appropriate Banach space. Unfortunately, this cannot be done in a straightforward way because of the presence of small (in $\eps$) eigenvalues for $L$ that make the estimate for $L^{-1}$ degenerate as $\eps\to 0$.

To tackle the solvability of \eqref{eq: nonlinear scheme} we break down the right-hand side into parts supported in different regimes. First, we decompose into an \emph{outer part}, which is supported away from the segment, and an \emph{interface part}, which is supported near the segment.

More precisely, we write the perturbation as
\begin{equation*}
	\varphi=\psi+\sum_{j=1}^N\chi_{j,2}\phi_j
\end{equation*}
where $\phi_1,\dots,\phi_N,\psi\colon\R_{\p}^2\to\L_{\p}$ are global sections.
Equation \eqref{eq: nonlinear scheme} then reads
\begin{equation}\label{eq: linear unsplit}
	\begin{split}
		\eps^2\Delta\psi
        &+
        \sum_{j=1}^N
        \left(
        \eps^2 \chi_{j,2}\Delta \phi_j
        +2\eps^2\nabla\chi_{j,2}\cdot\nabla\phi_j
        +\eps^2\phi_j \Delta\chi_{j,2}
        \right)\\
        &-
        W''(\omega)
        \left(
        \psi+\sum_{j=1}^N\chi_{j,2}\phi_j
        \right)
	=-S(\omega)-\mathcal N_\omega\left(\psi+ \sum_{j=1}^N\chi_{j,2}\phi_j\right).
	\end{split}
\end{equation}
Denoting $\phi=(\phi_1,\dots,\phi_N)$, we see that equation \eqref{eq: linear unsplit} can be solved by first solving the \emph{outer-interface system}:
\begin{equation}\label{eq: inner-outer system}
\begin{cases}
	\eps^2\Delta \phi_j-W''(\omega)\phi_j=E_j(\phi_j,\psi)& \text{on }\spt\chi_{j,2},\  j=1,\dots,N\\
	\eps^2\Delta\psi-V\psi=E_\out(\phi,\psi)& \text{on }\R^2_{\p}
\end{cases}
\end{equation}
where
\begin{equation*}
	V\coloneqq \chi_{\Ical,1}W''(\omega)+\kappa_W(1-\chi_{\Ical,1})
\end{equation*}
and the sections $E_j(\phi_j,\psi)\colon\R^2_{\p}\cap \spt\chi_{j,2}\to\L_{\p}$ and $E_\out(\phi,\psi)\colon\R^2_{\p}\to\L_{\p}$ are defined by
\begin{equation}
\begin{split}
	E_j(\phi_j,\psi)&\coloneqq
    -S(\omega)
    -\chi_{j,1}\mathcal N_\omega(\psi+\phi_j)
    +\chi_{j,1}[W''(\omega)-\kappa_W]\psi,\label{eq: Ej}\\
    E_\out(\phi,\psi)&\coloneqq
    -\chi_{\Ical,2}S(\omega)
    -\chi_{\Ical,1}\mathcal N_\omega\left(\psi+\sum_{k=1}^N\chi_{k,2}\phi_k\right)\\
    &\quad
    -\sum_{k=1}^N
    \left(
    2\eps^2\nabla\chi_{k,2}\cdot\nabla\phi_k
    +\eps^2\phi_k \Delta\chi_{k,2}
    \right).
\end{split}
\end{equation}
We emphasise that with this notation $E_j$ is a local section. The terms $S(\omega)$ and $W''(\omega)$ in its expression are understood after restricting
the global ansatz to $\spt\chi_{j,2}$. When the inner equation is inserted back into
\eqref{eq: linear unsplit}, the factor $\chi_{j,2}$ in the decomposition of
$\varphi$ localizes the term $S(\omega)$.

\subsection{The boundary-interior system}\label{subs: bdy-int system}

For the local problems we first hold the outer correction \(\psi\) fixed and
consider the system
\begin{equation}\label{eq: system reduced segments}
	\eps^2\Delta \phi_j-W''(\omega)\phi_j=E_j(\phi_j,\psi)\quad
    \text{on }\spt\chi_{j,2},\quad j=1,\dots,N.
\end{equation}
Here \(E_j\) is given by \eqref{eq: Ej}. After solving these local equations,
we impose compatibility with the outer equation.

To solve \eqref{eq: system reduced segments}, we now further decompose each $\phi_j$ in order to exploit the linear theories for both $U$ and $H$. For every $j=1,\dots,N$, the new functions are
\[
\left\{
\begin{aligned}
    \phi_j^{\mathrm{int}}&\colon I_j\times\R\to\R,\\
    \phi_p&\colon \R^2_*\to\L,\qquad p\in\partial I_j.
\end{aligned}
\right.
\]
Here \(\L\) is the one-puncture line bundle over \(\R^2_*\) fixed in
\Cref{subs: line bundle}. We interpret $\phi_j^{\mathrm{int}}$ as the interior
correction and each $\phi_p$ as the correction at the boundary point $p$. If
\(q\in\spt\chi_{j,2}\), we write
\[
    q=X_j(x,z),\qquad
    (x_p,z_p)=Y_p^{-1}(q),\quad p\in\partial I_j.
\]
These are trial variables in the algebraic decomposition; after solving the
response problems below, their values will be denoted by \(v_j\) and \(b_p\),
respectively. We look for $\phi_j$ of the form
\begin{equation}\label{eq: decomp phi}
	\phi_j(q)=
    \eta_{j,1}(q)\phi_j^{\mathrm{int}}\left(x,\frac{z-h_j(x)}{\eps}\right)
    +
    \sum_{p\in\partial I_j}\zeta_{p,4}(q)\phi_p(x_p,z_p),
    \quad q\in\spt \chi_{j,2}.
\end{equation}
As in \Cref{prop: expansion error}, the interior term is interpreted through the trivialization of $\L_{\p}$ used in the Fermi coordinates around $I_j$, while the $p$-summand is interpreted through the boundary coordinates $Y_p$. In the formulas below, the boundary equation at \(p\) is read after pulling all coefficients back by \(Y_p\), and the interior equation is read in the coordinates \(q=X_j(x,z)\). Actually, we will consider the change of coordinates
\[
    t=\frac{z-h_j(x)}{\eps}.
\]
and express the equations in variables $(x,t)$.
For the following calculation we fix $j$ and collect the two boundary corrections in the tuple
\[
    \phi_j^{\mathrm{bd}}\coloneqq (\phi_p)_{p\in\partial I_j}.
\]
It will also be useful to separate the leading curvature contribution in \(E_j\). Using \eqref{eq: Bj-error-expansion}, we write
\begin{equation}\label{eq: local remainder Rj}
\begin{split}
    \mathcal R_j(\phi_j,\psi)
    &\coloneqq
    -\left[
    S(\omega)+\eps\eta_{j,3}h_j''(x)H'(t)
    \right]\\
    &\quad
    -\chi_{j,1}\mathcal N_\omega(\psi+\phi_j)
    +\chi_{j,1}\left[W''(\omega)-\kappa_W\right]\psi .
\end{split}
\end{equation}
Thus
\[
    E_j(\phi_j,\psi)
    =
    \eps\eta_{j,3}h_j''(x)H'(t)
    +
    \mathcal R_j(\phi_j,\psi).
\]
The first term in \(\mathcal R_j\) is lower order by
\eqref{eq: improved Linfty error}, the second is quadratic in the perturbation,
and the last is exponentially small after solving the outer equation.

For \(v=v(x,t)\), define
\[
\begin{split}
    \mathfrak B_{j,h_j}[v]
    &\coloneqq
    h_j'^2\partial_{tt}v
    -2\eps h_j'\partial_{xt}v
    -\eps h_j''\partial_tv,\\
    m_j^{\mathrm{bd}}[v]
    &\coloneqq
    \frac{1}{A_H}\left(
    h_j'^2\int_\R vH'''
    +2\eps h_j'\int_\R \partial_xv H''
    \right),\\
    m_j^{\mathrm{int}}[v]
    &\coloneqq
    \frac{\eps h_j''}{A_H}\int_\R vH''.
\end{split}
\]
When the shift varies, we write
\(m_{j,h_j}^{\mathrm{bd}}\) and \(m_{j,h_j}^{\mathrm{int}}\) to display this
dependence.
All integrals in these definitions are taken in \(t\). Integration by parts
gives
\[
    \frac{1}{A_H}\int_\R\mathfrak B_{j,h_j}[v]H'
    =m_j^{\mathrm{bd}}[v]+m_j^{\mathrm{int}}[v].
\]
We therefore set
\[
    \mathfrak B_{j,h_j}^{\perp}[v]
    \coloneqq
    \mathfrak B_{j,h_j}[v]
    -\bigl(m_j^{\mathrm{bd}}[v]+m_j^{\mathrm{int}}[v]\bigr)H'
\]
so that \(\mathfrak B_{j,h_j}^{\perp}[v]\) is orthogonal to \(H'\). The split
isolates \(m_j^{\mathrm{bd}}\), which involves only \(v\) and \(\partial_xv\),
for distribution between the boundary and interior equations;
\(m_j^{\mathrm{int}}\), which contains \(h_j''\), remains in the interior
equation. We then define
\[
    \widetilde L_{H,h_j}v
    \coloneqq
    \partial_{tt}v+\eps^2\partial_{xx}v-W''(H(t))v
    +\mathfrak B_{j,h_j}^{\perp}[v].
\]

The next lemma rewrites the equation for $\phi_j$ as one boundary equation for
each \(p\in\partial I_j\), together with one interior equation depending on the
full tuple $\phi_j^{\mathrm{bd}}$. The cut-offs are chosen so that the
commutator terms produced by differentiating one correction are supported where
the other correction is already defined, and the remainder \(\mathcal R_j\) is
distributed using \(\eta_{j,4}+\sum_{p\in\partial I_j}\zeta_{p,4}=1\) on the
segment region. We record the system explicitly and postpone its verification
to \S\ref{sec: inner-bdy decomp}.

\begin{lemma}[The boundary-interior system]\label{lem: inner-bdy decomp}
    Fix an outer correction \(\psi\). The system
    \eqref{eq: system reduced segments} can be solved, via the decomposition
    \eqref{eq: decomp phi}, if the following system
    \begin{align}
		\eps^2\Delta\phi_p-W''(U_{\theta_{\h}(p)}(\cdot/\eps))\phi_p&= F_p(\phi_p,\phi_j^{\mathrm{int}};\psi)\quad \text{on }\R^2_*,\quad p\in\partial I_j,\label{eq: boundary eq2}\\
		\widetilde L_{H,h_j}\phi_j^{\mathrm{int}}&= F_j(\phi_j^{\mathrm{bd}},\phi_j^{\mathrm{int}};\psi)\quad \text{on }I_j\times \R\label{eq: interior eq2}
	\end{align}
is solved. The right-hand sides are
\begin{equation*}
    \begin{split}
F_p(\phi_p,\phi_j^{\mathrm{int}};\psi)&=-2\eps^2\chi_{j,4}\zeta_{p,5}\nabla\eta_{j,1}\cdot\nabla\phi_j^{\mathrm{int}}-\eps^2\chi_{j,4}\zeta_{p,5}\Delta\eta_{j,1}\phi_j^{\mathrm{int}}\\
        &+\chi_{j,4}\zeta_{p,5}\mathcal R_j(\phi_j,\psi)\\
	&+\chi_{j,4}\zeta_{p,5}\left(W''(\omega)-W''(U_{\theta_{\h}(p)}(\cdot/\eps))\right)\phi_p\\
        &+\chi_{j,4}\zeta_{p,5}\eta_{j,1}\left(W''(\omega)-W''(H(t))\right)\phi_j^{\mathrm{int}}\\
    &-\chi_{j,4}\zeta_{p,5}\eta_{j,1}
    m_j^{\mathrm{bd}}[\phi_j^{\mathrm{int}}]H'(t),\\
        F_j(\phi_j^{\mathrm{bd}},\phi_j^{\mathrm{int}};\psi)&=\eps\chi_{j,4}\eta_{j,3}h_j''(x)H'(t)\\
        &+\chi_{j,4}\eta_{j,4}\mathcal R_j(\phi_j,\psi)\\
        &-2\eps^2\chi_{j,4}\sum_{p\in\partial I_j}\nabla\zeta_{p,4}\cdot\nabla\phi_p-\eps^2\chi_{j,4}\sum_{p\in\partial I_j}\Delta\zeta_{p,4}\phi_p\\
        &-\chi_{j,4}\eta_{j,4}
        m_j^{\mathrm{bd}}[\phi_j^{\mathrm{int}}]H'(t)
        -\chi_{j,4}m_j^{\mathrm{int}}[\phi_j^{\mathrm{int}}]H'(t).
    \end{split}
\end{equation*}
Here \(\phi_j\) inside \(\mathcal R_j\) denotes the section
reconstructed by \eqref{eq: decomp phi}. In the boundary equation at \(p\),
every term in \(F_p\) is first restricted to the boundary neighborhood, pulled
back by \(Y_p\), and transported by \(\mathcal T_p^{-1}\) to the
one-puncture model bundle. On \(\spt\zeta_{p,5}\), the contribution from the other boundary point of
\(I_j\) vanishes. Thus the displayed \(F_p\) depends only on
\((\phi_p,\phi_j^{\mathrm{int}},\psi)\), as its notation indicates.
\end{lemma}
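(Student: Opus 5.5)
The plan is to substitute the ansatz \eqref{eq: decomp phi} directly into the left-hand side of \eqref{eq: system reduced segments} and verify that, whenever \eqref{eq: boundary eq2}--\eqref{eq: interior eq2} hold, the terms regroup exactly into $E_j(\phi_j,\psi)=\eps\eta_{j,3}h_j''H'(t)+\mathcal R_j(\phi_j,\psi)$.

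\textbf{Step 1 (Leibniz expansion).} On $\spt\chi_{j,2}$, applying $\eps^2\Delta-W''(\omega)$ to $\phi_j$ gives three groups of terms:
\begin{itemize}
\item the interior term $\eta_{j,1}\bigl(\eps^2\Delta-W''(\omega)\bigr)\phi_j^{\mathrm{int}}$ together with the commutators $2\eps^2\nabla\eta_{j,1}\cdot\nabla\phi_j^{\mathrm{int}}+\eps^2\Delta\eta_{j,1}\,\phi_j^{\mathrm{int}}$;
\item for each $p$, the boundary term $\zeta_{p,4}\bigl(\eps^2\Delta-W''(\omega)\bigr)\phi_p$;
\item the boundary commutators $2\eps^2\nabla\zeta_{p,4}\cdot\nabla\phi_p+\eps^2\Delta\zeta_{p,4}\,\phi_p$.
\end{itemize}

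\textbf{Step 2 (interior operator in shifted coordinates).} Write the interior operator in the variables $(x,t)$ with $t=(z-h_j(x))/\eps$. The chain rule gives
\[
\eps^2\Delta = (1+h_j'^2)\partial_{tt}+\eps^2\partial_{xx}-2\eps h_j'\partial_{xt}-\eps h_j''\partial_t = \partial_{tt}+\eps^2\partial_{xx}+\mathfrak B_{j,h_j}.
\]
Using $\mathfrak B_{j,h_j}=\mathfrak B^\perp_{j,h_j}+(m_j^{\mathrm{bd}}+m_j^{\mathrm{int}})H'$, it follows that
\[
\bigl(\eps^2\Delta-W''(\omega)\bigr)\phi_j^{\mathrm{int}} = \widetilde L_{H,h_j}\phi_j^{\mathrm{int}} + \bigl(m_j^{\mathrm{bd}}+m_j^{\mathrm{int}}\bigr)[\phi_j^{\mathrm{int}}]\,H' - \bigl(W''(\omega)-W''(H)\bigr)\phi_j^{\mathrm{int}}.
\]

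\textbf{Step 3 (boundary operator in the model chart).} In the boundary chart, $\bigl(\eps^2\Delta-W''(\omega)\bigr)\phi_p$ equals the model operator with potential $W''(U_{\theta_{\h}(p)}(\cdot/\eps))$, minus $\bigl(W''(\omega)-W''(U_\theta)\bigr)\phi_p$. This uses that $Y_p$ is a rigid motion and that $\mathcal T_p$ commutes with $\Delta$, since the transition functions are constant signs.

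\textbf{Step 4 (substitution and bookkeeping).} Substitute the equations \eqref{eq: boundary eq2} and \eqref{eq: interior eq2} for $\widetilde L_{H,h_j}\phi_j^{\mathrm{int}}$ and for the model operator applied to $\phi_p$. The expansion of Step 1 then becomes
\[
\eta_{j,1}F_j+\sum_p\zeta_{p,4}F_p+(\text{the correction terms from Steps 2--3})+(\text{commutators}).
\]
The following cutoff identities, which come from the nesting properties of Subsection~\ref{subs: cutoff}, are what make the terms cancel:
\begin{itemize}
\item $\eta_{j,1}\chi_{j,4}=\eta_{j,1}$, and $\eta_{j,1}\eta_{j,4}=\eta_{j,1}$ on the support of the interior part of $\mathcal R_j$;
\item $\zeta_{p,4}\zeta_{p,5}\chi_{j,4}=\zeta_{p,4}$;
\item $\eta_{j,1}\eta_{j,3}=\eta_{j,3}$ on $\spt\eta_{j,3}$, where the curvature term lives;
\item $\nabla\zeta_{p,4}$ and $\Delta\zeta_{p,4}$ are supported where $\eta_{j,1}=1$;
\item $\nabla\eta_{j,1}$ is supported where $\zeta_{p,4}=1$;
\item $\eta_{j,4}+\sum_p\zeta_{p,4}=1$ on the support of $\chi_{j,1}$, and $\zeta_{p,5}=1$ there.
\end{itemize}

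With these identities, each group of terms either cancels or combines as follows:
\begin{itemize}
\item The boundary commutators from Step 1 cancel against the corresponding terms in $F_j$, which is why those terms appear in $F_j$ with $\eta_{j,1}=1$ on their support.
\item The interior commutators cancel against the first line of $F_p$.
\item The $m_j^{\mathrm{int}}$ term produced in Step 2 cancels the last term of $F_j$.
\item The $m_j^{\mathrm{bd}}$ term is split as $\eta_{j,4}+\sum_p\zeta_{p,4}$ and cancels the corresponding pieces of $F_j$ and $F_p$.
\item The potential differences in Steps 2--3 cancel the explicit potential-difference terms in $F_p$. On the interior support, $\omega$ equals the translated heteroclinic exactly on $\{\eta_{j,3}=1\}$, so no potential-difference term is needed there beyond what is listed.
\item What remains is $\eps\eta_{j,3}h_j''H'+(\eta_{j,4}+\sum_p\zeta_{p,4})\mathcal R_j=E_j$, as required.
\end{itemize}

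Finally, check that $F_p$ only sees $(\phi_p,\phi_j^{\mathrm{int}},\psi)$. Since $\zeta_{p,5}$ vanishes near the other endpoint (because $\delta\eps^\alpha\ll|I_j|$), $\phi_j$ restricted to $\spt\zeta_{p,5}$ reduces to $\eta_{j,1}\phi_j^{\mathrm{int}}+\zeta_{p,4}\phi_p$.

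The \textbf{main obstacle} is the bookkeeping of supports, in particular the interior potential mismatch $W''(\omega)-W''(H)$ on the overlap of $\eta_{j,1}$ with the region where $\zeta_{p,3}$ is nonzero. I would first verify carefully that every such term is absorbed by the $\zeta_{p,5}$-localized terms in $F_p$, and only then carry out the mechanical cancellation described above.
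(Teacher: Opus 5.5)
Your proposal is correct and follows the paper's proof essentially step for step. Both arguments take the Leibniz expansion of $L_\omega\phi_j$ on $\spt\chi_{j,2}$, use the stretched-coordinate identity $\eps^2\Delta v-W''(H)v=\widetilde L_{H,h_j}v+(m_j^{\mathrm{bd}}+m_j^{\mathrm{int}})[v]H'$, substitute the two equations, and cancel terms using the same cutoff support relations; the leftover interior mismatch $-\eta_{j,4}(W''(\omega)-W''(H))\phi_j^{\mathrm{int}}$ vanishes because $\omega$ is the pure shifted heteroclinic on $\spt\chi_{j,2}\cap\spt\eta_{j,4}$. Two bookkeeping slips are harmless: the identity you need is $\eta_{j,1}\eta_{j,4}=\eta_{j,4}$ (not $=\eta_{j,1}$), and $\eta_{j,4}+\sum_p\zeta_{p,4}=1$ must hold on all of $\spt\chi_{j,2}$, not just $\spt\chi_{j,1}$, since the $S(\omega)$ part of $\mathcal R_j$ is not localized by $\chi_{j,1}$; it does hold there, because the sum equals $\chi_{j,10}$.
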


\subsection{Solving the boundary part}

The next two results, based on \Cref{lem: boundary invertibility}, concern one
boundary equation in \eqref{eq: boundary eq2}. We use the following common
data. Fix \(\eps>0\), \(j\in\{1,\dots,N\}\), \(p\in\partial I_j\), and
\(\h\in\mathcal B(K\eps^\beta)\), and let \(h_j\) be the \(j\)-th component
of \(\h\). Let
\(v\in\mathcal C_{\eps,\lambda}^{2,\gamma}(I_j\times\R)\) and
\(\psi\in\mathscr B_{\mathrm{out}}(\eps)\) satisfy
\begin{equation}\label{eq: orth v}
    \int_\R v(x,t)H'(t)\,dt=0,
    \qquad x\in I_j,
\end{equation}
and
\begin{equation}\label{eq: assumption phi}
    \lVert v\rVert_{\mathcal C_{\eps,\lambda}^{2,\gamma}(I_j\times\R)}
    \leq R_\eps.
\end{equation}
We set \(\ell_\theta\coloneqq R_{-\theta}\ell_+\) and write
\(b_p=\mathscr B_{p,h_j}(v,\psi)\) for the boundary response constructed
below. Its adapted representative is
\[
    \widehat b_p(Y)
    \coloneqq
    b_p(\eps R_{-\theta_{\h}(p)}Y).
\]
In the inward coordinate \(x\geq0\), abbreviate
\(h_j(p+xe_p)\) by \(h_j(x)\), set
\[
    \overline X_h(x)
    \coloneqq
    \eps^{-1}\bigl(
    x\cos\theta_{\h}(p)+h_j(x)\sin\theta_{\h}(p)
    \bigr),
\]
and define the physical-coordinate tangential part by
\[
    b_p^\top(x)
    \coloneqq
    \int_\R
    \widehat b_p\bigl(\overline X_h(x),Z\bigr)H'(Z)\,dZ,
\]
with the scalar representative and sign convention of
\Cref{def: Hprime-component}. When the limit exists, we denote the
corresponding constant at infinity by
\[
    b_{p,\infty}
    \coloneqq
    \lim_{X\to+\infty}\int_\R \widehat b_p(X,Z)H'(Z)\,dZ.
\]

\begin{proposition}[Existence and estimates]
\label{prop: invert boundary}
	There exists \(\eps_*>0\), depending only on the fixed data and on the
    preceding parameter choices, such that, for every \(0<\eps\leq\eps_*\)
    and every choice of the data above, the \(p\)-th boundary equation
    \eqref{eq: boundary eq2} admits a unique solution
    \(b_p=\mathscr B_{p,h_j}(v,\psi)\) in the closed ball
    \[
        \lVert b_p\rVert_{C_{\ell_{\theta_{\h}(p)},\eps,\lambda}^{2,\gamma}
        (\R^2_*,\L)}
        \leq R_\eps,
    \]
    and this solution satisfies
	\begin{equation}\label{eq: size phi_b}
		\lVert b_p\rVert_{C_{\ell_{\theta_{\h}(p)},\eps,\lambda}^{2,\gamma}(\R^2_*,\L)}
        \lesssim
        \eps^{1-\alpha}
        \lVert v\rVert_{\mathcal C_{\eps,\lambda}^{2,\gamma}(I_j\times \R)}
        +\eps^{2\beta-(1-\alpha)\tau}.
	\end{equation}
    In particular, the right-hand side of \eqref{eq: size phi_b} is
    \(o(R_\eps)\).
    Moreover, the constant \(b_{p,\infty}\) exists, and on the interval
    \(I_\eps\) fixed in \Cref{subs: cutoff},
	\begin{equation}\label{eq: decays tangential part}
	\begin{split}
        |b_{p,\infty}|
        &+\lVert b_p^\top\rVert_{L^\infty(I_\eps)}
        +\eps^{\alpha\gamma}[b_p^\top]_{\gamma,I_\eps}\\
        &+\eps^\alpha
        \lVert\partial_xb_p^\top\rVert_{L^\infty(I_\eps)}
        +\eps^{\alpha(1+\gamma)}
        [\partial_xb_p^\top]_{\gamma,I_\eps}
        \lesssim \eps^{2\beta-2+2\alpha}.
	\end{split}
	\end{equation}
\end{proposition}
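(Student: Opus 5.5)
The plan is a fixed-point argument for the $p$-th boundary equation, built on the boundary inverse of \Cref{lem: boundary invertibility} and \Cref{cor: schauder-tan}.

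\textbf{Normalisation.} Rescale by $\eps$ and rotate by $\theta_{\h}(p)$. In the adapted variables $Y=(X,Z)$ of \Cref{def: adapted-coordinate-representatives}, \eqref{eq: boundary eq2} becomes
\[
    \Delta\widehat b_p-W''(U)\widehat b_p=\widehat F_p(b_p,v;\psi).
\]
This is exactly the model equation \eqref{eq: boundary lin}. Under this change of variables the weighted norm $C^{k,\gamma}_{\ell_{\theta},\eps,\lambda}$ becomes the unscaled norm $C^{k,\gamma}_{\ell_+,\lambda}$. Let $\mathcal G$ denote the bounded inverse from \Cref{lem: boundary invertibility}, with decay profile $g_\tau$. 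Define
\[
    \mathcal M(b)\coloneqq \mathcal G\bigl[\widehat F_p(b,v;\psi)\bigr].
\]
By \Cref{cor: schauder-tan},
\[
    \lVert\mathcal M(b)\rVert_{C^{2,\gamma}_{\ell_+,\lambda}}\lesssim \lVert\widehat F_p\rVert_{C^{0,\gamma}_{\ell_+,\lambda}}+A_{g_\tau}(\widehat F_p).
\]
The aim is to show that $\mathcal M$ maps the ball of radius $R_\eps$ into itself and is a contraction there. Uniqueness in the ball then comes from the contraction, and \eqref{eq: size phi_b} is the resulting bound on the fixed point.

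\textbf{Term-by-term estimates of $F_p$.} I will bound the terms in the order they appear in \Cref{lem: inner-bdy decomp}.
\begin{itemize}
\item The commutator terms $\eps^2\nabla\eta_{j,1}\cdot\nabla v$ and $\eps^2\Delta\eta_{j,1}v$ are supported where $X\sim\eps^{\alpha-1}$. Since $|\nabla^k\eta|\lesssim\eps^{-k\alpha}$, their sup norms are $\lesssim\eps^{1-\alpha}\lVert v\rVert$.
\item These terms have zero $H'$-projection in the unrotated frame, because $v$ satisfies \eqref{eq: orth v}. After rotation by $\theta=O(\eps^\beta)$ and the shift $h_j$, the projection is nonzero only at higher order.
\item The $\mathcal R_j$ term is controlled by \eqref{eq: improved Linfty error} and by the quadratic structure of $\mathcal N$.
\item The coefficient differences $W''(\omega)-W''(U_\theta)$ and $W''(\omega)-W''(H)$ are supported in $\{0<\zeta<1\}$ or in the transition zones. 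There they are exponentially small in $\eps^{\alpha-1}$, by \Cref{cor: estim U-}, plus terms of order $\eps^\beta$ from the mismatch between the rotation and the affine shift.
\item The $m^{\mathrm{bd}}_j[v]H'$ term is $O(\eps^{2\beta}+\eps^{1+\beta})\lVert v\rVert$ and is purely tangential.
\item The $\psi$-terms are exponentially small, since $\psi\in\mathscr B_{\mathrm{out}}(\eps)$.
\end{itemize}
Contraction follows because all terms that depend on $b$ are either quadratic in it or carry small coefficients.

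\textbf{Tangential estimate (the main obstacle).} The hardest step is $A_{g_\tau}(\widehat F_p)$, that is, a bound
\[
    |\widehat F_p^\top(X)|\le A\,\tau(\tau-1)(1+X)^{-\tau}
\]
for all $X>0$. The tangential source lives at scale $X\sim\eps^{\alpha-1}$. There the leading tangential error comes from $m^{\mathrm{bd}}$, from $h_j'^2$-type mismatch, and from the rotated-frame projection of the $v$-commutators, all of size about $\eps^{2\beta}$. Dividing by $g''$ at that scale gives
\[
    A\lesssim \eps^{2\beta}\eps^{-(1-\alpha)\tau}.
\]
This explains the second term in \eqref{eq: size phi_b}. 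It is $o(R_\eps)$ because $\tau<(1+\alpha)/(1-\alpha)$ in \eqref{eq: boundary-decay-exponent-range} gives $2\beta-(1-\alpha)\tau>\beta+1-\alpha$ when $\beta\ge2$.

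\textbf{Asymptotics of $b_p^\top$.} Existence of $b_{p,\infty}$ follows from \Cref{prop:tangential-asymptotics}. To prove \eqref{eq: decays tangential part} I plan two steps.
\begin{itemize}
\item First, use the closed formula of \Cref{prop:tangential-limit}, $b_{p,\infty}=\lim\int\widehat F_pD_RU$. The weight $D_RU\sim X H'$ picks up the tangential part at $X\sim\eps^{\alpha-1}$, giving $\eps^{\alpha-1}\cdot\eps^{\alpha-1}\cdot\eps^{2\beta}=\eps^{2\beta-2+2\alpha}$. I would first try to organise the orthogonal contributions so that they are annihilated by the $H'$ factor.
\item Second, on $I_\eps$ the estimates \eqref{eq: decay tangential part} with $g=g_\tau$ give
\[
    |b^\top-b_\infty|\lesssim e^{-\sigma X}+A\,g(X).
\]
Rescaling via $\overline X_h$ then converts $X$-derivatives into factors $\eps^{-1}$. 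I expect this to yield the stated weighted $\eps^{\alpha}$-scaled bounds, with the rotation correction $h_j\sin\theta$ contributing only lower order.
\end{itemize}
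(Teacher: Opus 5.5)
Your outline is the paper's proof. You rescale and rotate to the model operator and run a contraction for $\mathcal G_U[\widehat F_p]$ with the profile $g_\tau$. You pay $\eps^{-(1-\alpha)\tau}$ in $A_{g_\tau}$ because every source lives at $X\lesssim\eps^{\alpha-1}$, use \eqref{eq: orth v} to kill the leading $H'$-projection of the $\eta_{j,1}$-commutators, and get $b_{p,\infty}$ from the moment formula with $D_RU\approx -XH'$.

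\textbf{The step that does not close.} You control the approximation part of $\mathcal R_j$ by \eqref{eq: improved Linfty error}. That is an unweighted $L^\infty$ bound of order $\eps^{1+\beta+\alpha\gamma}$. After the $\eps^{-(1-\alpha)\tau}$ loss it gives only $A_{g_\tau}\lesssim\eps^{1+\beta+\alpha\gamma-(1-\alpha)\tau}$. This is not $o(R_\eps)$, since $(1-\alpha)\tau>2-2\alpha>1>\alpha(1+\gamma)$.

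You need instead the boundary-frame bounds \eqref{eq: boundary-approximation-error-size}--\eqref{eq: boundary-approximation-error-Ag}. They give weighted norm and $H'$-projection of order $\eps^{2\beta}$, hence $A_{g_\tau}\lesssim\eps^{2\beta-(1-\alpha)\tau}$. This is exactly where the second term of \eqref{eq: size phi_b} and the order in \eqref{eq: decays tangential part} come from.

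That $\eps^{2\beta}$ is not only the $h_j'^2H''$ term you mention. The endpoint transition error is that small because $h_j=\theta x$ on the collar. In the rotated frame the phase of the shifted heteroclinic is $t_\theta=(\cos\theta+\theta\sin\theta)Z+(\sin\theta-\theta\cos\theta)X$. It satisfies $t_\theta-Z=O(\theta^2)Z+O(\theta^3)X$, so the rotated model matches the sheared heteroclinic to second order. With the first-order (``order $\eps^\beta$'') rotation/shift mismatch your coefficient bullet describes, the transition term would be of order $\eps^{\beta+1-\alpha}$. Absent further cancellation, its contribution to $A_{g_\tau}$ would be of order $\eps^{-(1-\alpha)\tau}R_\eps$.

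\textbf{Smaller points.}
\begin{itemize}
\item The coefficient $W''(\omega)-W''(U_\theta)$ is neither $O(\eps^\beta)$ nor confined to transition zones. The set $\spt(\zeta_{p,5}\eta_{j,3})$ reaches the region where $h_j''\neq0$. There \Cref{lem: profile-potential-comparisons} gives only $O(\eps^{\beta-1+2\alpha})$ in the weighted H\"older norm and $O(\eps^{\beta-1+\alpha(2+\gamma)})$ in $L^\infty$.
\item This still works, but ``small coefficients'' is not the right criterion for the contraction. Each $b$-dependent coefficient must beat the $\eps^{-(1-\alpha)\tau}$ amplification in the $A_{g_\tau}$-component. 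For this coefficient that holds because $\beta-1+2\alpha-(1-\alpha)\tau>\beta-2+\alpha>0$. For the nonlinear term it holds because $\eps^{-(1-\alpha)\tau}R_\eps\to0$.
\item Likewise, $m^{\mathrm{bd}}_j[v]$ is $O(\eps^\beta)\lVert v\rVert$, not $O(\eps^{1+\beta})\lVert v\rVert$, since the stretched norm controls $\eps\partial_xv$ rather than $\partial_xv$. This is harmless.
\item The moment formula for $b_{p,\infty}$ also has a core contribution from $\{X<c\,\eps^{\alpha-1}\}$, which contains the puncture and where $D_RU$ is not close to $-XH'$. There all linear gluing terms vanish, and the quadratic and outer terms contribute only $O(\eps^{2\alpha-2}R_\eps^2)$. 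You need to say this explicitly.
\end{itemize}
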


\begin{proposition}[Lipschitz dependence]
\label{prop: invert boundary dependence}
    Let \(0<\eps\leq\eps_*\), and let
    \((\h^i,v^i,\psi^i)\), \(i=1,2\), be two choices of the data above for
    the same \(j\) and \(p\). Set
    \[
        \theta_i\coloneqq \theta_{\h^i}(p),
        \qquad
        \widehat b_p^i(Y)
        \coloneqq
        \mathscr B_{p,h_j^i}(v^i,\psi^i)(\eps R_{-\theta_i}Y),
        \qquad i=1,2.
    \]
    Then
	\begin{equation}\label{eq: Lip phib}
	\begin{split}
        \lVert\widehat b_p^1-\widehat b_p^2\rVert_{C_{\ell_+,\lambda}^{2,\gamma}(\R^2_*,\L)}
		&\lesssim
        \eps^{1-\alpha}
        \lVert v^1-v^2\rVert_{\mathcal C_{\eps,\lambda}^{2,\gamma}(I_j\times\R)}\\
		&+\eps^{-(1-\alpha)\tau}
        \lVert\psi^1-\psi^2\rVert_
        {C_{\Gamma,\eps,3\lambda/4}^{2,\gamma}(\R^2_{\p},\L_{\p})}\\
		&+\eps^{\beta-(1-\alpha)\tau}
        \lVert h_j^1-h_j^2\rVert_{C^{2,\gamma}(I_j)}.
	\end{split}
	\end{equation}
    If \(b_p^{\top,i}\) and \(b_{p,\infty}^i\) denote the corresponding
    tangential parts and constants, then
    \begin{equation}\label{eq: tangential dependence phib}
    \begin{split}
        &|b_{p,\infty}^1-b_{p,\infty}^2|
        +\lVert b_p^{\top,1}-b_p^{\top,2}\rVert_{L^\infty(I_\eps)}
        +\eps^{\alpha\gamma}
        [b_p^{\top,1}-b_p^{\top,2}]_{\gamma,I_\eps}\\
        &\quad
        +\eps^\alpha
        \lVert\partial_xb_p^{\top,1}-\partial_xb_p^{\top,2}\rVert_
        {L^\infty(I_\eps)}
        +\eps^{\alpha(1+\gamma)}
        [\partial_xb_p^{\top,1}-\partial_xb_p^{\top,2}]_{\gamma,I_\eps}\\
        &\qquad\lesssim
        \eps^{(1-\alpha)(\tau-1)}
        \lVert v^1-v^2\rVert_{\mathcal C_{\eps,\lambda}^{2,\gamma}(I_j\times\R)}\\
        &\qquad\quad
        +\eps^{-2(1-\alpha)}
        \lVert\psi^1-\psi^2\rVert_
        {C_{\Gamma,\eps,3\lambda/4}^{2,\gamma}(\R^2_{\p},\L_{\p})}
        +\eps^{\beta-2+2\alpha}
        \lVert h_j^1-h_j^2\rVert_{C^{2,\gamma}(I_j)}.
    \end{split}
    \end{equation}
\end{proposition}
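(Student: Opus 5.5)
The plan is to obtain the Lipschitz estimates by subtracting the two fixed-point equations satisfied by \(\widehat b_p^1\) and \(\widehat b_p^2\) on the fixed, unrotated model, and then applying \Cref{lem: boundary invertibility} together with \Cref{cor: schauder-tan} to the difference. In rescaled rotated variables \(Y=\eps^{-1}R_{\theta_i}(x_p,z_p)\), each \(\widehat b_p^i\) solves
\[
    \Delta\widehat b_p^i-W''(U)\widehat b_p^i=\widehat F_p^i,
\]
where \(\widehat F_p^i\) is \(F_p(b_p^i,v^i;\psi^i)\) pulled back via \Cref{def: adapted-coordinate-representatives} with angle \(\theta_i\). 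The difference \(d\coloneqq\widehat b_p^1-\widehat b_p^2\) then solves the same linear equation with source \(\widehat F_p^1-\widehat F_p^2\). By the uniqueness part of \Cref{lem: boundary invertibility}, together with \Cref{cor: schauder-tan} in the weighted space \(C^{2,\gamma}_{\ell_+,\lambda}\) with the admissible profile \(g_\tau\), it follows that
\[
    \lVert d\rVert\lesssim \lVert \widehat F_p^1-\widehat F_p^2\rVert_{C^{0,\gamma}_{\ell_+,\lambda}}+A_{g_\tau}(\widehat F_p^1-\widehat F_p^2).
\]
Likewise, \(b_{p,\infty}^1-b_{p,\infty}^2\) and the tangential differences are controlled by \eqref{eq: decay tangential part} applied to \(d\).

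The differences of the sources are estimated term by term in \(F_p\), as in the proof of \Cref{prop: invert boundary}, but now tracking differences.

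The \(v\)-dependent terms are the commutators \(\eps^2\nabla\eta_{j,1}\cdot\nabla v\) and \(\eps^2\Delta\eta_{j,1}v\), the potential mismatch \(\eta_{j,1}(W''(\omega)-W''(H))v\), and the projection term \(m_j^{\mathrm{bd}}[v]H'\). These are linear in \(v\) and supported where \(\nabla\eta\) lives, that is at \(x\sim\eps^\alpha\). Measured in the rescaled variable \(X\sim\eps^{\alpha-1}\), this gives the factor \(\eps^{1-\alpha}\) in the \(L^\infty\) part. Thanks to the orthogonality \eqref{eq: orth v}, their \(H'\)-projection involves only cutoff derivatives times moments of \(v\); its size relative to \(g_\tau''(X)\sim X^{-\tau}\) evaluated at \(X\sim\eps^{\alpha-1}\) yields the tangential gain \(\eps^{(1-\alpha)(\tau-1)}\).

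The \(\psi\)-terms enter through \(\chi_{j,1}[W''(\omega)-\kappa_W]\psi\) inside \(\mathcal R_j\) and through \(\mathcal N_\omega\). Because the weight is \(e^{3\lambda\dist/(4\eps)}\) while \(W''(\omega)-\kappa_W\) decays at rate \(\lambda\), they are bounded in the weighted norm by \(\lVert\psi^1-\psi^2\rVert\). Converting the \(L^\infty\) bound of the \(H'\)-projection into \(A_{g_\tau}\) costs at most \(\sup_{X\lesssim\eps^{\alpha-1}} g_\tau''(X)^{-1}\sim\eps^{-(1-\alpha)\tau}\), which gives the loss in \eqref{eq: Lip phib}. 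The quadratic terms \(\mathcal N_\omega\) are Lipschitz with constant \(O(R_\eps)\), so their contribution \(R_\eps\lVert d\rVert\) is absorbed into the left-hand side.

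The \(\h\)-dependence is the delicate part. It enters through the rotation angles \(\theta_i\) (where \(|\theta_1-\theta_2|\le\lVert h^1_j-h^2_j\rVert_{C^1}\)), through \(\omega_{\h}\) in \(W''(\omega)\), through \(S(\omega_{\h})\) in \(\mathcal R_j\), and through the coordinates \(t=(z-h_j)/\eps\). I would write the rotated pullback difference as an integral over \(s\in[\theta_2,\theta_1]\) of \(D_R\) applied to the integrand. Since \(D_R\) carries a factor \(|Y|\lesssim\eps^{\alpha-1}\) on the support, each \(\theta\)-derivative loses \(\eps^{\alpha-1}\). The \(S(\omega)\) difference is controlled using \Cref{prop: expansion error} differentiated in \(\h\), of size \(\eps^{\beta}\lVert h^1-h^2\rVert\) up to \(\eps^{-(1-\alpha)}\) losses, and the \(A_{g_\tau}\) conversion again costs \(\eps^{-(1-\alpha)\tau}\). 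Together these produce \(\eps^{\beta-(1-\alpha)\tau}\).

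For \eqref{eq: tangential dependence phib}, I would not pass through \eqref{eq: Lip phib}. Instead I would use the closed formula \eqref{eq:phi-top-infty-formula}, \(b_{p,\infty}^1-b_{p,\infty}^2=\lim\int(\widehat F^1_p-\widehat F^2_p)D_RU\), with \(|D_RU|\lesssim |Y|H'\)-type decay, and integrate directly over the support, of area \(\sim\eps^{2(\alpha-1)}\). The \(H'\)-orthogonality of the \(v\)-terms makes the leading contribution cancel, and the remaining terms give the stated powers \(\eps^{(1-\alpha)(\tau-1)}\), \(\eps^{-2(1-\alpha)}\) and \(\eps^{\beta-2+2\alpha}\). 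The \(x\)-derivative and Hölder bounds on \(I_\eps\) then follow from the convergence estimates in \eqref{eq: decay tangential part} evaluated at \(X\sim\eps^{\alpha-1}\), plus the change of variables \(\overline X_h\), whose \(h\)-dependence contributes a further \(\eps^{\beta}\)-small Lipschitz term.

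I expect the main obstacle to be the angle-difference step. Comparing sections defined in two different rotated frames generates the unbounded rotation mode, and one must verify that after rescaling its linear growth costs exactly \(\eps^{\alpha-1}\) per derivative and no more, while keeping the cutoff supports aligned between the two frames.
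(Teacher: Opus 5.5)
Your plan follows the paper's proof: compare the two fixed-point problems on the common unrotated model, and bound the source difference term by term in $C^{0,\gamma}_{\ell_+,\lambda}$ and in $A_{g_\tau}$, paying $\eps^{-(1-\alpha)\tau}$ for the support in $X\lesssim\eps^{\alpha-1}$, using orthogonality for the $v$-commutators, and absorbing the terms linear in $\widehat b_p^1-\widehat b_p^2$. Then derive \eqref{eq: tangential dependence phib} from the $D_RU$ moment formula, \eqref{eq: decay tangential part}, and the two coordinate conversions $\overline X_{h^i}$. Two corrections are needed: you cannot bypass \eqref{eq: Lip phib}, since the source difference in the moment integral still contains the potential-mismatch and nonlinear terms linear in $\widehat b_p^1-\widehat b_p^2$, which the paper controls by first proving \eqref{eq: Lip phib} and absorbing them into its right-hand side $\mathcal D_p$; and the tangential gain $\eps^{(1-\alpha)(\tau-2)}$ comes from $\int_{c\eps^{\alpha-1}}^{C\eps^{\alpha-1}}X\,g_\tau''(X)\,dX\sim\eps^{(1-\alpha)(\tau-2)}$ in that integral, not from comparing with $g_\tau''$ (the $A_{g_\tau}$ conversion is a loss).
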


The proofs of \Cref{prop: invert boundary,prop: invert boundary dependence}
can be found in \S\ref{sec: invert boundary}.

\subsection{Solving the interior part}
Fix \(j\in\{1,\dots,N\}\), a shift \(h_j\), and an outer correction \(\psi\).
For each trial interior correction
\(v\in\mathcal C_{\eps,\lambda}^{2,\gamma}(I_j\times\R)\) satisfying
\[
    v=0\quad\text{on }\partial I_j\times\R,
    \qquad
    \int_\R v(x,t)H'(t)\,dt=0,
    \qquad
    \lVert v\rVert_{\mathcal C_{\eps,\lambda}^{2,\gamma}(I_j\times\R)}
    \leq R_\eps,
\]
\Cref{prop: invert boundary} gives one boundary solution
\(\mathscr B_{p,h_j}(v,\psi)\) for each \(p\in\partial I_j\). We collect these
solutions in the tuple
\[
    \mathscr B_{j,h_j}(v,\psi)
    \coloneqq
    \left(\mathscr B_{p,h_j}(v,\psi)\right)_{p\in\partial I_j}.
\]
Substituting these boundary corrections into the interior right-hand side from
\Cref{lem: inner-bdy decomp}, we write
\begin{equation*}
	\mathcal F_{j,h_j}(v,\psi)
    \coloneqq
    F_j\left(\mathscr B_{j,h_j}(v,\psi),v;\psi\right).
\end{equation*}
The equation \eqref{eq: interior eq2} is not generally solvable with estimates uniform in $\eps>0$ because the first eigenvalue of the operator
\begin{equation*}
    \mathfrak L_{\eps,H}\coloneqq
    \partial_{tt}+\eps^2\partial_{xx}-W''(H(t))
\end{equation*}
is small in $\eps$. To fix this issue, we use an infinite dimensional Lyapunov--Schmidt reduction. For any interval \(I\subset\R\), we use the stretched
interior norm \(\mathcal C_{\eps,\lambda}^{0,\gamma}(I\times\R)\) from
\Cref{def: stretched interior norm} and define
\begin{equation*}
\begin{split}
	\Pi_I\colon \mathcal C_{\eps,\lambda}^{0,\gamma}&(I\times\R)\to
    \mathcal C_{\eps,\lambda}^{0,\gamma}(I\times\R)\\
	&f(x,t)\mapsto
    f(x,t)-\frac{1}{A_H}\int_\R f(x,t')H'(t')\,dt'\,H'(t).
\end{split}
\end{equation*}
All integrals in this definition are taken in the stretched normal variable
\(t\). Thus the projection convention uses \(dt\), not the physical measure
\(dz\).
With the projection operator at hand we can state the invertibility theory for
$\mathfrak L_{\eps,H}$. The following results are classical, see
\cite[Lemma 11.3]{PacardRitore} and \cite[Proposition 9.4]{PacardRitore},
respectively.
\begin{lemma}\label{lem: bound on Pi}
    The operator \(\Pi_I\) is a projection on
    \(\mathcal C_{\eps,\lambda}^{0,\gamma}(I\times\R)\) and satisfies
    \[
        \lVert\Pi_I f\rVert_{\mathcal C_{\eps,\lambda}^{0,\gamma}(I\times\R)}
        \leq C
        \lVert f\rVert_{\mathcal C_{\eps,\lambda}^{0,\gamma}(I\times\R)}
    \]
    for a constant \(C>0\) independent of \(\eps\) and \(I\).
\end{lemma}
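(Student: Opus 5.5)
The projection identity and the norm bound are proved separately; the only work is in the bound, and it reduces to estimating a single scalar function of $x$. Write
\[
    \Pi_I f=f-\mathfrak p(f)\,H',\qquad
    \mathfrak p(f)(x)=\frac1{A_H}\int_\R f(x,t')H'(t')\,dt'.
\]

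\textbf{The projection identity.} Since $\mathfrak p(H')=1$, we get $\mathfrak p(\Pi_I f)=\mathfrak p(f)-\mathfrak p(f)=0$. Therefore $\Pi_I(\Pi_I f)=\Pi_I f$, so $\Pi_I$ is idempotent and linear.

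\textbf{Reduction of the bound.} By the triangle inequality,
\[
    \lVert\Pi_I f\rVert\le\lVert f\rVert+\lVert\mathfrak p(f)H'\rVert,
\]
so it suffices to show $\lVert\mathfrak p(f)\,H'\rVert_{\mathcal C^{0,\gamma}_{\eps,\lambda}}\le C\lVert f\rVert_{\mathcal C^{0,\gamma}_{\eps,\lambda}}$. Write $\lVert f\rVert$ for this norm and set $w(t)=\cosh(\lambda t)$.

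\textbf{Sup norm of the scalar factor.} From $|f(x,t)|\le \lVert f\rVert/w(t)$ we get
\[
    |\mathfrak p(f)(x)|\le A_H^{-1}\lVert f\rVert\int_\R |H'|/w\,dt\le C\lVert f\rVert .
\]

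\textbf{H\"older seminorm of the scalar factor.} For $x,x'\in I$, the weighted H\"older seminorm of $wf$ with respect to $d_\eps$ gives
\[
    |f(x,t)-f(x',t)|\le \lVert f\rVert\,\eps^{-\gamma}|x-x'|^\gamma/w(t)
    +\lVert f\rVert\,|w(t)^{-1}-w(t)^{-1}|\cdots
\]
More carefully, I use $|(wf)(x,t)-(wf)(x',t)|\le\lVert f\rVert\,(\eps^{-1}|x-x'|)^\gamma$. Dividing by $w(t)$ and integrating against $H'$ yields
\[
    |\mathfrak p(f)(x)-\mathfrak p(f)(x')|\le C\lVert f\rVert\,(\eps^{-1}|x-x'|)^\gamma .
\]
So the scalar $a(x)=\mathfrak p(f)(x)$ satisfies $\lVert a\rVert_\infty+[a]_{\gamma,\eps^{-1}|\cdot|}\le C\lVert f\rVert$.

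\textbf{Weighted norm of the product.} It remains to bound the product $w(t)a(x)H'(t)$. The function $wH'$ is smooth and bounded with bounded derivative, because $H'$ decays like $e^{-\sqrt{\kappa_W}|t|}$ and $\lambda<\sqrt{\kappa_W}$. Hence $wH'$ is Lipschitz, and on differences $|t-t'|\le1$ it is $\gamma$-H\"older with $|t-t'|^\gamma\le d_\eps^\gamma$; differences with $|t-t'|>1$ are handled by the sup bound. The product rule for H\"older seminorms,
\[
    [aG]\le\lVert a\rVert_\infty[G]+[a]\,\lVert G\rVert_\infty ,
\]
applied in the metric $d_\eps$, then gives $\lVert w\,aH'\rVert_{\mathcal C^{0,\gamma}_\eps}\le C\lVert f\rVert$.

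\textbf{Uniformity of the constant.} All constants depend only on $H$, $\lambda$ and $\gamma$, not on $\eps$ or $I$. The one point needing care is the integrability of $H'/w$. This is automatic: $H'/w$ decays even faster than $H'$, so it is integrable; the condition $\lambda<\sqrt{\kappa_W}$ is needed only for the boundedness of $wH'$.
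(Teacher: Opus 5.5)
Your proof is correct. The paper does not prove this lemma itself; it quotes it as classical from \cite[Lemma 11.3]{PacardRitore}.

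Your argument verifies the statement directly for the anisotropic norm of Definition~\ref{def: stretched interior norm}. The decisive observation is that the weight $\cosh(\lambda t)$ depends only on $t$. On horizontal pairs it therefore factors out, and $d_\eps$ reduces to $\eps^{-1}|x-x'|$. So the bound on $[\mathfrak p(f)]$ comes out in exactly the metric the product estimate needs, with no loss of a power of $\eps$. Uniformity in $I$ is automatic, because the $t$-integral always runs over all of $\R$.

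The citation buys brevity. Your version buys two things. First, one need not check that the function spaces of the reference match, or dominate, the stretched weighted norms used here. Second, it shows that the only structural input is $\lambda<\sqrt{\kappa_W}$, which makes $\cosh(\lambda t)H'$ bounded and Lipschitz; the paper guarantees this through $\lambda<\min\{\sigma,\sqrt{\kappa_W}\}$.

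One cosmetic point: delete the first display in your H\"older step. The term $|w(t)^{-1}-w(t)^{-1}|$ is a false start, since both points share the same $t$. The line beginning ``More carefully'' is the actual argument.
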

\begin{lemma}\label{lem: invertibility LH}
		Let $I\subset\R$ be an interval. For any
	    \(f\in \mathcal C_{\eps,\lambda}^{0,\gamma}(I\times \R)\) satisfying
	    \(\Pi_I f=f\), there exists a unique
        \(\phi\in\mathcal C_{\eps,\lambda}^{2,\gamma}(I\times\R)\) solving
	\begin{equation*}
		\begin{cases}
			\mathfrak L_{\eps,H}\phi=f&\text{in }I\times\R\\
			\phi=0&\text{on }\partial I\times\R
		\end{cases}
	\end{equation*}
	satisfying
	\begin{equation*}
		\int_{\R}\phi(x,t)H'(t)dt=0,\quad \forall x\in I
	\end{equation*}
	and
	\begin{equation*}
				\lVert\phi\rVert_{\mathcal C_{\eps,\lambda}^{2,\gamma}(I\times \R)}\leq C\lVert f\rVert_{\mathcal C_{\eps,\lambda}^{0,\gamma}(I\times \R)}
	\end{equation*}
	for a constant $C>0$ independent of $\eps$ and $I$.
\end{lemma}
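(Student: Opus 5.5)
We would prove \Cref{lem: invertibility LH} by reducing it to a uniformly elliptic problem in stretched variables and combining a spectral-gap energy estimate with a blow-up argument. Set \(X=x/\eps\), so that \(\mathfrak L_{\eps,H}\) becomes \(\mathcal L\coloneqq\partial_{XX}+\partial_{tt}-W''(H(t))\) on the strip \(I_\eps\times\R\) with \(I_\eps\coloneqq\eps^{-1}I\). The norm \(\mathcal C_{\eps,\lambda}^{k,\gamma}\) then becomes the ordinary weighted H\"older norm \(\lVert\cosh(\lambda t)\,\cdot\,\rVert_{C^{k,\gamma}}\) in \((X,t)\). Uniformity in \(\eps\) and \(I\) therefore amounts to uniformity over all intervals \(I_\eps\subset\R\), bounded or not.

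\textbf{Step 1 (coercivity and existence).} Let \(L_H\coloneqq-\partial_{tt}+W''(H)\). This operator has kernel spanned by \(H'\) and a spectral gap \(\mu>0\) on \(\{H'\}^\perp\), with continuous spectrum starting at \(\kappa_W\). Hence, for \(v\in H^1_0(I_\eps\times\R)\) with \(\int v(X,t)H'(t)\,dt=0\) for a.e. \(X\),
\[
\int|\nabla v|^2+W''(H)v^2\ \geq\ \int|\partial_Xv|^2+\mu\int v^2 .
\]
When \(I_\eps\) is bounded, Lax--Milgram on this closed subspace gives a weak solution of \(\mathcal L\phi=f+c(X)H'\) for some Lagrange multiplier \(c(X)\). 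Testing against \(H'\) shows that \(c\equiv0\), because \(\int fH'\,dt=0\) and \(\int\phi H'\,dt=0\). For unbounded \(I_\eps\) we exhaust by bounded intervals, relying on the uniform estimate of Step 2.

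\textbf{Step 2 (uniform \(L^\infty\) bound).} We argue by contradiction. Suppose there are \(\phi_n,f_n\) on strips \(I_n\times\R\) with \(\lVert\cosh(\lambda t)\phi_n\rVert_\infty=1\) and \(\lVert f_n\rVert\to0\) in the weighted norm. Choose points \((X_n,t_n)\) where the weighted sup is nearly attained.

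First, \(|t_n|\) stays bounded. For \(|t|\geq T\) we have \(W''(H)\geq\kappa_W-o(1)>\lambda^2\), so \(\cosh(\lambda t)\) is a supersolution barrier, exactly as in Step 2 of \Cref{prop:bounded-solvability-boundary}. This is the step where \(\lambda<\sqrt{\kappa_W}\) is used.

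Next, translate in \(X\) and pass to the limit. We obtain a bounded solution of \(\mathcal L\psi=0\), either on \(\R^2\) or on a half-plane or strip carrying a zero Dirichlet condition. In the half-plane or strip case, odd reflection across the Dirichlet sides reduces to the whole-plane case. By \cite[Corollary 7.5]{PacardRitore}, \(\psi=cH'(t)\). The orthogonality condition passes to the limit and forces \(c=0\), which contradicts \(|\psi(0,t_\infty)|\geq 1/2\). In the strip case the Dirichlet condition alone gives \(c=0\), since after odd reflection \(\psi\) must be \(X\)-periodic and vanish on the boundary lines. The same contradiction argument also yields uniqueness.

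\textbf{Step 3 (Schauder and weights).} Local Schauder estimates on unit balls in \((X,t)\), multiplied by the weight \(\cosh(\lambda t)\), upgrade the bound to the \(\mathcal C^{2,\gamma}_{\eps,\lambda}\) estimate. For balls touching the boundary we use boundary Schauder estimates with zero Dirichlet data. The constants depend only on \(W,\lambda,\gamma\), not on \(I\) or \(\eps\).

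The main obstacle is Step 2 for unbounded or very long intervals. There the possible loss of compactness along the kernel direction \(H'\) must be excluded purely through orthogonality. We will handle it by checking carefully that the constraint \(\int\phi H'\,dt=0\) survives the locally uniform limit, using the exponential decay of \(H'\) to control the tails.
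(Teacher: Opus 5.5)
Your proposal is correct and reproduces the classical argument that the paper does not reprove but imports from \cite[Proposition~9.4]{PacardRitore}. That argument consists of existence from coercivity of the quadratic form on the $H'$-orthogonal subspace, uniform $L^\infty$ bounds by blow-up closed with the Liouville classification \cite[Corollary~7.5]{PacardRitore} plus orthogonality, and weighted Schauder upgrading; your odd reflection across the lateral Dirichlet sides $\partial I\times\R$ is the natural adaptation, since the cited setting has no lateral boundary. The one case your list of blow-up limits misses is when the stretched widths $|I_n|$ tend to zero, where there is no nontrivial limit; there, repeated odd reflection and a width-independent gradient bound give $\sup|\phi_n|\lesssim|I_n|\bigl(\lVert\phi_n\rVert_\infty+\lVert f_n\rVert_\infty\bigr)\to0$, so the contradiction still closes.
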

By construction, \(\mathfrak B_{j,h_j}^{\perp}[v]\) is orthogonal to \(H'\).
Hence \(\widetilde L_{H,h_j}\) preserves the orthogonal subspace whenever
\(v\) satisfies the boundary condition and
\(\int_\R v(x,t)H'(t)\,dt=0\). We therefore consider the projected equation
\begin{equation}\label{eq: interior eq3}
	\widetilde L_{H,h_j}v
    =
    \mathcal F_{j,h_j}(v,\psi)
    -c_j(v;h_j,\psi)(x)H'(t)
    \quad \text{on }I_j\times\R
\end{equation}
where
\[
    c_j(v;h_j,\psi)
    \coloneqq
    \mathfrak p_j\bigl(\mathcal F_{j,h_j}(v,\psi)\bigr).
\]
To solve the full equation we will show that, for a certain choice of vertical
shift \(h_j\), the coefficient retained in \eqref{eq: interior eq3} vanishes.
\begin{proposition}\label{prop: invertibility interior}
	There exists \(\eps_*>0\) such that, for every \(0<\eps\leq\eps_*\),
    every \(\h\in\mathcal B(K\eps^\beta)\), every
    \(j\in\{1,\dots,N\}\), and every
    \(\psi\in\mathscr B_{\mathrm{out}}(\eps)\), the following holds.
    Equation \eqref{eq: interior eq3} admits a unique solution
    \[
        v_j=\mathscr V_{j,h_j}(\psi)
    \]
    among the functions in
    \(\mathcal C_{\eps,\lambda}^{2,\gamma}(I_j\times\R)\) satisfying
    \[
        \int_\R v_j(x,t)H'(t)\,dt=0\quad(x\in I_j),
        \qquad
        v_j=0\quad\text{on }\partial I_j\times\R,
    \]
    and
    \[
        \lVert v_j\rVert_
        {\mathcal C_{\eps,\lambda}^{2,\gamma}(I_j\times\R)}
        \leq R_\eps,
    \]
    where the retained coefficient is
    \[
        c_j(h_j,\psi)
        \coloneqq
        c_j\bigl(\mathscr V_{j,h_j}(\psi);h_j,\psi\bigr).
    \]
	Moreover, if \((\h^i,\psi^i)\), \(i=1,2\), satisfy the same assumptions,
    then
				\begin{equation}\label{eq: Lip phiin}
				\begin{split}
                    &\lVert\mathscr V_{j,h_j^1}(\psi^1)
                    -\mathscr V_{j,h_j^2}(\psi^2)\rVert_
                    {\mathcal C_{\eps,\lambda}^{2,\gamma}(I_j\times \R)}\\
                    &\qquad\lesssim
                    \eps\lVert h_j^1-h_j^2\rVert_{C^{2,\gamma}(I_j)}
                    +\eps^{-(1-\alpha)\tau}
                    \lVert\psi^1-\psi^2\rVert_
                    {C_{\Gamma,\eps,3\lambda/4}^{2,\gamma}(\R^2_{\p},\L_{\p})}.
                \end{split}
				\end{equation}
\end{proposition}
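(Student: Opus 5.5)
We will prove Proposition~\ref{prop: invertibility interior} by a contraction argument in the closed ball
\[
    \mathcal K\coloneqq\Bigl\{v\in\mathcal C_{\eps,\lambda}^{2,\gamma}(I_j\times\R):\ v=0\text{ on }\partial I_j\times\R,\ \textstyle\int_\R vH'\,dt=0,\ \lVert v\rVert_{\mathcal C^{2,\gamma}_{\eps,\lambda}}\le R_\eps\Bigr\}.
\]

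\textbf{Definition of the map.} Write $\widetilde L_{H,h_j}=\mathfrak L_{\eps,H}+\mathfrak B^\perp_{j,h_j}$. Given $v\in\mathcal K$, let $T(v)$ be the unique orthogonal solution, from Lemma~\ref{lem: invertibility LH}, of
\[
    \mathfrak L_{\eps,H}T(v)=\Pi_{I_j}\bigl(\mathcal F_{j,h_j}(v,\psi)-\mathfrak B_{j,h_j}[v]\bigr),
    \qquad T(v)=0\text{ on }\partial I_j\times\R .
\]
Since $\Pi_{I_j}\mathfrak B_{j,h_j}[v]=\mathfrak B^\perp_{j,h_j}[v]$ and $\Pi_{I_j}\mathcal F=\mathcal F-c_jH'$, a fixed point of $T$ solves \eqref{eq: interior eq3}. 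By Lemmas~\ref{lem: bound on Pi} and~\ref{lem: invertibility LH}, it then suffices to bound $\mathcal F_{j,h_j}(v,\psi)-\mathfrak B_{j,h_j}[v]$ in $\mathcal C^{0,\gamma}_{\eps,\lambda}$.

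\textbf{Estimates on the right-hand side.} We bound each term term-by-term.

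First, $\mathfrak B_{j,h_j}[v]$ has coefficients $h_j'^2$, $\eps h_j'$ and $\eps h_j''$. Each is $O(\eps^\beta)$ in the stretched norm, because $\eps\partial_x$ is the scaled derivative. Hence this term is bounded by $\eps^\beta\lVert v\rVert$, which is a small linear term.

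Second, the leading term $\eps\chi_{j,4}\eta_{j,3}h_j''H'$ is a pure multiple of $H'$. It is therefore killed by $\Pi_{I_j}$, up to the cutoff $\chi_{j,4}$, which equals one where $H'(t)$ is non-negligible; the leftover is exponentially small. The term $m^{\mathrm{int}}_j[v]H'$ and the $m^{\mathrm{bd}}_j$ terms are likewise multiples of $H'$, projected out up to exponentially small cutoff errors.

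Third, $\mathcal R_j$ splits into three pieces:
\begin{itemize}
    \item the improved error, which by \eqref{eq: improved Linfty error} is of size $\eps^{1+\beta+\alpha\gamma}$ (one must check its H\"older seminorm in stretched variables costs at most a factor lost in $\alpha$, which is why $\alpha>1/(2+\gamma)$);
    \item the quadratic term, bounded by $R_\eps^2$ together with the boundary contributions;
    \item the $\psi$-term, bounded by $r_{\mathrm{out}}(\eps)$, which is exponentially small.
\end{itemize}

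Fourth, the commutator terms $\eps^2\nabla\zeta_{p,4}\cdot\nabla b_p$ and $\eps^2\Delta\zeta_{p,4}\,b_p$ are supported in $x\in I_\eps$ and carry factors $\eps^{1-\alpha}$ and $\eps^{2-2\alpha}$. Using \eqref{eq: size phi_b}, they are bounded by
\[
    \eps^{1-\alpha}\bigl(\eps^{1-\alpha}\lVert v\rVert+\eps^{2\beta-(1-\alpha)\tau}\bigr).
\]
One must verify that this quantity is $o(R_\eps)=o(\eps^{\beta+1-\alpha})$, i.e.\ that $2\beta-(1-\alpha)\tau>\beta$. This holds for $\beta\ge2$ because of the constraint \eqref{eq: boundary-decay-exponent-range} on $\tau$.

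Combining these bounds gives $T(\mathcal K)\subset\mathcal K$ for $\eps$ small.

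\textbf{Contraction and Lipschitz dependence.} The Lipschitz bound in $v$ follows from the same term-by-term analysis, using \eqref{eq: Lip phib} for the boundary responses, which enter with factor $\eps^{1-\alpha}\cdot\eps^{1-\alpha}$, and quadratic smallness of $\mathcal N$. This yields contraction constant $o(1)$ and hence a unique fixed point $\mathscr V_{j,h_j}(\psi)$. For \eqref{eq: Lip phiin}, we compare the fixed points for two data sets and absorb the $v$-differences. The remaining differences are then bounded as follows:
\begin{itemize}
    \item the $h$-dependence of $\mathfrak B$ and of the error gives $\eps\lVert h^1-h^2\rVert$ (the $h''$ term is projected away, so only $\eps$-order coefficients remain);
    \item the $\psi$-dependence enters through the boundary responses, with the factor $\eps^{-(1-\alpha)\tau}$ from \eqref{eq: Lip phib};
    \item the boundary responses also contribute $\eps^{1-\alpha}\eps^{\beta-(1-\alpha)\tau}\lVert h^1-h^2\rVert\lesssim\eps\lVert h^1-h^2\rVert$.
\end{itemize}

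\textbf{Main obstacle.} We expect the main difficulty to be the change of variables: the boundary responses $b_p$ are measured in rotated boundary coordinates, while they must be transported into the shifted stretched variables $(x,t)$ with weight $\cosh(\lambda t)$. The H\"older seminorm bookkeeping across $I_\eps$ under this transport is delicate. We would handle it using the affine collar \eqref{eq: affine-boundary-collar}, under which the transport is an exact rigid motion near the endpoints.
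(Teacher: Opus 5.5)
Your proposal is correct and is essentially the paper's proof. Both argue by contraction in the $R_\eps$-ball of the orthogonal Dirichlet subspace, with the same term-by-term bounds: the boundary commutators are controlled through \eqref{eq: size phi_b}, and \eqref{eq: Lip phib} gives the contraction factor $\eps^{2-2\alpha}$; the same comparison of fixed points then yields \eqref{eq: Lip phiin}. The differences are only in packaging: the paper inverts $\widetilde L_{H,h_j}$ by a Neumann series instead of moving $\mathfrak B_{j,h_j}[v]$ to the right-hand side, and it bounds the $H'$-multiples directly (by $\eps^{\beta+1}$ and $\eps^\beta\lVert v\rVert$, already $o(R_\eps)$) rather than projecting them away. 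It also takes the approximation remainder from \eqref{eq: interior-approximation-error-size}, which is $O(\eps^{2\beta})$ in the stretched H\"older norm, so the lower bound on $\alpha$ plays no role at this step.
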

The proof of \Cref{prop: invertibility interior} is given in
\S\ref{sec: invertibility interior}.

For \(\h\in\mathcal B(K\eps^\beta)\) and
\(\psi\in\mathscr B_{\mathrm{out}}(\eps)\), set
\[
    v_j(\h,\psi)
    \coloneqq
    \mathscr V_{j,h_j}(\psi),
    \qquad
    b_p(\h,\psi)
    \coloneqq
    \mathscr B_{p,h_j}\bigl(v_j(\h,\psi),\psi\bigr).
\]
Reconstruct \(\phi_{\h,j}(\psi)\) from these corrections by
\eqref{eq: decomp phi}, and define the local assembly map
\[
    \mathcal L_{\h}(\psi)
    \coloneqq
    \bigl(\phi_{\h,1}(\psi),\dots,\phi_{\h,N}(\psi)\bigr).
\]
\begin{lemma}[Local assembly]\label{lem: local assembly}
After possibly decreasing \(\eps_*\), for every \(0<\eps\leq\eps_*\),
\(\h\in\mathcal B(K\eps^\beta)\), and
\(\psi\in\mathscr B_{\mathrm{out}}(\eps)\),
\[
    \sup_j
    \lVert\mathcal L_{\h}(\psi)_j\rVert_
    {C_{I_j,\eps,\lambda}^{2,\gamma}
    (\spt\chi_{j,2},\L_{\p})}
    \lesssim R_\eps.
\]
For two admissible data sets \((\h^i,\psi^i)\), \(i=1,2\), one has
\begin{equation}\label{eq: local compatibility dependence}
\begin{split}
    &\sup_j\lVert\mathcal L_{\h^1}(\psi^1)_j
    -\mathcal L_{\h^2}(\psi^2)_j\rVert_
    {C_{I_j,\eps,\lambda}^{1,\gamma}
    (\spt\chi_{j,2},\L_{\p})}\\
    &\qquad\lesssim
    \eps^{-(1-\alpha)\tau}
    \lVert\psi^1-\psi^2\rVert_
    {C_{\Gamma,\eps,3\lambda/4}^{2,\gamma}
    (\R^2_{\p},\L_{\p})}
    +\left(\eps+\eps^{\beta-(1-\alpha)\tau}\right)
    \lVert\h^1-\h^2\rVert_{C^{2,\gamma}(\boldsymbol I)}.
\end{split}
\end{equation}
Here only one physical derivative is taken; the change of stretched normal
variable costs \(R_\eps/\eps=\eps^{\beta-\alpha}\lesssim\eps\), and is
therefore included in the last term.
\end{lemma}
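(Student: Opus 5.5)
The local assembly lemma is a bookkeeping statement: each reconstructed $\phi_j$ is the sum of an interior piece $\eta_{j,1}v_j(x,(z-h_j(x))/\eps)$ and boundary pieces $\zeta_{p,4}b_p(x_p,z_p)$. The plan is to estimate each piece in the section norm $C^{2,\gamma}_{I_j,\eps,\lambda}(\spt\chi_{j,2},\L_{\p})$ using \Cref{prop: invertibility interior} and \Cref{prop: invert boundary}, and then to subtract to obtain the Lipschitz bound via \eqref{eq: Lip phiin} and \eqref{eq: Lip phib}.

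\textbf{The size bound.} Start with the interior piece, and compare norms. On a trivializing ball $B_\eps(q)$ inside $\spt\eta_{j,1}$, the change of variables $(x,z)\mapsto(x,t)$ with $t=(z-h_j(x))/\eps$, rescaled by $\eps$, is $C^{3}$-close to a translation, because $\lVert h_j\rVert_{C^{2,\gamma}}\lesssim\eps^\beta$. Hence the $\eps$-scaled $C^{2,\gamma}$ norm of $v_j$ on the ball is bounded by its stretched $\mathcal C^{2,\gamma}_\eps$ norm in the corresponding $(x,t)$ ball.

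The weights also match. The factor $\cosh(\lambda t)$ is comparable to $e^{\lambda\dist(q,I_j)/\eps}$ up to a factor $e^{\lambda|h_j|/\eps}=1+O(\eps^{\beta-1})$. The cut-off $\eta_{j,1}$ has scaled derivatives $\eps^k|\nabla^k\eta_{j,1}|\lesssim\eps^{k(1-\alpha)}\le1$. So the interior piece is $\lesssim\lVert v_j\rVert_{\mathcal C^{2,\gamma}_{\eps,\lambda}}\le R_\eps$.

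For the boundary piece, \Cref{prop: invert boundary} gives a weighted bound relative to the rotated half-line $\ell_{\theta_{\h}(p)}$. On $\spt\zeta_{p,4}$ this half-line deviates from $I_j$ by at most $|\theta_{\h}(p)|\,|x_p|\lesssim K\eps^{\beta+\alpha}\ll\eps$. The exponential weights are therefore comparable, and the boundary piece is $\lesssim R_\eps$. The singular balls are covered directly by the $C_{*,\eps}$ part of that norm.

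\textbf{The Lipschitz bound.} Write each difference as (same data, different solutions) plus (same solution, different coordinates). The solution differences are handled by \eqref{eq: Lip phiin}, and by \eqref{eq: Lip phib} with $v^1-v^2$ inserted from \eqref{eq: Lip phiin}. Altogether these produce $\eps^{-(1-\alpha)\tau}\lVert\psi^1-\psi^2\rVert$ and $(\eps+\eps^{\beta-(1-\alpha)\tau})\lVert h^1-h^2\rVert$. The factor $\eps^{1-\alpha}$ in \eqref{eq: Lip phib} only helps.

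The coordinate mismatches are of two kinds:
\begin{itemize}
\item interior: $v(x,(z-h^1)/\eps)-v(x,(z-h^2)/\eps)$;
\item boundary: $\widehat b(R_{\theta_1}\cdot/\eps)-\widehat b(R_{\theta_2}\cdot/\eps)$.
\end{itemize}
Bound each by the mean value theorem, which costs one extra derivative. Only a $C^{1,\gamma}$ difference is claimed, so one derivative of $v$ or $b$ in $C^{2,\gamma}$ controls it. For the interior term this gives $\eps^{-1}\lVert h^1-h^2\rVert_\infty R_\eps\lesssim\eps^{\beta-\alpha}\lVert h^1-h^2\rVert\lesssim\eps\lVert h^1-h^2\rVert$. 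For the boundary term it gives $|\theta_1-\theta_2|\,|Y|\,R_\eps$, with $|Y|\lesssim\eps^{\alpha-1}$ on $\spt\zeta_{p,4}$ and $|\theta_1-\theta_2|\le\lVert h^1-h^2\rVert_{C^1}$; this yields $R_\eps\eps^{\alpha-1}\lVert h^1-h^2\rVert\lesssim\eps\lVert h^1-h^2\rVert$ since $\beta\ge2$. The cut-offs $\zeta_{p,4}$ and $\eta_{j,1}$ do not depend on $\h$, so they produce no difference.

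\textbf{Main obstacle.} The step needing the most care is the comparison of the exponentially weighted norms under the $\h$-dependent coordinates, in particular the factor $e^{\lambda|h|/\eps}$ and the rotated half-line. I would handle both as above, by using that $|h|,|\theta x|\lesssim\eps^{\beta+\alpha}\ll\eps$ on the relevant supports, so every weight distortion is $1+o(1)$.
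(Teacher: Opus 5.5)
Your proposal is correct and follows essentially the same route as the paper. The size bound comes from scaled product and composition estimates applied to \eqref{eq: decomp phi}, combined with \Cref{prop: invert boundary} and \Cref{prop: invertibility interior}; the difference bound comes from \eqref{eq: Lip phiin} and \eqref{eq: Lip phib} in the common unrotated boundary coordinates, plus the mean-value costs $\eps^{-1}R_\eps\lVert\h^1-\h^2\rVert_{C^{2,\gamma}(\boldsymbol I)}$ and $\eps^{\alpha-1}R_\eps\lVert\h^1-\h^2\rVert_{C^{2,\gamma}(\boldsymbol I)}$ for the two $\h$-dependent coordinate changes, exactly as you compute (one small correction: since $h_j$ is only $C^{2,\gamma}$, the rescaled map $(x,z)\mapsto(x,t)$ is $C^{2,\gamma}$-close rather than $C^3$-close to a translation, which is all the composition estimate needs).
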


The proof of \Cref{lem: local assembly} is given in
\S\ref{sec:local-assembly-proof}.

\subsection{The outer response}

For an admissible shift \(\h\) and an arbitrary local tuple \(\phi\), consider
the outer equation
\begin{equation}\label{eq: outer equation}
    \eps^2\Delta\psi-V\psi=E_\out(\phi,\psi)
    \quad\text{on }\R^2_{\p}.
\end{equation}

\begin{proposition}\label{prop: invert outer}
There exists \(\eps_*>0\) such that, for every \(0<\eps\le\eps_*\), every
\(\h\in\mathcal B(K\eps^\beta)\), and every
\(\phi=(\phi_1,\dots,\phi_N)\) satisfying
\begin{equation}\label{eq: norm bound on phi}
    \sup_j
    \lVert\phi_j\rVert_{C^{2,\gamma}_{I_j,\eps,\lambda}
    (\spt\chi_{j,2},\L_{\p})}
    \le 1,
\end{equation}
the outer equation has a unique solution in
\(\mathscr B_{\mathrm{out}}(\eps)\), which we denote by
\(\Psi_{\h}(\phi)\).
If \((\h^i,\phi^i)\), \(i=1,2\), satisfy the same assumptions, then
\begin{equation}\label{eq: contractivity psi-phi}
\begin{split}
    &\lVert\Psi_{\h^1}(\phi^1)-\Psi_{\h^2}(\phi^2)\rVert_
    {C_{\Gamma,\eps,3\lambda/4}^{2,\gamma}(\R^2_{\p},\L_{\p})}\\
    &\qquad\lesssim
    r_{\mathrm{out}}(\eps)
    \left(
        \lVert\h^1-\h^2\rVert_{C^{2,\gamma}(\boldsymbol I)}
        +
        \sup_j
        \lVert\phi_j^1-\phi_j^2\rVert_{C^{1,\gamma}_{I_j,\eps,\lambda}
        (\spt\chi_{j,2},\L_{\p})}
    \right).
\end{split}
\end{equation}
\end{proposition}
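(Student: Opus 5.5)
We will prove \Cref{prop: invert outer} by a contraction argument built on a uniform linear theory for $\eps^2\Delta-V$ in the weighted space $C^{2,\gamma}_{\Gamma,\eps,3\lambda/4}$.

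\textbf{Linear step.} We first check that $V$ is uniformly positive. On $\spt\chi_{\Ical,1}$ the ansatz $\omega$ is exponentially close to the pure phase $\pm\Ical$, at distance $\gtrsim\eps^\alpha$ from $\Gamma$; this uses \Cref{lem: estim U+}, \Cref{cor: estim U-} and the decay of $H\mp1$. Hence $W''(\omega)\ge\kappa_W/2$ there. Elsewhere $V$ equals $\kappa_W$ or is a convex combination of the two, so $V\ge\kappa_W/2$ on $\R^2_{\p}$.

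\textbf{Solvability and weighted estimate.} Given a datum $E$ with finite weighted norm, we solve $\eps^2\Delta\psi-V\psi=E$ as follows. We solve on exhausting bounded domains; near the punctures we pass to the square-root cover, where the lifted operator $\Delta-4|\xi|^2\eps^{-2}V$ is coercive, and oddness makes the solution descend to a section. The maximum principle gives $\lVert\psi\rVert_\infty\le(2/\kappa_W)\lVert E\rVert_\infty$. For the exponential weight we compare $\psi$ with the barrier $e^{-\mu\dist(\cdot,\Gamma)/\eps}$, suitably smoothed, with $\mu=3\lambda/4<\sqrt{\kappa_W/2}$; this requires $\lambda$ small relative to $\sqrt{\kappa_W}$, which we may impose. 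The comparison is done as in \cite[Lemma~8.3]{Guaraco-Marques-Neves19}. Local Schauder estimates on trivializing balls and on the lifted singular balls (as in \Cref{cor: schauder-tan}) then upgrade this to
\[
\lVert\psi\rVert_{C^{2,\gamma}_{\Gamma,\eps,3\lambda/4}}\lesssim\lVert E\rVert_{C^{0,\gamma}_{\Gamma,\eps,3\lambda/4}},
\]
with a constant independent of $\eps$. Uniqueness follows from the maximum principle among bounded sections.

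\textbf{Fixed point: size of the data.} Write $T(\psi)$ for the linear solution with datum $E_\out(\phi,\psi)$. We bound each term of $E_\out$ in the weighted norm.
\begin{itemize}
\item $\chi_{\Ical,2}S(\omega)$ is supported in $\{0<\chi_{j,5}<1\}$, at distance $\sim\eps^\alpha$ from $\Gamma$. By \eqref{eq: estimate S away from Ij} it is $O(e^{-\lambda\cdot c\eps^{\alpha-1}})$ in the $\lambda$-weight. Converting to the $3\lambda/4$-weight leaves a gain of $e^{-(\lambda/4)\dist/\eps}\le e^{-c'\eps^{\alpha-1}}$.
\item The cutoff commutators $\eps^2\nabla\chi_{k,2}\cdot\nabla\phi_k$ and $\eps^2\phi_k\Delta\chi_{k,2}$ live on $\{0<\chi_{k,2}<1\}$, also at distance $\sim\eps^\alpha$. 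There $\phi_k$ is $O(e^{-\lambda\dist/\eps})$ by \eqref{eq: norm bound on phi}, and the cutoff derivatives cost only $\eps^{2-2\alpha}$ or less. The weight change again produces a factor $e^{-c\eps^{\alpha-1}}$.
\item The nonlinear term $\chi_{\Ical,1}\mathcal N_\omega(\cdot)$ is quadratic. It is bounded by $\lVert\psi\rVert^2$ plus the square of the $\phi_k$ terms on the transition region, each of which is exponentially small.
\end{itemize}
Choosing $\widetilde\lambda$ smaller than the constants $c,c'$ above, all data terms are $\ll r_{\mathrm{out}}(\eps)$. Hence $T$ maps $\mathscr B_{\mathrm{out}}(\eps)$ into itself.

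\textbf{Fixed point: contraction and Lipschitz dependence.} The difference $T(\psi^1)-T(\psi^2)$ is controlled by $\mathcal N$ differences, of size $\lesssim r_{\mathrm{out}}\lVert\psi^1-\psi^2\rVert$, so $T$ is a contraction. For \eqref{eq: contractivity psi-phi} we subtract the equations for $(\h^i,\phi^i)$. The $\phi$-dependence enters only through the commutator and nonlinear terms on the transition strips. There one derivative of $\phi$ appears and the weight change gives the factor $e^{-c\eps^{\alpha-1}}\le r_{\mathrm{out}}$, which explains why the estimate uses the $C^{1,\gamma}$ norm of $\phi^1-\phi^2$.

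\textbf{Expected main obstacle.} The $\h$-dependence enters through $S(\omega_{\h})$, $W''(\omega_{\h})$ in $V$, and $\mathcal N_{\omega_{\h}}$. We need $\partial_{\h}$ of these in the transition region. Differentiating $H((z-h)/\eps)$ or $U_{\theta_{\h}}$ costs $\eps^{-1}$, but this is absorbed by the exponential smallness there. We expect this step to be the main obstacle: verifying that varying $\h$ in $V$ changes $V$ only by exponentially small amounts on $\spt\chi_{\Ical,1}$. That in turn needs the decay of $\partial_\theta U$ and of $H'$ away from the interface, from \Cref{cor: estim U-} and \Cref{lem: estim U+}.
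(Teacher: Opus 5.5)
Your proposal is correct and follows essentially the paper's route. The paper also builds a uniform weighted inverse for $\eps^2\Delta-V$ on the lifted cover, using exhaustions and an exponential barrier (its \Cref{lem: invertibility out}). It then runs a contraction on $\mathscr B_{\mathrm{out}}(\eps)$ in which every datum is exponentially small, thanks to the $\eps^\alpha$-separation and the drop in weight from $\lambda$ to $3\lambda/4$, and it gets the $\h$-dependence from the mean-value bounds \eqref{eq: outer-potential-dependence}, \eqref{eq: outer-approximation-error-dependence} and \eqref{eq: outer-ansatz-dependence}.

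Three small fixes are needed.
\begin{itemize}
\item No extra restriction on $\lambda$ is required. Since $V=\kappa_W+\chi_{\Ical,1}(W''(\omega)-\kappa_W)$ is exponentially close to $\kappa_W$, you get $V-\lambda^2\ge c>0$ for the given $\lambda<\sqrt{\kappa_W}$.
\item The barrier should be $\sum_j e^{-\mu\dist(\cdot,I_j)/\eps}$, which is a supersolution because $\Delta\dist(\cdot,I_j)\ge0$ weakly. The function $e^{-\mu\dist(\cdot,\Gamma)/\eps}=\max_j e^{-\mu\dist(\cdot,I_j)/\eps}$ is a maximum of supersolutions and fails to be one along ridges equidistant from two segments.
\item $\chi_{\Ical,2}S(\omega)$ is not supported only in $\{0<\chi_{j,5}<1\}$, because the normal tails of the interior error terms also reach $\spt\chi_{\Ical,2}$. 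These tails are covered by \eqref{eq: outer-approximation-error-global}.
\end{itemize}
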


The proof is given in \S\ref{sec: invert outer}.

\subsection{The fixed-shift correction}

Define
\[
    \mathcal K_{\h}(\psi)
    \coloneqq
    \Psi_{\h}\bigl(\mathcal L_{\h}(\psi)\bigr),
    \qquad
    \psi\in\mathscr B_{\mathrm{out}}(\eps).
\]

Whenever \(\psi(\h)\) is a fixed point of \(\mathcal K_{\h}\), we use the
following notation. For \(j\in\{1,\dots,N\}\) and
\(p\in\partial I_j\), set
\[
    v_j(\h)\coloneqq v_j(\h,\psi(\h)),
    \qquad
    b_p(\h)\coloneqq b_p(\h,\psi(\h)),
    \qquad
    \phi_j(\h)\coloneqq\phi_{\h,j}(\psi(\h)),
\]
and
\[
    \varphi_{\h}
    \coloneqq
    \psi(\h)+\sum_{j=1}^N\chi_{j,2}\phi_j(\h),
    \qquad
    c_j(\h)\coloneqq c_j(h_j,\psi(\h)),
    \qquad
    u_{\h}^*\coloneqq\omega_{\h}+\varphi_{\h}.
\]
The adapted representative of the boundary response is
\[
    \widehat b_p(\h)(Y)
    \coloneqq
    b_p(\h)(\eps R_{-\theta_{\h}(p)}Y).
\]

\begin{proposition}[Existence and estimates]
\label{prop: fixed-shift-correction}
There exists \(\eps_*>0\) such that, for every \(0<\eps\le\eps_*\) and
every \(\h\in\mathcal B(K\eps^\beta)\), the map \(\mathcal K_{\h}\) has a
unique fixed point \(\psi(\h)\) in \(\mathscr B_{\mathrm{out}}(\eps)\).
The associated quantities defined above satisfy
\[
    \lVert v_j(\h)\rVert_{\mathcal C_{\eps,\lambda}^{2,\gamma}(I_j\times\R)}
    \lesssim R_\eps,
    \qquad
    \lVert b_p(\h)\rVert_
    {C_{\ell_{\theta_{\h}(p)},\eps,\lambda}^{2,\gamma}(\R^2_*,\L)}
    \lesssim R_\eps,
\]
\[
    \sup_j
    \lVert\phi_j(\h)\rVert_{C_{I_j,\eps,\lambda}^{2,\gamma}
    (\spt\chi_{j,2},\L_{\p})}
    \lesssim R_\eps,
\]
and
\begin{equation}\label{eq: fixed-shift-outer-bound}
    \lVert\psi(\h)\rVert_{C_{\Gamma,\eps,\lambda}^{2,\gamma}
    (\R^2_{\p},\L_{\p})}
    \lesssim \eps^{1-\alpha}R_\eps
    =o(R_\eps).
\end{equation}

The corrected section has the exact residual
\begin{equation}\label{eq: fixed-shift-residual}
\begin{split}
    S\bigl(\omega_{\h}+\varphi_{\h}\bigr)
    &=-\sum_{j=1}^N
    \chi_{j,2}\eta_{j,1}c_j(\h)H'(t_{j,\h}),\\
    t_{j,\h}&=\frac{z-h_j(x)}{\eps}.
\end{split}
\end{equation}

For every \(j\), in the Fermi coordinates around \(I_j\), one has
\begin{equation}\label{eq: fixed-shift-interior-profile}
    \left\lVert
        u_{\h}^*(x,z)
        -H\left(\frac{z-h_j(x)}{\eps}\right)
    \right\rVert_
    {C_{I_j,\eps,\lambda}^{2,\gamma}(\spt\eta_{j,1},\L_{\p})}
    \lesssim R_\eps
\end{equation}
and, for every \(p\in\partial I_j\), in the boundary coordinates around
\(p\), one has
\begin{equation}\label{eq: fixed-shift-boundary-profile}
    \left\lVert
        u_{\h}^*(x,z)
        -\mathcal T_p\left(
            U_{\theta_{\h}(p)}
            \left(\frac{x}{\eps},\frac{z}{\eps}\right)
        \right)
    \right\rVert_{C_{*,\eps}^{2,\gamma}(\spt\zeta_{p,1},\L_{\p})}
    \lesssim R_\eps .
\end{equation}
\end{proposition}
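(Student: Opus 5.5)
The plan is to obtain \(\psi(\h)\) from the Banach fixed-point theorem applied to \(\mathcal K_{\h}=\Psi_{\h}\circ\mathcal L_{\h}\) on the closed set \(\mathscr B_{\mathrm{out}}(\eps)\). All other conclusions are then read off from the estimates of the component propositions.

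\textbf{Self-map.} Take \(\psi\in\mathscr B_{\mathrm{out}}(\eps)\). By \Cref{lem: local assembly}, \(\sup_j\lVert\mathcal L_{\h}(\psi)_j\rVert_{C^{2,\gamma}_{I_j,\eps,\lambda}}\lesssim R_\eps\le 1\) for \(\eps\) small, so hypothesis \eqref{eq: norm bound on phi} holds. \Cref{prop: invert outer} then produces \(\Psi_{\h}(\mathcal L_{\h}(\psi))\in\mathscr B_{\mathrm{out}}(\eps)\), so \(\mathcal K_{\h}\) maps the ball into itself.

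\textbf{Contraction.} Chain \eqref{eq: contractivity psi-phi}, taken with \(\h^1=\h^2\), with \eqref{eq: local compatibility dependence}. This gives
\[
    \lVert\mathcal K_{\h}(\psi^1)-\mathcal K_{\h}(\psi^2)\rVert
    \lesssim
    r_{\mathrm{out}}(\eps)\,\eps^{-(1-\alpha)\tau}\,
    \lVert\psi^1-\psi^2\rVert .
\]
Since \(r_{\mathrm{out}}(\eps)=\exp(-\widetilde\lambda\eps^{\alpha-1})\) beats every negative power of \(\eps\), the constant is \(<1/2\) for \(\eps\le\eps_*\). Banach's theorem gives a unique fixed point, and the bounds on \(v_j,b_p,\phi_j\) follow from \Cref{prop: invertibility interior}, \Cref{prop: invert boundary} and \Cref{lem: local assembly}.

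\textbf{Bound \eqref{eq: fixed-shift-outer-bound}.} This needs more than membership in \(\mathscr B_{\mathrm{out}}\), because of the stronger weight \(\lambda\) in place of \(3\lambda/4\). I would redo the outer estimate directly on \(E_\out\).
\begin{itemize}
\item The term \(\chi_{\Ical,2}S(\omega)\) is supported on \(\{0<\chi_{j,5}<1\}\), at distance \(\gtrsim\eps^\alpha\) from \(\Gamma\). By \eqref{eq: estimate S away from Ij} it is \(O(1)\) in the \(\lambda\)-weighted norm, and hence exponentially small after the barrier argument.
\item The commutator terms \(\eps^2\nabla\chi_{k,2}\cdot\nabla\phi_k\) and \(\eps^2\phi_k\Delta\chi_{k,2}\) carry a factor \(\eps^{1-\alpha}\) in scaled derivatives. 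Their size is \(\eps^{1-\alpha}R_\eps\) times the weight already carried by \(\phi_k\).
\item The nonlinear term is quadratic, of size \(\lesssim R_\eps^2\).
\end{itemize}
Applying the outer inverse, with \(V\ge c>0\) and a barrier at rate \(\lambda<\sqrt{\kappa_W}\), gives \(\eps^{1-\alpha}R_\eps\).

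\textbf{Residual \eqref{eq: fixed-shift-residual}.} By construction, \(v_j\) solves \eqref{eq: interior eq3} with the coefficient \(c_j\) retained, \(b_p\) solves \eqref{eq: boundary eq2}, and \(\psi(\h)\) solves the outer equation with these \(\phi_j\). Tracing \Cref{lem: inner-bdy decomp} backwards, the system \eqref{eq: system reduced segments} holds up to the single extra term \(-c_jH'\) in the interior equation. This term enters \(\phi_j\) multiplied by \(\eta_{j,1}\), and then \(\varphi\) multiplied by \(\chi_{j,2}\). Reassembling \eqref{eq: linear unsplit} gives \(S(\omega+\varphi)=-\sum_j\chi_{j,2}\eta_{j,1}c_jH'(t_{j,\h})\).

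\textbf{Profile estimates \eqref{eq: fixed-shift-interior-profile} and \eqref{eq: fixed-shift-boundary-profile}.} On \(\spt\eta_{j,1}\) we have \(\omega=H\) (away from the \(\zeta\) overlap, where \(U_\theta\) agrees with \(H\) up to \(e^{-\sigma\eps^{\alpha-1}}\) by \Cref{cor: estim U-}), and \(\varphi=\psi+\phi_j\). So the difference is \(O(R_\eps)\), after converting stretched norms to section norms. The boundary estimate is analogous on \(\spt\zeta_{p,1}\), where \(\omega=\mathcal T_pU_\theta\) and \(\varphi=\psi+b_p\). The estimate for \(b_p\) must be transferred from the \(\ell_\theta\)-weighted norm to \(C_{*,\eps}^{2,\gamma}\), which is immediate.

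\textbf{Main obstacle.} The delicate point is the residual identity: one must verify that the cutoff nesting makes each piece's equation hold exactly where it is used, so that no stray commutator survives. This is the bookkeeping of \Cref{lem: inner-bdy decomp}.
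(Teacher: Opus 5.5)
Your overall architecture is the same as the paper's, and the self-map, contraction, size bounds and residual identity are fine. The self-map comes from \Cref{lem: local assembly}, and the contraction constant $r_{\mathrm{out}}(\eps)\eps^{-(1-\alpha)\tau}$ from \eqref{eq: contractivity psi-phi} combined with \eqref{eq: local compatibility dependence}. The size bounds are read off from the component results. The residual identity follows by rerunning \Cref{lem: inner-bdy decomp} with the projected interior equation and then substituting the outer equation.

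The step that fails as written is the improved weighted bound \eqref{eq: fixed-shift-outer-bound}, specifically your handling of the approximation error in $E_{\out}$.

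\emph{Support claim.} $\chi_{\Ical,2}S(\omega)$ is not supported only in $\{0<\chi_{j,5}<1\}$. Indeed $\chi_{\Ical,2}>0$ as soon as $\dist(q,I_j)>3\delta\eps^\alpha$, while $\chi_{j,5}=1$ up to $6\delta\eps^\alpha$. So the interior residual terms (tangential, orthogonal, transition, potential) also contribute there, and \eqref{eq: estimate S away from Ij} does not cover them.

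\emph{Barrier claim.} More seriously, an $O(1)$ bound in the norm with weight $e^{\lambda\dist(\cdot,\Gamma)/\eps}$ cannot become exponentially small ``after the barrier argument''. The weighted estimate in \Cref{lem: invertibility out} maps the $\lambda$-weighted space to itself with an $O(1)$ constant and gains nothing at the same rate. An $O(1)$ $\lambda$-weighted source generally produces an $O(1)$ $\lambda$-weighted solution, since on the source's support the solution is comparable to $f/V$. So from \eqref{eq: estimate S away from Ij} you only get $\lVert\psi(\h)\rVert_{C^{2,\gamma}_{\Gamma,\eps,\lambda}}\lesssim 1$, not $\eps^{1-\alpha}R_\eps$. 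Distance $\gtrsim\eps^\alpha$ from $\Gamma$ yields smallness in the $\lambda$-weighted norm only if the source decays at a rate strictly larger than $\lambda$.

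That is exactly what \eqref{eq: outer-approximation-error-global} provides, and it is the input the paper uses here. Fix $\lambda<\lambda_*<\min\{\sigma,\sqrt{\kappa_W}\}$. Every term of $S(\omega_{\h})$ on $\spt\chi_{\Ical,2}$ is bounded on each $\eps$-ball by $\eps^{-M}e^{-\lambda_*d(q)/\eps}$; for the interior terms this comes from the normal decay of $H'$, $H''$ and of $U_\pm$. After inserting the weight one is left with $\eps^{-M}e^{-(\lambda_*-\lambda)d(q)/\eps}\lesssim e^{-c/\eps^{1-\alpha}}$. With this replacing your first bullet, the rest of your outer argument goes through as in the paper:
\begin{itemize}
\item the commutators are $\lesssim\eps^{1-\alpha}R_\eps$;
\item the nonlinear term is $\lesssim R_\eps^2+r_{\mathrm{out}}(\eps)^2$, using $2\cdot\tfrac{3\lambda}{4}>\lambda$ for the $\psi$ factors;
\item then apply \eqref{eq: bound on G} with $\nu=\lambda$.
\end{itemize}

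There is also a smaller inaccuracy in the profile step that does not affect the conclusion. On the overlap of $\spt\eta_{j,1}$ with $\spt\zeta_{p,3}$, the rotated model and the sheared heteroclinic are not exponentially close. Their phases differ by $t(\cos\theta-1)+\eps^{-1}x(\theta\cos\theta-\sin\theta)$. The correct comparison is \eqref{eq: full-overlap-profile-comparison}, of size $\eps^{2\beta}$, which is still $\lesssim R_\eps$ because $\beta\geq2$. Also, on that region $\varphi=\psi+\chi_{j,2}\phi_j$ rather than $\psi+\phi_j$, and $\phi_j$ contains both the $v_j$ and $b_p$ pieces on the cutoff overlaps. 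This is handled by \Cref{lem: local assembly} and the cutoff bounds.
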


\begin{proposition}[Lipschitz dependence on the shift]
\label{prop: fixed-shift-dependence}
Let \(0<\eps\leq\eps_*\), and let
\(\h^1,\h^2\in\mathcal B(K\eps^\beta)\). Set
\[
    M\coloneqq
    \lVert\h^1-\h^2\rVert_{C^{2,\gamma}(\boldsymbol I)}.
\]
Then, for every \(j\) and \(p\in\partial I_j\),
\begin{equation}\label{eq: compatible correction dependence}
\begin{split}
    \lVert\psi(\h^1)-\psi(\h^2)\rVert_
    {C_{\Gamma,\eps,3\lambda/4}^{2,\gamma}
    (\R^2_{\p},\L_{\p})}
    &\lesssim e^{-\widetilde\lambda/\eps^{1-\alpha}}M,\\
    \lVert v_j(\h^1)-v_j(\h^2)\rVert_
    {\mathcal C_{\eps,\lambda}^{2,\gamma}(I_j\times\R)}
    &\lesssim\eps M,\\
    \lVert\widehat b_p(\h^1)-\widehat b_p(\h^2)\rVert_
    {C_{\ell_+,\lambda}^{2,\gamma}(\R^2_*,\L)}
    &\lesssim
    \left(\eps^{2-\alpha}
    +\eps^{\beta-(1-\alpha)\tau}\right)M.
\end{split}
\end{equation}
\end{proposition}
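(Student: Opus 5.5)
The plan is to differentiate the fixed-point construction of \Cref{prop: fixed-shift-correction} with respect to the shift, using the Lipschitz estimates already available for each building block. The outer estimate comes first, and the other two follow by substitution.

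\textbf{Outer correction.} Since \(\psi(\h^i)=\mathcal K_{\h^i}(\psi(\h^i))\), we write
\[
    \psi(\h^1)-\psi(\h^2)
    =\bigl[\mathcal K_{\h^1}(\psi(\h^1))-\mathcal K_{\h^1}(\psi(\h^2))\bigr]
    +\bigl[\mathcal K_{\h^1}(\psi(\h^2))-\mathcal K_{\h^2}(\psi(\h^2))\bigr].
\]
Each bracket is estimated by composing \eqref{eq: contractivity psi-phi} with \eqref{eq: local compatibility dependence}.

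For the first bracket, the shift is fixed, so the factor \(r_{\mathrm{out}}(\eps)\) from \eqref{eq: contractivity psi-phi} multiplies \(\eps^{-(1-\alpha)\tau}\lVert\psi^1-\psi^2\rVert\). This coefficient tends to zero, because \(r_{\mathrm{out}}\) is exponentially small while \(\eps^{-(1-\alpha)\tau}\) is only polynomially large. The first bracket can therefore be absorbed into the left-hand side.

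For the second bracket, \(\psi\) is fixed and only the shift varies. Applying the same two estimates gives
\[
    r_{\mathrm{out}}\bigl(1+\eps+\eps^{\beta-(1-\alpha)\tau}\bigr)M .
\]
The assembled tuples satisfy the bound \eqref{eq: norm bound on phi} since they are of size \(R_\eps\ll1\), so \Cref{prop: invert outer} applies. We need \(\eps^{\beta-(1-\alpha)\tau}\lesssim\eps^{-C}\) to be at most polynomially large. Any polynomial factor is absorbed by shrinking \(\widetilde\lambda\) slightly, which is why the statement uses \(e^{-\widetilde\lambda/\eps^{1-\alpha}}\). This yields the first estimate.

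\textbf{Interior correction.} By definition \(v_j(\h^i)=\mathscr V_{j,h_j^i}(\psi(\h^i))\). Estimate \eqref{eq: Lip phiin} gives
\[
    \lVert v_j(\h^1)-v_j(\h^2)\rVert
    \lesssim \eps M+\eps^{-(1-\alpha)\tau}e^{-\widetilde\lambda/\eps^{1-\alpha}}M
    \lesssim\eps M,
\]
since the exponential beats any power of \(\eps\).

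\textbf{Boundary correction.} We insert the data \((\h^i,v_j(\h^i),\psi(\h^i))\) into \eqref{eq: Lip phib}. The three terms are:
\begin{itemize}
    \item the \(v\)-term, \(\eps^{1-\alpha}\cdot\eps M=\eps^{2-\alpha}M\);
    \item the \(\psi\)-term, \(\eps^{-(1-\alpha)\tau}e^{-\widetilde\lambda/\eps^{1-\alpha}}M\), which is negligible;
    \item the shift term, \(\eps^{\beta-(1-\alpha)\tau}M\).
\end{itemize}
Summing gives the claimed bound \((\eps^{2-\alpha}+\eps^{\beta-(1-\alpha)\tau})M\).

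\textbf{Main obstacle.} The delicate point is the absorption step in the outer estimate. One has to check that all the assumptions of \Cref{prop: invert outer} and \Cref{lem: local assembly} hold uniformly along the fixed points, which is provided by \Cref{prop: fixed-shift-correction}. One also has to check that the polynomial losses really are dominated by \(r_{\mathrm{out}}\); this may require decreasing \(\eps_*\).

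A second subtlety is that \eqref{eq: contractivity psi-phi} measures \(\phi\) only in \(C^{1,\gamma}\), and this is exactly the norm controlled in \eqref{eq: local compatibility dependence}. The composition is therefore consistent.
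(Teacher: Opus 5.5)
Your proposal is correct and follows the paper's proof. It uses the fixed-point identity with \eqref{eq: contractivity psi-phi} and \eqref{eq: local compatibility dependence}, absorbs the \(r_{\mathrm{out}}(\eps)\eps^{-(1-\alpha)\tau}\) term, and then substitutes into \eqref{eq: Lip phiin} and \eqref{eq: Lip phib}; your two-bracket telescoping is just an explicit form of the paper's single combined step.

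One small correction: you should not shrink \(\widetilde\lambda\). It would weaken the stated bound, because \(r_{\mathrm{out}}(\eps)=e^{-\widetilde\lambda/\eps^{1-\alpha}}\) uses the same fixed \(\widetilde\lambda\). It is also unnecessary: \eqref{eq: boundary-decay-exponent-range} and \(\beta\geq2\) give \(\beta-(1-\alpha)\tau>1-\alpha>0\), so the factor \(1+\eps+\eps^{\beta-(1-\alpha)\tau}\) is bounded and there is no polynomial loss to absorb.
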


The proofs are given in \S\ref{sec: fixed-shift-correction}.

\subsection{The reduced obstruction}

For the compatible correction from \Cref{prop: fixed-shift-correction}, define
\begin{equation}\label{eq: definition Rj projected}
    R_j(\h)
    \coloneqq
    c_j(\h)-\eps h_j''.
\end{equation}

\begin{proposition}[Projected shift estimate]\label{prop: projected-h-error}
There exists a constant \(C>0\), depending only on the fixed data, such that,
for every \(K\geq1\), there are constants \(\eps_*=\eps_*(K)>0\) and
\(C_K>0\) for which the following holds whenever
\(0<\eps\leq\eps_*\). If
\(\h\in\mathcal B(K\eps^\beta)\), then
\[
    c_j(\h)=\eps h_j''+R_j(\h),
\]
\[
    \spt R_j(\h)\subset\spt\eta_{j,4}\subset\{\eta_{j,3}=1\}
    \qquad\text{for every }j,
\]
and
\begin{equation}\label{eq: Rj size}
    \lVert R_j(\h)\rVert_{C^{0,\gamma}(I_j)}
    \leq C\eps^{1+\beta}.
\end{equation}
Moreover, if \(\h^1,\h^2\in\mathcal B(K\eps^\beta)\), then
\begin{equation}\label{eq: Rj lip}
\begin{split}
    \lVert R_j(\h^1)-R_j(\h^2)\rVert_{C^{0,\gamma}(I_j)}
    &\leq C_K\left(
        \eps^{\beta-\alpha\gamma}
        +\eps^{4-\alpha(3+\gamma)}
    \right)
    \lVert\h^1-\h^2\rVert_{C^{2,\gamma}(\boldsymbol I)}.
\end{split}
\end{equation}
Both powers of \(\eps\) in \eqref{eq: Rj lip} are strictly larger than
one, so its right-hand side is
\(o(\eps)\lVert\h^1-\h^2\rVert_{C^{2,\gamma}(\boldsymbol I)}\).
\end{proposition}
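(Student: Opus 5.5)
The plan is to compute \(c_j(\h)=\mathfrak p_j\bigl(\mathcal F_{j,h_j}(v_j(\h),\psi(\h))\bigr)\) by projecting each term of \(F_j\) in \Cref{lem: inner-bdy decomp} separately onto \(H'\). The identity \(c_j=\eps h_j''+R_j\) will come from the first term. Every other term will be shown to be supported in \(\spt\eta_{j,4}\) and of size \(O(\eps^{1+\beta})\). Throughout, \(v=v_j(\h)\), \(b_p=b_p(\h)\) and \(\psi=\psi(\h)\) are the compatible corrections of \Cref{prop: fixed-shift-correction}.

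\emph{Leading term and supports.} Since \(\h\in\mathcal X_\eps\), we have \(\spt h_j''\subset\spt\eta_{j,4}\subset\{\eta_{j,3}=1\}\). On the relevant \(t\)-range \(\chi_{j,4}=1\), up to an exponentially small tail controlled by the weight \(\cosh(\lambda t)\) and the decay of \(H'\). Hence \(\mathfrak p_j\bigl(\eps\chi_{j,4}\eta_{j,3}h_j''H'\bigr)=\eps h_j''\), modulo a tail that we absorb into \(R_j\). For the remaining terms, supports are checked as follows.
\begin{enumerate}[label=(\roman*)]
\item The factors \(\eta_{j,4}\mathcal R_j\) and \(\eta_{j,4}m_j^{\mathrm{bd}}\) are supported in \(\spt\eta_{j,4}\).
\item The term \(m_j^{\mathrm{int}}\) carries the factor \(h_j''\), so it is supported in \(\spt\eta_{j,4}\).
\item The commutators \(\nabla\zeta_{p,4}\) and \(\Delta\zeta_{p,4}\) vanish outside \(\{0<\zeta_{p,4}<1\}\). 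There \(\eta_{j,4}=\chi_{j,10}-\zeta_{p,4}>0\), so this region also lies in \(\spt\eta_{j,4}\).
\end{enumerate}
This proves the support claim.

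\emph{Size.} Each remaining term is estimated as follows.
\begin{enumerate}[label=(\alph*)]
\item For \(\eta_{j,4}\mathcal R_j\):
\begin{itemize}
\item the first part is bounded by \eqref{eq: improved Linfty error}, giving \(\eps^{1+\beta+\alpha\gamma}\) in \(L^\infty\) and at most \(\eps^{1+\beta}\) in \(C^{0,\gamma}\) after combining with \eqref{eq: Bj-error-expansion};
\item the quadratic part is \(O(R_\eps^2)\);
\item the \(\psi\)-part is exponentially small by \eqref{eq: fixed-shift-outer-bound}.
\end{itemize}
\item The terms \(m_j^{\mathrm{bd}}[v]\) and \(m_j^{\mathrm{int}}[v]\) are bounded by \(\bigl(\eps^{2\beta}+\eps^{1+\beta}\bigr)R_\eps\), using \(\lVert v\rVert\lesssim R_\eps\) and \(\lVert\h\rVert_{C^{2,\gamma}}\le K\eps^\beta\).
\item The boundary commutators are the delicate part. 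Write them in the shifted variables \((x,t)\). Since \(\zeta_{p,4}\) depends only on the inward coordinate, its projection onto \(H'\) equals
\[
-2\eps^2\,\partial_x\zeta_{p,4}\,\partial_x b_p^\top-\eps^2\,\Delta\zeta_{p,4}\,b_p^\top .
\]
This holds up to errors from the rotation by \(\theta_{\h}(p)\) and the relabelling \(\overline X_h\). Those errors are of relative size \(\eps^\beta\) and are handled with the weighted norm of \(b_p\). Using \eqref{eq: decays tangential part} on \(I_\eps\) together with \(|\nabla^k\zeta|\lesssim\eps^{-k\alpha}\), both pieces are \(O\bigl(\eps^{2-2\alpha}\eps^{2\beta-2+2\alpha}\bigr)=O(\eps^{2\beta})\), including the \(\eps^{\alpha\gamma}\)-weighted Hölder seminorms. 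Since \(\beta\ge2\), this is \(\le C\eps^{1+\beta}\).
\end{enumerate}
The constant in \eqref{eq: Rj size} should be independent of \(K\) once \(\eps_*(K)\) is small, because the \(K\)-dependent pieces carry extra powers of \(\eps\). The \(\eps^{1+\beta}\) contribution from \(\mathcal R_j\) is \(K\)-free, coming from the ansatz estimate.

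\emph{Lipschitz bound.} Difference each of the terms above, inserting \Cref{prop: fixed-shift-dependence}, \eqref{eq: tangential dependence phib} and \eqref{eq: Lip phiin}. The boundary commutators contribute
\[
\eps^{2-2\alpha}\Bigl(\eps^{(1-\alpha)(\tau-1)}\eps M+\eps^{\beta-2+2\alpha}M\Bigr)+\text{exp.\ small},
\]
and the weighted Hölder seminorms on \(I_\eps\) cost \(\eps^{-\alpha\gamma}\). This produces the terms \(\eps^{\beta-\alpha\gamma}M\) and \(\eps^{3-2\alpha+(1-\alpha)(\tau-1)-\alpha\gamma}M\). The latter is bounded by \(\eps^{4-\alpha(3+\gamma)}M\) because \(\tau>2\).

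The differences of \(\mathcal R_j\) need the Lipschitz version of \Cref{prop: expansion error} in \(\h\). Its \(h\)-derivative is \(O(\eps^{\beta})\) times the profile terms, hence \(O(\eps^{\beta-\alpha\gamma})M\) in \(C^{0,\gamma}\). The \(m_j\) terms are Lipschitz with constant \(\eps^{1+\beta}\). The main obstacle will be this boundary-commutator bookkeeping: the rotation by \(\theta_{\h}(p)\) changes between \(\h^1\) and \(\h^2\), so the two tangential parts are taken along different curves \(\overline X_{h}\). I would first compare both in the fixed frame \(\widehat b_p\), then bound the change of curve by \(|\theta_1-\theta_2|\,\eps^{-1}x\lesssim\eps^{\alpha-1}M\) times \(\partial_X\widehat b_p\), using the exponential-in-\(X\) decay available on \(I_\eps\).
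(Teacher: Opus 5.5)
Your overall plan matches the paper's: project each term of $F_j$ onto $H'$, read off $\eps h_j''$ from the leading term up to a cutoff tail, check supports, and control the boundary commutators through \eqref{eq: decays tangential part} and \eqref{eq: tangential dependence phib}. Your exponent bookkeeping for the commutator part of \eqref{eq: Rj lip} is also the paper's. The gap is in the size estimate, at exactly the term you single out as decisive: the approximation part of $\chi_{j,4}\eta_{j,4}\mathcal R_j$.

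\textbf{The gap.} You bound this term via \eqref{eq: Bj-error-expansion} and then assert that the resulting $\eps^{1+\beta}$ contribution is $K$-free. It is not.
\begin{itemize}
\item Estimate \eqref{eq: Bj-error-expansion} controls $S(\omega_{\h})$ itself. Its leading part $-\eps h_j''H'$ genuinely has size up to $K\eps^{1+\beta}$. Removing $\eps\eta_{j,3}h_j''H'$ by the triangle inequality therefore leaves a bound $CK\eps^{1+\beta}$.
\item Estimate \eqref{eq: Bj-error-expansion} is stated in the isotropic $\eps$-scaled ball norm. After projecting, passing to the ordinary $C^{0,\gamma}(I_j)$-norm costs a factor $\eps^{-\gamma}$. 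So your route really yields only $C_K\eps^{1+\beta-\gamma}$.
\end{itemize}
The $K$-independence of $C$ is not cosmetic. In the final step one needs $\lVert\mathcal W(\h)\rVert_{C^{2,\gamma}}\lesssim\eps^{-1}\lVert R(\h)\rVert_{C^{0,\gamma}}\le K\eps^\beta$. This cannot be arranged if $R_j$ is only bounded by $C'K\eps^{1+\beta}$ with $C'$ not small.

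\textbf{The fix.} No term of $R_j$ is of exact order $\eps^{1+\beta}$. On $\spt(\chi_{j,4}\eta_{j,4})$ the remainder is exactly $b_{j,\h}(h_j')^2H''(t)$, by \eqref{eq: exact-interior-remainder}. Since $\int_\R H''H'\,dt=0$, its $H'$-projection is $O_m(\eps^m)$; this is \eqref{eq: projected-approximation-error-size}. Even without the orthogonality, \eqref{eq: approximation-remainder-size} gives $O_K(\eps^{2\beta-\alpha\gamma})=o(\eps^{1+\beta})$, because its tangential H\"older quotient is unscaled. Then every contribution carries a strictly positive excess power over $\eps^{1+\beta}$. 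The $K$-dependence is absorbed by shrinking $\eps_*(K)$, which is how the paper gets a constant depending only on the fixed data.

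\textbf{Smaller points}, none affecting the conclusion:
\begin{itemize}
\item The same $\eps^{-\gamma}$ conversion applies to the nonlinear term, which is $O(\eps^{-\gamma}R_\eps^2)$.
\item $m_j^{\mathrm{bd}}[v]$ is $O(\eps^{\beta}R_\eps)$, not $O(\eps^{2\beta}R_\eps)$. This comes from $2\eps h_j'\int\partial_xv\,H''$ with $\lVert\partial_xv\rVert_\infty\lesssim\eps^{-1}R_\eps$.
\item The commutator bound in $C^{0,\gamma}(I_j)$ is $\eps^{2\beta-\alpha\gamma}$, not $\eps^{2\beta}$.
\end{itemize}
All of these remain $o(\eps^{1+\beta})$.

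In the Lipschitz argument, there is no exponential decay in $X$ of $\widehat b_p$ on $I_\eps$, since its sources reach $X\sim\eps^{\alpha-1}$. For the change-of-curve terms you do not need it: the crude bound $\lVert\widehat b_p\rVert_{C^{2,\gamma}_{\ell_+,\lambda}}\lesssim R_\eps$ suffices. This is how the paper estimates $E_{p,i}(\h^1)-E_{p,i}(\h^2)$.
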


The proof is given in \S\ref{sec: projected-h-error}.

\subsection{Adjusting the parameter $h$ and concluding the proof of \Cref{thm: sol on intervals}}

\begin{proof}[Proof of \Cref{thm: sol on intervals}]
By \Cref{prop: fixed-shift-correction}, every
\(\h\in\mathcal B(K\eps^\beta)\) determines a corrected section whose only
remaining error is the one displayed in \eqref{eq: fixed-shift-residual}.
By \Cref{prop: projected-h-error}, its coefficients satisfy
\(c_j(\h)=\eps h_j''+R_j(\h)\). Consequently, all these coefficients vanish
precisely when
\begin{equation}\label{eq: eq for h}
    \h''+\eps^{-1}R(\h)=0,
    \qquad
    R(\h)=(R_1(\h),\dots,R_N(\h)).
\end{equation}

Let
\[
    \mathcal Q\colon
    C^{0,\gamma}(\boldsymbol I)
    \longrightarrow
    C_0^{2,\gamma}(\boldsymbol I)
\]
be the componentwise Dirichlet inverse of \(d^2/dx^2\). The standard
one-dimensional Schauder estimate gives
\(\lVert\mathcal Qf\rVert_{C^{2,\gamma}(\boldsymbol I)}
\lesssim\lVert f\rVert_{C^{0,\gamma}(\boldsymbol I)}\), and
\eqref{eq: eq for h} is equivalent to
\begin{equation}\label{eq: fixed point h}
    \h=\mathcal W(\h),
    \qquad
    \mathcal W(\h)\coloneqq
    -\mathcal Q\bigl(\eps^{-1}R(\h)\bigr).
\end{equation}
Choose \(K\geq1\) sufficiently large and consider the ball
\(\mathcal B(K\eps^\beta)\).

The support assertion in \Cref{prop: projected-h-error} shows that
\((\mathcal W(\h))''=-\eps^{-1}R(\h)\) is supported in
\(\spt\eta_{j,4}\) on every \(I_j\). Since \(\mathcal Q\) imposes the
Dirichlet boundary condition, it follows that
\(\mathcal W(\h)\in\mathcal X_\eps\). Moreover, \eqref{eq: Rj size} and the
Schauder estimate give
\[
    \lVert\mathcal W(\h)\rVert_{C^{2,\gamma}(\boldsymbol I)}
    \lesssim
    \eps^{-1}\lVert R(\h)\rVert_{C^{0,\gamma}(\boldsymbol I)}
    \lesssim\eps^\beta.
\]
Thus \(K\) can be fixed, independently of \(\eps\), so that
\(\mathcal W(\mathcal B(K\eps^\beta))
\subset\mathcal B(K\eps^\beta)\). Fix this choice of \(K\). Finally,
\eqref{eq: Rj lip} gives
\[
    \lVert\mathcal W(\h^1)-\mathcal W(\h^2)\rVert_{C^{2,\gamma}(\boldsymbol I)}
    \lesssim_K
    \left(
        \eps^{\beta-\alpha\gamma-1}
        +\eps^{3-\alpha(3+\gamma)}
    \right)
    \lVert\h^1-\h^2\rVert_{C^{2,\gamma}(\boldsymbol I)}.
\]
Both exponents are positive, so \(\mathcal W\) is a contraction after
decreasing \(\eps\).

Let \(\h\) be the unique fixed point and let
\(\varphi_{\h}\) be the correction supplied by
\Cref{prop: fixed-shift-correction}. Equation \eqref{eq: fixed point h} gives
\(c_j(\h)=\eps h_j''+R_j(\h)=0\) for every \(j\). Hence
\eqref{eq: fixed-shift-residual} yields
\[
    S\bigl(\omega_{\h}+\varphi_{\h}\bigr)=0.
\]
Set \(u=\omega_{\h}+\varphi_{\h}\). Then \(u\) solves
\eqref{eq:section-allen-cahn}, and
local elliptic bootstrapping on the double cover shows that it is smooth.
Moreover,
\[
    \lVert\h\rVert_{C^{2,\gamma}(\boldsymbol I)}\lesssim\eps^\beta,
    \qquad
    \sup_{p\in\p}|\theta_{\h}(p)|\lesssim\eps^\beta,
\]
where the last estimate follows from \eqref{eq: boundary-angle-bound}. Set
\(\theta(p)=\theta_{\h}(p)\). Since \(u=u_{\h}^*\), the two asymptotic
estimates in the statement are exactly
\eqref{eq: fixed-shift-interior-profile} and
\eqref{eq: fixed-shift-boundary-profile}.

It remains to locate the nodal set. In the interior coordinates,
\eqref{eq: fixed-shift-interior-profile} gives
\[
    u=H(t)+O(R_\eps),
    \qquad
    \eps\partial_z u=H'(t)+O(R_\eps),
    \qquad
    t=\frac{z-h_j(x)}{\eps}.
\]
The phase gap for \(H\) confines the zeros to a fixed neighborhood of
\(t=0\). On this neighborhood \(H'\) is bounded below, so, for \(\eps\)
small, the implicit function theorem gives a unique zero on each interior
normal fibre, and
\[
    z=h_j(x)+O(\eps R_\eps).
\]

Near a boundary point, rotate the square-root coordinate by the corresponding
half-angle. The lift of the boundary model is then \(\U\), and
\(\partial_{\xi_2}\U>0\) by \Cref{prop: lifted-existence}. In the rescaled
chart \(y=\eps\xi^2\), \eqref{eq: fixed-shift-boundary-profile} and the
implicit function theorem therefore give a unique zero graph. On a fixed
bounded set in the \(\xi\)-plane, the graph lies at distance \(O(R_\eps)\)
from the lifted model line, and the squaring map converts this into a
downstairs displacement \(O(\eps R_\eps)\). On the positive end, the chain
rule and \Cref{cor: estim U-} give
\(\partial_{\xi_2}\U\simeq |\xi_1|\) along the zero set. Thus the lifted
displacement is \(O(R_\eps/|\xi_1|)\), while the derivative of
\(y=\eps\xi^2\) has size \(O(\eps|\xi_1|)\); the same downstairs estimate
follows. Uniform transversality on the intervening compact annuli completes
the comparison.
Since the boundary neighborhood has radius \(O(\eps^\alpha)\), rotating the
model through \(\theta(p)\) moves its nodal half-line by at most
\(O(\eps^{\alpha+\beta})\).

The phase gaps of \(H\), \(U\), and the outer pure phases, together with the
size estimates in \Cref{prop: fixed-shift-correction}, exclude any other
zeros: outside fixed neighborhoods of the model zero sets the approximate
solution is uniformly separated from zero, and the compatible correction
tends to zero with \(\eps\). Conversely, the interior and boundary zero graphs
cover each segment and approach its removed boundary points. Therefore
\[
    \dist_{\mathcal H}(\{u=0\},\Gamma)
    \lesssim
    \lVert\h\rVert_{C^0}
    +\eps R_\eps
    +\eps^{\alpha+\beta}
    \lesssim\eps^\beta.
\]
This concludes the proof of \Cref{thm: sol on intervals}.
\end{proof}

\section{Construction and estimates of the approximate solution}
\label{sec: ansazt}

Fix \(K>0\). Throughout this section, let
\(\h\in\mathcal B(K\eps^\beta)\), with \(\mathcal B(r)\) as in
\eqref{eq: admissible-shift-ball}.
These shifts satisfy the affine boundary-collar property from
\eqref{eq: admissible-shift-space}--\eqref{eq: affine-boundary-collar}. We use
the outer section \(\Ical\) and the fixed bundle conventions from
\Cref{sec: preliminaries-gluing}. For the residual computation we introduce
the auxiliary sections \(\mathcal H_{\h,j}\) in
\cref{subs: interior-heteroclinic-sections} and \(\mathcal U_{\h,p}\) in
\cref{subs: boundary-half-line-sections}. We then recall the approximate solution
\(\omega_{\h}\) in \cref{subs: approximate solution}
and compute its error of approximation, estimate its size as well as its Lipschitz dependence on \(\h\). Finally, we finish with similar estimates for the different profiles and their potentials.
Since \(\Ical\) is represented by a constant pure phase wherever it is used,
\(S(\Ical)=0\) on the outer transition regions.

\subsection{The shifted heteroclinic sections}
\label{subs: interior-heteroclinic-sections}

Fix \(j\in\{1,\dots,N\}\), and use the tube \(T_j\) and its fixed
trivialization from \Cref{sec: preliminaries-gluing}.

\begin{definition}[The shifted heteroclinic section]
\label{def: interior heteroclinic section}
For the admissible shift \(\h\) fixed above, let
\(\mathcal H_{\h,j}\) be the global section represented on \(T_j\) by
\[
    \mathcal H_{\h,j}(X_j(x,z))
    =
    \eta_{j,1}(X_j(x,z))
    H\left(\frac{z-h_j(x)}{\eps}\right)
\]
in the chosen trivialization, and set it equal to zero outside \(T_j\).
\end{definition}

By the cutoff construction, \(\spt\eta_{j,1}\Subset T_j\), so the zero extension is smooth.

\subsection{The rotated boundary sections}
\label{subs: boundary-half-line-sections}

Fix \(p\in\partial I_j\), and use the boundary coordinates
\(q=Y_p(x,z)=p+xe_p+z\nu_j\). Since \(p\) belongs to a unique segment, it
determines \(j\), which will therefore be omitted from the notation. We use
the fixed boundary identification \(\mathcal T_p\) from
\eqref{eq: boundary-bundle-identification}.

Given an admissible shift \(\h\) as above, recall that
\(
    \theta_{\h}(p)
    =
    \partial_{e_p}h_j(p),
\)
as in \eqref{eq: affine-boundary-collar}.
We use the lifted-similarity convention from
\Cref{subs: geometric-setup} when writing
\(U_{\theta_{\h}(p)}(x/\eps,z/\eps)\) as a section over the \((x,z)\)-disk.

\begin{definition}[The rotated boundary section]
\label{def: boundary half-line section}
Let \(\mathcal U_{\h,p}\) be the global section given on
\(B_\delta(p)\setminus\{p\}\) by
\[
    \mathcal U_{\h,p}(Y_p(x,z))
    =
    \zeta_{p,8}(Y_p(x,z))
    \mathcal T_p
    \left(
        U_{\theta_{\h}(p)}\left(\frac{x}{\eps},\frac{z}{\eps}\right)
    \right),
\]
and set it equal to zero outside \(B_\delta(p)\).
\end{definition}

For \(\eps\) sufficiently small,
\(\spt\zeta_{p,8}\Subset B_\delta(p)\), so this zero extension is smooth.
Only the dependence of \(\mathcal U_{\h,p}\) on \(\eps\) is suppressed from
the notation.

Wherever \(\zeta_{p,8}=1\), the definition of \(\mathcal U_{\h,p}\) and the
equation satisfied by \(U\) give
\[
    S(\mathcal U_{\h,p})=0.
\]

\subsection{The approximate solution}
\label{subs: approximate solution}

Define the section associated with the \(j\)-th segment by
\[
    \mathcal P_{\h,j}
    \coloneqq
    \eta_{j,3}\mathcal H_{\h,j}
    +
    \sum_{p\in\partial I_j}\zeta_{p,3}\mathcal U_{\h,p}.
\]
By \eqref{eq: section-valued-global-ansatz}, the approximate solution can be
written in the computational form
\[
    \omega_{\h}
    =
    \chi_{\Ical,5}\Ical
    +
    \sum_{j=1}^N\chi_{j,5}\mathcal P_{\h,j}.
\]
This is an identity of sections of \(\L_{\p}\). By the cutoff nesting,
\(\eta_{j,1}=1\) on \(\spt(\chi_{j,5}\eta_{j,3})\), and
\(\zeta_{p,8}=1\) on \(\spt(\chi_{j,5}\zeta_{p,3})\). Thus the cutoffs used
to define the global model sections do not alter the expression above.

On \(\spt\chi_{j,5}\), the partition of unity gives
\[
    \eta_{j,3}
    +
    \sum_{p\in\partial I_j}\zeta_{p,3}
    =1.
\]
Consequently, in the segment trivialization,
\begin{equation}\label{eq: interval ansatz}
    \mathcal P_{\h,j}
    =
    H\left(\frac{z-h_j(x)}{\eps}\right)
    +
    \sum_{p\in\partial I_j}\zeta_{p,3}
    \left[
        U_{\theta_{\h}(p)}
        \left(\frac{x_p}{\eps},\frac{z_p}{\eps}\right)
        -
        H\left(\frac{z-h_j(x)}{\eps}\right)
    \right],
\end{equation}
where \((x_p,z_p)=Y_p^{-1}(q)\) in the boundary neighborhood of \(p\).

\subsection{Error of the approximate solution}
\label{subs: proof-expansion-error}

We first record the interpolation defect associated with the potential. If
\(a\) and \(b\) belong to the same fibre and \(s\in[0,1]\), set
\begin{equation}\label{eq: interpolation-defect}
    \mathfrak I_W(a,b;s)
    \coloneqq
    W'\bigl((1-s)a+sb\bigr)
    -(1-s)W'(a)-sW'(b).
\end{equation}
Since \(W'\) is odd, this definition is independent of the choice of local
trivialization. We also note that $\mathfrak I_W(a,b;s)=0$ when $s$ is either $0$ or $1$.

For \(q=X_j(x,z)\), write
\[
    t_{j,\h}(x,z)
    \coloneqq
    \frac{z-h_j(x)}{\eps}.
\]
Expressions involving \(H'(t_{j,\h})\) and \(H''(t_{j,\h})\) below are
understood in the segment trivialization used to define
\(\mathcal H_{\h,j}\).

The following lemma is entirely computational, and we omit its proof.

\begin{lemma}[Error of approximation]\label{lem: residual-identity}
For \(\h\in\mathcal B(K\eps^\beta)\),
\begin{equation}\label{eq: residual-identity}
\begin{split}
    S(\omega_{\h})
    =
    \sum_{j=1}^N
    \Bigg[
        \mathcal E^{\mathrm{out}}_{j,\h}
        +
        \chi_{j,5}
        \Bigg(
            \mathcal E^{\mathrm{tan}}_{j,\h}
            +
            \mathcal E^{\mathrm{ort}}_{j,\h}
            +
            \sum_{p\in\partial I_j}
            \left(
                \mathcal E^{\mathrm{trans}}_{j,p,\h}
                +
                \mathcal E^{\mathrm{pot}}_{j,p,\h}
            \right)
        \Bigg)
    \Bigg],
\end{split}
\end{equation}
where
\begin{align}
    \mathcal E^{\mathrm{out}}_{j,\h}
    \coloneqq{}&
    \eps^2\Delta\chi_{j,5}
    \bigl(\mathcal P_{\h,j}-\Ical\bigr)
    +
    2\eps^2\nabla\chi_{j,5}\cdot
    \nabla\bigl(\mathcal P_{\h,j}-\Ical\bigr)
    -
    \mathfrak I_W
    \bigl(\Ical,\mathcal P_{\h,j};\chi_{j,5}\bigr),
    \label{eq: outer-residual-term}\\
    \mathcal E^{\mathrm{tan}}_{j,\h}
    \coloneqq{}&
    -\eps h_j''(x)H'\bigl(t_{j,\h}(x,z)\bigr),
    \label{eq: tangential-residual-term}\\
    \mathcal E^{\mathrm{ort}}_{j,\h}
    \coloneqq{}&
    \eta_{j,3}\bigl(h_j'(x)\bigr)^2
    H''\bigl(t_{j,\h}(x,z)\bigr),
    \label{eq: orthogonal-residual-term}\\
    \mathcal E^{\mathrm{trans}}_{j,p,\h}
    \coloneqq{}&
    \eps^2\Delta\zeta_{p,3}
    \bigl(\mathcal U_{\h,p}-\mathcal H_{\h,j}\bigr)+
    2\eps^2\nabla\zeta_{p,3}\cdot
    \nabla\bigl(\mathcal U_{\h,p}-\mathcal H_{\h,j}\bigr),
    \label{eq: transition-residual-term}\\
    \mathcal E^{\mathrm{pot}}_{j,p,\h}
    \coloneqq{}&
    -\mathfrak I_W
    \bigl(
        \mathcal H_{\h,j},
        \mathcal U_{\h,p};
        \zeta_{p,3}
    \bigr).
    \label{eq: potential-residual-term}
\end{align}
The contributions in \eqref{eq: residual-identity} satisfy the following
support containments:
\begin{itemize}
    \item
    \(\spt\mathcal E^{\mathrm{out}}_{j,\h}
    \subset\overline{\{0<\chi_{j,5}<1\}}\);
    \item
    \(\spt\bigl(\chi_{j,5}\mathcal E^{\mathrm{tan}}_{j,\h}\bigr)
    \subset\spt\eta_{j,4}\);
    \item
    \(\spt\mathcal E^{\mathrm{ort}}_{j,\h}
    \subset\spt\eta_{j,3}\);
    \item
    \(\spt\bigl(
        \chi_{j,5}\mathcal E^{\mathrm{trans}}_{j,p,\h}
    \bigr)
    \cup
    \spt\bigl(
        \chi_{j,5}\mathcal E^{\mathrm{pot}}_{j,p,\h}
    \bigr)
    \subset
    \spt\chi_{j,5}\cap
    \overline{\{0<\zeta_{p,3}<1\}}\).
\end{itemize}
\end{lemma}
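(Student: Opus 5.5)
The identity \eqref{eq: residual-identity} is a direct expansion of \(S(\omega_{\h})\) using the two nested convex combinations that define \(\omega_{\h}\). At each stage we use the Leibniz rule for \(\eps^2\Delta\) together with the interpolation defect \(\mathfrak I_W\), and we compute in local trivializations. These steps are legitimate because every term is odd or even in the correct way, so the resulting identities are identities of sections.

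\textbf{Outer layer.} Fix \(q\). By the disjointness of the supports of the \(\chi_{j,5}\), at most one \(j\) has \(\chi_{j,5}(q)\neq0\). Near such \(q\) we write
\[
    \omega_{\h}=(1-\chi_{j,5})\Ical+\chi_{j,5}\mathcal P_{\h,j}.
\]
Since \(\Ical\) is a constant pure phase wherever \(\chi_{j,5}<1\), we have \(S(\Ical)=0\) there. The Leibniz rule and the definition \eqref{eq: interpolation-defect} give
\[
    S(\omega_{\h})
    =\chi_{j,5}S(\mathcal P_{\h,j})
    +\mathcal E^{\mathrm{out}}_{j,\h}.
\]
The term \(\mathcal E^{\mathrm{out}}_{j,\h}\) vanishes wherever \(\chi_{j,5}\in\{0,1\}\), because the derivatives of \(\chi_{j,5}\) vanish there and \(\mathfrak I_W(\cdot,\cdot;0)=\mathfrak I_W(\cdot,\cdot;1)=0\). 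This gives the first support claim. Where all \(\chi_{j,5}=0\), \(\omega_{\h}=\Ical\) and there is no error.

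\textbf{Segment layer.} On \(\spt\chi_{j,5}\) we use the partition \(\eta_{j,3}+\sum_p\zeta_{p,3}=1\) and the fact that \(\eta_{j,1}=1\), \(\zeta_{p,8}=1\) there. This gives
\[
    \mathcal P_{\h,j}
    =\mathcal H_{\h,j}+\sum_p\zeta_{p,3}
    \bigl(\mathcal U_{\h,p}-\mathcal H_{\h,j}\bigr),
\]
with at most one \(p\) active at each point, as in \eqref{eq: interval ansatz}. Repeating the Leibniz/interpolation computation gives
\[
    S(\mathcal P_{\h,j})
    =\eta_{j,3}S(\mathcal H_{\h,j})
    +\sum_p\zeta_{p,3}S(\mathcal U_{\h,p})
    +\sum_p\bigl(\mathcal E^{\mathrm{trans}}_{j,p,\h}+\mathcal E^{\mathrm{pot}}_{j,p,\h}\bigr).
\]
Here \(S(\mathcal U_{\h,p})=0\) on \(\{\zeta_{p,8}=1\}\). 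The transition and potential terms are supported in \(\overline{\{0<\zeta_{p,3}<1\}}\), for the same reason as in the outer layer.

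\textbf{The heteroclinic residual.} It remains to compute \(S(\mathcal H_{\h,j})\) in Fermi coordinates, which are Euclidean because the segment is straight. With \(t=(z-h_j(x))/\eps\),
\[
    \eps^2\Delta H(t)
    =(1+h_j'^2)H''(t)-\eps h_j''H'(t).
\]
Using \(H''=W'(H)\), this gives
\[
    S=h_j'^2H''(t)-\eps h_j''H'(t).
\]
Multiplying by \(\eta_{j,3}\) produces \(\mathcal E^{\mathrm{ort}}_{j,\h}\) together with \(-\eta_{j,3}\eps h_j''H'\).

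To replace the latter by \(\mathcal E^{\mathrm{tan}}_{j,\h}=-\eps h_j''H'\), without the cutoff, we argue as follows. By \eqref{eq: admissible-shift-space}, \(h_j''\) is supported in \(\spt\eta_{j,4}\subset\{\eta_{j,3}=1\}\). Hence \(\eta_{j,3}h_j''=h_j''\), and the second support claim also follows.

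\textbf{Main obstacle.} The only real care is bookkeeping: checking the cutoff nesting, namely \(\spt\eta_{j,4}\subset\{\eta_{j,3}=1\}\), \(\zeta_{p,8}=1\) on \(\spt\zeta_{p,3}\cap\spt\chi_{j,5}\), and the disjointness of the \(\zeta_{p,3}\) for the two endpoints. We also need to confirm that the trivializations used for \(\mathcal H_{\h,j}\), for \(\mathcal T_p(U_\theta)\) and for \(\Ical\) agree on their overlaps. This agreement is exactly the sign convention fixed for \(\mathcal T_p\) and for \(T_j\).
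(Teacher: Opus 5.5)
Your proposal is correct, and it is exactly the direct two-layer computation (Leibniz rule plus the interpolation defect $\mathfrak I_W$, first for $\chi_{j,5}$ and then for $\zeta_{p,3}$) that the paper calls entirely computational and omits. One small correction: $\eta_{j,1}=1$ holds only near $\spt(\chi_{j,5}\eta_{j,3})$, and $\zeta_{p,8}=1$ only near $\spt(\chi_{j,5}\zeta_{p,3})$, not on all of $\spt\chi_{j,5}$; this weaker statement is all your argument actually uses.
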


We next estimate the terms in \eqref{eq: residual-identity}.
Physical norms on the displayed subsets of the segment region are read in
the segment coordinates.

\begin{lemma}[Size of the residual terms]
\label{lem: approximation-error-size}
There are constants \(C,c>0\), uniform for
\(\h\in\mathcal B(K\eps^\beta)\), such that the following estimates hold.
\begin{align}
    \lVert\mathcal E^{\mathrm{out}}_{j,\h}\rVert_
    {C^{0,\gamma}_{I_j,\eps,\lambda}
    (\{0<\chi_{j,5}<1\})}
    &\leq
    C\exp\left(-\frac{c}{\eps^{1-\alpha}}\right),
    \label{eq: outer-error-size}\\
    \lVert\chi_{j,5}\mathcal E^{\mathrm{tan}}_{j,\h}\rVert_
    {\mathsf C^{0,\gamma}_{\eps,\lambda}(\spt\eta_{j,4})}
    &\leq C\eps^{1+\beta},
    \label{eq: tangential-error-size}\\
    \lVert\mathcal E^{\mathrm{ort}}_{j,\h}\rVert_
    {\mathsf C^{0,\gamma}_{\eps,\lambda}(\spt\eta_{j,3})}
    &\leq C\eps^{2\beta-\alpha\gamma},
    \label{eq: orthogonal-error-size}\\
    \lVert\chi_{j,5}\mathcal E^{\mathrm{trans}}_{j,p,\h}\rVert_
    {\mathsf C^{0,\gamma}_{\eps,\lambda}
    (\{0<\zeta_{p,3}<1\})}
    &\leq C\left(
        \eps^{2+2\beta-\alpha(2+\gamma)}
        +
        \eps^{1+3\beta-\alpha(1+\gamma)}
    \right),
    \label{eq: transition-error-size}\\
    \lVert\chi_{j,5}\mathcal E^{\mathrm{pot}}_{j,p,\h}\rVert_
    {\mathsf C^{0,\gamma}_{\eps,\lambda}
    (\{0<\zeta_{p,3}<1\})}
    &\leq C\eps^{4\beta-\alpha\gamma}.
    \label{eq: potential-error-size}
\end{align}
In particular,
\begin{equation}\label{eq: approximation-error-aggregate}
    \lVert S(\omega_{\h})\rVert_
    {\mathsf C^{0,\gamma}_{\eps,\lambda}(\spt\eta_{j,3})}
    \leq C\eps^{1+\beta}.
\end{equation}
Moreover,
\begin{equation}\label{eq: outer-approximation-error-global}
    \lVert\chi_{\Ical,2}S(\omega_{\h})\rVert_
    {C^{0,\gamma}_{\Gamma,\eps,\lambda}
    (\R^2_{\p},\L_{\p})}
    \leq
    C\exp\left(-\frac{c}{\eps^{1-\alpha}}\right).
\end{equation}
\end{lemma}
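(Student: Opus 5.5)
The plan is to estimate each term of the residual identity in \Cref{lem: residual-identity} separately, working in the stretched interior variables \(t=(z-h_j(x))/\eps\) or in the rescaled boundary variables \((X,Z)=(x_p,z_p)/\eps\). Throughout we use three inputs: the cutoff derivative bounds \(|\nabla^k\chi|\lesssim\eps^{-k\alpha}\); the shift bounds \(\lVert h_j\rVert_{C^{2,\gamma}}\le K\eps^\beta\) and \(|\theta_{\h}(p)|\le K\eps^\beta\); and the exponential asymptotics of \(U\) from \Cref{cor: estim U-} and \Cref{lem: estim U+}. Hölder seminorms in the physical norm \(\mathsf C^{0,\gamma}_{\eps,\lambda}\) are estimated by interpolation. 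A coefficient varying on the \(x\)-scale \(\eps^\alpha\) (or \(O(1)\)) costs a factor \(\eps^{-\alpha\gamma}\) relative to its sup norm. Functions of \(z/\eps\) cost nothing, since the seminorm uses \(\eps^{-1}|z-z'|\).

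For \(\mathcal E^{\mathrm{out}}\), note that on \(\{0<\chi_{j,5}<1\}\) the distance to \(I_j\) is of order \(\eps^\alpha\). By \Cref{lem: estim U+}, the exponential tails of \(H\), and the fact that \(\mathcal P_{\h,j}-\Ical\) and its derivatives are \(O(e^{-c\eps^{\alpha-1}})\) there, all three summands are exponentially small. The interpolation defect vanishes to first order in \(\mathcal P-\Ical\). Since \(\lambda\,\dist/\eps\) is itself of order \(\eps^{\alpha-1}\), the weight \(e^{\lambda\dist/\eps}\) is absorbed by taking \(c\) smaller than the decay rate minus \(\lambda\). This gives \eqref{eq: outer-error-size}, and \eqref{eq: outer-approximation-error-global} follows the same way.

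The tangential and orthogonal terms are explicit. The first is \(\eps h_j''H'(t)\), with \(\cosh(\lambda t)H'(t)\) bounded because \(\lambda<\sqrt{\kappa_W}\); this gives \(\eps^{1+\beta}\), and the Hölder seminorm of \(h_j''\) is controlled by \(\lVert h_j\rVert_{C^{2,\gamma}}\). The second is \(\eta_{j,3}h_j'^2H''\), giving \(\eps^{2\beta}\) times the loss \(\eps^{-\alpha\gamma}\) from \(\eta_{j,3}\).

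For the transition and potential terms we work on \(\{0<\zeta_{p,3}<1\}\), where \(x_p\sim\eps^\alpha\), so \(X\sim\eps^{\alpha-1}\to\infty\). We write
\[
    \mathcal U-\mathcal H=\bigl[U_\theta(X,Z)-H(Z\cos\theta-X\sin\theta)\bigr]+\bigl[H(Z\cos\theta-X\sin\theta)-H(t)\bigr].
\]
In the first bracket the rotated end of \(U\) is compared with the rotated heteroclinic. By \Cref{cor: estim U-} it is \(O(e^{-\sigma\eps^{\alpha-1}})\), which is negligible.

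The second bracket is the \textbf{main obstacle}. On the affine collar \(h_j(x)=\theta x\), so the rotated profile argument is \((z\cos\theta-x\sin\theta)/\eps\), while \(t=(z-\theta x)/\eps\). These differ by
\[
    \eps^{-1}\bigl[z(\cos\theta-1)-x(\sin\theta-\theta)\bigr]=O\bigl(\eps^{-1}(\theta^2|z|+\theta^3x)\bigr).
\]
On the support, with the \(H'\) weight making \(|z|\lesssim\eps|t|\), this is \(O(\eps^{2\beta}|t|+\eps^{3\beta+\alpha-1})\). Inserting this into the commutators, \(\eps^2\Delta\zeta\sim\eps^{2-2\alpha}\) and \(\eps^2\nabla\zeta\cdot\nabla\sim\eps^{1-\alpha}\eps^{0}\) (the gradient acts in the \(t\)-direction with a factor \(1/\eps\)), and adding the Hölder loss \(\eps^{-\alpha\gamma}\) yields the two powers in \eqref{eq: transition-error-size}. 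The delicate point is to track the \(\theta^2 z\) versus \(\theta^3x\) contributions, and the polynomial weight \(|t|\) against \(\cosh(\lambda t)H'\), carefully.

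For the potential term, \(\mathfrak I_W(a,b;s)=O(|a-b|^2)\). Since \(|\mathcal U-\mathcal H|=O(\eps^{2\beta})\) in the weighted sense, this gives \(\eps^{4\beta-\alpha\gamma}\).

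Finally, the aggregate bound \eqref{eq: approximation-error-aggregate} follows by comparing exponents. Using \(\beta\ge2\) and \(\alpha<1/2\), every power above is at least \(1+\beta\). For example, \(2\beta-\alpha\gamma\ge1+\beta\) holds since \(\beta-1\ge1>\alpha\gamma\), and \(2+2\beta-\alpha(2+\gamma)\ge1+\beta\) holds similarly.
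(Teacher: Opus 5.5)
Your treatment of the outer, tangential, orthogonal and potential terms, and of the aggregate bound, follows the paper's route and is fine. For \eqref{eq: outer-approximation-error-global} you should also note that the normal tails of the inner residual terms reaching $\spt\chi_{\Ical,2}$, at distance $\gtrsim\eps^\alpha$ from $\Gamma$, are exponentially small; it is not only $\mathcal E^{\mathrm{out}}_{j,\h}$ that matters there.

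The genuine gap is the second power in \eqref{eq: transition-error-size}. You bound $2\eps^2\nabla\zeta_{p,3}\cdot\nabla(\mathcal U_{\h,p}-\mathcal H_{\h,j})$ by letting the gradient act in the $t$-direction, with a factor $\eps^{-1}$, on a difference of size $\eps^{2\beta}$. That gives $\eps^{2-\alpha}\cdot\eps^{-1}\cdot\eps^{2\beta}\cdot\eps^{-\alpha\gamma}=\eps^{1+2\beta-\alpha(1+\gamma)}$. This is a factor $\eps^{-\beta}$ worse than the claimed $\eps^{1+3\beta-\alpha(1+\gamma)}$. Your phase bound $O(\eps^{2\beta}|t|+\eps^{3\beta+\alpha-1})$ is dominated by its first part, so nothing in your argument produces the extra $\eps^\beta$. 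The route cannot be repaired as it stands. With $s=(z\cos\theta-x\sin\theta)/\eps$ your rotated phase, the normal derivative is $\eps\partial_z(H(s)-H(t))=(\cos\theta-1)H'(s)+H'(s)-H'(t)$. The term $(\cos\theta-1)H'(s)\approx-\tfrac{\theta^2}{2}H'(s)$ keeps it at size $\eps^{2\beta}$ in the weighted norm.

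The missing observation is geometric. On $\spt\chi_{j,5}$ one has $\chi_{j,10}=1$, so $\zeta_{p,3}=\varrho\bigl(x/(\delta\eps^\alpha)-3\bigr)$ depends only on the tangential coordinate. Hence $\nabla\zeta_{p,3}=\partial_x\zeta_{p,3}\,e_p$ and $\Delta\zeta_{p,3}=\partial_x^2\zeta_{p,3}$. Only $\partial_x D$ enters the gradient commutator, where $D$ is the boundary-minus-interior profile difference, and $\partial_x D$ enjoys an extra cancellation. Since $\partial_x s=-\eps^{-1}\sin\theta$ and $\partial_x t=-\eps^{-1}\theta$,
\[
\partial_x\bigl(H(s)-H(t)\bigr)=\frac{\theta-\sin\theta}{\eps}H'(s)+\frac{\theta}{\eps}\bigl(H'(t)-H'(s)\bigr),
\]
with $\theta-\sin\theta=O(\theta^3)$ and $|s-t|=O(\theta^2|t|+\theta^3x/\eps)$. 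Therefore $\cosh(\lambda z/\eps)\,\partial_x D=O(\eps^{3\beta-1})$; the $U$-versus-$H$ part is exponentially small because the first rotated coordinate is $\gtrsim\eps^{\alpha-1}$.

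For the H\"older norm you also need two derivative bounds. A second tangential derivative costs $|\theta|/\eps$, giving $\partial_x^2D=O(\eps^{4\beta-2})$, and $\eps\partial_z\partial_x D=O(\eps^{3\beta-1})$. Multiplying by $\lVert\partial_x\zeta_{p,3}\rVert_{\mathsf C^{0,\gamma}_\eps}\lesssim\eps^{-\alpha(1+\gamma)}$ then gives exactly $\eps^{1+3\beta-\alpha(1+\gamma)}$. Your weaker power would still suffice for \eqref{eq: approximation-error-aggregate}, since $\beta>\alpha(1+\gamma)$, but it does not prove the lemma as stated.
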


\begin{proof}
\emph{Estimate \eqref{eq: outer-error-size}.}
Choose
\[
    \lambda<\lambda_*<\min\{\sigma,\sqrt{\kappa_W}\},
\]
where \(\sigma\) is the decay rate in
\Cref{cor: estim U-,lem: estim U+}. The standard heteroclinic estimates give
the same decay with rate \(\lambda_*\) for \(H\) and its derivatives.

Let \(B_\eps(q)\) be one of the trivializing balls whose closure is contained
in \(\{0<\chi_{j,5}<1\}\), and set \(d(q)=\dist(q,I_j)\). Then
\(d(q)\simeq\eps^\alpha\). Choose the outer representative in which
\(\Ical\) is represented by the constant \(+1\). The interior model is then
represented by \(|H|\), and each boundary model by
\(U_+\circ R_{\theta_{\h}(p)}\). Since
\(\lVert h_j\rVert_\infty+|\theta_{\h}(p)|\leq C\eps^\beta\), the transition sets of
these profiles remain at distance at least \(d(q)/2\) from \(B_\eps(q)\).
The model estimates therefore imply
\[
    \lVert\mathcal P_{\h,j}-\Ical\rVert_
    {C^{2,\gamma}_\eps(B_\eps(q),\L_{\p})}
    \leq
    C\eps^{-M}\exp\left(-\frac{\lambda_*d(q)}{\eps}\right)
\]
for some fixed \(M>0\), where the polynomial factor also accounts for the
derivatives of the tangential partition of unity.
The rescaled cutoff construction gives, for \(k=1,2\),
\[
    \lVert\nabla^k\chi_{j,5}\rVert_
    {C^{0,\gamma}_\eps(B_\eps(q))}
    \leq C\eps^{-\alpha(k+\gamma)}.
\]
Taylor's theorem also gives the local H\"older estimate
\[
    \lVert\mathfrak I_W(a,b;s)\rVert_{C^{0,\gamma}_\eps(B_\eps(q))}
    \leq
    C\lVert s(1-s)\rVert_{C^{0,\gamma}_\eps(B_\eps(q))}
    \lVert a-b\rVert_{C^{0,\gamma}_\eps(B_\eps(q))}^2
\]
for the bounded profiles occurring here. Applying these estimates to the
three terms in \eqref{eq: outer-residual-term}, and then applying the weight
in the norm relative to \(I_j\), gives, for some fixed \(M>0\),
\[
    e^{\lambda d(q)/\eps}
    \lVert\mathcal E^{\mathrm{out}}_{j,\h}\rVert_
    {C^{0,\gamma}_\eps(B_\eps(q),\L_{\p})}
    \leq
    C\eps^{-M}
    \exp\left(-\frac{(\lambda_*-\lambda)d(q)}{\eps}\right).
\]
Since \(d(q)\simeq\eps^\alpha\), the exponential absorbs the polynomial
factor. Taking the supremum over the outer transition proves
\eqref{eq: outer-error-size}.

\emph{Estimate \eqref{eq: tangential-error-size}.}
The heteroclinic decay and
\(\lVert\h\rVert_{C^{2,\gamma}(\boldsymbol I)}\leq K\eps^\beta\) give
\[
    \lVert H'(t_{j,\h})\rVert_
    {\mathsf C^{0,\gamma}_{\eps,\lambda}(\spt\eta_{j,4})}
    \leq C.
\]
Indeed, \(|h_j|/\eps\leq C\eps^{\beta-1}\), so the normal weight centered
at \(z=0\) is uniformly equivalent to the one centered at \(z=h_j(x)\);
the tangential variation of the profile is controlled by
\(|h_j'|/\eps\leq C\eps^{\beta-1}\). On the region where
\(\chi_{j,5}=1\), the weighted product estimate therefore gives
\[
    \lVert\eps h_j''H'(t_{j,\h})\rVert_
    {\mathsf C^{0,\gamma}_{\eps,\lambda}(\spt\eta_{j,4}\cap\{\chi_{j,5}=1\})}
    \leq
    C\eps\lVert h_j''\rVert_{C^{0,\gamma}(I_j)}
    \leq C\eps^{1+\beta}.
\]
On the transition of \(\chi_{j,5}\), one has
\(|z-h_j(x)|\geq c\eps^\alpha\). Hence \(H'(t_{j,\h})\) and its scaled
derivatives are exponentially small there, and this decay absorbs every
H\"older loss from \(\chi_{j,5}\). This proves
\eqref{eq: tangential-error-size}.

\emph{Estimate \eqref{eq: orthogonal-error-size}.}
The same shifted-profile estimate gives
\[
    \lVert(h_j')^2H''(t_{j,\h})\rVert_
    {\mathsf C^{0,\gamma}_{\eps,\lambda}(\spt\eta_{j,3})}
    \leq C\eps^{2\beta}.
\]
The fixed-profile construction of the cutoffs at tangential scale
\(\eps^\alpha\) gives
\[
    \lVert\eta_{j,3}\rVert_
    {\mathsf C^{0,\gamma}_\eps(\spt\eta_{j,3})}
    \leq C\eps^{-\alpha\gamma}.
\]
The weighted product estimate now yields
\[
    \lVert\eta_{j,3}(h_j')^2H''(t_{j,\h})\rVert_
    {\mathsf C^{0,\gamma}_{\eps,\lambda}(\spt\eta_{j,3})}
    \leq C\eps^{2\beta-\alpha\gamma},
\]
which is \eqref{eq: orthogonal-error-size}.

\emph{Estimate \eqref{eq: transition-error-size}.}
Fix \(p\in\partial I_j\), use the inward boundary coordinate \(x\), and
write \(\theta=\theta_{\h}(p)\). The transition of \(\zeta_{p,3}\) is
contained in the affine boundary collar, so
\begin{equation}\label{eq: expansion h near end}
    h_j(p+xe_p)=\theta x,
    \qquad
    h_j'(p+xe_p)=\theta,
    \qquad
    h_j''(p+xe_p)=0.
\end{equation}
Moreover, \(x\simeq\eps^\alpha\) and
\(|\theta|\leq K\eps^\beta\) on this transition. In the segment
trivialization, the representatives of the two uncut models are
\[
    U_-(r,s),
    \qquad
    H(t),
\]
where
\[
    r=\frac{x\cos\theta+z\sin\theta}{\eps},
    \qquad
    s=\frac{-x\sin\theta+z\cos\theta}{\eps},
    \qquad
    t=\frac{z-\theta x}{\eps}.
\]
The use of \(U_-\) here follows from the compatibility between the bundle
identification in \eqref{eq: boundary-bundle-identification} and the segment
trivialization. The cutoff nesting also gives \(\eta_{j,1}=\zeta_{p,8}=1\)
on the region under consideration, so these are the uncut representatives of
\(\mathcal H_{\h,j}\) and \(\mathcal U_{\h,p}\). Set
\[
    D\coloneqq U_-(r,s)-H(t)
    =\bigl(U_-(r,s)-H(s)\bigr)+\bigl(H(s)-H(t)\bigr).
\]
Since \(r\geq c\eps^{\alpha-1}\), the first difference and all its needed
scaled derivatives are bounded by
\(C\eps^{-M}\exp(-c/\eps^{1-\alpha})\). For the second difference, direct
substitution gives
\begin{equation}\label{eq: boundary-phase-gap}
    s-t
    =
    t(\cos\theta-1)
    +
    \frac{x}{\eps}(\theta\cos\theta-\sin\theta).
\end{equation}
Using
\[
    |\cos\theta-1|\leq C\theta^2,
    \qquad
    |\theta\cos\theta-\sin\theta|\leq C|\theta|^3,
\]
the mean value theorem and the exponential decay of the derivatives of \(H\)
give
\[
    \lVert\cosh(\lambda z/\eps)D\rVert_
    {L^\infty(\spt\chi_{j,5}\cap\{0<\zeta_{p,3}<1\})}
    \leq
    C\left(
        e^{-c/\eps^{1-\alpha}}
        +|\theta|^2
        +|\theta|^3\eps^{\alpha-1}
    \right)
    \leq C\eps^{2\beta}.
\]
For the tangential derivative, at fixed \(z\),
\[
    \partial_x\bigl(H(s)-H(t)\bigr)
    =
    \frac{\theta-\sin\theta}{\eps}H'(s)
    +
    \frac{\theta}{\eps}\bigl(H'(t)-H'(s)\bigr).
\]
Together with \eqref{eq: boundary-phase-gap}, this gives
\[
    \lVert\cosh(\lambda z/\eps)\partial_xD\rVert_
    {L^\infty(\spt\chi_{j,5}\cap\{0<\zeta_{p,3}<1\})}
    \leq
    C\left(
        e^{-c/\eps^{1-\alpha}}
        +\frac{|\theta|^3}{\eps}
        +\frac{|\theta|^4\eps^\alpha}{\eps^2}
    \right)
    \leq C\eps^{3\beta-1}.
\]
Set \(w(z)=\cosh(\lambda z/\eps)\). Differentiating once more gives
\[
\begin{aligned}
    \lVert\partial_x(wD)\rVert_\infty
    +\lVert\eps\partial_z(wD)\rVert_\infty
    &\leq C\eps^{2\beta},\\
    \lVert\partial_x(w\partial_xD)\rVert_\infty
    +\lVert\eps\partial_z(w\partial_xD)\rVert_\infty
    &\leq C\eps^{3\beta-1}
\end{aligned}
\]
on \(\spt\chi_{j,5}\cap\{0<\zeta_{p,3}<1\}\). Here scaled normal
differentiation preserves the preceding orders, while a second tangential
differentiation costs at most
\(|\theta|/\eps=O(\eps^{\beta-1})\); all differentiated radial remainders are
still exponentially small. For pairs at physical scaled distance at most one,
the mean value theorem in the metric
\(|x-x'|+\eps^{-1}|z-z'|\) gives the required H\"older quotients. For pairs
at distance at least one, the corresponding supremum bounds do so. Thus we
have proved
\begin{equation}\label{eq: estimate variable difference}
\begin{split}
    \lVert\mathcal U_{\h,p}-\mathcal H_{\h,j}\rVert_
    {\mathsf C^{0,\gamma}_{\eps,\lambda}
    (\spt\chi_{j,5}\cap\{0<\zeta_{p,3}<1\})}
    &\leq C\eps^{2\beta},\\
    \lVert\partial_x(\mathcal U_{\h,p}-\mathcal H_{\h,j})\rVert_
    {\mathsf C^{0,\gamma}_{\eps,\lambda}
    (\spt\chi_{j,5}\cap\{0<\zeta_{p,3}<1\})}
    &\leq C\eps^{3\beta-1}.
\end{split}
\end{equation}

On \(\spt\chi_{j,5}\), the nesting gives \(\chi_{j,10}=1\). By the chosen
product construction of the boundary cutoffs, one therefore has
\[
    \zeta_{p,3}(Y_p(x,z))
    =\varrho\left(\frac{x}{\delta\eps^\alpha}-3\right)
\]
on this region. Consequently,
\[
    \nabla\zeta_{p,3}
    =\partial_x\zeta_{p,3}e_p,
    \qquad
    \Delta\zeta_{p,3}=\partial_x^2\zeta_{p,3}.
\]
Its fixed-profile construction gives, for \(m=1,2\),
\[
    \lVert\partial_x^m\zeta_{p,3}\rVert_
    {\mathsf C^{0,\gamma}_\eps
    (\spt\chi_{j,5}\cap\{0<\zeta_{p,3}<1\})}
    \leq C\eps^{-\alpha(m+\gamma)}.
\]
On the region where \(\chi_{j,5}=1\), the product estimate and
\eqref{eq: estimate variable difference} therefore give
\[
\begin{split}
    \lVert\eps^2\Delta\zeta_{p,3}
    (\mathcal U_{\h,p}-\mathcal H_{\h,j})\rVert_
    {\mathsf C^{0,\gamma}_{\eps,\lambda}
    (\{\chi_{j,5}=1\}\cap\{0<\zeta_{p,3}<1\})}
    &\leq C\eps^{2+2\beta-\alpha(2+\gamma)},\\
    \lVert2\eps^2\nabla\zeta_{p,3}\cdot
    \nabla(\mathcal U_{\h,p}-\mathcal H_{\h,j})\rVert_
    {\mathsf C^{0,\gamma}_{\eps,\lambda}
    (\{\chi_{j,5}=1\}\cap\{0<\zeta_{p,3}<1\})}
    &\leq C\eps^{1+3\beta-\alpha(1+\gamma)}.
\end{split}
\]
On the transition of \(\chi_{j,5}\), the profile difference and all the
derivatives used above are exponentially small in the normal direction. This
decay absorbs the H\"older losses produced by multiplication by
\(\chi_{j,5}\). The estimated section vanishes outside
\(\spt\chi_{j,5}\), so the last two estimates prove
\eqref{eq: transition-error-size} on the full domain stated there.

\emph{Estimate \eqref{eq: potential-error-size}.}
For bounded \(a,b\) and \(s\in[0,1]\), Taylor's theorem gives
\begin{equation}\label{eq: interpolation-defect-quadratic}
    \mathfrak I_W(a,b;s)
    =s(1-s)(b-a)^2\mathfrak R_W(a,b;s),
\end{equation}
where \(\mathfrak R_W\) is smooth and uniformly bounded on the range of the
profiles. Set
\[
    G(a,b,s)\coloneqq s(1-s)\mathfrak R_W(a,b;s).
\]
The individual profiles have uniformly bounded unweighted scaled H\"older
norms, while
\[
\begin{split}
    \lVert G(\mathcal H_{\h,j},\mathcal U_{\h,p},\zeta_{p,3})\rVert_\infty
    &\leq C,\\
    [G(\mathcal H_{\h,j},\mathcal U_{\h,p},\zeta_{p,3})]_{\gamma,\eps}
    &\leq C\eps^{-\alpha\gamma}
\end{split}
\]
on \(\spt\chi_{j,5}\cap\{0<\zeta_{p,3}<1\}\). Furthermore,
\(\cosh(\lambda z/\eps)D^2
=(\cosh(\lambda z/\eps)D)D\), so
\[
    \lVert D^2\rVert_
    {\mathsf C^{0,\gamma}_{\eps,\lambda}
    (\spt\chi_{j,5}\cap\{0<\zeta_{p,3}<1\})}
    \leq C\eps^{4\beta}.
\]
On \(\{\chi_{j,5}=1\}\cap\{0<\zeta_{p,3}<1\}\),
the H\"older product rule, applied using the supremum and seminorm bounds in
the preceding two displays, gives
\[
    \lVert\mathcal E^{\mathrm{pot}}_{j,p,\h}\rVert_
    {\mathsf C^{0,\gamma}_{\eps,\lambda}
    (\{\chi_{j,5}=1\}\cap\{0<\zeta_{p,3}<1\})}
    \leq C\eps^{-\alpha\gamma}\eps^{4\beta}
    =C\eps^{4\beta-\alpha\gamma}.
\]
On the transition of \(\chi_{j,5}\), the profile difference is exponentially
small in the normal direction, so the cutoff loss is again absorbed. The
estimated section vanishes outside \(\spt\chi_{j,5}\), and hence this proves
\eqref{eq: potential-error-size} on the stated domain.

\emph{Estimate \eqref{eq: approximation-error-aggregate}.}
Relative to the exponent \(1+\beta\), the four exponents appearing in
\eqref{eq: orthogonal-error-size}--\eqref{eq: potential-error-size} have the
gaps
\[
\begin{aligned}
    (2\beta-\alpha\gamma)-(1+\beta)
    &=\beta-1-\alpha\gamma>0,\\
    \bigl(2+2\beta-\alpha(2+\gamma)\bigr)-(1+\beta)
    &=1+\beta-\alpha(2+\gamma)>0,\\
    \bigl(1+3\beta-\alpha(1+\gamma)\bigr)-(1+\beta)
    &=2\beta-\alpha(1+\gamma)>0,\\
    (4\beta-\alpha\gamma)-(1+\beta)
    &=3\beta-1-\alpha\gamma>0.
\end{aligned}
\]
These inequalities follow from \(\beta\geq2\), \(\alpha<1/2\), and
\(\gamma<1\). Where \(\chi_{j,5}=1\),
\eqref{eq: orthogonal-error-size} applies directly to the orthogonal term in
the residual identity. On the transition of \(\chi_{j,5}\), the factor
\(H''(t_{j,\h})\) is exponentially small, so its decay absorbs the H\"older
loss from this cutoff. On the overlap of the outer transition with
\(\spt\eta_{j,3}\), the same profile-tail calculation used for
\eqref{eq: outer-error-size}, now performed in the physical product norm,
gives an exponentially small bound. The change of tangential H\"older scale
costs only a fixed power of \(\eps^{-1}\), which is absorbed by the
exponential decay.
Thus \eqref{eq: residual-identity}, the estimates just proved, and the
separation of the segment tubes give
\eqref{eq: approximation-error-aggregate}.

\emph{Estimate \eqref{eq: outer-approximation-error-global}.}
The cutoff \(\chi_{\Ical,2}\) vanishes on a fixed multiple of the
\(\eps^\alpha\)-neighborhood of \(\Gamma\). In particular, it vanishes on
every singular ball \(B_{R_0\eps}(p)\) for \(\eps\) small. Let
\(B_\eps(q)\) be a trivializing ball meeting the support of
\(\chi_{\Ical,2}S(\omega_{\h})\), and set
\(d(q)=\dist(q,\Gamma)\). Then \(d(q)\geq c\eps^\alpha\).

The outer term satisfies the required local estimate by the calculation for
\eqref{eq: outer-error-size}. For the inner terms, the tangential and
orthogonal contributions contain \(H'(t_{j,\h})\) and
\(H''(t_{j,\h})\), respectively, and hence decay in the normal direction.
For a transition term, the decomposition
\[
    D=\bigl(U_-(r,s)-H(s)\bigr)+\bigl(H(s)-H(t)\bigr)
\]
gives radial decay for the first summand; on the present ball, the phase-gap
factor in the second summand multiplies a normally decaying derivative of
\(H\). The potential term is quadratic in this same difference. Therefore,
for some fixed \(M>0\), all terms in the residual identity satisfy
\[
    \lVert S(\omega_{\h})\rVert_
    {C^{0,\gamma}_\eps(B_\eps(q),\L_{\p})}
    \leq
    C\eps^{-M}\exp\left(-\frac{\lambda_* d(q)}{\eps}\right).
\]
This estimate also covers the transition of \(\chi_{\Ical,2}\); its cutoff
variation introduces only additional polynomial factors. Multiplication by
the weight in \eqref{eq: outer-approximation-error-global} leaves
\[
    C\eps^{-M}
    \exp\left(-\frac{(\lambda_*-\lambda)d(q)}{\eps}\right)
    \leq
    C\exp\left(-\frac{c}{\eps^{1-\alpha}}\right).
\]
Taking the supremum over the trivializing balls and summing over the finitely
many segment regions proves \eqref{eq: outer-approximation-error-global}.
\end{proof}

The tangential term is the only contribution at the leading order. On
\(\spt\chi_{j,10}\), we remove its contribution to the residual and define
\begin{equation}\label{eq: approximation-remainder}
    \mathcal E^{\mathrm{rem}}_{j,\h}
    \coloneqq
    S(\omega_{\h})
    -
    \chi_{j,5}\mathcal E^{\mathrm{tan}}_{j,\h}
    =
    S(\omega_{\h})
    +
    \eps\chi_{j,5}h_j''(x)H'\bigl(t_{j,\h}(x,z)\bigr).
\end{equation}

Recall from \Cref{def: adapted-coordinate-representatives} that tildes with
subscripts \(j,\h\) denote shifted interior representatives, while hats with
subscripts \(p,\h\) denote representatives on the fixed boundary model.

For the tangential boundary projection, we use the admissible decay function
\(g_\tau\) from \eqref{eq: boundary-decay-function}. For
\(p\in\partial I_j\), let
\(\widehat{\mathcal E}^{\mathrm{rem}}_{p,\h}\) denote the boundary
representative of
\(\chi_{j,4}\zeta_{p,5}\mathcal E^{\mathrm{rem}}_{j,\h}\).

For the interior localization, define
\[
    a_{j,\h}(x)
    =
    \frac{1}{A_H}\int_{\R}
    (\chi_{j,4}\eta_{j,3})
    \bigl(X_j(x,h_j(x)+\eps t)\bigr)
    (H'(t))^2\,dt
\]
and let \(\widetilde{\mathcal E}^{\mathrm{rem}}_{j,\h}\) denote the shifted
interior representative of
\(\chi_{j,4}\eta_{j,4}\mathcal E^{\mathrm{rem}}_{j,\h}\).

\begin{lemma}[Size of the approximation remainder]
\label{lem: approximation-remainder-size}
There is a constant \(C>0\), uniform for
\(\h\in\mathcal B(K\eps^\beta)\), such that
\begin{align}
    \lVert\mathcal E^{\mathrm{rem}}_{j,\h}\rVert_
    {\mathsf C^{0,\gamma}_{\eps,\lambda}(\spt\eta_{j,3})}
    &\leq C\eps^{2\beta-\alpha\gamma},
    \label{eq: approximation-remainder-size}\\
    \lVert\mathcal E^{\mathrm{rem}}_{j,\h}\rVert_
    {L^\infty(\spt\eta_{j,3})}
    &\leq C\eps^{2\beta}
    \leq C\eps^{1+\beta+\alpha\gamma},
    \label{eq: approximation-error-Linfty}\\
    \lVert\widehat{\mathcal E}^{\mathrm{rem}}_{p,\h}\rVert_
    {C^{0,\gamma}_{\ell_+,\lambda}(\R^2_*,\L)}
    +
    \lVert(\widehat{\mathcal E}^{\mathrm{rem}}_{p,\h})^\top\rVert_
    {L^\infty(\R^+)}
    &\leq C\eps^{2\beta},
    \label{eq: boundary-approximation-error-size}\\
    A_{g_\tau}
    \bigl(\widehat{\mathcal E}^{\mathrm{rem}}_{p,\h}\bigr)
    &\leq
    C\eps^{2\beta-(1-\alpha)\tau},
    \label{eq: boundary-approximation-error-Ag}\\
    \lVert\widetilde{\mathcal E}^{\mathrm{rem}}_{j,\h}\rVert_
    {\mathcal C^{0,\gamma}_{\eps,\lambda}(I_j\times\R)}
    &\leq C\eps^{2\beta},
    \label{eq: interior-approximation-error-size}\\
    \left\lVert
        \mathfrak p_j
        \bigl(\widetilde{\mathcal E}^{\mathrm{rem}}_{j,\h}\bigr)
    \right\rVert_{C^{0,\gamma}(I_j)}
    +
    \lVert\eps(a_{j,\h}-\eta_{j,3})h_j''\rVert_{C^{0,\gamma}(I_j)}
    &\leq C_m\eps^m,
    \label{eq: projected-approximation-error-size}
\end{align}
where \(p\in\partial I_j\), \(m>0\) is arbitrary, and \(C_m\) may depend
on \(m\).
\end{lemma}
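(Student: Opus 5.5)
Subtracting $\chi_{j,5}\mathcal E^{\mathrm{tan}}_{j,\h}$ from the residual identity of \Cref{lem: residual-identity} gives, on $\spt\chi_{j,10}$,
\[
    \mathcal E^{\mathrm{rem}}_{j,\h}
    =\mathcal E^{\mathrm{out}}_{j,\h}
    +\chi_{j,5}\Bigl(\mathcal E^{\mathrm{ort}}_{j,\h}
    +\sum_{p\in\partial I_j}\bigl(\mathcal E^{\mathrm{trans}}_{j,p,\h}
    +\mathcal E^{\mathrm{pot}}_{j,p,\h}\bigr)\Bigr).
\]
Every estimate will be read off from this decomposition and \Cref{lem: approximation-error-size}. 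For \eqref{eq: approximation-remainder-size} I will add the bounds \eqref{eq: outer-error-size}--\eqref{eq: potential-error-size}, omitting the tangential one. The exponent gaps already checked there show that $2\beta-\alpha\gamma$ is the smallest exponent, since $2+2\beta-\alpha(2+\gamma)>2\beta-\alpha\gamma$, $1+3\beta-\alpha(1+\gamma)>2\beta-\alpha\gamma$, and $\beta\ge2$. For the $L^\infty$ bound \eqref{eq: approximation-error-Linfty} I will rerun the same estimates without Hölder seminorms, which removes the $\eps^{-\alpha\gamma}$ losses coming from the cutoffs. This gives $|\mathcal E^{\mathrm{ort}}|\lesssim\eps^{2\beta}$, and the transition terms are $O(\eps^{2+2\beta-2\alpha}+\eps^{1+3\beta-\alpha})$, which is smaller. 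The comparison $2\beta\ge1+\beta+\alpha\gamma$ then follows from $\beta\ge2>1+\alpha\gamma$.

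For the interior statements \eqref{eq: interior-approximation-error-size} and \eqref{eq: projected-approximation-error-size} I will work in the shifted variable $t=(z-h_j)/\eps$. On $\spt\eta_{j,4}$ the transition terms vanish, and $\zeta_{p,3}$-effects are absent, so only $\mathcal E^{\mathrm{ort}}=(h_j')^2H''(t)$ remains, with $\eta_{j,3}=1$ there. Its stretched norm is $O(\eps^{2\beta})$. There is no $\eps^{-\alpha\gamma}$ loss, because in the metric $d_\eps$ the Hölder quotient in $x$ gains a factor $\eps^\gamma$. The projection vanishes exactly, since $\int H''H'\,dt=0$. Wherever $\chi_{j,5}<1$, the decay of $H$ at $|t|\gtrsim\eps^{\alpha-1}$ gives $O(\eps^m)$. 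Similarly, $a_{j,\h}-\eta_{j,3}$ is supported where $\chi_{j,4}\eta_{j,3}$ differs from $\eta_{j,3}$ along the fibre. That happens only for $|t|\gtrsim\eps^{\alpha-1}$, so it is $O(\eps^m)$ together with its Hölder norm; multiplying by $\eps h_j''$ finishes \eqref{eq: projected-approximation-error-size}.

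The boundary estimates \eqref{eq: boundary-approximation-error-size}--\eqref{eq: boundary-approximation-error-Ag} are the main obstacle. I will pull back by $Y_p(\eps R_{-\theta}\cdot)$. In the boundary variables $(X,Z)$, the support of $\chi_{j,4}\zeta_{p,5}\mathcal E^{\mathrm{rem}}$ lies in $X\lesssim\eps^{\alpha-1}$, and the function is $O(\eps^m)$ exponentially away from $\ell_+$. On $\{\zeta_{p,3}=1\}$ the error vanishes, because $S(\mathcal U_{\h,p})=0$ and $h_j''=0$ on the collar. The remaining contributions are $\mathcal E^{\mathrm{ort}}$, of size $\theta^2=O(\eps^{2\beta})$, and the transition terms, which are smaller. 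Unweighted Hölder norms in unit balls of $(X,Z)$ cost nothing, giving the $C^{0,\gamma}_{\ell_+,\lambda}$ and $L^\infty$ bounds of $\eps^{2\beta}$. For $A_{g_\tau}$, I only know $|f^\top(X)|\lesssim\eps^{2\beta}$ on $X\lesssim C\eps^{\alpha-1}$, and $f^\top=0$ beyond that. Since $g_\tau''(X)\simeq(1+X)^{-\tau}\gtrsim\eps^{(1-\alpha)\tau}$ on this range, this yields $A_{g_\tau}\lesssim\eps^{2\beta-(1-\alpha)\tau}$. The delicate point is checking that the rotated coordinates and the $\zeta$ cutoffs keep the support inside $X\lesssim\eps^{\alpha-1}$ uniformly in $\theta$, and that outer terms remain exponentially small there.
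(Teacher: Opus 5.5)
Your proposal is correct and follows the paper's proof essentially step by step. It uses the same local decomposition of $\mathcal E^{\mathrm{rem}}_{j,\h}$ fed by \Cref{lem: approximation-error-size}, and the same exact reduction of the interior representative to the pulled-back localizer times $(h_j')^2H''(t)$, where $\int_\R H''H'\,dt=0$ kills the projection up to normal-cutoff tails at $|t|\gtrsim\eps^{\alpha-1}$. It also uses the same pulled-back boundary estimate, whose support in $\{0<X\lesssim\eps^{\alpha-1}\}$ gives $A_{g_\tau}\lesssim\eps^{2\beta}\eps^{-(1-\alpha)\tau}$.

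There are two harmless slips. First, the $t$-dependent cutoff in $\widetilde{\mathcal E}^{\mathrm{rem}}_{j,\h}$ is the localizer $\chi_{j,4}$, since $\chi_{j,5}\equiv1$ on its support. Second, in the boundary localization the orthogonal term should be bounded by $\lVert h_j'\rVert_{C^{0,\gamma}}^2\lesssim\eps^{2\beta}$ rather than by $\theta^2$, because $\spt\zeta_{p,5}$ meets $\spt\eta_{j,4}$, where $h_j$ need not be affine.
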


\begin{proof}
Set \(\Omega_j\coloneqq\spt\eta_{j,3}\). The separation of the segment
neighborhoods implies that all terms indexed by \(k\ne j\) in
\eqref{eq: residual-identity} vanish on \(\Omega_j\). Hence
\eqref{eq: residual-identity} and \eqref{eq: approximation-remainder} give
\begin{equation}\label{eq: remainder-local-decomposition}
\begin{split}
    \mathcal E^{\mathrm{rem}}_{j,\h}
    ={}&
    \mathcal E^{\mathrm{out}}_{j,\h}
    +
    \chi_{j,5}\mathcal E^{\mathrm{ort}}_{j,\h}\\
    &+
    \sum_{p\in\partial I_j}\chi_{j,5}
    \left(
        \mathcal E^{\mathrm{trans}}_{j,p,\h}
        +
        \mathcal E^{\mathrm{pot}}_{j,p,\h}
    \right)
    \qquad\text{on }\Omega_j.
\end{split}
\end{equation}

\emph{Estimate \eqref{eq: approximation-remainder-size}.}
Set \(q=2\beta-\alpha\gamma\). On the intersection of \(\Omega_j\) with
the outer transition, repeat the profile-tail calculation used for
\eqref{eq: outer-error-size} in the physical product norm. The change in the
tangential H\"older scale produces only a fixed polynomial loss, and hence,
for some fixed \(M>0\),
\begin{equation}\label{eq: remainder-outer-bound}
    \lVert\mathcal E^{\mathrm{out}}_{j,\h}\rVert_
    {\mathsf C^{0,\gamma}_{\eps,\lambda}(\Omega_j)}
    \leq
    C\eps^{-M}\exp\left(-\frac{c}{\eps^{1-\alpha}}\right)
    \leq C\eps^q.
\end{equation}
For the orthogonal contribution, \eqref{eq: orthogonal-error-size}, its
direct supremum estimate, and the cutoff bound give
\[
    \lVert\mathcal E^{\mathrm{ort}}_{j,\h}\rVert_
    {\mathsf C^{0,\gamma}_{\eps,\lambda}(\Omega_j)}
    \leq C\eps^q,
    \qquad
    \lVert\cosh(\lambda z/\eps)
    \mathcal E^{\mathrm{ort}}_{j,\h}\rVert_{L^\infty(\Omega_j)}
    \leq C\eps^{2\beta},
\]
and
\[
    [\chi_{j,5}]_{\gamma,\eps}
    \leq C\eps^{-\alpha\gamma}.
\]
The H\"older product rule therefore yields
\begin{equation}\label{eq: remainder-orthogonal-bound}
    \lVert\chi_{j,5}\mathcal E^{\mathrm{ort}}_{j,\h}\rVert_
    {\mathsf C^{0,\gamma}_{\eps,\lambda}(\Omega_j)}
    \leq
    C\left(\eps^q+\eps^{-\alpha\gamma}\eps^{2\beta}\right)
    =C\eps^q.
\end{equation}
The support containments in \Cref{lem: residual-identity} and the restriction
of \eqref{eq: transition-error-size}--
\eqref{eq: potential-error-size} to \(\Omega_j\) give
\begin{align*}
    \lVert\chi_{j,5}\mathcal E^{\mathrm{trans}}_{j,p,\h}\rVert_
    {\mathsf C^{0,\gamma}_{\eps,\lambda}(\Omega_j)}
    &\leq
    C\left(
        \eps^{q+2(1-\alpha)}
        +
        \eps^{q+1+\beta-\alpha}
    \right),\\
    \lVert\chi_{j,5}\mathcal E^{\mathrm{pot}}_{j,p,\h}\rVert_
    {\mathsf C^{0,\gamma}_{\eps,\lambda}(\Omega_j)}
    &\leq C\eps^{q+2\beta}.
\end{align*}
All three added exponents are positive because \(\alpha<1/2\) and
\(\beta\geq2\). Summing these estimates in
\eqref{eq: remainder-local-decomposition} over the two points of
\(\partial I_j\) proves \eqref{eq: approximation-remainder-size}.

\emph{Estimate \eqref{eq: approximation-error-Linfty}.}
The formula for the orthogonal term gives directly
\[
    \lVert\chi_{j,5}\mathcal E^{\mathrm{ort}}_{j,\h}\rVert_{L^\infty(\Omega_j)}
    \leq
    \lVert h_j'\rVert_{L^\infty(I_j)}^2\lVert H''\rVert_{L^\infty(\R)}
    \leq C\eps^{2\beta}.
\]
By \eqref{eq: remainder-outer-bound},
\[
    \lVert\mathcal E^{\mathrm{out}}_{j,\h}\rVert_{L^\infty(\Omega_j)}
    \leq
    C\eps^{-M}\exp\left(-\frac{c}{\eps^{1-\alpha}}\right)
    \leq C\eps^{2\beta}.
\]
For the remaining terms, every exponential weight is at least one, so
\eqref{eq: transition-error-size} and \eqref{eq: potential-error-size} imply
\begin{align*}
    \lVert\chi_{j,5}\mathcal E^{\mathrm{trans}}_{j,p,\h}\rVert_
    {L^\infty(\Omega_j)}
    &\leq C\left(
        \eps^{2+2\beta-\alpha(2+\gamma)}
        +
        \eps^{1+3\beta-\alpha(1+\gamma)}
    \right),\\
    \lVert\chi_{j,5}\mathcal E^{\mathrm{pot}}_{j,p,\h}\rVert_
    {L^\infty(\Omega_j)}
    &\leq C\eps^{4\beta-\alpha\gamma}.
\end{align*}
Relative to \(2\beta\), the three polynomial gaps are
\[
    2-\alpha(2+\gamma),
    \qquad
    1+\beta-\alpha(1+\gamma),
    \qquad
    2\beta-\alpha\gamma.
\]
They are positive because \(\alpha<1/2\), \(\gamma<1\), and
\(\beta\geq2\). Thus
\eqref{eq: remainder-local-decomposition} gives the
first inequality in \eqref{eq: approximation-error-Linfty}. The second
follows from
\[
    2\beta-(1+\beta+\alpha\gamma)
    =\beta-1-\alpha\gamma>0.
\]

\emph{Estimate \eqref{eq: boundary-approximation-error-size}.}
Fix \(p\in\partial I_j\), set
\[
    \theta=\theta_{\h}(p),
    \qquad
    L_\eps=\eps^{\alpha-1},
\]
and use the fixed boundary coordinates \((X,Z)\) from
\eqref{eq: boundary-error-pullback}. Set
\[
    \kappa_{p,\h}
    \coloneqq
    \widehat{(\chi_{j,4}\zeta_{p,5})}_{p,\h}.
\]
This pulled-back localization is supported where
\(|X|+|Z|\leq CL_\eps\) and has uniformly bounded
\(C^{0,\gamma}\)-norm. The nesting of the cutoffs gives
\(\chi_{j,5}=1\) on its support. Consequently, the outer term vanishes,
and the localization excludes the terms associated with the other point of
\(\partial I_j\). Thus, in the fixed boundary coordinates,
\begin{equation}\label{eq: localized-boundary-remainder-decomposition}
    \widehat{\mathcal E}^{\mathrm{rem}}_{p,\h}
    =
    \kappa_{p,\h}
    \left(
        \widehat{\mathcal E}^{\mathrm{ort}}_{j,\h}
        +
        \widehat{\mathcal E}^{\mathrm{trans}}_{j,p,\h}
        +
        \widehat{\mathcal E}^{\mathrm{pot}}_{j,p,\h}
    \right),
\end{equation}
where the three representatives on the right are needed only on
\(\spt\kappa_{p,\h}\). The localized remainder vanishes where
\(\zeta_{p,3}=1\). On its nonzero support, the physical coordinates satisfy
\(c\eps^\alpha\leq x\leq C\eps^\alpha\) and
\(|z|\leq C\eps^\alpha\). Since \(|\theta|\leq C\eps^\beta\), it follows
that
\begin{equation}\label{eq: boundary-remainder-scaled-support}
    cL_\eps\leq X\leq CL_\eps,
    \qquad
    |Z|\leq CL_\eps
\end{equation}
on the nonzero support of \(\widehat{\mathcal E}^{\mathrm{rem}}_{p,\h}\).

We first estimate the two terms supported in the transition of
\(\zeta_{p,3}\). This transition lies in the affine boundary collar. If
\(x=\eps(X\cos\theta-Z\sin\theta)\), then the shifted heteroclinic is
represented there by \(H(T_\theta)\), where
\[
    T_\theta(X,Z)
    =
    (\sin\theta-\theta\cos\theta)X
    +
    (\cos\theta+\theta\sin\theta)Z.
\]
In the \(U_-\)-trivialization, set
\[
    \widehat D(X,Z)
    \coloneqq
    U_-(X,Z)-H(T_\theta(X,Z)).
\]
Let
\[
    \widehat\Omega^{\mathrm{tr}}_{p,\h}
    \coloneqq
    \spt\kappa_{p,\h}
    \cap
    \{0<\widehat\zeta_{p,3}<1\}.
\]
Since
\[
    T_\theta-Z
    =O(\theta^3X)+O(\theta^2Z),
\]
the model decay, the decay of the heteroclinic, and
\(|X|\leq CL_\eps\) on this transition give
\begin{equation}\label{eq: boundary-profile-difference-for-remainder}
    \lVert\widehat D\rVert_
    {C^{1,\gamma}_{\ell_+,\lambda}
    (\widehat\Omega^{\mathrm{tr}}_{p,\h})}
    \leq
    C\left(
        e^{-cL_\eps}
        +|\theta|^2
        +|\theta|^3L_\eps
    \right)
    \leq C\eps^{2\beta}.
\end{equation}
Here the last inequality uses \(|\theta|\leq C\eps^\beta\) and
\(\beta+\alpha-1>0\).

On \(\spt\kappa_{p,\h}\), the factor \(\chi_{j,10}\) in
\(\zeta_{p,3}\) is equal to one. Thus its pullback
\(\widehat\zeta_{p,3}\) is a one-variable fixed profile whose transition
has scale \(L_\eps\), and
\[
    \lVert\nabla^k\widehat\zeta_{p,3}\rVert_{C^{0,\gamma}}
    \leq CL_\eps^{-k},
    \qquad k=1,2.
\]
Because the dilation and rotation in
\eqref{eq: boundary-error-pullback} transform \(\eps^2\Delta\) into the
ordinary Laplacian, the transition term satisfies the exact identity
\[
    \widehat{\mathcal E}^{\mathrm{trans}}_{j,p,\h}
    =
    (\Delta\widehat\zeta_{p,3})\widehat D
    +
    2\nabla\widehat\zeta_{p,3}
    \mathbin{\cdot}\nabla\widehat D.
\]
It follows from
\eqref{eq: boundary-profile-difference-for-remainder} that
\begin{equation}\label{eq: boundary-transition-remainder-bound}
    \lVert\kappa_{p,\h}
    \widehat{\mathcal E}^{\mathrm{trans}}_{j,p,\h}\rVert_
    {C^{0,\gamma}_{\ell_+,\lambda}(\R^2_*,\L)}
    \leq
    C(L_\eps^{-2}+L_\eps^{-1})\eps^{2\beta}
    \leq C\eps^{2\beta}.
\end{equation}
The quadratic formula \eqref{eq: interpolation-defect-quadratic} similarly
gives
\begin{equation}\label{eq: boundary-potential-remainder-bound}
    \lVert\kappa_{p,\h}
    \widehat{\mathcal E}^{\mathrm{pot}}_{j,p,\h}\rVert_
    {C^{0,\gamma}_{\ell_+,\lambda}(\R^2_*,\L)}
    \leq C\eps^{4\beta}.
\end{equation}

It remains to estimate the orthogonal term. On its support inside the
boundary localization, write
\[
    t_{\h}(X,Z)
    =
    X\sin\theta+Z\cos\theta
    -\eps^{-1}h_j(p+x e_p).
\]
The bounds for \(h_j\) and \(\theta\), together with
\(|X|+|Z|\leq CL_\eps\), give
\[
    |t_{\h}-Z|
    \leq
    C\left(
        |\theta|L_\eps
        +\theta^2L_\eps
        +\eps^{\beta-1}
    \right)
    =o(1).
\]
The first derivatives of \(t_{\h}\) are uniformly bounded. Moreover,
\eqref{eq: boundary-remainder-scaled-support} gives \(X>0\), so the distance
to \(\ell_+\) is \(|Z|\) throughout this support. The cutoff
factors have uniformly bounded scaled H\"older norms, while
\(\lVert h_j'\rVert_{C^{0,\gamma}}\leq C\eps^\beta\). Therefore
\begin{equation}\label{eq: boundary-orthogonal-remainder-bound}
    \lVert\kappa_{p,\h}
    \widehat{\mathcal E}^{\mathrm{ort}}_{j,\h}\rVert_
    {C^{0,\gamma}_{\ell_+,\lambda}(\R^2_*,\L)}
    \leq C\eps^{2\beta}.
\end{equation}
Combining \eqref{eq: localized-boundary-remainder-decomposition} with
\eqref{eq: boundary-transition-remainder-bound}--
\eqref{eq: boundary-orthogonal-remainder-bound} proves the first norm estimate in
\eqref{eq: boundary-approximation-error-size}. The zero extension causes no
loss: the localized remainder is smooth at the outer edge of its support and
vanishes in a neighborhood of the puncture, where the ansatz is exactly the
boundary model.

For \(X>0\), the weight relative to \(\ell_+\) is equivalent to
\(e^{\lambda|Z|}\). Hence the estimate just proved gives
\[
\begin{split}
    \left|
        (\widehat{\mathcal E}^{\mathrm{rem}}_{p,\h})^\top(X)
    \right|
    &\leq
    C\eps^{2\beta}
    \int_\R e^{-\lambda|Z|}|H'(Z)|\,dZ\\
    &\leq C\eps^{2\beta},
\end{split}
\]
which is the second norm estimate in
\eqref{eq: boundary-approximation-error-size}.

\emph{Estimate \eqref{eq: boundary-approximation-error-Ag}.}
The support of the boundary localization implies
\[
    \spt
    (\widehat{\mathcal E}^{\mathrm{rem}}_{p,\h})^\top
    \subset
    \{0<X\leq CL_\eps\}.
\]
Since
\[
    g_\tau''(X)
    =
    (\tau-1)(\tau-2)(1+X)^{-\tau},
\]
the definition of \(A_{g_\tau}\) and
\eqref{eq: boundary-approximation-error-size} yield
\[
\begin{split}
    A_{g_\tau}
    \bigl(\widehat{\mathcal E}^{\mathrm{rem}}_{p,\h}\bigr)
    &\leq
    C\sup_{0<X\leq CL_\eps}
    (1+X)^\tau
    \left|
        (\widehat{\mathcal E}^{\mathrm{rem}}_{p,\h})^\top(X)
    \right|\\
    &\leq
    C\eps^{2\beta}L_\eps^\tau
    =
    C\eps^{2\beta-(1-\alpha)\tau}.
\end{split}
\]
This proves \eqref{eq: boundary-approximation-error-Ag}.

\emph{Estimate \eqref{eq: interior-approximation-error-size}.}
Define
\[
    b_{j,\h}(x,t)
    \coloneqq
    (\chi_{j,4}\eta_{j,4})
    \bigl(X_j(x,h_j(x)+\eps t)\bigr).
\]
The nesting and product construction of the cutoffs give the exact support
identities
\[
    (\chi_{j,4}\eta_{j,4})\chi_{j,5}\eta_{j,3}
    =\chi_{j,4}\eta_{j,4},
    \qquad
    (\chi_{j,4}\eta_{j,4})\mathcal E^{\mathrm{out}}_{j,\h}=0,
\]
and, for each \(p\in\partial I_j\),
\[
    (\chi_{j,4}\eta_{j,4})
    \mathcal E^{\mathrm{trans}}_{j,p,\h}
    =
    (\chi_{j,4}\eta_{j,4})
    \mathcal E^{\mathrm{pot}}_{j,p,\h}
    =0.
\]
Thus the shifted representative satisfies the exact identity
\begin{equation}\label{eq: exact-interior-remainder}
    \widetilde{\mathcal E}^{\mathrm{rem}}_{j,\h}(x,t)
    =
    b_{j,\h}(x,t)(h_j'(x))^2H''(t).
\end{equation}
The cutoff estimates give, for any two points in \(I_j\times\R\),
\[
\begin{split}
    |b_{j,\h}(x,t)-b_{j,\h}(x',t')|
    &\leq
    C\eps^{-\alpha}
    \left(
        (1+\lVert h_j'\rVert_\infty)|x-x'|
        +\eps|t-t'|
    \right)\\
    &\leq
    C\eps^{1-\alpha}
    d_\eps\bigl((x,t),(x',t')\bigr).
\end{split}
\]
Moreover,
\[
    [(h_j')^2]_{\gamma,d_\eps}
    \leq
    \eps^\gamma[(h_j')^2]_{C^{0,\gamma}(I_j)}
    \leq C\eps^{2\beta+\gamma}.
\]
Consequently,
\[
    \lVert b_{j,\h}\rVert_
    {\mathcal C^{0,\gamma}_\eps(I_j\times\R)}
    \leq C,
    \qquad
    \lVert(h_j')^2\rVert_
    {\mathcal C^{0,\gamma}_\eps(I_j\times\R)}
    \leq C\eps^{2\beta}.
\]
Since \(\cosh(\lambda t)H''(t)\) has a fixed
\(C^{0,\gamma}\)-norm, the product
estimate in \eqref{eq: exact-interior-remainder} proves
\eqref{eq: interior-approximation-error-size}.

\emph{Estimate \eqref{eq: projected-approximation-error-size}.}
When a cutoff \(\eta_{j,m}\) occurs in an interval norm below, it denotes the
restriction \(x\mapsto\eta_{j,m}(X_j(x,0))\). Let
\[
    b^0_{j,\h}(x)\coloneqq b_{j,\h}(x,0).
\]
For some fixed \(c>0\), the product construction of the cutoffs gives
\[
    b_{j,\h}(x,t)=b^0_{j,\h}(x)
    \qquad\text{when }|t|\leq c\eps^{\alpha-1}.
\]
Indeed, \(\lVert h_j\rVert_\infty=O(\eps^\beta)=o(\eps^\alpha)\), so the points in
this range of \(t\) remain in the normal core where \(\chi_{j,4}=1\),
\(\chi_{j,10}=1\), and \(\eta_{j,4}\) is independent of the normal
variable. The cutoff estimates also give, for some fixed \(M>0\),
\begin{equation}\label{eq: interior-localizer-holder-bound}
    \lVert b_{j,\h}(\cdot,t)-b^0_{j,\h}\rVert_{C^{0,\gamma}(I_j)}
    \leq C\eps^{-M}
    \qquad\text{for every }t\in\R.
\end{equation}
Using
\[
    \int_\R H''H'
    =
    \frac12\int_\R\bigl((H')^2\bigr)'
    =0,
\]
we obtain from \eqref{eq: exact-interior-remainder}
\[
    \mathfrak p_j
    \bigl(\widetilde{\mathcal E}^{\mathrm{rem}}_{j,\h}\bigr)(x)
    =
    \frac{(h_j'(x))^2}{A_H}
    \int_\R
    \bigl(b_{j,\h}(x,t)-b^0_{j,\h}(x)\bigr)
    H''(t)H'(t)\,dt.
\]
The integrand is supported where
\(|t|\geq c\eps^{\alpha-1}\). Combining
\eqref{eq: interior-localizer-holder-bound} with the exponential decay of
\(H'H''\) gives
\[
    \left\lVert
        \int_\R
        (b_{j,\h}-b^0_{j,\h})H''H'\,dt
    \right\rVert_{C^{0,\gamma}(I_j)}
    \leq
    C\eps^{-M}
    e^{-c/\eps^{1-\alpha}}.
\]
This quantity is \(O_m(\eps^m)\) for every \(m>0\), and hence so is the
first term in \eqref{eq: projected-approximation-error-size}.

Finally, set
\[
    c_{j,\h}(x,t)
    \coloneqq
    (\chi_{j,4}\eta_{j,3})
    \bigl(X_j(x,h_j(x)+\eps t)\bigr).
\]
On the same core in the \(t\)-variable,
\(c_{j,\h}(x,t)=\eta_{j,3}(x)\). This follows from the same normal-core
argument, with \(\eta_{j,3}\) in place of \(\eta_{j,4}\). Moreover, after
increasing \(M\) if necessary,
\[
    \lVert c_{j,\h}(\cdot,t)-\eta_{j,3}\rVert_{C^{0,\gamma}(I_j)}
    \leq C\eps^{-M}
    \qquad\text{for every }t\in\R.
\]
Since
\(A_H=\int_\R(H')^2\), the definition of \(a_{j,\h}\) gives
\[
    a_{j,\h}(x)-\eta_{j,3}(x)
    =
    \frac1{A_H}\int_\R
    \bigl(c_{j,\h}(x,t)-\eta_{j,3}(x)\bigr)(H'(t))^2\,dt.
\]
The same tail estimate shows that this difference is
\(O_m(\eps^m)\) in \(C^{0,\gamma}(I_j)\) for every \(m>0\). Multiplication
by \(\eps h_j''\), using
\(\lVert h_j''\rVert_{C^{0,\gamma}(I_j)}\leq C\eps^\beta\), proves the second term
in \eqref{eq: projected-approximation-error-size} and completes the proof.
\end{proof}

\subsection{Dependence of the approximation error on $\h$}

We now compare the same terms for two admissible shifts. The boundary terms
must be placed in the common unrotated coordinates of
\eqref{eq: boundary-error-pullback}; subtracting their coordinate
representatives before this pullback would also compare two different model
half-lines.

\begin{lemma}[Dependence of the approximation error on the shift]
\label{lem: approximation-error-dependence}
Let \(\h^1,\h^2\in\mathcal B(K\eps^\beta)\), and set
\[
    M
    \coloneqq
    \lVert\h^1-\h^2\rVert_{C^{2,\gamma}(\boldsymbol I)}.
\]
There are constants \(C,c>0\), uniform in \(\h^1\) and \(\h^2\), such that
\begin{align}
    \lVert\mathcal E^{\mathrm{out}}_{j,\h^1}
    -\mathcal E^{\mathrm{out}}_{j,\h^2}\rVert_
    {C^{0,\gamma}_{I_j,\eps,\lambda}
    (\{0<\chi_{j,5}<1\})}
    &\leq
    C\exp\left(-\frac{c}{\eps^{1-\alpha}}\right)M,
    \label{eq: outer-error-dependence}\\
    \lVert\widetilde{\mathcal E}^{\mathrm{tan}}_{j,\h^1}
    -\widetilde{\mathcal E}^{\mathrm{tan}}_{j,\h^2}\rVert_
    {\mathcal C^{0,\gamma}_{\eps,\lambda}(I_j\times\R)}
    &\leq C\eps M,
    \label{eq: tangential-error-dependence}\\
    \lVert\widetilde{\mathcal E}^{\mathrm{ort}}_{j,\h^1}
    -\widetilde{\mathcal E}^{\mathrm{ort}}_{j,\h^2}\rVert_
    {\mathcal C^{0,\gamma}_{\eps,\lambda}(I_j\times\R)}
    &\leq C\eps^\beta M,
    \label{eq: orthogonal-error-dependence}\\
    \lVert\widehat{\chi_{j,5}
    \mathcal E^{\mathrm{trans}}_{j,p,\h^1}}_{p,\h^1}
    -\widehat{\chi_{j,5}
    \mathcal E^{\mathrm{trans}}_{j,p,\h^2}}_{p,\h^2}\rVert_
    {C^{0,\gamma}_{\ell_+,\lambda}(\R^2_*,\L)}
    &\leq C\eps^{1+\beta-\alpha}M,
    \label{eq: transition-error-dependence}\\
    \lVert\widehat{\chi_{j,5}
    \mathcal E^{\mathrm{pot}}_{j,p,\h^1}}_{p,\h^1}
    -\widehat{\chi_{j,5}
    \mathcal E^{\mathrm{pot}}_{j,p,\h^2}}_{p,\h^2}\rVert_
    {C^{0,\gamma}_{\ell_+,\lambda}(\R^2_*,\L)}
    &\leq C\eps^{3\beta}M,
    \label{eq: potential-error-dependence}\\
    \lVert\chi_{\Ical,2}
    (S(\omega_{\h^1})-S(\omega_{\h^2}))\rVert_
    {C^{0,\gamma}_{\Gamma,\eps,3\lambda/4}
    (\R^2_{\p},\L_{\p})}
    &\leq
    C\exp\left(-\frac{c}{\eps^{1-\alpha}}\right)M,
    \label{eq: outer-approximation-error-dependence}\\
        \lVert\widehat{\mathcal E}^{\mathrm{rem}}_{p,\h^1}
        -\widehat{\mathcal E}^{\mathrm{rem}}_{p,\h^2}\rVert_
        {C^{0,\gamma}_{\ell_+,\lambda}(\R^2_*,\L)}
        +
        \lVert\bigl(
            \widehat{\mathcal E}^{\mathrm{rem}}_{p,\h^1}
            -\widehat{\mathcal E}^{\mathrm{rem}}_{p,\h^2}
        \bigr)^\top\rVert_{L^\infty(\R^+)}
    &\leq C\eps^\beta M,
    \label{eq: boundary-approximation-error-dependence}\\
    A_{g_\tau}
    \left(
        \widehat{\mathcal E}^{\mathrm{rem}}_{p,\h^1}
        -\widehat{\mathcal E}^{\mathrm{rem}}_{p,\h^2}
    \right)
    &\leq
    C\eps^{\beta-(1-\alpha)\tau}M,
    \label{eq: boundary-approximation-error-dependence-Ag}\\
    \lVert\widetilde{\mathcal E}^{\mathrm{rem}}_{j,\h^1}
    -\widetilde{\mathcal E}^{\mathrm{rem}}_{j,\h^2}\rVert_
    {\mathcal C^{0,\gamma}_{\eps,\lambda}(I_j\times\R)}
    &\leq C\eps^\beta M
    \leq C\eps M
    \label{eq: interior-approximation-error-dependence}
\end{align}
and
\begin{multline}\label{eq: projected-approximation-error-dependence}
    \left\lVert
        \mathfrak p_j\left(
            \widetilde{\mathcal E}^{\mathrm{rem}}_{j,\h^1}
            -\widetilde{\mathcal E}^{\mathrm{rem}}_{j,\h^2}
        \right)
    \right\rVert_{C^{0,\gamma}(I_j)}
    +
    \left\lVert
        \eps(a_{j,\h^1}-\eta_{j,3})(h_j^1)''
        -\eps(a_{j,\h^2}-\eta_{j,3})(h_j^2)''
    \right\rVert_{C^{0,\gamma}(I_j)}\\
    \leq C_m\eps^m M,
\end{multline}

where \(j\in\{1,\dots,N\}\), \(p\in\partial I_j\), \(m>0\) is arbitrary,
and \(C_m\) may depend on \(m\).
\end{lemma}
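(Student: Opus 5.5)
The plan is to differentiate in \(\h\) along the segment \(\h^s=(1-s)\h^2+s\h^1\), \(s\in[0,1]\), which stays in \(\mathcal B(K\eps^\beta)\) by convexity, and to bound the \(s\)-derivative of each term by the same computations as in \Cref{lem: approximation-error-size,lem: approximation-remainder-size}. Writing \(k=\h^1-\h^2\) and \(\dot\theta=\theta_{\h^1}(p)-\theta_{\h^2}(p)\), we have \(|\dot\theta|\le M\) by \eqref{eq: boundary-angle-bound}, and \(\lVert k\rVert_{C^{2,\gamma}}=M\). Each difference is then \(\int_0^1\partial_s(\cdot)\,ds\), and it suffices to estimate \(\partial_s\) uniformly in \(s\). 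The key structural point is that every term except the tangential one carries an explicit smallness factor (a power of \(h_j'\), of \(\theta\), or an exponential tail). Differentiating in \(s\) replaces one such factor by \(k\) or \(\dot\theta\), so the bound loses one factor of \(\eps^\beta\) and gains \(M\).

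Concretely, the terms are handled as follows.

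\begin{itemize}
\item For \eqref{eq: outer-error-dependence} and \eqref{eq: outer-approximation-error-dependence}: \(\partial_s\) of \(\mathcal P_{\h^s,j}-\Ical\) produces \(-\eps^{-1}k H'(t)\) and \(\dot\theta\,D_RU\)-type terms. These keep the exponential tail \(\eps^{-M'}e^{-\lambda_*d/\eps}\), so the proof of \eqref{eq: outer-error-size} repeats with an extra factor \(M\). Using the weaker weight \(3\lambda/4\) in \eqref{eq: outer-approximation-error-dependence} leaves room to absorb the polynomial loss.

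\item For the tangential term in stretched variables, \(\widetilde{\mathcal E}^{\mathrm{tan}}_{j,\h}=-\eps h_j''(x)H'(t)\) depends on \(\h\) only linearly through \(h_j''\). This gives \eqref{eq: tangential-error-dependence} directly.

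\item For the orthogonal term, \(\widetilde{\mathcal E}^{\mathrm{ort}}=b(x,t)(h_j')^2H''(t)\), where \(b\) is the pulled-back cutoff. Its \(s\)-derivative is \(2b\,h_j'k'H''+(\partial_sb)(h_j')^2H''\), with \(\partial_s b\) of size \(\eps^{-\alpha}kM\). Both pieces are \(O(\eps^\beta M)\) in \(\mathcal C^{0,\gamma}_{\eps,\lambda}\). This gives \eqref{eq: orthogonal-error-dependence} and the interior part \eqref{eq: interior-approximation-error-dependence} via \eqref{eq: exact-interior-remainder}.

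\item For \eqref{eq: projected-approximation-error-dependence}, differentiate the tail representations of \(\mathfrak p_j\) and \(a_{j,\h}-\eta_{j,3}\) from the proof of \eqref{eq: projected-approximation-error-size}. The integrands remain supported on \(|t|\ge c\eps^{\alpha-1}\), and \(\partial_s\) only costs polynomial factors, so the result is \(O_m(\eps^mM)\).
\end{itemize}

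The boundary terms \eqref{eq: transition-error-dependence}, \eqref{eq: potential-error-dependence}, \eqref{eq: boundary-approximation-error-dependence} and \eqref{eq: boundary-approximation-error-dependence-Ag} are handled in the fixed unrotated coordinates \((X,Z)\) of \eqref{eq: boundary-error-pullback}. There the model \(U_-(X,Z)\) is independent of \(\h\), and all \(\h\)-dependence sits in \(T_\theta\), in the localizer \(\kappa_{p,\h}\), and in the pulled-back cutoffs.

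\begin{itemize}
\item For the difference \(\widehat D=U_--H(T_\theta)\), we have \(\partial_\theta T_\theta=\theta\sin\theta\,X+\theta\cos\theta\,Z=O(\theta(|X|+|Z|))\). Since \(|X|+|Z|\lesssim L_\eps\), this gives \(\lVert\partial_s\widehat D\rVert_{C^{1,\gamma}_{\ell_+,\lambda}}\lesssim\eps^{\beta}L_\eps M\).

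\item The cutoffs \(\widehat\zeta_{p,3}\) and \(\kappa_{p,\h}\) move under rotation by \(\dot\theta\). Their \(s\)-derivatives are bounded by \(L_\eps\dot\theta\,|\nabla\widehat\zeta|\lesssim M\) times the profile sizes already bounded in \eqref{eq: boundary-profile-difference-for-remainder}.

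\item Inserting these into \(\widehat{\mathcal E}^{\mathrm{trans}}=(\Delta\widehat\zeta)\widehat D+2\nabla\widehat\zeta\cdot\nabla\widehat D\), together with \(|\nabla^k\widehat\zeta|\lesssim L_\eps^{-k}\), gives \(\eps^\beta L_\eps\cdot L_\eps^{-1}M\) times lower-order factors. This is consistent with the \(\eps^{1+\beta-\alpha}M\) bound. The potential term is quadratic in \(\widehat D\), giving \(\eps^{3\beta}M\). The orthogonal term contributes \(\eps^\beta M\). This yields \eqref{eq: boundary-approximation-error-dependence}.

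\item The \(A_{g_\tau}\) bound \eqref{eq: boundary-approximation-error-dependence-Ag} follows exactly as \eqref{eq: boundary-approximation-error-Ag}: the \(s\)-derivative is still supported in \(X\le CL_\eps\), so we multiply by \(L_\eps^\tau\).
\end{itemize}

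I expect the main obstacle to be the boundary bookkeeping. The transition region of \(\widehat\zeta_{p,3}\) and the support of \(\kappa_{p,\h}\) move with \(\theta\), so \(\partial_s\) of a cutoff composed with a rotation must be controlled in Hölder norm. The factor \(L_\eps\) from differentiating the rotation must be compensated by the \(L_\eps^{-1}\) gradient of the cutoff or by the smallness \(\theta^2L_\eps\) of \(T_\theta-Z\). I would first check that the worst term, \(\partial_\theta T_\theta\cdot H'\cdot\Delta\widehat\zeta\), indeed gives \(\eps^{\beta}L_\eps^{-1}M\le\eps^{\beta}M\). I would then verify the Hölder seminorms by the same mean-value argument used for \eqref{eq: estimate variable difference}.
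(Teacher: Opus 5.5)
Your route is the paper's: interpolate along \(\h^s\), compare term by term through the mean value theorem in the shift, and work in the fixed unrotated boundary coordinates where \(U_-\) does not move. The gap is in the one estimate that carries the sharp boundary exponents.

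You bound \(\partial_\theta T_\theta=\theta\sin\theta\,X+\theta\cos\theta\,Z\) by \(|\theta|(|X|+|Z|)\lesssim\eps^\beta L_\eps\), and conclude \(\lVert\partial_s\widehat D\rVert_{C^{1,\gamma}_{\ell_+,\lambda}}\lesssim\eps^\beta L_\eps M\). Put this into the product estimate for the gradient commutator \(2\nabla\widehat\zeta_{p,3}\cdot\nabla\partial_s\widehat D\), with \(|\nabla\widehat\zeta_{p,3}|\lesssim L_\eps^{-1}\). The result is only \(O(\eps^\beta M)\), and that is \emph{not} consistent with \eqref{eq: transition-error-dependence}: since \(\eps^{1+\beta-\alpha}=\eps^\beta L_\eps^{-1}\), you are off by the divergent factor \(L_\eps=\eps^{\alpha-1}\). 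The same loss makes \(2G\widehat D\,\partial_s\widehat D=O(\eps^{3\beta}L_\eps M)\) in the potential term, not the \(\eps^{3\beta}M\) of \eqref{eq: potential-error-dependence}. Your closing sanity check looks at the wrong term and the wrong threshold. The Laplacian commutator is harmless even with your bound, and the target is \(\eps^{1+\beta-\alpha}M\), not \(\eps^\beta M\).

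The missing observation is that neither piece of \(\partial_\theta T_\theta\) should be paid for with \(L_\eps\).
\begin{itemize}
\item The \(X\)-coefficient \(\theta\sin\theta\) is quadratic in \(\theta\). That piece is therefore \(O(\eps^{2\beta}L_\eps)=O(\eps^{2\beta+\alpha-1})\), which is \(O(\eps^\beta)\) because \(\beta+\alpha-1>0\).
\item The \(Z\)-piece multiplies \(H'(T_\theta)\). On the localized region, \(T_\theta=Z(1+O(\theta^2))+O(\theta^3L_\eps)\), and \(\lambda<\sqrt{\kappa_W}\). Hence \(|Z|\,H'(T_\theta)e^{\lambda|Z|}\) is uniformly bounded: the factor \(|Z|\) is absorbed by the gap between the decay of \(H'\) and the weight, not by \(|Z|\lesssim L_\eps\).
\end{itemize}
The same bookkeeping handles the first derivatives and the H\"older quotients. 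It gives \(\lVert\partial_s\widehat D\rVert_{C^{1,\gamma}_{\ell_+,\lambda}}\lesssim\eps^\beta M\), which is the second line of \eqref{eq: boundary-profile-lipschitz-bounds}. With it, \((L_\eps^{-1}+L_\eps^{-2})\eps^\beta M\lesssim\eps^{1+\beta-\alpha}M\) and \(\eps^{2\beta}\cdot\eps^\beta M\) give the two stated bounds.

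Your weaker bound would still suffice for \eqref{eq: boundary-approximation-error-dependence} and \eqref{eq: boundary-approximation-error-dependence-Ag}, which only need \(\eps^\beta M\) from the transition and potential pieces. The damage is therefore confined to \eqref{eq: transition-error-dependence} and \eqref{eq: potential-error-dependence}, but those are part of the statement you are proving.
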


\begin{proof}
Set
\[
    \h^s\coloneqq \h^2+s(\h^1-\h^2),
    \qquad 0\leq s\leq1.
\]
The ball \(\mathcal B(K\eps^\beta)\) is convex, so every \(\h^s\) is an
admissible shift. In particular,
\begin{equation}\label{eq: common-shift-difference-bounds}
\begin{aligned}
    \lVert h_j^i\rVert_{C^{2,\gamma}(I_j)}&\leq K\eps^\beta,
    &\lVert h_j^1-h_j^2\rVert_{C^{2,\gamma}(I_j)}&\leq M,\\
    |\theta_{\h^i}(p)|&\leq K\eps^\beta,
    &|\theta_{\h^1}(p)-\theta_{\h^2}(p)|&\leq M.
\end{aligned}
\end{equation}
We also write
\[
    L_\eps\coloneqq\eps^{\alpha-1}.
\]
Thus \(L_\eps\to\infty\), and for every fixed \(N,m>0\), after decreasing
the constant in the exponential if necessary,
\begin{equation}\label{eq: exponential-absorbs-powers}
    \eps^{-N}e^{-cL_\eps}
    \leq C_Ne^{-cL_\eps/2}
    \leq C_{N,m}\eps^m.
\end{equation}

\emph{Estimate \eqref{eq: outer-error-dependence}.}
Fix a trivializing ball \(B_\eps(q)\) contained in
\(\{0<\chi_{j,5}<1\}\), and set \(d(q)=\dist(q,I_j)\). Then
\(d(q)\simeq\eps^\alpha\). Choose
\[
    \lambda<\lambda_*<\min\{\sigma,\sqrt{\kappa_W}\}.
\]
In a fixed local trivialization, the mean-value formulas for a translated
heteroclinic and a rotated boundary profile are
\begin{align*}
    &H\left(\frac{z-h_j^1(x)}{\eps}\right)
    -H\left(\frac{z-h_j^2(x)}{\eps}\right)\\
    &\qquad={}
    -\frac{h_j^1(x)-h_j^2(x)}{\eps}
    \int_0^1
    H'\left(
        \frac{z-h_j^2(x)-s(h_j^1(x)-h_j^2(x))}{\eps}
    \right)\,ds,
    \\
    &U_{\theta_1}(y)-U_{\theta_2}(y)
    =
    (\theta_1-\theta_2)
    \int_0^1
    \left.
    \partial_\theta\bigl(U(R_\theta y)\bigr)
    \right|_{\theta=\theta_s}\,ds,
\end{align*}
where \(\theta_i=\theta_{\h^i}(p)\) and
\(\theta_s=\theta_2+s(\theta_1-\theta_2)\). The profile decay and
\eqref{eq: common-shift-difference-bounds} therefore give, for some fixed
\(N_0>0\),
\begin{equation}\label{eq: outer-profile-difference}
    \lVert\mathcal P_{\h^1,j}-\mathcal P_{\h^2,j}\rVert_
    {C^{2,\gamma}_\eps(B_\eps(q),\L_{\p})}
    \leq
    C\eps^{-N_0}
    e^{-\lambda_*d(q)/\eps}M.
\end{equation}
Here the fixed power of \(\eps^{-1}\) accounts for differentiation of the
translations, rotations, and tangential cutoffs.

Subtracting \eqref{eq: outer-residual-term} for the two shifts gives the
exact identity
\begin{align*}
    \mathcal E^{\mathrm{out}}_{j,\h^1}
    -\mathcal E^{\mathrm{out}}_{j,\h^2}
    ={}&
    \eps^2\Delta\chi_{j,5}
    (\mathcal P_{\h^1,j}-\mathcal P_{\h^2,j})\\
    &+2\eps^2\nabla\chi_{j,5}\mathbin{\cdot}
    \nabla(\mathcal P_{\h^1,j}-\mathcal P_{\h^2,j})\\
    &-\Bigl[
        \mathfrak I_W(\Ical,\mathcal P_{\h^1,j};\chi_{j,5})
        -\mathfrak I_W(\Ical,\mathcal P_{\h^2,j};\chi_{j,5})
    \Bigr].
\end{align*}
The cutoff derivative bounds, the smooth dependence of
\(\mathfrak I_W\) on its second profile, and
\eqref{eq: outer-profile-difference} imply, for some fixed \(N>0\),
\[
    e^{\lambda d(q)/\eps}
    \lVert\mathcal E^{\mathrm{out}}_{j,\h^1}
    -\mathcal E^{\mathrm{out}}_{j,\h^2}\rVert_
    {C^{0,\gamma}_\eps(B_\eps(q),\L_{\p})}
    \leq
    C\eps^{-N}
    e^{-(\lambda_*-\lambda)d(q)/\eps}M.
\]
Since \(d(q)\simeq\eps^\alpha\),
\eqref{eq: exponential-absorbs-powers} proves
\eqref{eq: outer-error-dependence}.

\emph{Estimate \eqref{eq: tangential-error-dependence}.}
The shifted interior pullback fixes the argument of the heteroclinic, and
hence
\[
    \widetilde{\mathcal E}^{\mathrm{tan}}_{j,\h^i}(x,t)
    =-\eps(h_j^i)''(x)H'(t).
\]
Consequently,
\[
    \widetilde{\mathcal E}^{\mathrm{tan}}_{j,\h^1}
    -\widetilde{\mathcal E}^{\mathrm{tan}}_{j,\h^2}
    =-\eps\bigl((h_j^1)''-(h_j^2)''\bigr)H'(t).
\]
If \(a=(h_j^1)''-(h_j^2)''\), then
\[
    \lVert a\rVert_{L^\infty(I_j)}+[a]_{\gamma,d_\eps}
    \leq
    \lVert a\rVert_{C^{0,\gamma}(I_j)}
    \leq M.
\]
Since \(\cosh(\lambda t)H'(t)\) has a fixed
\(C^{0,\gamma}(\R)\)-norm, the weighted product estimate gives
\eqref{eq: tangential-error-dependence}.

\emph{Estimate \eqref{eq: orthogonal-error-dependence}.}
Set
\[
    \widetilde\eta_i(x,t)
    \coloneqq
    \eta_{j,3}\bigl(X_j(x,h_j^i(x)+\eps t)\bigr),
    \qquad
    q_i(x)\coloneqq\bigl((h_j^i)'(x)\bigr)^2.
\]
Then the exact pulled-back formula is
\[
    \widetilde{\mathcal E}^{\mathrm{ort}}_{j,\h^i}
    =\widetilde\eta_iq_iH''(t),
\]
and therefore
\begin{equation}\label{eq: orthogonal-error-exact-difference}
\begin{split}
    \widetilde{\mathcal E}^{\mathrm{ort}}_{j,\h^1}
    -\widetilde{\mathcal E}^{\mathrm{ort}}_{j,\h^2}
    ={}&
    (\widetilde\eta_1-\widetilde\eta_2)q_1H''(t)\\
    &+\widetilde\eta_2(q_1-q_2)H''(t).
\end{split}
\end{equation}
The fixed-profile cutoff construction and the mean value theorem in the
normal variable give
\begin{equation}\label{eq: shifted-eta-difference}
\begin{split}
    \lVert\widetilde\eta_i\rVert_
    {\mathcal C^{0,\gamma}_\eps(I_j\times\R)}&\leq C,\\
    \lVert\widetilde\eta_1-\widetilde\eta_2\rVert_
    {\mathcal C^{0,\gamma}_\eps(I_j\times\R)}
    &\leq C\eps^{-\alpha}M.
\end{split}
\end{equation}
For the second estimate, the supremum bound costs \(\eps^{-\alpha}M\).
The corresponding stretched Lipschitz bounds cost at most
\(C\eps^{1-2\alpha}M\), which is harmless because \(\alpha<1/2\), and
they imply the stated H\"older bound. Moreover,
\begin{equation}\label{eq: squared-slope-difference}
\begin{split}
    \lVert q_i\rVert_{\mathcal C^{0,\gamma}_\eps(I_j\times\R)}
    &\leq C\eps^{2\beta},\\
    \lVert q_1-q_2\rVert_{\mathcal C^{0,\gamma}_\eps(I_j\times\R)}
    &\leq C\eps^\beta M,
\end{split}
\end{equation}
where the second line follows by factoring
\(q_1-q_2=((h_j^1)'+(h_j^2)')((h_j^1)'-(h_j^2)')\).
Applying \eqref{eq: shifted-eta-difference}--
\eqref{eq: squared-slope-difference} to
\eqref{eq: orthogonal-error-exact-difference} yields
\[
    \lVert\widetilde{\mathcal E}^{\mathrm{ort}}_{j,\h^1}
    -\widetilde{\mathcal E}^{\mathrm{ort}}_{j,\h^2}\rVert_
    {\mathcal C^{0,\gamma}_{\eps,\lambda}(I_j\times\R)}
    \leq
    C\bigl(\eps^{2\beta-\alpha}+\eps^\beta\bigr)M
    \leq C\eps^\beta M.
\]
This proves \eqref{eq: orthogonal-error-dependence}.

\emph{Estimate \eqref{eq: transition-error-dependence}.}
Fix \(p\in\partial I_j\), and abbreviate
\[
    \theta_i\coloneqq\theta_{\h^i}(p),
    \qquad
    \widehat\zeta_i
    \coloneqq\widehat{(\zeta_{p,3})}_{p,\h^i},
    \qquad
    \widehat\chi_i
    \coloneqq\widehat{(\chi_{j,5})}_{p,\h^i}.
\]
Set
\begin{equation}\label{eq: localized-boundary-transition-domain}
\begin{split}
    \widehat\Omega_i^{\mathrm{tr}}
    &\coloneqq
    \spt\widehat\chi_i
    \cap\overline{\{0<\widehat\zeta_i<1\}},\\
    \widehat\Omega_{12}^{\mathrm{tr}}
    &\coloneqq
    \widehat\Omega_1^{\mathrm{tr}}
    \cup\widehat\Omega_2^{\mathrm{tr}}.
\end{split}
\end{equation}
On this union, the nesting gives \(\chi_{j,10}=1\), and the cutoffs in the
definitions of the two model sections are inactive. Moreover,
\(cL_\eps\leq X\leq CL_\eps\).
Let \(D_i\) be the boundary representative of
\(\mathcal U_{\h^i,p}-\mathcal H_{\h^i,j}\) in the coordinates associated
with \(\h^i\). The affine boundary-collar condition gives, throughout
\(\widehat\Omega_{12}^{\mathrm{tr}}\),
\[
    D_i(X,Z)=U_-(X,Z)-H(T_i(X,Z)),
\]
where
\begin{equation}\label{eq: common-coordinate-phase}
    T_i(X,Z)
    =
    (\sin\theta_i-\theta_i\cos\theta_i)X
    +(\cos\theta_i+\theta_i\sin\theta_i)Z.
\end{equation}
The identities
\[
    T_\theta-Z=O(\theta^3X)+O(\theta^2Z),
    \qquad
    \partial_\theta T_\theta
    =\theta\sin\theta X+\theta\cos\theta Z,
\]
together with the decay of \(U_--H(Z)\) as \(X\to\infty\), give
\begin{equation}\label{eq: boundary-profile-lipschitz-bounds}
\begin{split}
    \lVert D_i\rVert_
    {C^{1,\gamma}_{\ell_+,\lambda}
    (\widehat\Omega_{12}^{\mathrm{tr}},\L)}
    &\leq C\eps^{2\beta},\\
    \lVert D_1-D_2\rVert_
    {C^{1,\gamma}_{\ell_+,\lambda}
    (\widehat\Omega_{12}^{\mathrm{tr}},\L)}
    &\leq C\eps^\beta M.
\end{split}
\end{equation}
In the second line, the factor \(|Z|\) is absorbed by heteroclinic decay, while
\(|X|\leq CL_\eps\) and \(\beta+\alpha-1>0\) control the tangential term.

The pulled-back fixed-profile cutoffs satisfy, for \(k=0,1,2\),
\begin{equation}\label{eq: boundary-cutoff-lipschitz-bounds}
\begin{split}
    \lVert\nabla^k\widehat\zeta_i\rVert_{C^{0,\gamma}(\R^2)}
    &\leq CL_\eps^{-k},\\
    \lVert\nabla^k(\widehat\zeta_1-\widehat\zeta_2)\rVert_
    {C^{0,\gamma}(\R^2)}
    &\leq CL_\eps^{-k}M.
\end{split}
\end{equation}
On \(\widehat\Omega_{12}^{\mathrm{tr}}\), set
\[
    \mathcal C_i
    \coloneqq
    (\Delta\widehat\zeta_i)D_i
    +2\nabla\widehat\zeta_i\mathbin{\cdot}\nabla D_i.
\]
This is precisely the boundary representative of
\(\mathcal E^{\mathrm{trans}}_{j,p,\h^i}\) wherever
\(\widehat\chi_i\neq0\). Subtracting gives the exact identity
\begin{align*}
    \mathcal C_1-\mathcal C_2
    ={}&
    (\Delta\widehat\zeta_1)(D_1-D_2)
    +2\nabla\widehat\zeta_1\mathbin{\cdot}\nabla(D_1-D_2)\\
    &+\Delta(\widehat\zeta_1-\widehat\zeta_2)D_2
    +2\nabla(\widehat\zeta_1-\widehat\zeta_2)
    \mathbin{\cdot}\nabla D_2.
\end{align*}
By \eqref{eq: boundary-profile-lipschitz-bounds} and
\eqref{eq: boundary-cutoff-lipschitz-bounds},
\begin{align}
    \lVert\mathcal C_1-\mathcal C_2\rVert_
    {C^{0,\gamma}_{\ell_+,\lambda}
    (\widehat\Omega_{12}^{\mathrm{tr}},\L)}
    &\leq
    C\left[
        (L_\eps^{-2}+L_\eps^{-1})\eps^\beta
        +(L_\eps^{-2}+L_\eps^{-1})\eps^{2\beta}
    \right]M
    \leq C\eps^{1+\beta-\alpha}M,
    \label{eq: boundary-commutator-difference-bound}\\
    \lVert\mathcal C_i\rVert_
    {C^{0,\gamma}_{\ell_+,\lambda}
    (\widehat\Omega_{12}^{\mathrm{tr}},\L)}
    &\leq
    C(L_\eps^{-2}+L_\eps^{-1})\eps^{2\beta}
    \leq CL_\eps^{-1}\eps^{2\beta}.
    \label{eq: boundary-commutator-fixed-bound}
\end{align}
Fixed-profile cutoff calculus also gives
\[
    \lVert\widehat\chi_i\rVert_{C^{0,\gamma}(\widehat\Omega_{12}^{\mathrm{tr}})}
    \leq C,
    \qquad
    \lVert\widehat\chi_1-\widehat\chi_2\rVert_{
        C^{0,\gamma}(\widehat\Omega_{12}^{\mathrm{tr}})}
    \leq CM.
\]
The localized representatives vanish outside
\(\widehat\Omega_{12}^{\mathrm{tr}}\), and on this union their difference is
\begin{equation}\label{eq: localized-transition-exact-difference}
    \widehat\chi_1\mathcal C_1-\widehat\chi_2\mathcal C_2
    =
    \widehat\chi_1(\mathcal C_1-\mathcal C_2)
    +(\widehat\chi_1-\widehat\chi_2)\mathcal C_2.
\end{equation}
The product estimate, \eqref{eq: boundary-commutator-difference-bound}, and
\eqref{eq: boundary-commutator-fixed-bound} prove
\eqref{eq: transition-error-dependence} on the full boundary model.

\emph{Estimate \eqref{eq: potential-error-dependence}.}
By the quadratic identity \eqref{eq: interpolation-defect-quadratic}, in the
same boundary coordinates set
\[
    \mathcal V_i\coloneqq-G_iD_i^2,
\]
where \(G_i\) is a smooth bounded function of the two profiles and of
\(\widehat\zeta_i\). On \(\widehat\Omega_{12}^{\mathrm{tr}}\), the
fixed-profile cutoff estimates give
\begin{equation}\label{eq: boundary-potential-coefficient-difference}
\begin{split}
    \lVert G_i\rVert_{C^{0,\gamma}(\widehat\Omega_{12}^{\mathrm{tr}})}
    &\leq C,\\
    \lVert G_1-G_2\rVert_{C^{0,\gamma}(\widehat\Omega_{12}^{\mathrm{tr}})}
    &\leq CM.
\end{split}
\end{equation}
The exact difference is
\[
    \mathcal V_1-\mathcal V_2
    =-G_1(D_1^2-D_2^2)-(G_1-G_2)D_2^2.
\]
Since \(D_1^2-D_2^2=(D_1+D_2)(D_1-D_2)\),
\eqref{eq: boundary-profile-lipschitz-bounds} and
\eqref{eq: boundary-potential-coefficient-difference} give
\begin{align}
    \lVert\mathcal V_1-\mathcal V_2\rVert_
    {C^{0,\gamma}_{\ell_+,\lambda}
    (\widehat\Omega_{12}^{\mathrm{tr}},\L)}
    &\leq C\eps^{3\beta}M,
    \label{eq: boundary-potential-bare-difference}\\
    \lVert\mathcal V_i\rVert_
    {C^{0,\gamma}_{\ell_+,\lambda}
    (\widehat\Omega_{12}^{\mathrm{tr}},\L)}
    &\leq C\eps^{4\beta}.
    \label{eq: boundary-potential-bare-size}
\end{align}
The localized representatives vanish outside
\(\widehat\Omega_{12}^{\mathrm{tr}}\), and their exact difference is
\begin{equation}\label{eq: localized-potential-exact-difference}
    \widehat\chi_1\mathcal V_1-\widehat\chi_2\mathcal V_2
    =
    \widehat\chi_1(\mathcal V_1-\mathcal V_2)
    +(\widehat\chi_1-\widehat\chi_2)\mathcal V_2.
\end{equation}
The product estimate, \eqref{eq: boundary-potential-bare-difference}, and
\eqref{eq: boundary-potential-bare-size} now give
\(C(\eps^{3\beta}+\eps^{4\beta})M\).
This proves \eqref{eq: potential-error-dependence}.

\emph{Estimate \eqref{eq: outer-approximation-error-dependence}.}
This global estimate must also include the exterior tails of the terms inside
the segment brackets in \eqref{eq: residual-identity}. Choose
\[
    \frac{3\lambda}{4}<\lambda_*<
    \min\{\sigma,\sqrt{\kappa_W}\}.
\]
The contribution from every singular ball is zero because
\(\chi_{\Ical,2}\) vanishes near \(\Gamma\). If a trivializing ball
\(B_\eps(q)\) meets \(\spt\chi_{\Ical,2}\), set
\(d(q)=\dist(q,\Gamma)\). Either the ansatz is independent of the shift on
that ball, or \(d(q)\geq c\eps^\alpha\). The same mean-value formulas used in
\eqref{eq: outer-profile-difference}, now applied to every profile entering
the ansatz, give, for a fixed \(N>0\),
\[
    \lVert\omega_{\h^1}-\omega_{\h^2}\rVert_
    {C^{2,\gamma}_\eps(B_\eps(q),\L_{\p})}
    \leq
    C\eps^{-N}e^{-\lambda_*d(q)/\eps}M.
\]
Because \(W'\) is smooth on the bounded range of the ansatz, the same bound,
with a possibly larger \(N\), holds for
\(S(\omega_{\h^1})-S(\omega_{\h^2})\) in
\(C^{0,\gamma}_\eps(B_\eps(q),\L_{\p})\). Multiplication by
\(\chi_{\Ical,2}\) costs only another fixed power of \(\eps^{-1}\).
After inserting the global weight, the resulting local bound is
\[
    C\eps^{-N}
    e^{-(\lambda_*-3\lambda/4)d(q)/\eps}M
    \leq
    C\exp\left(-\frac{c}{\eps^{1-\alpha}}\right)M.
\]
Taking the supremum over the trivializing balls proves
\eqref{eq: outer-approximation-error-dependence}.

\emph{Estimate \eqref{eq: boundary-approximation-error-dependence}.}
Set
\[
    \kappa_i
    \coloneqq
    \widehat{(\chi_{j,4}\zeta_{p,5})}_{p,\h^i},
    \qquad
    F_i
    \coloneqq
    \widehat{\mathcal E}^{\mathrm{rem}}_{p,\h^i}.
\]
Define the common boundary localization region by
\[
    \widehat\Omega_{12}^{\mathrm{bd}}
    \coloneqq
    \spt\kappa_1\cup\spt\kappa_2.
\]
Fixed-profile cutoff calculus gives
\begin{equation}\label{eq: boundary-localizer-difference}
    \lVert\kappa_i\rVert_{C^{0,\gamma}(\R^2)}\leq C,
    \qquad
    \lVert\kappa_1-\kappa_2\rVert_{C^{0,\gamma}(\R^2)}\leq CM,
\end{equation}
and
\[
    \widehat\Omega_{12}^{\mathrm{bd}}
    \subset\{|X|+|Z|\leq CL_\eps\}.
\]
On this domain, write
\[
    Q_i
    \coloneqq
    \widehat{\mathcal E}^{\mathrm{ort}}_{j,\h^i}
    +\widehat{\mathcal E}^{\mathrm{trans}}_{j,p,\h^i}
    +\widehat{\mathcal E}^{\mathrm{pot}}_{j,p,\h^i}.
\]
The exact localized decomposition
\eqref{eq: localized-boundary-remainder-decomposition} becomes
\begin{equation}\label{eq: boundary-remainder-difference-decomposition}
    F_i=\kappa_iQ_i.
\end{equation}
The raw estimates in the proof of
\Cref{lem: approximation-remainder-size} give
\begin{equation}\label{eq: common-boundary-remainder-raw-bound}
    \lVert Q_i\rVert_
    {C^{0,\gamma}_{\ell_+,\lambda}
    (\widehat\Omega_{12}^{\mathrm{bd}},\L)}
    \leq C\eps^{2\beta}.
\end{equation}
Indeed, each pullback of \(\chi_{j,5}\) equals one on its own localization
region. On the part of the other localization region where this is not so,
the distance to the segment is comparable to \(\eps^\alpha\), and the same
profile-tail estimate used above is exponentially small.

It remains to compare the orthogonal contributions to \(Q_1\) and \(Q_2\).
Set
\[
    \widehat\Omega_{12}^{\mathrm{ort}}
    \coloneqq
    \widehat\Omega_{12}^{\mathrm{bd}}
    \cap
    \bigl(
        \spt\widehat{(\eta_{j,3})}_{p,\h^1}
        \cup
        \spt\widehat{(\eta_{j,3})}_{p,\h^2}
    \bigr).
\]
The separation between the boundary point and the support of
\(\eta_{j,3}\), together with \eqref{eq: common-shift-difference-bounds},
ensures that both inward coordinates below belong to \(I_j\) on this
domain. For \((X,Z)\in\widehat\Omega_{12}^{\mathrm{ort}}\), set
\begin{align*}
    x_i&=\eps(X\cos\theta_i-Z\sin\theta_i),\\
    t_i&=X\sin\theta_i+Z\cos\theta_i
    -\eps^{-1}h_j^i(p+x_i e_p),\\
    q_i&=\bigl((h_j^i)'(p+x_i e_p)\bigr)^2,
    \qquad
    \widehat\eta_i=\widehat{(\eta_{j,3})}_{p,\h^i}.
\end{align*}
Then
\[
    \widehat{\mathcal E}^{\mathrm{ort}}_{j,\h^i}
    =\widehat\eta_iq_iH''(t_i).
\]
Outside \(\widehat\Omega_{12}^{\mathrm{ort}}\), both orthogonal terms
vanish. The phase satisfies
\begin{equation}\label{eq: boundary-orthogonal-phase-difference}
\begin{split}
    \lVert t_i-Z\rVert_
    {C^{0,\gamma}(\widehat\Omega_{12}^{\mathrm{ort}})}
    &\leq
    C\bigl(\eps^{\beta-1}+\eps^\beta L_\eps\bigr)=o(1),\\
    \lVert t_1-t_2\rVert_
    {C^{0,\gamma}(\widehat\Omega_{12}^{\mathrm{ort}})}
    &\leq CL_\eps M.
\end{split}
\end{equation}
The first line makes the weights centered at \(t_i=0\) uniformly equivalent
to the weight centered at \(Z=0\). The cutoff bounds,
\eqref{eq: common-shift-difference-bounds}, and the mean value theorem now
give
\begin{align}
    \lVert\widehat\eta_i\rVert_
    {C^{0,\gamma}(\widehat\Omega_{12}^{\mathrm{ort}})}
    &\leq C,
    &
    \lVert\widehat\eta_1-\widehat\eta_2\rVert_
    {C^{0,\gamma}(\widehat\Omega_{12}^{\mathrm{ort}})}
    &\leq CM,
    \label{eq: boundary-eta-difference}\\
    \lVert q_i\rVert_
    {C^{0,\gamma}(\widehat\Omega_{12}^{\mathrm{ort}})}
    &\leq C\eps^{2\beta},
    &
    \lVert q_1-q_2\rVert_
    {C^{0,\gamma}(\widehat\Omega_{12}^{\mathrm{ort}})}
    &\leq C\eps^\beta M,
    \label{eq: boundary-squared-slope-difference}\\
    \lVert H''(t_1)-H''(t_2)\rVert_
    {C^{0,\gamma}_{\ell_+,\lambda}
    (\widehat\Omega_{12}^{\mathrm{ort}},\L)}
    &\leq CL_\eps M.
    \label{eq: boundary-Hpp-difference}
\end{align}
The exact difference of the two orthogonal terms is
\begin{align*}
    &\widehat\eta_1q_1H''(t_1)
    -\widehat\eta_2q_2H''(t_2)\\
    &\qquad={}
    (\widehat\eta_1-\widehat\eta_2)q_1H''(t_1)
    +\widehat\eta_2(q_1-q_2)H''(t_1)\\
    &\qquad\quad
    +\widehat\eta_2q_2\bigl(H''(t_1)-H''(t_2)\bigr).
\end{align*}
It follows from \eqref{eq: boundary-eta-difference}--
\eqref{eq: boundary-Hpp-difference} that
\begin{align*}
    \lVert\widehat{\mathcal E}^{\mathrm{ort}}_{j,\h^1}
    -\widehat{\mathcal E}^{\mathrm{ort}}_{j,\h^2}\rVert_
    {C^{0,\gamma}_{\ell_+,\lambda}
    (\widehat\Omega_{12}^{\mathrm{ort}},\L)}
    &\leq
    C\bigl(\eps^\beta+\eps^{2\beta}
    +\eps^{2\beta}L_\eps\bigr)M\\
    &\leq C\eps^\beta M,
\end{align*}
because \(\beta+\alpha-1>0\). Together with
\eqref{eq: transition-error-dependence} and
\eqref{eq: potential-error-dependence}, and with the exponentially small
profile-tail part described after
\eqref{eq: common-boundary-remainder-raw-bound}, this gives
\begin{equation}\label{eq: common-boundary-Q-difference}
    \lVert Q_1-Q_2\rVert_
    {C^{0,\gamma}_{\ell_+,\lambda}
    (\widehat\Omega_{12}^{\mathrm{bd}},\L)}
    \leq C\eps^\beta M.
\end{equation}
Finally,
\[
    F_1-F_2
    =\kappa_1(Q_1-Q_2)+(\kappa_1-\kappa_2)Q_2.
\]
Equations \eqref{eq: boundary-localizer-difference}--
\eqref{eq: common-boundary-Q-difference} prove
\[
    \lVert F_1-F_2\rVert_
    {C^{0,\gamma}_{\ell_+,\lambda}(\R^2_*,\L)}
    \leq C\eps^\beta M.
\]
For \(X>0\), the weight relative to \(\ell_+\) is equivalent to
\(e^{\lambda|Z|}\). Hence
\[
    |(F_1-F_2)^\top(X)|
    \leq
    C\eps^\beta M
    \int_\R e^{-\lambda|Z|}|H'(Z)|\,dZ
    \leq C\eps^\beta M.
\]
This proves both terms in
\eqref{eq: boundary-approximation-error-dependence}.

\emph{Estimate \eqref{eq: boundary-approximation-error-dependence-Ag}.}
The support of the boundary localizers gives
\[
    \spt(F_1-F_2)^\top\subset\{0<X\leq CL_\eps\}.
\]
Since \(g_\tau''(X)=(\tau-1)(\tau-2)(1+X)^{-\tau}\), the definition of
\(A_{g_\tau}\) and the projection estimate just proved yield
\begin{align*}
    A_{g_\tau}(F_1-F_2)
    &\leq
    C\sup_{0<X\leq CL_\eps}
    (1+X)^\tau|(F_1-F_2)^\top(X)|\\
    &\leq C\eps^\beta L_\eps^\tau M
    =C\eps^{\beta-(1-\alpha)\tau}M.
\end{align*}
This proves \eqref{eq: boundary-approximation-error-dependence-Ag}.

\emph{Estimate \eqref{eq: interior-approximation-error-dependence}.}
Set
\[
    b_i(x,t)
    \coloneqq
    (\chi_{j,4}\eta_{j,4})
    \bigl(X_j(x,h_j^i(x)+\eps t)\bigr),
    \qquad
    q_i(x)\coloneqq\bigl((h_j^i)'(x)\bigr)^2.
\]
The exact identity \eqref{eq: exact-interior-remainder} reads
\[
    \widetilde{\mathcal E}^{\mathrm{rem}}_{j,\h^i}
    =b_iq_iH''(t).
\]
In addition to \eqref{eq: squared-slope-difference}, the cutoff construction
and the normal mean value theorem give
\[
    \lVert b_i\rVert_{\mathcal C^{0,\gamma}_\eps(I_j\times\R)}\leq C,
    \qquad
    \lVert b_1-b_2\rVert_{\mathcal C^{0,\gamma}_\eps(I_j\times\R)}
    \leq C\eps^{-\alpha}M.
\]
Therefore
\[
    b_1q_1H''-b_2q_2H''
    =b_1(q_1-q_2)H''+(b_1-b_2)q_2H'',
\]
and the weighted product estimate gives
\[
    \lVert\widetilde{\mathcal E}^{\mathrm{rem}}_{j,\h^1}
    -\widetilde{\mathcal E}^{\mathrm{rem}}_{j,\h^2}\rVert_
    {\mathcal C^{0,\gamma}_{\eps,\lambda}(I_j\times\R)}
    \leq
    C\bigl(\eps^\beta+\eps^{2\beta-\alpha}\bigr)M
    \leq C\eps^\beta M.
\]
Since \(\beta\geq2\) and \(0<\eps<1\), this is also bounded by
\(C\eps M\). This proves
\eqref{eq: interior-approximation-error-dependence}.

\emph{Estimate \eqref{eq: projected-approximation-error-dependence}.}
First set
\[
    b_i^0(x)\coloneqq b_i(x,0),
    \qquad
    d_i(x,t)\coloneqq b_i(x,t)-b_i^0(x).
\]
The normal-core argument in the proof of
\eqref{eq: projected-approximation-error-size} is uniform for the segment
joining \(\h^1\) to \(\h^2\). Thus, for some fixed \(c>0\),
\[
    d_i(\cdot,t)=0
    \qquad\text{when }|t|\leq cL_\eps.
\]
The cutoff bounds and the mean value theorem with respect to the shift give,
for some fixed \(N>0\),
\begin{align*}
    \lVert d_i(\cdot,t)\rVert_{C^{0,\gamma}(I_j)}
    &\leq C\eps^{-N},\\
    \lVert d_1(\cdot,t)-d_2(\cdot,t)\rVert_{C^{0,\gamma}(I_j)}
    &\leq C\eps^{-N}M
\end{align*}
for every \(t\in\R\). Define
\[
    J_i(x)
    \coloneqq
    \frac1{A_H}\int_\R d_i(x,t)H''(t)H'(t)\,dt.
\]
The exponential decay of \(H'H''\) then gives
\begin{equation}\label{eq: projected-remainder-tail-difference}
\begin{split}
    \lVert J_i\rVert_{C^{0,\gamma}(I_j)}
    &\leq C\eps^{-N}e^{-cL_\eps},\\
    \lVert J_1-J_2\rVert_{C^{0,\gamma}(I_j)}
    &\leq C\eps^{-N}e^{-cL_\eps}M.
\end{split}
\end{equation}
Since \(\int_\R H''H'=0\),
\[
    \mathfrak p_j
    (\widetilde{\mathcal E}^{\mathrm{rem}}_{j,\h^i})
    =q_iJ_i.
\]
In the ordinary interval norm, the same factorization used in
\eqref{eq: squared-slope-difference} gives
\begin{equation}\label{eq: ordinary-squared-slope-difference}
\begin{split}
    \lVert q_i\rVert_{C^{0,\gamma}(I_j)}
    &\leq C\eps^{2\beta},\\
    \lVert q_1-q_2\rVert_{C^{0,\gamma}(I_j)}
    &\leq C\eps^\beta M.
\end{split}
\end{equation}
Consequently,
\[
    \mathfrak p_j
    (\widetilde{\mathcal E}^{\mathrm{rem}}_{j,\h^1}
    -\widetilde{\mathcal E}^{\mathrm{rem}}_{j,\h^2})
    =(q_1-q_2)J_1+q_2(J_1-J_2).
\]
Combining \eqref{eq: ordinary-squared-slope-difference},
\eqref{eq: projected-remainder-tail-difference}, and
\eqref{eq: exponential-absorbs-powers} proves that the first norm in
\eqref{eq: projected-approximation-error-dependence} is bounded by
\(C_m\eps^mM\) for every \(m>0\).

For the second norm, set
\[
    c_i(x,t)
    \coloneqq
    (\chi_{j,4}\eta_{j,3})
    \bigl(X_j(x,h_j^i(x)+\eps t)\bigr),
    \qquad
    K_i\coloneqq a_{j,\h^i}-\eta_{j,3}.
\]
The definition of \(a_{j,\h^i}\) gives the exact formula
\[
    K_i(x)
    =
    \frac1{A_H}\int_\R
    \bigl(c_i(x,t)-\eta_{j,3}(x)\bigr)(H'(t))^2\,dt.
\]
On the same normal core, \(c_i(x,t)=\eta_{j,3}(x)\). The cutoff bounds,
the mean value theorem with respect to the shift, and the exponential decay
of \((H')^2\) therefore give, after increasing \(N\) if necessary,
\begin{equation}\label{eq: coefficient-tail-difference}
\begin{split}
    \lVert K_i\rVert_{C^{0,\gamma}(I_j)}
    &\leq C\eps^{-N}e^{-cL_\eps},\\
    \lVert K_1-K_2\rVert_{C^{0,\gamma}(I_j)}
    &\leq C\eps^{-N}e^{-cL_\eps}M.
\end{split}
\end{equation}
The exact algebraic decomposition is
\[
    \eps K_1(h_j^1)''-\eps K_2(h_j^2)''
    =
    \eps K_1\bigl((h_j^1)''-(h_j^2)''\bigr)
    +\eps(K_1-K_2)(h_j^2)''.
\]
Using \eqref{eq: common-shift-difference-bounds},
\eqref{eq: coefficient-tail-difference}, and
\eqref{eq: exponential-absorbs-powers}, the second norm in
\eqref{eq: projected-approximation-error-dependence} is also bounded by
\(C_m\eps^mM\). This completes the proof of
\eqref{eq: projected-approximation-error-dependence}.
\end{proof}

\subsection{Profile and potential comparisons}
\label{subs: profile-potential-comparisons}

The boundary equation compares the approximate solution with two different
models.  These comparisons take place on different regions and therefore have
different sizes.  To state them precisely, fix \(p\in\partial I_j\).
On the segment-side overlap \(x>0\), let \(H_{\h,j}\) be the uncut model
section represented by
\[
    H_{\h,j}(Y_p(x,z))
    =
    H\left(
        \frac{z-h_j(p+xe_p)}{\eps}
    \right).
\]
On the full boundary neighborhood of \(p\), let \(U_{\h,p}\) be represented
by
\[
    U_{\h,p}(Y_p(x,z))
    =
    \mathcal T_p
    \left(
        U_{\theta_{\h}(p)}\left(\frac{x}{\eps},\frac{z}{\eps}\right)
    \right).
\]
The first formula is read in the segment trivialization, which is compatible
with the chosen boundary trivialization on the overlap. Every localized
expression involving \(H_{\h,j}\) below is multiplied by \(\eta_{j,1}\) or
\(\eta_{j,3}\), and is extended by zero outside the segment-side overlap.

\begin{lemma}[Fixed-shift profile and potential comparisons]
\label{lem: profile-potential-comparisons}
There is a constant \(C>0\), uniform for
\(\h\in\mathcal B(K\eps^\beta)\), such that
\begin{align}
    \left\lVert
        \widehat{
            \chi_{j,4}\zeta_{p,5}
            (\omega_{\h}-U_{\h,p})
        }_{p,\h}
    \right\rVert_{L^\infty(\R^2_*,\L)}
    &\leq C\eps^{\beta-1+\alpha(2+\gamma)},
    \label{eq: U-profile-comparison-Linfty}\\
    \left\lVert
        \widehat{
            \chi_{j,4}\zeta_{p,5}
            [W''(\omega_{\h})-W''(U_{\h,p})]
        }_{p,\h}
    \right\rVert_{L^\infty(\R^2_*)}
    &\leq C\eps^{\beta-1+\alpha(2+\gamma)},
    \label{eq: U-potential-comparison-Linfty}\\
    \left\lVert
        \widehat{
            \chi_{j,4}\zeta_{p,5}
            (\omega_{\h}-U_{\h,p})
        }_{p,\h}
    \right\rVert_{C^{0,\gamma}_{\ell_+,\lambda}(\R^2_*,\L)}
    &\leq C\eps^{\beta-1+2\alpha},
    \label{eq: U-profile-comparison-Holder}\\
    \left\lVert
        \widehat{
            \chi_{j,4}\zeta_{p,5}
            [W''(\omega_{\h})-W''(U_{\h,p})]
        }_{p,\h}
    \right\rVert_{C^{0,\gamma}_{\ell_+,\lambda}(\R^2_*)}
    &\leq C\eps^{\beta-1+2\alpha}.
    \label{eq: U-potential-comparison-Holder}
\end{align}
On the smaller affine overlap, the comparison with the interior model is
stronger:
\begin{equation}\label{eq: full-overlap-profile-comparison}
    \lVert U_{\h,p}-H_{\h,j}\rVert_
    {C^{2,\gamma}_{I_j,\eps,\lambda}
    (\spt(\chi_{j,5}\eta_{j,1}\zeta_{p,3}),\L_{\p})}
    \leq C\eps^{2\beta}.
\end{equation}
Its localized profile and potential consequences are
\begin{align}
    \left\lVert
        \widehat{
            \chi_{j,4}\zeta_{p,5}\eta_{j,1}
            (\omega_{\h}-H_{\h,j})
        }_{p,\h}
    \right\rVert_{C^{0,\gamma}_{\ell_+,\lambda}(\R^2_*,\L)}
    &\leq C\eps^{2\beta},
    \label{eq: H-profile-comparison-size}\\
    \left\lVert
        \widehat{
            \chi_{j,4}\zeta_{p,5}\eta_{j,1}
            [W''(\omega_{\h})-W''(H_{\h,j})]
        }_{p,\h}
    \right\rVert_{C^{0,\gamma}_{\ell_+,\lambda}(\R^2_*)}
    &\leq C\eps^{2\beta}.
    \label{eq: H-potential-comparison-size}
\end{align}
\end{lemma}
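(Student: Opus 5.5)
The plan is to reduce every comparison to explicit profile differences, using the structure of the ansatz on the boundary localization. On \(\spt(\chi_{j,4}\zeta_{p,5})\) the nesting gives \(\chi_{j,5}=1\), \(\chi_{\Ical,5}=0\), \(\eta_{j,1}=\zeta_{p,8}=1\), and the other endpoint of \(I_j\) does not contribute. Hence \eqref{eq: interval ansatz} reads, in the segment trivialization,
\[
    \omega_{\h}-U_{\h,p}=\eta_{j,3}\bigl(H_{\h,j}-U_{\h,p}\bigr),
    \qquad
    \omega_{\h}-H_{\h,j}=\zeta_{p,3}\bigl(U_{\h,p}-H_{\h,j}\bigr).
\]
The potential bounds will then follow from the profile bounds. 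Since \(W''\) is smooth and even, on the bounded range of the profiles we have
\[
    W''(\omega)-W''(U)=(\omega-U)\int_0^1W'''\bigl(U+s(\omega-U)\bigr)\,ds .
\]
The coefficient has unweighted scaled \(C^{0,\gamma}\) norm \(O(1)\), because the profiles have bounded scaled Hölder norms and the pulled-back cutoffs live on scale \(L_\eps=\eps^{\alpha-1}\gg1\). The weight then rides on the factor \(\omega-U\). So \eqref{eq: U-potential-comparison-Linfty}, \eqref{eq: U-potential-comparison-Holder} and \eqref{eq: H-potential-comparison-size} reduce to \eqref{eq: U-profile-comparison-Linfty}, \eqref{eq: U-profile-comparison-Holder} and \eqref{eq: H-profile-comparison-size}.

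\textbf{Step 1: the \(U\)-comparison.} Pull back by \eqref{eq: boundary-error-pullback}. The difference to estimate is \(\widehat\eta_{j,3}\,\widehat D\), where
\[
    \widehat D=U_-(X,Z)-H\bigl(t_{\h}(X,Z)\bigr),
    \qquad
    t_{\h}=X\sin\theta+Z\cos\theta-\eps^{-1}h_j(p+xe_p).
\]
I split
\[
    \widehat D=\bigl(U_--H(Z)\bigr)+\bigl(H(Z)-H(T_\theta)\bigr)+\bigl(H(T_\theta)-H(t_{\h})\bigr).
\]
On \(\spt\widehat\eta_{j,3}\) one has \(X\ge cL_\eps\), so the first term is \(O(e^{-cL_\eps})\) with the weight included, by \Cref{cor: estim U-}. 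The second term is \(O(\eps^{2\beta})\) exactly as in \eqref{eq: boundary-profile-difference-for-remainder}. The third term carries the loss: it measures the non-affine part of the shift, namely
\[
    \eps^{-1}\bigl(h_j(p+xe_p)-\theta x\bigr).
\]
Here I use that \(h_j''\) is supported in \(\spt\eta_{j,4}\) and vanishes on the collar adjacent to \(p\). Taylor's formula from the edge \(x_0\) of that support, with \(|x-x_0|\lesssim\eps^\alpha\), gives a supremum bound and a Hölder bound:
\begin{itemize}
    \item Since \(h_j''(x_0)=0\), we get \(|h_j''(x)|\le [h_j'']_\gamma|x-x_0|^\gamma\lesssim\eps^{\beta+\alpha\gamma}\). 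Integrating twice gives \(|h_j-\theta x|\lesssim\eps^{\beta+\alpha(2+\gamma)}\). Dividing by \(\eps\) and multiplying by the weighted bound on \(H'\) yields \eqref{eq: U-profile-comparison-Linfty}.
    \item For the weighted Hölder norm I only use \(\|h_j''\|_\infty\lesssim\eps^\beta\), which gives \(|h_j-\theta x|\lesssim\eps^{\beta+2\alpha}\) and \(|\partial_X(\cdot)|\lesssim\eps^{\beta+\alpha}\). The Hölder seminorm at unit scale is then controlled by the \(C^1\) bound, giving \eqref{eq: U-profile-comparison-Holder}. This is why the \(\gamma\)-gain is lost.
\end{itemize}
The cutoff \(\widehat\eta_{j,3}\) costs at most \(L_\eps^{-\gamma}\le1\). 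The weight is handled as in \eqref{eq: boundary-orthogonal-phase-difference}: we have \(t_{\h}-Z=o(1)\), so the weights centered at \(t_{\h}=0\), \(T_\theta=0\) and \(Z=0\) are uniformly equivalent.

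\textbf{Step 2: the full-overlap estimate.} On \(\spt(\chi_{j,5}\eta_{j,1}\zeta_{p,3})\) the shift is affine, so the third term above vanishes identically and \(U_{\h,p}-H_{\h,j}=D\) as in the proof of \eqref{eq: transition-error-size}. The bound \(\lesssim e^{-c/\eps^{1-\alpha}}+\theta^2+|\theta|^3\eps^{\alpha-1}\lesssim\eps^{2\beta}\) must now be upgraded to \(C^{2,\gamma}_{I_j,\eps,\lambda}\) in isotropic \(\eps\)-balls. I differentiate \(H(s)-H(t)\) up to twice in the rescaled variables, where \(\eps\nabla\) becomes \(\nabla_{(X,Z)}\). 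By \eqref{eq: boundary-phase-gap} we have \(\nabla(s-t)=O(\theta^2)\), and each derivative of \(H\) keeps its exponential decay in the normal variable, so every derivative stays \(O(\eps^{2\beta})\) with the weight. The radial part \(U_--H(s)\) is handled by \Cref{cor: estim U-}.

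\textbf{Step 3: the \(H\)-comparisons.} Now \(\omega_{\h}-H_{\h,j}=\zeta_{p,3}\bigl(U_{\h,p}-H_{\h,j}\bigr)\). Where \(\zeta_{p,3}\neq0\) and \(\eta_{j,1}\neq0\), the shift is affine, because \(\spt\zeta_{p,3}\) stays inside the collar. So \eqref{eq: full-overlap-profile-comparison}, after pulling back and using the \(O(1)\) Hölder bound on the pulled-back cutoffs, gives \eqref{eq: H-profile-comparison-size}. The potential version \eqref{eq: H-potential-comparison-size} follows as in the opening paragraph.

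The main obstacle I expect is Step 1. The support bookkeeping must confirm that the region where \(h_j''\neq0\) meets \(\spt(\chi_{j,4}\zeta_{p,5})\) only within \(O(\eps^\alpha)\) of the collar edge \(x_0\), which is what makes the Taylor bounds valid. The two different powers in the \(L^\infty\) and Hölder statements must then be tracked carefully.
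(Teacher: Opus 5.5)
Your proposal is correct and follows essentially the same route as the paper: the same two localized identities coming from $\eta_{j,3}+\zeta_{p,3}=1$, and the same split of the interior phase into the affine part $t_\theta$ (controlled by $t_\theta-Z=O(\theta^2 Z)+O(\theta^3 X)$ and the decay of $U_- - H$) and the non-affine remainder $\eps^{-1}(h_j-\theta x)$. That remainder is bounded by Taylor expansion from the edge of $\spt h_j''$, where the $(x-a_{p,\eps})^{2+\gamma}$ versus $(x-a_{p,\eps})^2$ dichotomy gives the $L^\infty$ and H\"older exponents, and the potential bounds follow by smooth composition. One small correction: $\eta_{j,1}=1$ holds only on $\spt(\chi_{j,5}\eta_{j,3})$ and not on all of $\spt(\chi_{j,4}\zeta_{p,5})$, since it vanishes near $p$, but $\spt(\chi_{j,5}\eta_{j,3})$ is the only place your argument uses it.
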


\begin{proof}
On \(\spt(\chi_{j,4}\zeta_{p,5})\), cutoff nesting gives
\(\chi_{j,5}=1\), and the neighborhood associated with the other boundary
point of \(I_j\) is disjoint. Hence
\[
    \omega_{\h}
    =\eta_{j,3}H_{\h,j}+\zeta_{p,3}U_{\h,p},
    \qquad
    \eta_{j,3}+\zeta_{p,3}=1.
\]
It follows that
\begin{equation}\label{eq: two-local-profile-identities}
\begin{aligned}
    \chi_{j,4}\zeta_{p,5}(\omega_{\h}-U_{\h,p})
    &=
    \chi_{j,4}\zeta_{p,5}\eta_{j,3}
    (H_{\h,j}-U_{\h,p}),\\
    \chi_{j,4}\zeta_{p,5}\eta_{j,1}
    (\omega_{\h}-H_{\h,j})
    &=
    \chi_{j,4}\zeta_{p,5}\eta_{j,1}\zeta_{p,3}
    (U_{\h,p}-H_{\h,j}).
\end{aligned}
\end{equation}

Write \(\theta=\theta_{\h}(p)\). After the boundary pullback,
\(U_{\h,p}\) is represented by the fixed section \(U(X,Z)\). Set
\[
    t_\theta(X,Z)
    =
    (\cos\theta+\theta\sin\theta)Z
    +(\sin\theta-\theta\cos\theta)X.
\]

We first use the identity for \(\omega_{\h}-U_{\h,p}\). The corresponding
localized expression vanishes outside
\(\spt(\chi_{j,4}\zeta_{p,5}\eta_{j,3})\). On this support the inward
coordinate is positive and \(O(\eps^\alpha)\). Let
\(a_{p,\eps}\simeq\eps^\alpha\) denote the inner edge of that collar in the
inward coordinate \(x\), and set
\[
    r_{j,\h}(x)
    =
    h_j(p+xe_p)-\theta_{\h}(p)x.
\]
The function \(r_{j,\h}\) and its first two derivatives vanish at the affine
side of the collar, with the second derivative understood by continuity.
The support condition on \(h_j''\) and its \(C^{0,\gamma}\)-bound imply
\begin{align}
    |r_{j,\h}(x)|
    &\leq
    C\eps^\beta(x-a_{p,\eps})_+^{2+\gamma},
    &
    |r_{j,\h}(x)|
    &\leq
    C\eps^\beta(x-a_{p,\eps})_+^2.
    \label{eq: nonaffine-shift-Taylor}
\end{align}
In the pulled-back boundary coordinates, the exact interior phase is
\[
    t_{\h}(X,Z)
    =
    t_{\theta_{\h}(p)}(X,Z)
    -\eps^{-1}r_{j,\h}
    \bigl(\eps(\cos\theta X-\sin\theta Z)\bigr).
\]
The first estimate in \eqref{eq: nonaffine-shift-Taylor} gives the sharper
factor
\[
    C\eps^{\beta-1+\alpha(2+\gamma)}
\]
in \(L^\infty\), whereas the quadratic estimate and the scaled composition
estimate give
\[
    C\eps^{\beta-1+2\alpha}
\]
in the full weighted \(C^{0,\gamma}\)-norm. The affine phase error and the
convergence of \(U\) to \(H\) are smaller on this region. This proves
\eqref{eq: U-profile-comparison-Linfty} and
\eqref{eq: U-profile-comparison-Holder}. Applying the smooth composition
estimate for \(W''\) proves \eqref{eq: U-potential-comparison-Linfty} and
\eqref{eq: U-potential-comparison-Holder}.

We next use the identity for \(\omega_{\h}-H_{\h,j}\). Its localization by
\(\eta_{j,1}\zeta_{p,3}\) lies in the affine boundary collar, where
\(H_{\h,j}\) is represented by \(H(t_\theta)\) and
\[
    t_\theta-Z
    =
    O(\theta^2)Z+O(\theta^3)X.
\]
On this localization, \(X\simeq\eps^{\alpha-1}\). The convergence of
\(U(X,Z)\) to \(H(Z)\), the exponential decay of the derivatives of \(H\),
and \(|\theta|\leq K\eps^\beta\) give contributions bounded by
\[
    C\left(
        e^{-c/\eps^{1-\alpha}}
        +\eps^{2\beta}
        +\eps^{3\beta+\alpha-1}
    \right)
    \leq C\eps^{2\beta},
\]
which proves \eqref{eq: full-overlap-profile-comparison}.
The second identity in \eqref{eq: two-local-profile-identities} and the
uniform cutoff bounds prove \eqref{eq: H-profile-comparison-size}. The smooth
composition estimate for \(W''\) gives
\eqref{eq: H-potential-comparison-size}.
\end{proof}

\begin{lemma}[Dependence of the profile and potential comparisons on the shift]
\label{lem: profile-potential-comparison-dependence}
For two shifts \(\h^1,\h^2\in\mathcal B(K\eps^\beta)\), set
\[
    M=\lVert\h^1-\h^2\rVert_{C^{2,\gamma}(\boldsymbol I)}.
\]
There is a constant \(C>0\), independent of \(\h^1,\h^2\), such that
\begin{equation}\label{eq: U-profile-comparison-dependence-Linfty}
\left\lVert
\begin{aligned}
&\widehat{
    \chi_{j,4}\zeta_{p,5}
    (\omega_{\h^1}-U_{\h^1,p})
}_{p,\h^1}\\
&\quad-
\widehat{
    \chi_{j,4}\zeta_{p,5}
    (\omega_{\h^2}-U_{\h^2,p})
}_{p,\h^2}
\end{aligned}
\right\rVert_{L^\infty(\R^2_*,\L)}
\leq C\eps^{-1+\alpha(2+\gamma)}M .
\end{equation}
\begin{equation}\label{eq: U-potential-comparison-dependence-Linfty}
\left\lVert
\begin{aligned}
&\widehat{
    \chi_{j,4}\zeta_{p,5}
    [W''(\omega_{\h^1})-W''(U_{\h^1,p})]
}_{p,\h^1}\\
&\quad-
\widehat{
    \chi_{j,4}\zeta_{p,5}
    [W''(\omega_{\h^2})-W''(U_{\h^2,p})]
}_{p,\h^2}
\end{aligned}
\right\rVert_{L^\infty(\R^2_*)}
\leq C\eps^{-1+\alpha(2+\gamma)}M .
\end{equation}
\begin{equation}\label{eq: U-profile-comparison-dependence-Holder}
\left\lVert
\begin{aligned}
&\widehat{
    \chi_{j,4}\zeta_{p,5}
    (\omega_{\h^1}-U_{\h^1,p})
}_{p,\h^1}\\
&\quad-
\widehat{
    \chi_{j,4}\zeta_{p,5}
    (\omega_{\h^2}-U_{\h^2,p})
}_{p,\h^2}
\end{aligned}
\right\rVert_{C^{0,\gamma}_{\ell_+,\lambda}(\R^2_*,\L)}
\leq C\eps^{-1+2\alpha}M .
\end{equation}
\begin{equation}\label{eq: U-potential-comparison-dependence-Holder}
\left\lVert
\begin{aligned}
&\widehat{
    \chi_{j,4}\zeta_{p,5}
    [W''(\omega_{\h^1})-W''(U_{\h^1,p})]
}_{p,\h^1}\\
&\quad-
\widehat{
    \chi_{j,4}\zeta_{p,5}
    [W''(\omega_{\h^2})-W''(U_{\h^2,p})]
}_{p,\h^2}
\end{aligned}
\right\rVert_{C^{0,\gamma}_{\ell_+,\lambda}(\R^2_*)}
\leq C\eps^{-1+2\alpha}M .
\end{equation}
For the comparison with the interior model,
\begin{equation}\label{eq: H-profile-comparison-dependence}
\left\lVert
\begin{aligned}
&\widehat{
    \chi_{j,4}\zeta_{p,5}\eta_{j,1}
    (\omega_{\h^1}-H_{\h^1,j})
}_{p,\h^1}\\
&\quad-
\widehat{
    \chi_{j,4}\zeta_{p,5}\eta_{j,1}
    (\omega_{\h^2}-H_{\h^2,j})
}_{p,\h^2}
\end{aligned}
\right\rVert_{C^{0,\gamma}_{\ell_+,\lambda}(\R^2_*,\L)}
\leq C\eps^\beta M .
\end{equation}
\begin{equation}\label{eq: H-potential-comparison-dependence}
\left\lVert
\begin{aligned}
&\widehat{
    \chi_{j,4}\zeta_{p,5}\eta_{j,1}
    [W''(\omega_{\h^1})-W''(H_{\h^1,j})]
}_{p,\h^1}\\
&\quad-
\widehat{
    \chi_{j,4}\zeta_{p,5}\eta_{j,1}
    [W''(\omega_{\h^2})-W''(H_{\h^2,j})]
}_{p,\h^2}
\end{aligned}
\right\rVert_{C^{0,\gamma}_{\ell_+,\lambda}(\R^2_*)}
\leq C\eps^\beta M .
\end{equation}
\end{lemma}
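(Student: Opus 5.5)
The plan is to reuse the exact local identities \eqref{eq: two-local-profile-identities} from the proof of \Cref{lem: profile-potential-comparisons} for each shift \(\h^i\), pull both back to the common unrotated boundary chart via \eqref{eq: boundary-error-pullback}, and estimate the difference by interpolating along the convex segment \(\h^s=\h^2+s(\h^1-\h^2)\), exactly as in the proof of \Cref{lem: approximation-error-dependence}. In the fixed boundary coordinates \(U_{\h^i,p}\) is represented by the same section \(U(X,Z)\) for both shifts, so the difference is carried entirely by the interior phase and by the pulled-back cutoffs. Writing
\[
    \widehat{\chi_{j,4}\zeta_{p,5}(\omega_{\h^i}-U_{\h^i,p})}_{p,\h^i}
    =\kappa_i\widehat\eta_{3,i}\bigl(H(t_i)-U_-\bigr),
\]
with \(\kappa_i,\widehat\eta_{3,i}\) the pulled-back localizers and \(t_i=t_{\theta_i}-\eps^{-1}r_{j,\h^i}(\eps(\cos\theta_iX-\sin\theta_iZ))\), I would split the difference as \((\kappa_1\widehat\eta_{3,1}-\kappa_2\widehat\eta_{3,2})(H(t_1)-U_-)+\kappa_2\widehat\eta_{3,2}(H(t_1)-H(t_2))\).

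For the first term, fixed-profile cutoff calculus gives a \(C^{0,\gamma}\) bound of order \(M\) on the localizer difference, as in \eqref{eq: boundary-localizer-difference}. This multiplies the fixed-shift bound from \Cref{lem: profile-potential-comparisons}, producing \(\eps^{\beta-1+\alpha(2+\gamma)}M\) in \(L^\infty\) and \(\eps^{\beta-1+2\alpha}M\) in the weighted H\"older norm, both better than required. The second term is the main one. By the mean value theorem, \(H(t_1)-H(t_2)=(t_1-t_2)\int_0^1H'(t_s)\,ds\), and \(t_1-t_2\) has three pieces:
\begin{itemize}
    \item the affine phase difference \(t_{\theta_1}-t_{\theta_2}\), which is \(O(\eps^\beta M(|X|+|Z|))\) by \(\partial_\theta t_\theta=\theta\sin\theta X+\theta\cos\theta Z\), hence \(O(\eps^{\beta+\alpha-1}M)\) after heteroclinic decay absorbs \(|Z|\);
    \item the nonaffine remainder difference \(\eps^{-1}(r_{j,\h^1}-r_{j,\h^2})\);
    \item the change in the argument of \(r\), which is \(O(\eps^{\beta-1}\cdot M\eps^\alpha)\).
\end{itemize}
The remainder difference is the key step. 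Since \(r_{j,\h^1}-r_{j,\h^2}=r_{j,\h^1-\h^2}\) is linear in the shift and has second derivative supported on the non-affine side, the Taylor estimate \eqref{eq: nonaffine-shift-Taylor} applied with \(\h^1-\h^2\) in place of \(\h\) (bounded by \(M\) instead of \(\eps^\beta\)) gives \(|r_{j,\h^1-\h^2}(x)|\le CM(x-a_{p,\eps})_+^{2+\gamma}\le CM\eps^{\alpha(2+\gamma)}\). This yields \eqref{eq: U-profile-comparison-dependence-Linfty}. The quadratic version together with the scaled composition estimate yields the H\"older bound \eqref{eq: U-profile-comparison-dependence-Holder}. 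In both cases the exponential weight relative to \(\ell_+\) is absorbed by \(H'(t_s)\), because \(|t_s-Z|=o(1)\) as in \eqref{eq: boundary-orthogonal-phase-difference}.

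The potential bounds \eqref{eq: U-potential-comparison-dependence-Linfty} and \eqref{eq: U-potential-comparison-dependence-Holder} then follow by writing \(W''(a_1)-W''(b)-(W''(a_2)-W''(b))=(a_1-a_2)\int_0^1W'''\), since \(W'''\) is smooth on the bounded range of the profiles. Here the H\"older norm of the integrated coefficient is uniformly bounded.

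For \eqref{eq: H-profile-comparison-dependence} and \eqref{eq: H-potential-comparison-dependence}, the second identity in \eqref{eq: two-local-profile-identities} localizes to the affine collar, where the pulled-back difference is \(\widehat D_i=U_--H(T_i)\) with \(T_i\) as in \eqref{eq: common-coordinate-phase}. The bound \eqref{eq: boundary-profile-lipschitz-bounds}, namely \(\lVert D_1-D_2\rVert\le C\eps^\beta M\), together with the localizer Lipschitz bounds and \(\lVert D_i\rVert\le C\eps^{2\beta}\), gives \(C\eps^\beta M\) directly. The potential version follows by the same \(W'''\) interpolation.

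I expect the main obstacle to be the H\"older estimate for the composition \(H(t_1)-H(t_2)\) in the weighted norm. The seminorm of \(\eps^{-1}r_{j,\h^1-\h^2}(\eps\,\cdot)\) in scaled variables over a collar of width \(L_\eps=\eps^{\alpha-1}\) must be shown to cost exactly \(\eps^{-1+2\alpha}\), using the first derivative bound \(|r'|\le CM(x-a)\) rather than the \(C^{2,\gamma}\) bound. Care is also needed that the two collars for \(\h^1\) and \(\h^2\) differ only by \(O(\eps^\alpha M)\) in the pulled-back chart.
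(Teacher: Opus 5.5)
Your proposal is correct and is essentially the paper's proof. Both arguments pull back to the common unrotated chart, where $U$ is the same for both shifts, and split $t_{\h^1}-t_{\h^2}$ into three pieces: the affine part, the remainder difference $r_1-r_2$ at a common argument (bounded by $M(x-a_{p,\eps})_+^{2+\gamma}$ for $L^\infty$ and by $M(x-a_{p,\eps})_+^2$ for the H\"older norm), and the argument change $r_2(x_1)-r_2(x_2)$; both then treat the cutoff mismatch as $O(M)$ times the fixed-shift bounds and handle the $H$-comparison on the affine collar through $\partial_\theta t_\theta$.

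Your cruder bounds for the argument change and the affine phase (forgoing the $(x-a_{p,\eps})_+^{1+\gamma}$ and $\theta\sin\theta=O(\theta^2)$ gains) still suffice for the $U$-comparison, since $\beta>\alpha(1+\gamma)$. For the $H$-comparison you rightly use the sharper bound \eqref{eq: boundary-profile-lipschitz-bounds}, whose proof applies verbatim on all of $\spt(\eta_{j,1}\zeta_{p,3})$. Note only that in the $H$-potential estimate the reference profile $H(T_i)$ also depends on $i$, so your common-reference $W'''$ interpolation needs one harmless extra term.
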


\begin{proof}[Proof of \Cref{lem: profile-potential-comparison-dependence}]
Write
\[
    \theta_i=\theta_{\h^i}(p),
    \qquad i=1,2.
\]
Set
\[
    t_\theta(X,Z)
    =
    (\cos\theta+\theta\sin\theta)Z
    +(\sin\theta-\theta\cos\theta)X.
\]

We first compare the expressions involving \(U_{\h^i,p}\). They vanish
outside the corresponding pullbacks of
\(\spt(\chi_{j,4}\zeta_{p,5}\eta_{j,3})\). The mismatch between these two
supports is controlled by the variation of the pulled-back cutoffs and the
fixed-shift estimates. On their common part, use the inner collar coordinate
\(a_{p,\eps}\simeq\eps^\alpha\) as in the preceding proof and set
\[
    r_i(x)
    =
    h_j^i(p+xe_p)-\theta_i x.
\]
The support condition on \((h_j^i)''\), together with its
\(C^{0,\gamma}\)-bound, gives
\begin{align}
    |r_i(x)|
    &\leq
    C\eps^\beta(x-a_{p,\eps})_+^{2+\gamma},
    &
    |r_i(x)|
    &\leq
    C\eps^\beta(x-a_{p,\eps})_+^2,
    \label{eq: nonaffine-shift-Taylor-fixed}\\
    |r_i'(x)|
    &\leq
    C\eps^\beta(x-a_{p,\eps})_+^{1+\gamma},
    &
    |r_i'(x)|
    &\leq
    C\eps^\beta(x-a_{p,\eps})_+,
    \label{eq: nonaffine-shift-derivative}\\
    |r_1(x)-r_2(x)|
    &\leq
    CM(x-a_{p,\eps})_+^{2+\gamma},
    &
    |r_1(x)-r_2(x)|
    &\leq
    CM(x-a_{p,\eps})_+^2.
    \label{eq: nonaffine-shift-Taylor-difference}
\end{align}
For \((X,Z)\) in this common part, define
\[
    x_i
    =
    \eps(\cos\theta_i X-\sin\theta_i Z).
\]
Then \(0<x_i\leq C\eps^\alpha\),
\[
    |x_1-x_2|
    \leq
    C\eps(|X|+|Z|)M
    \leq C\eps^\alpha M,
\]
and the interior phase in the coordinates associated with \(\h^i\) is
\[
    t_{\h^i}(X,Z)
    =
    t_{\theta_i}(X,Z)
    -\eps^{-1}r_i(x_i).
\]
The exact decomposition
\[
    r_1(x_1)-r_2(x_2)
    =
    [r_1(x_1)-r_2(x_1)]
    +
    [r_2(x_1)-r_2(x_2)]
\]
allows us to apply \eqref{eq: nonaffine-shift-Taylor-difference} to the first
term and \eqref{eq: nonaffine-shift-derivative} to the second. It gives the
\(L^\infty\)-factor
\[
    C\eps^{-1+\alpha(2+\gamma)}M,
\]
whereas the quadratic estimate and the scaled composition estimate give
\[
    C\eps^{-1+2\alpha}M
\]
in the full weighted \(C^{0,\gamma}\)-norm. The affine phase and pulled-back
cutoff differences are smaller. Applying these bounds to the first identity
in \eqref{eq: two-local-profile-identities} proves
\eqref{eq: U-profile-comparison-dependence-Linfty} and
\eqref{eq: U-profile-comparison-dependence-Holder}.

The smooth composition estimate for \(W''\) now proves
\eqref{eq: U-potential-comparison-dependence-Linfty} and
\eqref{eq: U-potential-comparison-dependence-Holder}.

We next compare the expressions involving \(H_{\h^i,j}\). Their
localizations lie in the affine boundary collar. There the boundary pullback
of \(U_{\h^i,p}\) is the same fixed section \(U\), whereas
\(H_{\h^i,j}\) is represented by \(H(t_{\theta_i})\). Since
\[
    \partial_\theta t_\theta(X,Z)
    =
    \theta\cos\theta Z+\theta\sin\theta X,
\]
the mean value theorem and heteroclinic decay bound the \(Z\)-contribution by
\(C\eps^\beta M\). Moreover,
\(\theta\sin\theta=O(\theta^2)\), and hence the \(X\)-contribution is bounded
by
\[
    C\eps^{2\beta+\alpha-1}M
    \leq C\eps^\beta M
\]
because \(\beta+\alpha-1>0\). The variation of the pulled-back cutoffs is
\(O(M)\) and multiplies the fixed-shift bound
\eqref{eq: H-profile-comparison-size}. This proves
\eqref{eq: H-profile-comparison-dependence}; the smooth composition estimate
for \(W''\) proves \eqref{eq: H-potential-comparison-dependence}.
\end{proof}

\noindent\emph{Localized potential products.}
Let \(a\) denote any of the scalar potential coefficients in the preceding
two lemmas, including their shift differences. It is extended by zero on the
fixed boundary model, and its positive-side support satisfies
\[
    \spt a\cap\{X>0\}
    \subset
    \{0<X\leq C\eps^{\alpha-1}\}.
\]
Consequently, every
\(\Phi\in C^{0,\gamma}_{\ell_+,\lambda}(\R^2_*,\L)\) satisfies
\begin{align}
    \lVert a\Phi\rVert_{C^{0,\gamma}_{\ell_+,\lambda}(\R^2_*,\L)}
    &\leq
    C\lVert a\rVert_{C^{0,\gamma}_{\ell_+,\lambda}(\R^2_*)}
    \lVert\Phi\rVert_{C^{0,\gamma}_{\ell_+,\lambda}(\R^2_*,\L)},
    \label{eq: localized-potential-product}\\
    \lVert(a\Phi)^\top\rVert_{L^\infty(\R^+)}
    &\leq
    C\lVert a\rVert_{L^\infty(\R^2_*)}
    \lVert\Phi\rVert_{C^{0,\gamma}_{\ell_+,\lambda}(\R^2_*,\L)},
    \label{eq: localized-potential-projection}\\
    A_{g_\tau}(a\Phi)
    &\leq
    C\eps^{-(1-\alpha)\tau}
    \lVert a\rVert_{L^\infty(\R^2_*)}
    \lVert\Phi\rVert_{C^{0,\gamma}_{\ell_+,\lambda}(\R^2_*,\L)}.
    \label{eq: localized-potential-Ag}
\end{align}
Indeed, the first estimate is the weighted product estimate, and the second
follows by integrating against \(H'\). The support condition and
\(g_\tau''(X)\simeq(1+X)^{-\tau}\) give the last estimate.

The remaining estimates control the ansatz and its potential in the outer
region, together with the background coefficient multiplying the outer
correction in the boundary equation.

\begin{lemma}[Outer-region and boundary-background estimates]
\label{lem: outer-background-comparisons}
There are constants \(C,c>0\), uniform for
\(\h,\h^1,\h^2\in\mathcal B(K\eps^\beta)\), such that, with
\[
    M=\lVert\h^1-\h^2\rVert_{C^{2,\gamma}(\boldsymbol I)},
\]
\begin{align}
    \left\lVert
        \chi_{\Ical,1}
        [W''(\omega_{\h})-\kappa_W]
    \right\rVert_
    {C^{2,\gamma}_{\Gamma,\eps,3\lambda/4}(\R^2_{\p})}
    &\leq
    C e^{-c/\eps^{1-\alpha}},
    \label{eq: outer-potential-size}\\
    \left\lVert
        \chi_{\Ical,1}
        [W''(\omega_{\h^1})-W''(\omega_{\h^2})]
    \right\rVert_
    {C^{0,\gamma}_{\Gamma,\eps,3\lambda/4}(\R^2_{\p})}
    &\leq
    C e^{-c/\eps^{1-\alpha}}M,
    \label{eq: outer-potential-dependence}\\
    \left\lVert
        \chi_{\Ical,1}
        (\omega_{\h^1}-\omega_{\h^2})
    \right\rVert_
    {C^{2,\gamma}_{\Gamma,\eps,3\lambda/4}
    (\R^2_{\p},\L_{\p})}
    &\leq
    C e^{-c/\eps^{1-\alpha}}M.
    \label{eq: outer-ansatz-dependence}
\end{align}
At the boundary one has the uniform, rather than small, estimate
\begin{equation}\label{eq: boundary-background-potential}
    \left\lVert
        \widehat{
            \chi_{j,1}
            [W''(\omega_{\h})-\kappa_W]
        }_{p,\h}
    \right\rVert_
    {C^{0,\gamma}_{\ell_+,\lambda/4}(\R^2_*)}
    \leq C.
\end{equation}
For two shifts, the corresponding coefficients satisfy
\begin{equation}\label{eq: boundary-background-potential-dependence}
\left\lVert
\begin{aligned}
&\widehat{
    \chi_{j,1}
    [W''(\omega_{\h^1})-\kappa_W]
}_{p,\h^1}\\
&\quad-
\widehat{
    \chi_{j,1}
    [W''(\omega_{\h^2})-\kappa_W]
}_{p,\h^2}
\end{aligned}
\right\rVert_{C^{0,\gamma}_{\ell_+,\lambda/4}(\R^2_*)}
\leq C\eps^{-1}M.
\end{equation}
\end{lemma}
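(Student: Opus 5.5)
The plan is to treat the three outer estimates and the two boundary-background estimates separately, since they rest on different mechanisms: exponential profile tails in the outer region versus uniform boundedness of the model coefficients near the puncture.

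\emph{Outer estimates.} On \(\spt\chi_{\Ical,1}\) we have \(\dist(q,\Gamma)\geq c\eps^\alpha\), which excludes every singular ball. Fix a trivializing ball \(B_\eps(q)\) there and set \(d(q)=\dist(q,\Gamma)\). Choose \(3\lambda/4<\lambda_*<\min\{\sigma,\sqrt{\kappa_W}\}\). In the outer representative where \(\Ical\equiv1\), each profile in \(\omega_{\h}\) is represented by \(|H|\), by \(U_+\circ R_{\theta_{\h}(p)}\), or by \(1\). The decay estimates of \Cref{cor: estim U-,lem: estim U+}, the heteroclinic tails, and the cutoff derivative bounds \(\eps^{-k\alpha}\) then give
\[
    \lVert\omega_{\h}-\Ical\rVert_{C^{2,\gamma}_\eps(B_\eps(q))}\leq C\eps^{-N}e^{-\lambda_*d(q)/\eps},
\]
exactly as in the proof of \eqref{eq: outer-error-size}. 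Since \(W''\) is smooth and even with \(W''(\pm1)=\kappa_W\), composition gives the same bound for \(W''(\omega_{\h})-\kappa_W\) in \(C^{2,\gamma}_\eps\). Multiplying by \(\chi_{\Ical,1}\) costs only another polynomial factor. Inserting the weight \(e^{3\lambda d/(4\eps)}\) leaves \(\eps^{-N}e^{-(\lambda_*-3\lambda/4)d/\eps}\leq Ce^{-c/\eps^{1-\alpha}}\), which proves \eqref{eq: outer-potential-size}. For \eqref{eq: outer-ansatz-dependence} we reuse the mean-value formulas behind \eqref{eq: outer-profile-difference} (translation for \(H\), rotation for \(U\)), now applied to all profiles. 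This gives \(\lVert\omega_{\h^1}-\omega_{\h^2}\rVert_{C^{2,\gamma}_\eps(B_\eps(q))}\leq C\eps^{-N}e^{-\lambda_*d/\eps}M\), and the same weight absorption applies. Then \eqref{eq: outer-potential-dependence} follows by writing \(W''(\omega_{\h^1})-W''(\omega_{\h^2})=(\omega_{\h^1}-\omega_{\h^2})\int_0^1W'''(\cdot)\,ds\) and using the uniform local bounds of the profiles.

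\emph{Boundary background.} After pulling back by \eqref{eq: boundary-error-pullback}, the coefficient \(a\coloneqq\widehat{\chi_{j,1}[W''(\omega_{\h})-\kappa_W]}_{p,\h}\) is bounded in \(C^{0,\gamma}\). This holds because the pulled-back ansatz has uniformly bounded scaled \(C^{1}\) norm: the cutoffs vary on scale \(L_\eps=\eps^{\alpha-1}\geq1\), and the profiles \(U\), \(H(t_{\h})\) are uniformly Lipschitz in \((X,Z)\) since \(|h_j'|\lesssim\eps^\beta\). Near the puncture this is read through the square-root chart, where \(W''\) of a continuous section is a genuine continuous scalar function because \(W''\) is even. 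For the weight \(e^{\lambda\dist(\cdot,\ell_+)/4}\), the coefficient is exponentially close to \(0\) away from the zero set: \(U\) and \(H(t_{\h})\) approach \(\pm1\) at rate \(\sigma>\lambda\), the interpolation between them preserves this, and the rotation by \(\theta\) moves the zero half-line by \(O(\eps^\beta L_\eps)=o(1)\) on the support. Where \(\chi_{j,1}\) is in transition, the expression is also exponentially small. Hence \(e^{\lambda\dist/4}|a|\lesssim e^{(\lambda/4-\sigma')\dist}\lesssim1\), which gives \eqref{eq: boundary-background-potential}; the weight \(\lambda/4\) leaves room for the small loss in decay rate caused by the rotation and the cutoffs.

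\emph{Main obstacle: \eqref{eq: boundary-background-potential-dependence}.} The two coefficients live in different rotated coordinates, and on a support of size \(L_\eps\) a rotation difference \(|\theta_1-\theta_2|\leq M\) displaces points by \(O(L_\eps M)\). I plan to split the difference into three pieces. The first is the pullback of a fixed function under two rotations, which is bounded by \(|\nabla(\cdot)|\,|Y|\,M\lesssim L_\eps M\leq\eps^{-1}M\). The second is the shift dependence of the interior profile, where \(|h^1_j-h^2_j|/\eps\lesssim\eps^{-1}M\). The third is the cutoff differences, which are \(O(M)\). In each piece the difference is controlled by a derivative of \(W''(\omega)\) times a displacement of size at most \(\eps^{-1}M\). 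That derivative still decays away from the nodal set at rate close to \(\sigma\), so the \(\lambda/4\) weight is absorbed as above. The Hölder seminorm is handled the same way, using one more derivative of the profiles. The \(\eps^{-1}\) loss is crude but matches the stated bound, so I do not expect to need anything sharper.
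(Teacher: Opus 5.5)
Your proposal is correct and follows essentially the paper's argument. For the three outer bounds, you use exponential profile tails and absorb the weight because $d(q)\gtrsim\eps^\alpha$. For \eqref{eq: boundary-background-potential}, you use uniform bounds plus decay away from a nodal set lying within $o(1)$ of $\ell_+$. For \eqref{eq: boundary-background-potential-dependence}, you split into a pulled-back cutoff difference and a mean-value bound on the translated and rotated profiles.

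The one slip is that the pullback of $\chi_{j,1}$ is not confined to $|Y|\lesssim L_\eps$; it runs along the segment out to $|Y|\sim\delta/\eps$. So the cutoff difference is $O(\eps^{-\alpha}M)$, as in the paper, the rotation displacement is $O(\eps^{-1}M)$, and the nodal offset is $O(\eps^{\beta-1})$. All of these still fit within the stated bounds.
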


\begin{proof}
Choose \(\lambda_*\) such that
\[
    \frac{3\lambda}{4}<\lambda_*<
    \min\{\sigma,\sqrt{\kappa_W}\}.
\]
We first prove \eqref{eq: outer-potential-size}. If an
\(\eps\)-ball centered at \(q\) meets \(\spt\chi_{\Ical,1}\), then either
the ansatz is already equal to a pure phase on that ball or
\[
    d(q)\coloneqq\dist(q,\Gamma)\geq c\eps^\alpha.
\]
The exponential convergence of the heteroclinic and boundary models, the
scaled cutoff bounds, and the smoothness of \(W''\) give, for some fixed
\(N>0\),
\[
    \left\lVert
        \chi_{\Ical,1}[W''(\omega_{\h})-\kappa_W]
    \right\rVert_{C^{2,\gamma}_\eps(B_\eps(q))}
    \leq C\eps^{-N}e^{-\lambda_*d(q)/\eps}.
\]
After multiplication by the weight
\(e^{3\lambda d(q)/(4\eps)}\), the right-hand side is bounded by
\(Ce^{-c/\eps^{1-\alpha}}\). This proves
\eqref{eq: outer-potential-size}.

For \eqref{eq: outer-potential-dependence}, apply the mean value theorem to
the translations and rotations of every profile entering the ansatz. On the
same ball,
\[
    \left\lVert
        \chi_{\Ical,1}
        [W''(\omega_{\h^1})-W''(\omega_{\h^2})]
    \right\rVert_{C^{0,\gamma}_\eps(B_\eps(q))}
    \leq
    C\eps^{-N}e^{-\lambda_*d(q)/\eps}M.
\]
The preceding weighted argument proves
\eqref{eq: outer-potential-dependence}. The same mean value formulas, before
composition with \(W''\), give
\[
    \left\lVert
        \chi_{\Ical,1}
        (\omega_{\h^1}-\omega_{\h^2})
    \right\rVert_{C^{2,\gamma}_\eps(B_\eps(q),\L_{\p})}
    \leq
    C\eps^{-N}e^{-\lambda_*d(q)/\eps}M,
\]
and hence prove \eqref{eq: outer-ansatz-dependence}.

We next prove \eqref{eq: boundary-background-potential}. In the adapted
boundary coordinates, the boundary profile is fixed. The transition sets of
the other profiles remain within \(C\eps^{\beta-1}\) of \(\ell_+\), their
scaled \(C^{1,\gamma}\)-norms are uniform, and their potentials decay
exponentially away from these transition sets. Since \(\beta\geq2\), the
decay measured from \(\ell_+\), together with the uniform pulled-back cutoff
bounds, gives
\[
    \left\lVert
        \widehat{
            \chi_{j,1}[W''(\omega_{\h})-\kappa_W]
        }_{p,\h}
    \right\rVert_{C^{0,\gamma}_{\ell_+,\lambda/4}(\R^2_*)}
    \leq C.
\]

Finally, compare the two adapted boundary pullbacks. For \(i=1,2\), set
\[
    q_i(X,Z)
    =Y_p\bigl(\eps R_{-\theta_{\h^i}(p)}(X,Z)\bigr).
\]
The scalar representatives have the exact difference
\begin{align*}
&\chi_{j,1}(q_1)
    [W''(\omega_{\h^1}(q_1))-\kappa_W]
-\chi_{j,1}(q_2)
    [W''(\omega_{\h^2}(q_2))-\kappa_W]\\
&\quad=
[\chi_{j,1}(q_1)-\chi_{j,1}(q_2)]
    [W''(\omega_{\h^1}(q_1))-\kappa_W]\\
&\qquad\quad+
\chi_{j,1}(q_2)
    [W''(\omega_{\h^1}(q_1))-W''(\omega_{\h^2}(q_2))].
\end{align*}
The pulled-back cutoff difference is \(O(\eps^{-\alpha}M)\); after
multiplication by the decaying potential coefficient, the first term has the
same bound in
\(C^{0,\gamma}_{\ell_+,\lambda/4}(\R^2_*)\). The mean value theorem applied
to the translated and rotated profiles bounds the second term by
\(C\eps^{-1}M\) in that norm. Since \(\alpha<1\), the weighted product
estimate gives
\[
\left\lVert
\begin{aligned}
&\widehat{
    \chi_{j,1}[W''(\omega_{\h^1})-\kappa_W]
}_{p,\h^1}\\
&\quad-
\widehat{
    \chi_{j,1}[W''(\omega_{\h^2})-\kappa_W]
}_{p,\h^2}
\end{aligned}
\right\rVert_{C^{0,\gamma}_{\ell_+,\lambda/4}(\R^2_*)}
\leq C\eps^{-1}M,
\]
which is \eqref{eq: boundary-background-potential-dependence}.
\end{proof}

\begin{proof}[Proof of \Cref{prop: expansion error}]
The section constructed in \eqref{eq: section-valued-global-ansatz} has the
local description stated in the proposition.  Its residual and its support
are given by \Cref{lem: residual-identity}.  The estimates
\eqref{eq: estimate S away from Ij} and
\eqref{eq: Bj-error-expansion} follow from
\eqref{eq: outer-approximation-error-global} and
\eqref{eq: approximation-error-aggregate}, respectively.  Finally,
the expression in brackets in \eqref{eq: improved Linfty error} differs from
\(\mathcal E^{\mathrm{rem}}_{j,\h}\) by
\[
    \eps(\eta_{j,3}-\chi_{j,5})h_j''H'(t_{j,\h}).
\]
This difference lies in the exponentially small normal cutoff tails. Hence
\eqref{eq: approximation-error-Linfty}, together with this tail estimate and
\(2\beta\geq1+\beta+\alpha\gamma\), proves the last estimate.
\end{proof}

\section{Technical estimates for the gluing construction}\label{sec: proofs}

\subsection{Proof of \Cref{lem: inner-bdy decomp}}\label{sec: inner-bdy decomp}

\begin{proof}
Fix \(j\). We write
\[
    L_\omega\coloneqq \eps^2\Delta-W''(\omega),
    \qquad
    U_p\coloneqq U_{\theta_{\h}(p)}(\cdot/\eps).
\]
In the coordinates \(q=X_j(x,z)\), \(t=(z-h_j(x))/\eps\), any function
\(v=v(x,t)\) satisfies
\begin{equation}\label{eq: physical-stretched-operator}
    \eps^2\Delta v-W''(H(t))v
    =
    \widetilde L_{H,h_j}v
    +\bigl(m_j^{\mathrm{bd}}[v]+m_j^{\mathrm{int}}[v]\bigr)H'.
\end{equation}

Assume that \(\phi_p\), \(p\in\partial I_j\), and
\(\phi_j^{\mathrm{int}}\) solve \eqref{eq: boundary eq2}--\eqref{eq: interior eq2},
and reconstruct \(\phi_j\) by \eqref{eq: decomp phi}. Expanding
\(L_\omega\phi_j\) gives, on \(\spt\chi_{j,2}\),
\[
\begin{split}
    L_\omega\phi_j
    &=
    \eta_{j,1}\bigl(\eps^2\Delta\phi_j^{\mathrm{int}}
    -W''(\omega)\phi_j^{\mathrm{int}}\bigr)\\
    &\quad+
    \sum_{p\in\partial I_j}
    \zeta_{p,4}
    \bigl(\eps^2\Delta\phi_p-W''(\omega)\phi_p\bigr)\\
    &\quad+
    2\eps^2\nabla\eta_{j,1}\cdot\nabla\phi_j^{\mathrm{int}}
    +\eps^2\Delta\eta_{j,1}\phi_j^{\mathrm{int}}\\
    &\quad+
    \sum_{p\in\partial I_j}
    \left(
    2\eps^2\nabla\zeta_{p,4}\cdot\nabla\phi_p
    +\eps^2\Delta\zeta_{p,4}\phi_p
    \right).
\end{split}
\]
The two equations in the lemma imply
\[
\begin{split}
    \eps^2\Delta\phi_p-W''(\omega)\phi_p
    &=
    F_p
    -
    \bigl(W''(\omega)-W''(U_p)\bigr)\phi_p,\\
    \eps^2\Delta\phi_j^{\mathrm{int}}-W''(\omega)\phi_j^{\mathrm{int}}
    &=
    F_j
    +\bigl(m_j^{\mathrm{bd}}[\phi_j^{\mathrm{int}}]
    +m_j^{\mathrm{int}}[\phi_j^{\mathrm{int}}]\bigr)H'
    -
    \bigl(W''(\omega)-W''(H(t))\bigr)\phi_j^{\mathrm{int}}.
\end{split}
\]
We now substitute the definitions of \(F_p\) and \(F_j\). On \(\spt\chi_{j,2}\),
the cut-offs satisfy
\[
\begin{aligned}
    \chi_{j,4}&=1\quad\text{on }\spt\chi_{j,2},\\
    \zeta_{p,5}&=1\quad\text{on }\spt\chi_{j,2}\cap\spt\zeta_{p,4},\\
    \spt\chi_{j,2}\cap
    \bigl(\spt\nabla\eta_{j,1}\cup\spt\Delta\eta_{j,1}\bigr)
    &\subset \bigcup_{p\in\partial I_j}\{\zeta_{p,4}=1\},\\
    \spt\chi_{j,2}\cap
    \bigl(\spt\nabla\zeta_{p,4}\cup\spt\Delta\zeta_{p,4}\bigr)
    &\subset\{\eta_{j,1}=1\},\\
    \spt\chi_{j,2}\cap
    \bigl(\spt\eta_{j,4}\cup\spt\eta_{j,3}\bigr)
    &\subset\{\eta_{j,1}=1\}.
\end{aligned}
\]
Hence the commutator terms involving \(\eta_{j,1}\) are cancelled by the
corresponding terms in the boundary equations, and the commutator terms involving
\(\zeta_{p,4}\) are cancelled by the corresponding terms in the interior
equation.

The remainder terms recombine because
\[
    \eta_{j,4}+\sum_{p\in\partial I_j}\zeta_{p,4}=1
    \quad\text{on }\spt\chi_{j,2},
\]
and \(\eta_{j,1}=1\) on \(\spt\chi_{j,2}\cap\spt\eta_{j,4}\). Thus their total contribution is
\(\mathcal R_j(\phi_j,\psi)\). The leading term contributes
\[
    \eta_{j,1}\eps\eta_{j,3}h_j''(x)H'(t)
    =
    \eps\eta_{j,3}h_j''(x)H'(t).
\]
The boundary potential mismatch
\(\bigl(W''(\omega)-W''(U_p)\bigr)\phi_p\) cancels directly. The remaining
interior mismatch is multiplied by \(\eta_{j,4}\), where the approximate
solution is the pure shifted heteroclinic; hence
\[
    W''(\omega)=W''(H(t))
    \qquad\text{on }\spt\chi_{j,2}\cap\spt\eta_{j,4}.
\]
Finally, the scalar terms obtained from \(\mathfrak B_{j,h_j}\) cancel. Indeed,
writing \(m^{\mathrm{bd}}=m_j^{\mathrm{bd}}[\phi_j^{\mathrm{int}}]\)
and \(m^{\mathrm{int}}=m_j^{\mathrm{int}}[\phi_j^{\mathrm{int}}]\), their
sum is
\[
\begin{split}
    &\eta_{j,1}(m^{\mathrm{bd}}+m^{\mathrm{int}})H'
    -\eta_{j,1}\sum_{p\in\partial I_j}
    \zeta_{p,4}m^{\mathrm{bd}}H'
    -\eta_{j,1}\eta_{j,4}m^{\mathrm{bd}}H'
    -\eta_{j,1}m^{\mathrm{int}}H'\\
    &\qquad=
    \eta_{j,1}m^{\mathrm{bd}}H'
    \left(1-\eta_{j,4}
    -\sum_{p\in\partial I_j}\zeta_{p,4}\right)=0.
\end{split}
\]
Combining these cancellations gives
\[
    L_\omega\phi_j
    =
    \eps\eta_{j,3}h_j''(x)H'(t)
    +
    \mathcal R_j(\phi_j,\psi)
    =
    E_j(\phi_j,\psi),
\]
which is precisely \eqref{eq: system reduced segments}.
\end{proof}

\subsection{Proofs of the boundary-response propositions}
\label{sec: invert boundary}

\begin{proof}[Proof of \Cref{prop: invert boundary}]
Fix \(j\in\{1,\dots,N\}\), a boundary point \(p\in\partial I_j\), and
\(h_j\), \(v\), and \(\psi\) as in the statement.
In this proof, we write
\(F_p(\phi_p;v,h_j,\psi)\) for the right-hand side in
\eqref{eq: boundary eq2}, with the fixed data \((v,h_j,\psi)\) displayed
explicitly. Set
\[
    \theta\coloneqq\theta_{\h}(p),
    \qquad
    U_{\theta,\eps}(y)\coloneqq U_\theta(y/\eps),
    \qquad y=(x,z).
\]
We use the hatted boundary representatives from
\Cref{def: adapted-coordinate-representatives}, suppressing the subscripts
\(p,\h\). Thus the rescaling and rotation \(y=\eps R_{-\theta}Y\) turn
\eqref{eq: boundary eq2} into
\begin{equation}\label{eq: boundary eq3}
    \Delta\widehat\phi_p-W''(U)\widehat\phi_p
    =\widehat F_p(\phi_p;v,h_j,\psi)
    \qquad\text{on }\R^2_* .
\end{equation}
The change of variables identifies
\(C^{k,\gamma}_{\ell_\theta,\eps,\lambda}\) with
\(C^{k,\gamma}_{\ell_+,\lambda}\).
Throughout this proof, a full-model norm whose domain is suppressed is taken
over \((\R^2_*,\L)\), and a stretched interior norm whose domain is
suppressed is taken over \(I_j\times\R\).

Although the operator on the left-hand side of
\eqref{eq: boundary eq3} is the one considered in
\Cref{lem: boundary invertibility}, the right-hand side depends on the unknown
\(\phi_p\). We must therefore examine this dependence carefully. Denote the
physical representative of \(v\) by
\[
    v_h(x,z)\coloneqq
    v\left(x,\frac{z-h_j(x)}{\eps}\right).
\]
Before the rescaling, the right-hand side
\(F_p(\phi_p;v,h_j,\psi)\) decomposes as
\[
\begin{split}
    A_1&=-2\eps^2\chi_{j,4}\zeta_{p,5}
    \nabla\eta_{j,1}\cdot\nabla v_h,\\
    A_2&=-\eps^2\chi_{j,4}\zeta_{p,5}\Delta\eta_{j,1}v_h,\\
    A_3&=\chi_{j,4}\zeta_{p,5}\mathcal R_j(\phi_j,\psi),\\
    B_1&=\chi_{j,4}\zeta_{p,5}
    \bigl(W''(\omega)-W''(U_{\theta,\eps})\bigr)\phi_p,\\
    B_2&=\chi_{j,4}\zeta_{p,5}\eta_{j,1}
    \bigl(W''(\omega)-W''(H(t))\bigr)v_h,\\
    C_{\mathrm{bd}}
    &=-\chi_{j,4}\zeta_{p,5}\eta_{j,1}
    m_j^{\mathrm{bd}}[v]H'(t).
\end{split}
\]
For fixed \(h_j\), \(v\), and
\(\psi\), the terms \(A_1,A_2,B_2\), and \(C_{\mathrm{bd}}\) are independent of
\(\phi_p\). The terms depending on \(\phi_p\) are \(B_1\) and \(A_3\).
Here \(B_1\) is linear in \(\phi_p\) and \(A_3\) depends
nonlinearly on it through \(\mathcal R_j\). The precise dependence of \(A_3\)
will be examined in the estimates below, but since we localise
\(\mathcal R_j\) by the boundary cut-off \(\zeta_{p,5}\), no boundary
correction other than \(\phi_p\) enters \(A_3\) directly. Consequently, we
must solve \eqref{eq: boundary eq3} by a contraction mapping argument.

For an admissible
\(g\), let \(\mathcal G_U(f)\) denote the unique bounded solution furnished by
\Cref{lem: boundary invertibility} whenever
\(f\in C_{\ell_+,\lambda}^{0,\gamma}(\R^2_*,\L)\) and
\(A_g(f)<+\infty\). By \Cref{cor: schauder-tan},
\[
    \lVert\mathcal G_U(f)\rVert_{C_{\ell_+,\lambda}^{2,\gamma}}
    \leq
    C_{g,\lambda}
    \left(
        \lVert f\rVert_{C_{\ell_+,\lambda}^{0,\gamma}}+A_g(f)
    \right).
\]
Consider the ball
\[
    \mathcal B_p\coloneqq
    \left\{
        \phi_p\in C_{\ell_\theta,\eps,\lambda}^{2,\gamma}(\R^2_*,\L):
        \lVert\phi_p\rVert_{C_{\ell_\theta,\eps,\lambda}^{2,\gamma}}
        \leq R_\eps
    \right\}.
\]
For \(\phi_p\in\mathcal B_p\), define
\[
    T_{h_j,v,\psi}(\phi_p)(y)
    \coloneqq
    \left[
        \mathcal G_U\bigl(\widehat F_p(\phi_p;v,h_j,\psi)\bigr)
    \right](\eps^{-1}R_\theta y).
\]
Write \(T=T_{h_j,v,\psi}\). By construction, the fixed points of \(T\) in
\(\mathcal B_p\) are precisely the solutions of
\eqref{eq: boundary eq2} in this ball. We will prove, uniformly over the fixed
data above, that, for \(\eps\) sufficiently small,
\begin{equation}\label{eq: boundary self-map}
    T\bigl(\mathcal B_p\bigr)
    \subseteq \mathcal B_p,
\end{equation}
and that there exists \(q_\eps\to0\) such that
\begin{equation}\label{eq: boundary contraction}
    \lVert T(\phi_p^1)-T(\phi_p^2)\rVert_{C_{\ell_\theta,\eps,\lambda}^{2,\gamma}}
    \leq
    q_\eps
    \lVert\phi_p^1-\phi_p^2\rVert_{C_{\ell_\theta,\eps,\lambda}^{2,\gamma}}
\end{equation}
for every \(\phi_p^1,\phi_p^2\in\mathcal B_p\).

We first prove \eqref{eq: boundary self-map}. For
\(\phi_p\in\mathcal B_p\), write
\(\widehat F_{\phi_p}\coloneqq
\widehat F_p(\phi_p;v,h_j,\psi)\).
By \Cref{cor: schauder-tan}, applied with the admissible decay function
\(g_\tau\) from \eqref{eq: boundary-decay-function}, it suffices to prove that
\[
    \lVert\widehat F_{\phi_p}\rVert_{C_{\ell_+,\lambda}^{0,\gamma}}
    +A_{g_\tau}(\widehat F_{\phi_p})
    \leq c_\eps R_\eps
\]
uniformly for \(\phi_p\in\mathcal B_p\), where \(c_\eps\to0\).

In what follows, for each term
\(Q\in\{A_1,A_2,A_3,B_1,B_2,C_{\mathrm{bd}}\}\), we write
\(\widehat Q(Y)\coloneqq Q(\eps R_{-\theta}Y)\) for the scaled and rotated counterpart. Similarly, given a domain \(\Omega\subset\R^2\), we write
\(\widehat\Omega\coloneqq\eps^{-1}R_\theta\Omega\).

We begin estimating the
H\"older norm of \(A_1\) and \(A_2\). Set
\(
    \Omega_{p,\eta}
    \coloneqq
    \spt(\chi_{j,4}\zeta_{p,5}\nabla\eta_{j,1})
    \cup
    \spt(\chi_{j,4}\zeta_{p,5}\Delta\eta_{j,1}).
\)
From the definition of the cut-offs, the set \(\Omega_{p,\eta}\) is contained in the
boundary collar
\(
    2\delta\eps^\alpha\leq x_p\leq3\delta\eps^\alpha.
\)
Both the boundary and interior coordinates are valid on this collar, and their
weights are uniformly comparable by the coordinate comparison used in
\eqref{eq: estimate variable difference}. Since
\(\partial_xv_h=\partial_xv-\eps^{-1}h_j'\partial_tv\) and
\(\partial_zv_h=\eps^{-1}\partial_tv\), \Cref{def: stretched interior norm} and
\(\lVert h_j'\rVert_\infty\lesssim\eps^\beta\) give
\[
\begin{split}
    &\lVert v_h\rVert_{C_{\ell_\theta,\eps,\lambda}^{0,\gamma}(\Omega_{p,\eta})}
    +\eps
    \lVert\nabla v_h\rVert_{C_{\ell_\theta,\eps,\lambda}^{0,\gamma}(\Omega_{p,\eta})}
    \lesssim
    \lVert v\rVert_{\mathcal C_{\eps,\lambda}^{2,\gamma}(I_j\times\R)}.
\end{split}
\]
Moreover, \(R_{-\theta}\) is an isometry sending \(\ell_+\) to
\(\ell_\theta\), so the rotation introduces no additional factor. The scaled
cut-off estimates are $\lVert\nabla\eta_{j,1}\rVert_{\mathsf C_\eps^{0,\gamma}(\Omega_{p,\eta})}
    \lesssim\eps^{-\alpha}$ and $\lVert\Delta\eta_{j,1}\rVert_{\mathsf C_\eps^{0,\gamma}(\Omega_{p,\eta})}
    \lesssim\eps^{-2\alpha}.$

The remaining cut-offs occur undifferentiated and have uniformly bounded
scaled H\"older norm. Therefore
\[
\begin{aligned}
    \lVert\widehat A_1\rVert
    _{C_{\ell_+,\lambda}^{0,\gamma}(\widehat\Omega_{p,\eta})}
    &\lesssim
    \eps^2\eps^{-\alpha}\eps^{-1}
    \lVert v\rVert_{\mathcal C_{\eps,\lambda}^{2,\gamma}(I_j\times\R)}
    =\eps^{1-\alpha}
    \lVert v\rVert_{\mathcal C_{\eps,\lambda}^{2,\gamma}(I_j\times\R)},\\
    \lVert\widehat A_2\rVert
    _{C_{\ell_+,\lambda}^{0,\gamma}(\widehat\Omega_{p,\eta})}
    &\lesssim
    \eps^2\eps^{-2\alpha}
    \lVert v\rVert_{\mathcal C_{\eps,\lambda}^{2,\gamma}(I_j\times\R)}
    =\eps^{2-2\alpha}
    \lVert v\rVert_{\mathcal C_{\eps,\lambda}^{2,\gamma}(I_j\times\R)}.
\end{aligned}
\]

We next estimate \(A_3\). Set
\(\Omega_{p,3}\coloneqq\spt(\chi_{j,4}\zeta_{p,5})\). On
\(\Omega_{p,3}\cap\spt\chi_{j,1}\), the nesting of the cut-offs gives
\(\chi_{j,2}=1\), so \eqref{eq: decomp phi} applies. Moreover, the supports of
\(\zeta_{p,5}\) and \(\zeta_{p',4}\), for
\(p'\in\partial I_j\setminus\{p\}\), are disjoint, since an intersection
would imply \(|I_j|\leq13\delta\eps^\alpha<24\delta<|I_j|\). Hence
\[
    \phi_j=\eta_{j,1}v_h+\zeta_{p,4}\phi_p
    \quad\text{on }\Omega_{p,3}\cap\spt\chi_{j,1},
\]
and the scaled cut-off bounds and the coordinate comparison above give
\[
    \lVert\phi_j\rVert_{C_{\ell_\theta,\eps,\lambda}^{2,\gamma}
    (\Omega_{p,3}\cap\spt\chi_{j,1})}
    \lesssim
    \lVert v\rVert_{\mathcal C_{\eps,\lambda}^{2,\gamma}(I_j\times\R)}
    +\lVert\phi_p\rVert_{C_{\ell_\theta,\eps,\lambda}^{2,\gamma}}.
\]

On \(\Omega_{p,3}\), the nesting gives \(\chi_{j,5}=1\), while the
support condition on the shift gives \(\eta_{j,3}h_j''=h_j''\).  Hence the
approximation term in \(A_3\) is exactly the boundary representative of
\(\mathcal E^{\mathrm{rem}}_{j,\h}\) from
\eqref{eq: approximation-remainder}.  By
\eqref{eq: boundary-approximation-error-size},
\[
    \left\lVert\chi_{j,4}\zeta_{p,5}
    \bigl(S(\omega)+\eps\eta_{j,3}h_j''H'\bigr)\right\rVert_
    {C_{\ell_\theta,\eps,\lambda}^{0,\gamma}(\Omega_{p,3})}
    \lesssim\eps^{2\beta}.
\]
On \(\Omega_{p,3}\), the coordinate comparison above makes the
\(\Gamma\)-weight and the boundary weight with rate \(3\lambda/4\) uniformly
comparable. Since \(\phi_j\) has weight \(\lambda\) and
\(2(3\lambda/4)>\lambda\), the quadratic estimate for \(\mathcal N_\omega\) gives
\[
\begin{split}
    &\lVert\chi_{j,4}\zeta_{p,5}\chi_{j,1}\mathcal N_\omega(\psi+\phi_j)\rVert_
    {C_{\ell_\theta,\eps,\lambda}^{0,\gamma}(\Omega_{p,3})}
    \lesssim
    \lVert v\rVert_{\mathcal C_{\eps,\lambda}^{2,\gamma}(I_j\times\R)}^2
    +\lVert\phi_p\rVert_{C_{\ell_\theta,\eps,\lambda}^{2,\gamma}}^2
    +\exp\left(-\frac{\widetilde\lambda}{\eps^{1-\alpha}}\right).
\end{split}
\]
Finally, \eqref{eq: boundary-background-potential} shows that the coefficient
of \(\psi\) has boundary weight \(\lambda/4\).  Since \(\psi\) has weight
\(3\lambda/4\), their product is exponentially small in the norm with weight
\(\lambda\). Combining the three terms in
\eqref{eq: local remainder Rj} and rotating gives
\[
\begin{split}
    \lVert\widehat A_3\rVert_{C_{\ell_+,\lambda}^{0,\gamma}
    (\widehat\Omega_{p,3})}
    \lesssim{}&
    \eps^{2\beta}
    +\lVert v\rVert_{\mathcal C_{\eps,\lambda}^{2,\gamma}(I_j\times\R)}^2
    +\lVert\phi_p\rVert_{C_{\ell_\theta,\eps,\lambda}^{2,\gamma}}^2
    +\exp\left(-\frac{\widetilde\lambda}{\eps^{1-\alpha}}\right)
    =o(R_\eps).
\end{split}
\]
The last relation follows from \eqref{eq: assumption phi},
\(\phi_p\in\mathcal B_p\), and
\(2\beta>\beta+1-\alpha\).

We now estimate \(B_1\) and \(B_2\).  The two coefficients are precisely the
localized potential comparisons studied in
\Cref{lem: profile-potential-comparisons}.  Equations
\eqref{eq: U-potential-comparison-Holder} and
\eqref{eq: H-potential-comparison-size}, followed by the weighted product
estimate, give
\[
\begin{aligned}
    \lVert\widehat B_1\rVert_{C_{\ell_+,\lambda}^{0,\gamma}
    (\widehat\Omega_{p,3})}
    &\lesssim
    \eps^{\beta-1+2\alpha}
    \lVert\phi_p\rVert_{C_{\ell_\theta,\eps,\lambda}^{2,\gamma}},\\
    \lVert\widehat B_2\rVert_{C_{\ell_+,\lambda}^{0,\gamma}
    (\widehat\Omega_{p,3})}
    &\lesssim
    \eps^{2\beta}
    \lVert v\rVert_{\mathcal C_{\eps,\lambda}^{2,\gamma}(I_j\times\R)}.
\end{aligned}
\]
Since \(\beta-1+2\alpha>0\), both terms are
\(o(R_\eps)\).

It remains to estimate \(C_{\mathrm{bd}}\). Set
\(\Omega_{p,C}\coloneqq
\spt(\chi_{j,4}\zeta_{p,5}\eta_{j,1})\). The exponential decay of
\(H''\) and \(H'''\), together with
\Cref{def: stretched interior norm}, the coordinate comparison above, and
\(\lVert h_j\rVert_{C^{2,\gamma}(I_j)}\lesssim\eps^\beta\) give directly
\[
    \lVert\widehat C_{\mathrm{bd}}\rVert_{C_{\ell_+,\lambda}^{0,\gamma}
    (\widehat\Omega_{p,C})}
    \lesssim
    \eps^\beta
    \lVert v\rVert_{\mathcal C_{\eps,\lambda}^{2,\gamma}(I_j\times\R)}
    =o(R_\eps).
\]
Combining the estimates for
\(A_1,A_2,A_3,B_1,B_2\), and \(C_{\mathrm{bd}}\), we obtain
\[
    \lVert\widehat F_{\phi_p}\rVert_{C_{\ell_+,\lambda}^{0,\gamma}}
    \leq c_\eps R_\eps,
    \qquad c_\eps\to0.
\]

It remains to estimate \(A_{g_\tau}(\widehat F_{\phi_p})\). Each scaled term
is supported, for \(X>0\), in \(\{X\leq C\eps^{\alpha-1}\}\). Since
\(g_\tau''(X)=(\tau-1)(\tau-2)(1+X)^{-\tau}\), the definition of
\(A_{g_\tau}\) gives
\begin{equation}\label{eq: boundary Ag support}
    A_{g_\tau}(Q)
    \lesssim
    \eps^{-(1-\alpha)\tau}
    \lVert Q^\top\rVert_{L^\infty(\{X>0\})}
\end{equation}
for every such term \(Q\).

We first consider \(\widehat A_1+\widehat A_2\). On the transition region of
\(\eta_{j,1}\), the differentiated cut-offs are horizontal, up to
exponentially small normal tails. Since \(h_j(x)=\theta x\) in this collar,
the stretched normal variable in the unrotated boundary coordinates is
\[
    t=(\cos\theta+\theta\sin\theta)Z
      +(\sin\theta-\theta\cos\theta)X.
\]
At \(\theta=0\), the leading terms in the tangential projections of
\(A_1\) and \(A_2\) are therefore multiples of
\[
    \int_{\R}\partial_xv(x,Z)H'(Z)\,dZ
    \quad\text{and}\quad
    \int_{\R}v(x,Z)H'(Z)\,dZ,
\]
and both vanish by \eqref{eq: orth v} and its \(x\)-derivative.  Moreover,
\[
    \partial_xv_h
    =\partial_xv-\frac{\theta}{\eps}\partial_tv.
\]
The second term, together with the changes in the two coordinate variables,
contributes at most
\(\eps^{2}\eps^{-\alpha}|\theta|\eps^{-1}\lVert v\rVert\);
the corresponding discrepancy in \(A_2\) is smaller by a factor
\(\eps^{1-\alpha}\).  Since
\(|\theta|\lesssim\eps^\beta\), this gives
\[
    \lVert(\widehat A_1+\widehat A_2)^\top\rVert_{L^\infty(\{X>0\})}
    \lesssim
    \eps^{\beta+1-\alpha}
    \lVert v\rVert_{\mathcal C_{\eps,\lambda}^{2,\gamma}(I_j\times\R)}.
\]

For \(A_3\), equations \eqref{eq: boundary-approximation-error-size} and
\eqref{eq: boundary-approximation-error-Ag} control the approximation term
directly.  The quadratic estimate for \(\mathcal N_\omega\), the fixed outer bound, and
\(\phi_p\in\mathcal B_p\) control the remaining terms.  Thus
\[
    \lVert\widehat A_3^\top\rVert_{L^\infty(\{X>0\})}
    \lesssim
    \eps^{2\beta}
    +R_\eps^2
    +e^{-\widetilde\lambda/\eps^{1-\alpha}}.
\]
Equations \eqref{eq: U-potential-comparison-Linfty},
\eqref{eq: H-potential-comparison-size}, and
\eqref{eq: localized-potential-projection}, together with the definition of
\(m_j^{\mathrm{bd}}\), give
\[
\begin{aligned}
    \lVert\widehat B_1^\top\rVert_{L^\infty(\{X>0\})}
    &\lesssim
    \eps^{\beta-1+\alpha(2+\gamma)}
    \lVert\phi_p\rVert_{C_{\ell_\theta,\eps,\lambda}^{2,\gamma}},\\
    \lVert\widehat B_2^\top\rVert_{L^\infty(\{X>0\})}
    &\lesssim
    \eps^{2\beta}
    \lVert v\rVert_{\mathcal C_{\eps,\lambda}^{2,\gamma}},\\
    \lVert\widehat C_{\mathrm{bd}}^\top\rVert_{L^\infty(\{X>0\})}
    &\lesssim\eps^\beta
    \lVert v\rVert_{\mathcal C_{\eps,\lambda}^{2,\gamma}}.
\end{aligned}
\]

Applying \eqref{eq: boundary Ag support}, and using the sharper Section 8
estimate for the approximation term in \(A_3\), we obtain
\[
\begin{aligned}
    A_{g_\tau}(\widehat A_1+\widehat A_2)
    &\lesssim
    \eps^{\beta+1-\alpha-(1-\alpha)\tau}
    \lVert v\rVert_{\mathcal C_{\eps,\lambda}^{2,\gamma}},\\
    A_{g_\tau}(\widehat A_3)
    &\lesssim
    \eps^{2\beta-(1-\alpha)\tau}
    +\eps^{-(1-\alpha)\tau}R_\eps^2
    +\eps^{-(1-\alpha)\tau}
    e^{-\widetilde\lambda/\eps^{1-\alpha}},\\
    A_{g_\tau}(\widehat B_1)
    &\lesssim
    \eps^{\beta-1+\alpha(2+\gamma)-(1-\alpha)\tau}
    \lVert\phi_p\rVert_{C_{\ell_\theta,\eps,\lambda}^{2,\gamma}},\\
    A_{g_\tau}(\widehat B_2)
    &\lesssim
    \eps^{2\beta-(1-\alpha)\tau}
    \lVert v\rVert_{\mathcal C_{\eps,\lambda}^{2,\gamma}},\\
    A_{g_\tau}(\widehat C_{\mathrm{bd}})
    &\lesssim
    \eps^{\beta-(1-\alpha)\tau}
    \lVert v\rVert_{\mathcal C_{\eps,\lambda}^{2,\gamma}}.
\end{aligned}
\]
By \eqref{eq: boundary-decay-exponent-range},
\((1-\alpha)\tau<1+\alpha\).  Together with \(\beta\geq2\) and the
standing condition \(\alpha(2+\gamma)>1\), this implies
\[
\begin{aligned}
    \beta-(1-\alpha)\tau&>1-\alpha,\\
    \beta-1+\alpha(2+\gamma)-(1-\alpha)\tau&>1-\alpha,\\
    2\beta-(1-\alpha)\tau-(\beta+1-\alpha)&>0.
\end{aligned}
\]
Together with \(R_\eps\lesssim\eps^\beta\), these inequalities show that
every term in the preceding display is \(o(R_\eps)\); the exponential
term is smaller than every power of \(\eps\).
Consequently,
\begin{equation}\label{eq: boundary Ag bound}
    A_{g_\tau}(\widehat F_{\phi_p})
    \leq c_\eps R_\eps,
    \qquad c_\eps\to0.
\end{equation}

Combining this with the weighted H\"older estimate and applying
\Cref{cor: schauder-tan}, we conclude that, for \(\eps\) sufficiently small,
\[
    \lVert T(\phi_p)\rVert_{C_{\ell_\theta,\eps,\lambda}^{2,\gamma}}
    \lesssim
    \lVert\widehat F_{\phi_p}\rVert_{C_{\ell_+,\lambda}^{0,\gamma}}
    +A_{g_\tau}(\widehat F_{\phi_p})
    \leq c_\eps R_\eps
    \leq R_\eps.
\]
This proves \eqref{eq: boundary self-map}.

We next prove \eqref{eq: boundary contraction}. Fix
\(\phi_p^1,\phi_p^2\in\mathcal B_p\), and set
\[
    \delta\phi_p\coloneqq\phi_p^1-\phi_p^2,
    \qquad
    \delta\widehat F\coloneqq
    \widehat F_{\phi_p^1}-\widehat F_{\phi_p^2}.
\]
Since \(h_j\), \(v\), and \(\psi\) are fixed, the same rotation and
rescaling are used for both corrections. The terms \(A_1,A_2,B_2\), and
\(C_{\mathrm{bd}}\) cancel. Moreover, the support identity used in the
\(A_3\)-estimate
gives
\[
    \phi_j^1-\phi_j^2=\zeta_{p,4}\delta\phi_p
    \quad\text{on }\Omega_{p,3}\cap\spt\chi_{j,1}.
\]
Hence, before rescaling, the two remaining differences are
\[
\begin{split}
    \delta B_1
    &=\chi_{j,4}\zeta_{p,5}
    \bigl(W''(\omega)-W''(U_{\theta,\eps})\bigr)\delta\phi_p,\\
    \delta A_3
    &=-\chi_{j,4}\zeta_{p,5}\chi_{j,1}
    \bigl(\mathcal N_\omega(\psi+\phi_j^1)-\mathcal N_\omega(\psi+\phi_j^2)\bigr).
\end{split}
\]
Write \(\delta\widehat B_1\) and \(\delta\widehat A_3\) for their scaled
counterparts. Then
\(\delta\widehat F=\delta\widehat B_1+\delta\widehat A_3\).

The full H\"older part of the boundary matching estimate and the quadratic
Lipschitz estimate for \(\mathcal N_\omega\) give
\[
\begin{split}
    \lVert\delta\widehat B_1\rVert_{C_{\ell_+,\lambda}^{0,\gamma}}
    &\lesssim
    \eps^{\beta-1+2\alpha}
    \lVert\delta\phi_p\rVert_{C_{\ell_\theta,\eps,\lambda}^{2,\gamma}},\\
    \lVert\delta\widehat A_3\rVert_{C_{\ell_+,\lambda}^{0,\gamma}}
    &\lesssim
    R_\eps
    \lVert\delta\phi_p\rVert_{C_{\ell_\theta,\eps,\lambda}^{2,\gamma}}.
\end{split}
\]
For the second estimate, the Lipschitz coefficient is bounded by the fixed
outer bound, the norm of \(v\), and the norms of \(\phi_p^1\) and
\(\phi_p^2\), and hence by \(R_\eps\). The product involving \(\psi\)
has the required boundary weight because its two factors have rates
\(3\lambda/4\) and \(\lambda\).

Using instead the \(L^\infty\)-part of the boundary matching estimate and
\(H'\in L^1(\R)\), we obtain
\[
\begin{aligned}
    \lVert(\delta\widehat B_1)^\top\rVert_{L^\infty(\{X>0\})}
    &\lesssim
    \eps^{\beta-1+\alpha(2+\gamma)}
    \lVert\delta\phi_p\rVert_{C_{\ell_\theta,\eps,\lambda}^{2,\gamma}},\\
    \lVert(\delta\widehat A_3)^\top\rVert_{L^\infty(\{X>0\})}
    &\lesssim
    R_\eps
    \lVert\delta\phi_p\rVert_{C_{\ell_\theta,\eps,\lambda}^{2,\gamma}}.
\end{aligned}
\]
Both terms have the support considered in
\eqref{eq: boundary Ag support}, so
\[
\begin{split}
    A_{g_\tau}(\delta\widehat B_1)
    &\lesssim
    \eps^{\beta-1+\alpha(2+\gamma)-(1-\alpha)\tau}
    \lVert\delta\phi_p\rVert_{C_{\ell_\theta,\eps,\lambda}^{2,\gamma}},\\
    A_{g_\tau}(\delta\widehat A_3)
    &\lesssim
    \eps^{-(1-\alpha)\tau}R_\eps
    \lVert\delta\phi_p\rVert_{C_{\ell_\theta,\eps,\lambda}^{2,\gamma}}.
\end{split}
\]

Since \(R_\eps\lesssim\eps^\beta\) and
\(\alpha(2+\gamma)>1\), the four coefficients above are bounded by a
constant times \(\eps^{\beta-(1-\alpha)\tau}\).  The choice
\eqref{eq: boundary-decay-exponent-range} gives
\(\beta-(1-\alpha)\tau>1-\alpha>0\), so this power tends to zero. It follows from
the linearity of \(\mathcal G_U\) and \Cref{cor: schauder-tan} that
\[
\begin{split}
    \lVert T(\phi_p^1)-T(\phi_p^2)\rVert_{C_{\ell_\theta,\eps,\lambda}^{2,\gamma}}
    &\lesssim
    \lVert\delta\widehat F\rVert_{C_{\ell_+,\lambda}^{0,\gamma}}
    +A_{g_\tau}(\delta\widehat F)\\
    &\lesssim
    \eps^{\beta-(1-\alpha)\tau}
    \lVert\delta\phi_p\rVert_{C_{\ell_\theta,\eps,\lambda}^{2,\gamma}}.
\end{split}
\]
This proves \eqref{eq: boundary contraction}. For \(\eps\) sufficiently
small, the contraction mapping theorem gives a unique fixed point of \(T\) in
\(\mathcal B_p\), and hence a solution of \eqref{eq: boundary eq2}
satisfying the stated uniqueness condition. We denote this fixed point by
\(b_p=\mathscr B_{p,h_j}(v,\psi)\), as in the statement; in the remaining
calculations \(\phi_p\) denotes this fixed point, and its tangential constant
\(\phi_\infty\) is \(b_{p,\infty}\). At this fixed point, the
term-by-term estimates above give, up to an exponentially small term,
\[
\begin{split}
    \lVert\widehat F_{\phi_p}\rVert_{C_{\ell_+,\lambda}^{0,\gamma}}
    +A_{g_\tau}(\widehat F_{\phi_p})
    \lesssim{}&
    \eps^{1-\alpha}
    \lVert v\rVert_{\mathcal C_{\eps,\lambda}^{2,\gamma}(I_j\times\R)}
    +\eps^{2\beta-(1-\alpha)\tau}\\
    &+\left(
    \eps^{\beta-1+\alpha(2+\gamma)-(1-\alpha)\tau}
    +
    \eps^{-(1-\alpha)\tau}R_\eps
    \right)
    \lVert\phi_p\rVert_{C_{\ell_\theta,\eps,\lambda}^{2,\gamma}}.
\end{split}
\]
The coefficient in parentheses tends to zero.  Applying
\Cref{cor: schauder-tan} and absorbing the last term therefore yields
\[
    \lVert\phi_p\rVert_{C_{\ell_\theta,\eps,\lambda}^{2,\gamma}}
    \lesssim
    \eps^{1-\alpha}
    \lVert v\rVert_{\mathcal C_{\eps,\lambda}^{2,\gamma}(I_j\times\R)}
    +\eps^{2\beta-(1-\alpha)\tau}.
\]
This proves \eqref{eq: size phi_b}.

We conclude with the tangential estimates. The term-by-term bounds used above
retain the sharper information
\begin{equation}\label{eq: sharp boundary projected rhs}
    A_{g_\tau}(\widehat F_{\phi_p})
    \lesssim
    \eps^{2\beta-(1-\alpha)\tau},
    \qquad
    \lVert\widehat F_{\phi_p}^{\top}\rVert_{L^\infty(\{X>1\})}
    \lesssim\eps^{2\beta}.
\end{equation}
Indeed, the approximation term is controlled by
\eqref{eq: boundary-approximation-error-size} and
\eqref{eq: boundary-approximation-error-Ag}. The orthogonality calculation
for \(A_1+A_2\), the localized potential estimates, and
\(\lVert v\rVert+\lVert\phi_p\rVert\lesssim R_\eps\) show that every
correction-dependent term satisfies the same bounds. Since
\(g_\tau(X)=(1+X)^{2-\tau}\),
\eqref{eq: decay tangential part} and the physical relation
\(x=\eps X+O(\eps^\beta X)\) give, on \(I_\eps\),
\[
\begin{split}
    \lVert\phi_p^\top-\phi_\infty\rVert_{L^\infty}
    &\lesssim\eps^{2\beta-2+2\alpha},\\
    \lVert\partial_x\phi_p^\top\rVert_{L^\infty}
    &\lesssim\eps^{2\beta-2+\alpha},
    \qquad
    \lVert\partial_x^2\phi_p^\top\rVert_{L^\infty}
    \lesssim\eps^{2\beta-2}.
\end{split}
\]
The interpolation inequality
\([f]_\gamma\lesssim\lVert f\rVert_\infty^{1-\gamma}
\lVert f'\rVert_\infty^\gamma\) now gives the two H\"older bounds in
\eqref{eq: decays tangential part}.

It remains to bound the constant. Choose \(\varpi>0\) smaller than the fixed
boundary cut-off constants. On
\(\{X<\varpi\eps^{\alpha-1}\}\), all linear gluing terms vanish, and hence
\[
    \lVert\widehat F_{\phi_p}\rVert_{L^\infty}
    \lesssim R_\eps^2
    +e^{-\widetilde\lambda/\eps^{1-\alpha}}.
\]
Using \eqref{eq:phi-top-infty-formula}, split its integral at
\(X=\varpi\eps^{\alpha-1}\). On the complementary region,
\(D_RU+XH'\) decays exponentially, while
\(\spt\widehat F_{\phi_p}\subset\{X\leq C\eps^{\alpha-1}\}\).
Together with \eqref{eq: sharp boundary projected rhs}, this gives
\[
\begin{split}
    |\phi_\infty|
    &\lesssim
    \eps^{2\alpha-2}
    \left(
    \eps^{2\beta}+R_\eps^2
    \right)
    +e^{-c/\eps^{1-\alpha}}\\
    &\lesssim\eps^{2\beta-2+2\alpha},
\end{split}
\]
which completes the proof of \eqref{eq: decays tangential part}.
\end{proof}

The Lipschitz estimates follow by comparing two instances of the fixed-point
construction above, using the same decomposition of the right-hand side.

\begin{proof}[Proof of \Cref{prop: invert boundary dependence}]
We first compare the dependence on \(v\) and \(\psi\) while keeping \(h_j\)
fixed. Let \(v^i\) and \(\psi^i\), \(i=1,2\), satisfy the
assumptions above, and let \(\phi_p^i\) be the corresponding fixed points.
Write \(\phi_\infty^i=b_{p,\infty}^i\). Since \(h_j\) is the same for both
data sets, so are \(\theta\) and the change of variables. Set

\[
\begin{split}
    D_\phi&\coloneqq
    \lVert\widehat\phi_p^1-\widehat\phi_p^2\rVert_
    {C_{\ell_+,\lambda}^{2,\gamma}},\\
    D_v&\coloneqq
    \lVert v^1-v^2\rVert_{
    \mathcal C_{\eps,\lambda}^{2,\gamma}(I_j\times\R)},\\
    D_\psi&\coloneqq
    \lVert\psi^1-\psi^2\rVert_
    {C_{\Gamma,\eps,3\lambda/4}^{2,\gamma}
    (\R^2_{\p},\L_{\p})}.
\end{split}
\]
Adding and subtracting the right-hand sides with intermediate data
\((\phi_p^2,v^1,h_j,\psi^1)\) and
\((\phi_p^2,v^2,h_j,\psi^1)\) separates the variations in
\(\phi_p\), \(v\), and \(\psi\).
More precisely, set
\[
\begin{aligned}
    \delta_\phi\widehat F
    &\coloneqq
    \widehat F_p(\phi_p^1;v^1,h_j,\psi^1)
    -\widehat F_p(\phi_p^2;v^1,h_j,\psi^1),\\
    \delta_v\widehat F
    &\coloneqq
    \widehat F_p(\phi_p^2;v^1,h_j,\psi^1)
    -\widehat F_p(\phi_p^2;v^2,h_j,\psi^1),\\
    \delta_\psi\widehat F
    &\coloneqq
    \widehat F_p(\phi_p^2;v^2,h_j,\psi^1)
    -\widehat F_p(\phi_p^2;v^2,h_j,\psi^2).
\end{aligned}
\]

The estimates proving \eqref{eq: boundary contraction} remain valid for the
first difference because their nonlinear coefficient uses only
\[
    \lVert v^1\rVert_{\mathcal C_{\eps,\lambda}^{2,\gamma}}
    +\sum_{i=1}^2
    \lVert\phi_p^i\rVert_{C_{\ell_\theta,\eps,\lambda}^{2,\gamma}}
    +e^{-\widetilde\lambda/\eps^{1-\alpha}}
    \lesssim\eps^\beta.
\]
Thus
\[
    \lVert\delta_\phi\widehat F\rVert_{C_{\ell_+,\lambda}^{0,\gamma}}
    +A_{g_\tau}(\delta_\phi\widehat F)
    \lesssim
    \eps^{\beta-(1-\alpha)\tau}D_\phi.
\]

We now vary \(v\), with \(\phi_p^2\), \(h_j\), and \(\psi^1\) fixed. The
terms that change are \(A_1,A_2,A_3,B_2\), and \(C_{\mathrm{bd}}\). In the
weighted H\"older norm, the coefficients for
\(A_1+A_2,A_3,B_2\), and \(C_{\mathrm{bd}}\) are,
respectively,
\(\eps^{1-\alpha}\), \(\eps^\beta\),
\(\eps^{2\beta}\), and \(\eps^\beta\). Hence
\[
    \lVert\delta_v\widehat F\rVert_{C_{\ell_+,\lambda}^{0,\gamma}}
    \lesssim\eps^{1-\alpha}D_v.
\]
For the tangential projections, the orthogonality of \(v^1-v^2\) and its
\(x\)-derivative, together with \(H'\in L^1(\R)\), gives
\[
\begin{aligned}
    A_{g_\tau}(\delta_v\widehat A_1+\delta_v\widehat A_2)
    &\lesssim
    \eps^{\beta+1-\alpha-(1-\alpha)\tau}D_v,
    &
    A_{g_\tau}(\delta_v\widehat A_3)
    &\lesssim
    \eps^{\beta-(1-\alpha)\tau}D_v,\\
    A_{g_\tau}(\delta_v\widehat B_2)
    &\lesssim
    \eps^{2\beta-(1-\alpha)\tau}D_v,
    &
    A_{g_\tau}(\delta_v\widehat C_{\mathrm{bd}})
    &\lesssim
    \eps^{\beta-(1-\alpha)\tau}D_v.
\end{aligned}
\]
The choice \eqref{eq: boundary-decay-exponent-range} gives
\(\beta-(1-\alpha)\tau>1-\alpha\); all the coefficients in this display are
therefore \(O(\eps^{1-\alpha})\). Hence
\[
    \lVert\delta_v\widehat F\rVert_{C_{\ell_+,\lambda}^{0,\gamma}}
    +A_{g_\tau}(\delta_v\widehat F)
    \lesssim\eps^{1-\alpha}D_v.
\]

Finally, only \(A_3\) changes when \(\psi\) varies. The nonlinear difference
has Lipschitz coefficient \(O(\eps^\beta)\). For the remaining term, the
coefficient
\(\chi_{j,1}(W''(\omega)-\kappa_W)\) has boundary weight \(\lambda/4\),
while \(\psi^1-\psi^2\) has rate \(3\lambda/4\). Thus
\[
    \lVert\delta_\psi\widehat F\rVert_{C_{\ell_+,\lambda}^{0,\gamma}}
    \lesssim D_\psi.
\]
Its support reaches \(X=O(\eps^{\alpha-1})\), so
\eqref{eq: boundary Ag support} gives
\[
    A_{g_\tau}(\delta_\psi\widehat F)
    \lesssim\eps^{-(1-\alpha)\tau}D_\psi.
\]

Combining the three variations yields
\[
\begin{split}
    &\lVert\widehat F_p(\phi_p^1;v^1,h_j,\psi^1)
    -\widehat F_p(\phi_p^2;v^2,h_j,\psi^2)\rVert_
    {C_{\ell_+,\lambda}^{0,\gamma}}\\
    &\quad+
    A_{g_\tau}\bigl(
    \widehat F_p(\phi_p^1;v^1,h_j,\psi^1)
    -\widehat F_p(\phi_p^2;v^2,h_j,\psi^2)
    \bigr)\\
    &\qquad\lesssim
    \eps^{\beta-(1-\alpha)\tau}D_\phi
    +\eps^{1-\alpha}D_v
    +\eps^{-(1-\alpha)\tau}D_\psi.
\end{split}
\]
Applying \Cref{cor: schauder-tan} to the difference of the fixed-point
equations and absorbing the first term gives
\begin{equation}\label{eq: boundary fixed-h dependence}
    D_\phi
    \lesssim
    \eps^{1-\alpha}D_v
    +\eps^{-(1-\alpha)\tau}D_\psi.
\end{equation}

We finally vary \(h_j\). Let \(h_j^i\), \(i=1,2\), satisfy the assumptions
in the statement, set \(\theta_i=\partial_{e_p}h_j^i(p)\), and write
\[
    D_h\coloneqq
    \lVert h_j^1-h_j^2\rVert_{C^{2,\gamma}(I_j)}.
\]
Both boundary equations are compared in the common unrotated variable \(Y\).
Thus a fixed function \(\widehat\phi\) corresponds, for the \(i\)-th data set,
to \(\phi_i(y)=\widehat\phi(\eps^{-1}R_{\theta_i}y)\). With this
identification, the estimates above apply unchanged to the variations in
\(\widehat\phi\), \(v\), and \(\psi\); it remains to estimate the variation
of the right-hand side with \(\widehat\phi\), \(v\), and \(\psi\) fixed.

For \(A_1+A_2\), the chain rules used in their fixed-data estimate involve
only \(v\) and its first derivatives. Since \(v\in
\mathcal C_{\eps,\lambda}^{2,\gamma}\), the mean value theorem uses at most
its second derivatives. More precisely, if \((x_i,t_i)\) are the interior
coordinates of \(\eps R_{-\theta_i}Y\), then on
\(\widehat\Omega_{p,\eta}\)
\[
    \eps^{-1}|x_1-x_2|+|t_1-t_2|
    \lesssim\eps^{\alpha-1}D_h.
\]
Differentiating the affine-collar formula for \(t\) above with respect to
\(\theta\), and using \(\lVert v\rVert\leq R_\eps\), shows that both the full
H\"older difference and the tangential projection before applying
\eqref{eq: boundary Ag support} are \(O(\eps^\beta D_h)\).  Together with
the cut-off estimates, this gives
\[
    \lVert\delta_h(\widehat A_1+\widehat A_2)\rVert_
    {C_{\ell_+,\lambda}^{0,\gamma}}
    +A_{g_\tau}\bigl(\delta_h(\widehat A_1+\widehat A_2)\bigr)
    \lesssim\eps^{\beta-(1-\alpha)\tau}D_h.
\]
The potential terms are controlled by
\Cref{lem: profile-potential-comparison-dependence} and
\eqref{eq: localized-potential-product}--\eqref{eq: localized-potential-Ag}.
After multiplication by \(\phi_p\) or \(v\), both their full and projected
norms are \(O(\eps^{\beta-(1-\alpha)\tau}D_h)\).

For \(A_3\), the approximation part is controlled directly by
\eqref{eq: boundary-approximation-error-dependence} and
\eqref{eq: boundary-approximation-error-dependence-Ag}.  The nonlinear
difference has coefficient \(O(R_\eps)\), and
\eqref{eq: boundary-background-potential-dependence} controls the coefficient
of the exponentially small outer correction. Consequently,
\[
    \lVert\delta_h\widehat A_3\rVert_{C_{\ell_+,\lambda}^{0,\gamma}}
    +A_{g_\tau}(\delta_h\widehat A_3)
    \lesssim\eps^{\beta-(1-\alpha)\tau}D_h.
\]

It remains to consider the boundary scalar.  From its definition,
\[
\begin{split}
    m_{j,h_j^1}^{\mathrm{bd}}[v]
    -m_{j,h_j^2}^{\mathrm{bd}}[v]
    =\frac1{A_H}\Bigg(&
    \bigl((h_j^1)'^2-(h_j^2)'^2\bigr)\int_\R vH'''\\
    &+2\eps\bigl((h_j^1)'-(h_j^2)'\bigr)
    \int_\R\partial_xvH''\Bigg).
\end{split}
\]
The stretched derivative bounds and \(\lVert h_j^i\rVert_{C^{2,\gamma}}\lesssim
\eps^\beta\) therefore give a full weighted bound
\(CR_\eps D_h\).  The same calculation after composition with the two
boundary coordinates uses at most one further stretched derivative of \(v\).
Together with \eqref{eq: boundary Ag support}, it gives
\[
    \lVert\delta_h\widehat C_{\mathrm{bd}}\rVert_
    {C_{\ell_+,\lambda}^{0,\gamma}}
    +A_{g_\tau}(\delta_h\widehat C_{\mathrm{bd}})
    \lesssim\eps^{\beta-(1-\alpha)\tau}D_h.
\]

Combining these estimates with the preceding telescoping argument and
absorbing the variation in \(\widehat\phi_p\) gives
\begin{equation}\label{eq: boundary full dependence}
\begin{split}
    \lVert\widehat\phi_p^1-\widehat\phi_p^2\rVert_
    {C_{\ell_+,\lambda}^{2,\gamma}}
    \lesssim{}&
    \eps^{1-\alpha}
    \lVert v^1-v^2\rVert_{\mathcal C_{\eps,\lambda}^{2,\gamma}}
    +\eps^{-(1-\alpha)\tau}
    \lVert\psi^1-\psi^2\rVert_{C_{\Gamma,\eps,3\lambda/4}^{2,\gamma}}\\
    &+\eps^{\beta-(1-\alpha)\tau}
    \lVert h_j^1-h_j^2\rVert_{C^{2,\gamma}(I_j)}.
\end{split}
\end{equation}

It remains to prove the tangential dependence estimate. Set
\[
\begin{aligned}
    \mathcal D_p\coloneqq{}&
    \eps^{1-\alpha}
    \lVert v^1-v^2\rVert_{\mathcal C_{\eps,\lambda}^{2,\gamma}(I_j\times\R)}\\
    &+\eps^{-(1-\alpha)\tau}
    \lVert\psi^1-\psi^2\rVert_
    {C_{\Gamma,\eps,3\lambda/4}^{2,\gamma}(\R^2_{\p},\L_{\p})}\\
    &+\eps^{\beta-(1-\alpha)\tau}
    \lVert h_j^1-h_j^2\rVert_{C^{2,\gamma}(I_j)}.
\end{aligned}
\]
The difference estimates preceding \eqref{eq: boundary full dependence}, after
absorbing the variation in \(\widehat\phi_p\), give
\(A_{g_\tau}(\widehat F^1-\widehat F^2)\lesssim\mathcal D_p\).
Set \(L_\eps\coloneqq\eps^{\alpha-1}\).  Every
transition-supported term in \(\widehat F^1-\widehat F^2\), including the
potential term multiplying \(\widehat\phi_p^1-\widehat\phi_p^2\), is
supported in \(cL_\eps<X<CL_\eps\).  On the positive end,
\[
    D_RU(X,Z)=-XH'(Z)+\rho(X,Z),
    \qquad
    \int_{\R}|\rho(X,Z)|\,dZ\lesssim e^{-\sigma X}.
\]
Since
\(g_\tau''(X)=(\tau-1)(\tau-2)(1+X)^{-\tau}\), the moment formula gives
\[
\begin{split}
    \left|
    \int_{\{cL_\eps<X<CL_\eps\}}
    (\widehat F^1-\widehat F^2)D_RU
    \right|
    &\lesssim
    \mathcal D_p
    \int_{cL_\eps}^{CL_\eps}Xg_\tau''(X)\,dX
    +e^{-cL_\eps}\mathcal D_p\\
    &\lesssim
    L_\eps^{2-\tau}\mathcal D_p.
\end{split}
\]
In the core \(0<X<cL_\eps\), the approximation and linear gluing terms
vanish.  The remaining terms have size
\(O(R_\eps\mathcal D_p)\) for the nonlinear part and
\[
    O\left(
    \lVert\psi^1-\psi^2\rVert_
    {C_{\Gamma,\eps,3\lambda/4}^{2,\gamma}
    (\R^2_{\p},\L_{\p})}
    \right)
\]
for the background part.  Exponential localization in \(Z\) makes the normal
integral uniform, while \(|D_RU|\lesssim 1+X\) there.  Hence the core
contribution is bounded by
\[
    CL_\eps^2
    \left(
        R_\eps\mathcal D_p
        +\lVert\psi^1-\psi^2\rVert_
        {C_{\Gamma,\eps,3\lambda/4}^{2,\gamma}
        (\R^2_{\p},\L_{\p})}
    \right)
    \lesssim L_\eps^{2-\tau}\mathcal D_p.
\]
Indeed,
\(R_\eps\lesssim\eps^{(1-\alpha)\tau}\) and the definition of
\(\mathcal D_p\) gives
\(\lVert\psi^1-\psi^2\rVert\lesssim
\eps^{(1-\alpha)\tau}\mathcal D_p\).  Since
\(L_\eps^{2-\tau}=\eps^{(1-\alpha)(\tau-2)}\), the transition and core
estimates imply
\[
    |\phi_\infty^1-\phi_\infty^2|
    \lesssim
    \eps^{(1-\alpha)(\tau-2)}\mathcal D_p.
\]
Together with \eqref{eq: decay tangential part}, this gives after the common
conversion \(x=\eps X\), but before the two \(h_j\)-dependent physical
coordinate conversions,
\begin{equation}\label{eq: boundary tangential difference}
\begin{split}
    |\delta\phi_\infty|
    +\lVert\delta\phi_p^\top\rVert_{L^\infty(I_\eps)}
    &\lesssim
    \eps^{(1-\alpha)(\tau-2)}\mathcal D_p,\\
    \lVert\partial_x\delta\phi_p^\top\rVert_{L^\infty(I_\eps)}
    &\lesssim
    \eps^{-1+(1-\alpha)(\tau-1)}\mathcal D_p,\\
    \lVert\partial_x^2\delta\phi_p^\top\rVert_{L^\infty(I_\eps)}
    &\lesssim
    \eps^{(1-\alpha)\tau-2}\mathcal D_p,\\
    [\delta\phi_p^\top]_{\gamma,I_\eps}
    &\lesssim
    \eps^{(1-\alpha)(\tau-2)-\alpha\gamma}\mathcal D_p,\\
    [\partial_x\delta\phi_p^\top]_{\gamma,I_\eps}
    &\lesssim
    \eps^{-1+(1-\alpha)(\tau-1)-\alpha\gamma}\mathcal D_p,
\end{split}
\end{equation}
where \(\delta\phi_p^\top=\phi_p^{\top,1}-\phi_p^{\top,2}\) and
\(\delta\phi_\infty=\phi_\infty^1-\phi_\infty^2\).  The third line follows
directly from the second-derivative estimate in
\eqref{eq: decay tangential part}.  Interpolating the first two pointwise
bounds gives the fourth line, and interpolating the second and third gives the
fifth.  It remains only to compare the two physical coordinate conversions.
On \(I_\eps\), Taylor's theorem gives
\[
    |(h_j^1-h_j^2)(x)-(\theta_1-\theta_2)x|
    \lesssim D_hx^{2+\gamma},
    \qquad
    |(h_j^1-h_j^2)'(x)-(\theta_1-\theta_2)|
    \lesssim D_hx^{1+\gamma}.
\]
Together with the individual estimates above, these bounds show that the
combined contribution of the two coordinate conversions to the scaled
\(C^{1,\gamma}\)-quantity on the left-hand side of
\eqref{eq: tangential dependence phib} is
\(O(\eps^{\beta-2+2\alpha}D_h)\).  Multiplying the value,
first-derivative, and H\"older estimates in
\eqref{eq: boundary tangential difference} by the corresponding powers of
\(\eps^\alpha\), and then expanding \(\mathcal D_p\), gives
\eqref{eq: tangential dependence phib}.

\end{proof}

\subsection{Proof of \Cref{prop: invertibility interior}}\label{sec: invertibility interior}
\begin{proof}
Fix \(j\), \(h_j\), and \(\psi\), and work in the shifted variables
\((x,t)\), where \(t=(z-h_j(x))/\eps\).  Let \(\mathbb B_j\) be the closed
ball of radius \(R_\eps\), defined in \eqref{eq: correction-radius}, in the
subspace of \(\mathcal C_{\eps,\lambda}^{2,\gamma}(I_j\times\R)\) determined
by
\[
    v=0\quad\text{on }\partial I_j\times\R,
    \qquad
    \int_\R v(x,t)H'(t)\,dt=0\quad(x\in I_j).
\]

We first invert the operator on the left-hand side of
\eqref{eq: interior eq3}.  The definition of
\(\mathfrak B_{j,h_j}^{\perp}\) gives
\[
    \lVert\mathfrak B_{j,h_j}^{\perp}[v]\rVert_
    {\mathcal C_{\eps,\lambda}^{0,\gamma}(I_j\times\R)}
    \lesssim
    \eps^\beta
    \lVert v\rVert_{\mathcal C_{\eps,\lambda}^{2,\gamma}(I_j\times\R)}.
\]
It is orthogonal to \(H'\), so, if \(G_H\) is the inverse from
\Cref{lem: invertibility LH}, the Neumann series defines
\[
    \widetilde G_{H,h_j}
    \coloneqq
    \bigl(I+G_H\mathfrak B_{j,h_j}^{\perp}\bigr)^{-1}G_H
\]
and gives
\begin{equation}\label{eq: inverse Ltilde}
    \lVert\widetilde G_{H,h_j}f\rVert_
    {\mathcal C_{\eps,\lambda}^{2,\gamma}(I_j\times\R)}
    \lesssim
    \lVert f\rVert_{\mathcal C_{\eps,\lambda}^{0,\gamma}(I_j\times\R)}.
\end{equation}
For \(v\in\mathbb B_j\), consider the map
\[
    v\longmapsto
    \widetilde G_{H,h_j}\Pi_{I_j}
    \mathcal F_{j,h_j}(v,\psi).
\]
Since \(\Pi_{I_j}\) is bounded,
\[
    \lVert\Pi_{I_j}\mathcal F_{j,h_j}(v,\psi)\rVert_
    {\mathcal C_{\eps,\lambda}^{0,\gamma}(I_j\times\R)}
    \lesssim
    \lVert\mathcal F_{j,h_j}(v,\psi)\rVert_
    {\mathcal C_{\eps,\lambda}^{0,\gamma}(I_j\times\R)}.
\]
Thus it is enough to estimate the full right-hand side, including its
\(H'\)-component.

For this purpose, write
\[
    \mathcal F_{j,h_j}(v,\psi)
    =
    P_1+P_2+Q_1+Q_2+M_{\mathrm{bd}}+M_{\mathrm{int}},
\]
where
\[
\begin{aligned}
    P_1&=\eps\chi_{j,4}\eta_{j,3}h_j''H',
    &
    P_2&=\chi_{j,4}\eta_{j,4}\mathcal R_j(\phi_j,\psi),\\
    Q_1&=-2\eps^2\chi_{j,4}
    \sum_{p\in\partial I_j}\nabla\zeta_{p,4}\cdot\nabla b_p,
    &
    Q_2&=-\eps^2\chi_{j,4}
    \sum_{p\in\partial I_j}\Delta\zeta_{p,4}b_p,\\
    M_{\mathrm{bd}}
    &=-\chi_{j,4}\eta_{j,4}m_j^{\mathrm{bd}}[v]H',
    &
    M_{\mathrm{int}}
    &=-\chi_{j,4}m_j^{\mathrm{int}}[v]H',
\end{aligned}
\]
where
\[
    b_p=\mathscr B_{p,h_j}(v,\psi)
\]
and \(\phi_j\) is reconstructed from \(v\) and these boundary responses.
All six terms are read in the shifted interior coordinates.

We first prove that this map sends \(\mathbb B_j\) into itself.  The shift
bound gives
\begin{equation}\label{eq: interior P1 size}
    \lVert P_1\rVert_{\mathcal C_{\eps,\lambda}^{0,\gamma}(I_j\times\R)}
    \lesssim\eps^{\beta+1}.
\end{equation}
For \(P_2\), cutoff nesting identifies its approximation part with
\(-\widetilde{\mathcal E}^{\mathrm{rem}}_{j,\h}\).  Hence
\eqref{eq: interior-approximation-error-size}, the quadratic estimate for
\(\mathcal N_\omega\), \eqref{eq: size phi_b}, and the outer bound give
\begin{equation}\label{eq: interior P2 size}
\begin{split}
    \lVert P_2\rVert_{\mathcal C_{\eps,\lambda}^{0,\gamma}(I_j\times\R)}
    \lesssim
    \eps^{2\beta}+R_\eps^2+r_{\mathrm{out}}(\eps).
\end{split}
\end{equation}
The stretched cutoff bounds are
\[
    \eps^2
    \lVert\nabla\zeta_{p,4}\rVert_
    {\mathcal C_\eps^{0,\gamma}(I_j\times\R)}
    \lesssim\eps^{2-\alpha},
    \qquad
    \eps^2
    \lVert\Delta\zeta_{p,4}\rVert_
    {\mathcal C_\eps^{0,\gamma}(I_j\times\R)}
    \lesssim\eps^{2-2\alpha}.
\]
On the commutator collar, the boundary and shifted-interior exponential
weights are uniformly comparable.  Thus the boundary estimate can be used
directly in the stretched interior norm.
After the physical derivative in \(Q_1\) is taken into account,
\eqref{eq: size phi_b} therefore yields
\begin{align}
    \lVert Q_1\rVert_{\mathcal C_{\eps,\lambda}^{0,\gamma}(I_j\times\R)}
    &\lesssim
    \eps^{2-2\alpha}
    \lVert v\rVert_{\mathcal C_{\eps,\lambda}^{2,\gamma}(I_j\times\R)}
    +\eps^{2\beta+1-\alpha-(1-\alpha)\tau},
    \label{eq: interior Q1 size}\\
    \lVert Q_2\rVert_{\mathcal C_{\eps,\lambda}^{0,\gamma}(I_j\times\R)}
    &\lesssim
    \eps^{3-3\alpha}
    \lVert v\rVert_{\mathcal C_{\eps,\lambda}^{2,\gamma}(I_j\times\R)}
    +\eps^{2\beta+2-2\alpha-(1-\alpha)\tau}.
    \label{eq: interior Q2 size}
\end{align}
Finally, the definitions of the two moments give
\begin{align}
    \lVert M_{\mathrm{bd}}\rVert_
    {\mathcal C_{\eps,\lambda}^{0,\gamma}(I_j\times\R)}
    &\lesssim
    \eps^\beta
    \lVert v\rVert_{\mathcal C_{\eps,\lambda}^{2,\gamma}(I_j\times\R)},
    \label{eq: interior Mbd size}\\
    \lVert M_{\mathrm{int}}\rVert_
    {\mathcal C_{\eps,\lambda}^{0,\gamma}(I_j\times\R)}
    &\lesssim
    \eps^{\beta+1}
    \lVert v\rVert_{\mathcal C_{\eps,\lambda}^{2,\gamma}(I_j\times\R)}.
    \label{eq: interior Mint size}
\end{align}

For \(v\in\mathbb B_j\), division of
\eqref{eq: interior P1 size}--\eqref{eq: interior Mint size} by \(R_\eps\)
produces only positive powers of \(\eps\).  The two powers involving
\(\tau\) are positive because
\[
    \beta-(1-\alpha)\tau>1-\alpha,
    \qquad
    \beta+1-\alpha-(1-\alpha)\tau>\beta-2\alpha>0.
\]
The outer term is smaller than every power of \(\eps\).  Consequently,
\begin{equation}\label{eq: smallness Fin}
    \lVert\mathcal F_{j,h_j}(v,\psi)\rVert_
    {\mathcal C_{\eps,\lambda}^{0,\gamma}(I_j\times\R)}
    \leq c_\eps R_\eps,
    \qquad
    c_\eps\longrightarrow0.
\end{equation}

We next fix \(h_j,\psi\) and compare \(v_1,v_2\in\mathbb B_j\).  The term
\(P_1\) cancels.  The quadratic estimate, the boundary dependence
\eqref{eq: Lip phib}, and the two moment definitions give, respectively,
\[
\begin{aligned}
    \lVert P_2(v_1)-P_2(v_2)\rVert_
    {\mathcal C_{\eps,\lambda}^{0,\gamma}(I_j\times\R)}
    &\lesssim R_\eps
    \lVert v_1-v_2\rVert_{\mathcal C_{\eps,\lambda}^{2,\gamma}(I_j\times\R)},\\
    \lVert Q_1(v_1)-Q_1(v_2)\rVert_
    {\mathcal C_{\eps,\lambda}^{0,\gamma}(I_j\times\R)}
    &\lesssim\eps^{2-2\alpha}
    \lVert v_1-v_2\rVert_{\mathcal C_{\eps,\lambda}^{2,\gamma}(I_j\times\R)},\\
    \lVert Q_2(v_1)-Q_2(v_2)\rVert_
    {\mathcal C_{\eps,\lambda}^{0,\gamma}(I_j\times\R)}
    &\lesssim\eps^{3-3\alpha}
    \lVert v_1-v_2\rVert_{\mathcal C_{\eps,\lambda}^{2,\gamma}(I_j\times\R)},\\
    \lVert M_{\mathrm{bd}}(v_1)-M_{\mathrm{bd}}(v_2)\rVert_
    {\mathcal C_{\eps,\lambda}^{0,\gamma}(I_j\times\R)}
    &+
    \lVert M_{\mathrm{int}}(v_1)-M_{\mathrm{int}}(v_2)\rVert_
    {\mathcal C_{\eps,\lambda}^{0,\gamma}(I_j\times\R)}\\
    &\lesssim\eps^\beta
    \lVert v_1-v_2\rVert_{\mathcal C_{\eps,\lambda}^{2,\gamma}(I_j\times\R)}.
\end{aligned}
\]
Since
\[
    R_\eps+\eps^{3-3\alpha}+\eps^\beta
    =o(\eps^{2-2\alpha}),
\]
we obtain
\begin{equation}\label{eq: contract Fin}
\begin{split}
    &\lVert\mathcal F_{j,h_j}(v_1,\psi)
    -\mathcal F_{j,h_j}(v_2,\psi)\rVert_
    {\mathcal C_{\eps,\lambda}^{0,\gamma}(I_j\times\R)}\\
    &\qquad\lesssim
    \eps^{2-2\alpha}
    \lVert v_1-v_2\rVert_{\mathcal C_{\eps,\lambda}^{2,\gamma}(I_j\times\R)}.
\end{split}
\end{equation}
Equations \eqref{eq: inverse Ltilde}, \eqref{eq: smallness Fin}, and
\eqref{eq: contract Fin} show that this map is a contraction of
\(\mathbb B_j\) into itself when \(\eps\) is sufficiently small.  Its unique
fixed point is \(v_j=\mathscr V_{j,h_j}(\psi)\).

It remains to compare two data sets.  Let
\[
    v_i=\mathscr V_{j,h_j^i}(\psi^i),
    \qquad i=1,2,
\]
and set
\[
\begin{aligned}
    D_v&\coloneqq
    \lVert v_1-v_2\rVert_{\mathcal C_{\eps,\lambda}^{2,\gamma}(I_j\times\R)},\\
    D_h&\coloneqq
    \lVert h_j^1-h_j^2\rVert_{C^{2,\gamma}(I_j)},\\
    D_\psi&\coloneqq
    \lVert\psi^1-\psi^2\rVert_
    {C_{\Gamma,\eps,3\lambda/4}^{2,\gamma}
    (\R^2_{\p},\L_{\p})}.
\end{aligned}
\]
We use the decomposition
\[
\begin{split}
    &\mathcal F_{j,h_j^1}(v_1,\psi^1)
    -\mathcal F_{j,h_j^2}(v_2,\psi^2)\\
    &\quad=
    \bigl[
        \mathcal F_{j,h_j^1}(v_1,\psi^1)
        -\mathcal F_{j,h_j^1}(v_2,\psi^2)
    \bigr]\\
    &\qquad+
    \bigl[
        \mathcal F_{j,h_j^1}(v_2,\psi^2)
        -\mathcal F_{j,h_j^2}(v_2,\psi^2)
    \bigr].
\end{split}
\]
First keep the shift \(h_j=h_j^1\) fixed.  Equation
\eqref{eq: Lip phib} gives
\[
    \lVert\mathscr B_{p,h_j}(v_1,\psi^1)
    -\mathscr B_{p,h_j}(v_2,\psi^2)\rVert_
    {C_{\ell_{\theta_{\h^1}(p)},\eps,\lambda}^{2,\gamma}(\R^2_*,\L)}
    \lesssim
    \eps^{1-\alpha}D_v+
    \eps^{-(1-\alpha)\tau}D_\psi.
\]
The preceding term-by-term estimates then give
\begin{equation}\label{eq: interior fixed-h data rhs}
\begin{split}
    &\lVert\mathcal F_{j,h_j}(v_1,\psi^1)
    -\mathcal F_{j,h_j}(v_2,\psi^2)\rVert_
    {\mathcal C_{\eps,\lambda}^{0,\gamma}(I_j\times\R)}\\
    &\qquad\lesssim
    \rho_\eps D_v+
    \eps^{1-\alpha-(1-\alpha)\tau}D_\psi,
    \qquad
    \rho_\eps\longrightarrow0.
\end{split}
\end{equation}
Indeed, the \(Q_1\)-coefficients are
\(\eps^{2-2\alpha}\) and
\(\eps^{1-\alpha-(1-\alpha)\tau}\), the \(Q_2\)-coefficients are
\(\eps^{3-3\alpha}\) and
\(\eps^{2-2\alpha-(1-\alpha)\tau}\), and the nonlinear and moment terms are
smaller.

Now keep \(v=v_2\) and \(\psi=\psi^2\) fixed and vary the shift.  The term \(P_1\)
contributes \(O(\eps D_h)\).  By
\eqref{eq: interior-approximation-error-dependence}, the approximation part of
\(P_2\) contributes \(O(\eps^\beta D_h)\).  The quadratic estimate,
\eqref{eq: Lip phib}, and the exponentially small outer bound give
\[
\begin{split}
    \lVert P_2(v,h_j^1,\psi)-P_2(v,h_j^2,\psi)\rVert_
    {\mathcal C_{\eps,\lambda}^{0,\gamma}(I_j\times\R)}
    \lesssim{}&
    \left(
        \eps^\beta
        +\eps^{\beta-(1-\alpha)\tau}R_\eps
        +\eps^{-1}R_\eps^2
        +\eps^{-1}r_{\mathrm{out}}(\eps)
    \right)D_h\\
    &\lesssim\eps D_h.
\end{split}
\]
In the common unrotated boundary
coordinates, set
\[
    \widehat b_p^i(Y)
    =
    \mathscr B_{p,h_j^i}(v,\psi)
    \bigl(\eps R_{-\theta_{\h^i}(p)}Y\bigr),
    \qquad i=1,2.
\]
Equation \eqref{eq: Lip phib} gives
\[
    \lVert\widehat b_p^1-\widehat b_p^2\rVert_
    {C_{\ell_+,\lambda}^{2,\gamma}(\R^2_*,\L)}
    \lesssim
    \eps^{\beta-(1-\alpha)\tau}D_h.
\]
On the commutator collar, the two boundary coordinate maps differ by
\(O(\eps^{\alpha-1}D_h)\) in the scaled \(C^{1,\gamma}\)-norm.  Write
\(\delta_{\mathrm{coord}}Q_k\) for the part of the \(Q_k\)-difference caused
only by composing the boundary response with these two coordinate maps.
The \(C^{2,\gamma}\)-to-\(C^{1,\gamma}\) composition estimate and
\eqref{eq: size phi_b} give
\[
\begin{aligned}
    \lVert\delta_{\mathrm{coord}}Q_1\rVert_
    {\mathcal C_{\eps,\lambda}^{0,\gamma}(I_j\times\R)}
    &\lesssim
    \left(
        \eps^{1-\alpha}R_\eps
        +\eps^{2\beta-(1-\alpha)\tau}
    \right)D_h,\\
    \lVert\delta_{\mathrm{coord}}Q_2\rVert_
    {\mathcal C_{\eps,\lambda}^{0,\gamma}(I_j\times\R)}
    &\lesssim
    \eps^{1-\alpha}
    \left(
        \eps^{1-\alpha}R_\eps
        +\eps^{2\beta-(1-\alpha)\tau}
    \right)D_h.
\end{aligned}
\]
The variations of the pulled-back \(\chi_{j,4}\) and the undifferentiated
cutoff factors are included in these bounds; the differentiated
\(\zeta_{p,4}\)-factors are fixed and horizontal on this collar.  Both
coordinate terms are \(o(\eps D_h)\).  The direct boundary-response
differences contribute
\[
    O\left(
        \eps^{\beta+1-\alpha-(1-\alpha)\tau}D_h
    \right)
    \quad\text{and}\quad
    O\left(
        \eps^{\beta+2-2\alpha-(1-\alpha)\tau}D_h
    \right),
\]
respectively.  Both are \(O(\eps D_h)\), since
\[
    \beta-\alpha-(1-\alpha)\tau>\beta-1-2\alpha>0,
    \qquad
    \beta+1-2\alpha-(1-\alpha)\tau>\beta-3\alpha>0.
\]
Finally,
\[
\begin{aligned}
    \lVert M_{\mathrm{bd}}^1-M_{\mathrm{bd}}^2\rVert_
    {\mathcal C_{\eps,\lambda}^{0,\gamma}(I_j\times\R)}
    &\lesssim
    \eps^\beta D_v+R_\eps D_h,\\
    \lVert M_{\mathrm{int}}^1-M_{\mathrm{int}}^2\rVert_
    {\mathcal C_{\eps,\lambda}^{0,\gamma}(I_j\times\R)}
    &\lesssim
    \eps^{\beta+1}D_v+\eps R_\eps D_h.
\end{aligned}
\]
Combining the fixed-shift and varying-shift comparisons, and weakening the
outer coefficient, gives
\begin{equation}\label{eq: interior full data rhs}
\begin{split}
    &\lVert\mathcal F_{j,h_j^1}(v_1,\psi^1)
    -\mathcal F_{j,h_j^2}(v_2,\psi^2)\rVert_
    {\mathcal C_{\eps,\lambda}^{0,\gamma}(I_j\times\R)}\\
    &\qquad\lesssim
    \rho_\eps D_v+\eps D_h
    +\eps^{-(1-\alpha)\tau}D_\psi.
\end{split}
\end{equation}

The inverse has the corresponding shift dependence.  Directly from the
definition of \(\mathfrak B_{j,h_j}^{\perp}\),
\[
    \lVert(\mathfrak B_{j,h_j^1}^{\perp}
    -\mathfrak B_{j,h_j^2}^{\perp})v\rVert_
    {\mathcal C_{\eps,\lambda}^{0,\gamma}(I_j\times\R)}
    \lesssim
    D_h
    \lVert v\rVert_{\mathcal C_{\eps,\lambda}^{2,\gamma}(I_j\times\R)}.
\]
The resolvent identity therefore gives
\[
    \lVert(\widetilde G_{H,h_j^1}-\widetilde G_{H,h_j^2})f\rVert_
    {\mathcal C_{\eps,\lambda}^{2,\gamma}(I_j\times\R)}
    \lesssim
    D_h
    \lVert f\rVert_{\mathcal C_{\eps,\lambda}^{0,\gamma}(I_j\times\R)}.
\]
Here \(f\) is required to satisfy \(\Pi_{I_j}f=f\), as in
\Cref{lem: invertibility LH}.  By \eqref{eq: smallness Fin}, the
inverse-variation term in the two fixed-point equations is
\(o(R_\eps)D_h=o(\eps D_h)\).  Subtracting those equations and using
\eqref{eq: interior full data rhs} gives
\[
    D_v
    \lesssim
    \rho_\eps D_v
    +\eps D_h
    +\eps^{-(1-\alpha)\tau}D_\psi
    +o(\eps D_h).
\]
Absorbing the \(\rho_\eps D_v\)-term proves \eqref{eq: Lip phiin}.
\end{proof}

\subsection{Proof of \Cref{lem: local assembly}}
\label{sec:local-assembly-proof}

\begin{proof}
The scaled product and composition estimates applied to
\eqref{eq: decomp phi} give
\[
    \lVert\mathcal L_{\h}(\psi)_j\rVert_
    {C_{I_j,\eps,\lambda}^{2,\gamma}
    (\spt\chi_{j,2},\L_{\p})}
    \lesssim
    \lVert v_j(\h,\psi)\rVert_
    {\mathcal C_{\eps,\lambda}^{2,\gamma}(I_j\times\R)}
    +
    \sum_{p\in\partial I_j}
    \lVert b_p(\h,\psi)\rVert_
    {C_{\ell_{\theta_{\h}(p)},\eps,\lambda}^{2,\gamma}
    (\R^2_*,\L)}.
\]
The cutoff derivatives cost powers of
\(\eps^{1-\alpha}\) and \(\eps^{2-2\alpha}\), so their multiplication
operators are uniformly bounded in the displayed scaled norm.  The size
estimate now follows from \Cref{prop: invert boundary} and
\Cref{prop: invertibility interior}.

For the difference estimate, set
\[
    D_h=\lVert\h^1-\h^2\rVert_{C^{2,\gamma}(\boldsymbol I)},
    \qquad
    D_\psi=
    \lVert\psi^1-\psi^2\rVert_
    {C_{\Gamma,\eps,3\lambda/4}^{2,\gamma}
    (\R^2_{\p},\L_{\p})}.
\]
Equation \eqref{eq: Lip phiin} gives
\[
    \lVert v_j(\h^1,\psi^1)-v_j(\h^2,\psi^2)\rVert_
    {\mathcal C_{\eps,\lambda}^{2,\gamma}(I_j\times\R)}
    \lesssim
    \eps D_h+\eps^{-(1-\alpha)\tau}D_\psi.
\]
For \(i=1,2\), set
\[
    \widehat b_p(\h^i,\psi^i)(Y)
    \coloneqq
    b_p(\h^i,\psi^i)
    \bigl(\eps R_{-\theta_{\h^i}(p)}Y\bigr).
\]
Substituting this estimate into \eqref{eq: Lip phib} gives, in the common
unrotated boundary coordinates,
\[
\begin{split}
    \lVert\widehat b_p(\h^1,\psi^1)
    -\widehat b_p(\h^2,\psi^2)\rVert_
    {C_{\ell_+,\lambda}^{2,\gamma}(\R^2_*,\L)}
    \lesssim{}&
    \left(
        \eps+\eps^{\beta-(1-\alpha)\tau}
    \right)D_h\\
    &+\eps^{-(1-\alpha)\tau}D_\psi.
\end{split}
\]
It remains to compare the reconstructed sections in physical coordinates.
For the interior term, changing the stretched normal variable costs
\(R_\eps D_h/\eps\).  For a boundary term, changing from the common
unrotated coordinates to the two physical boundary coordinates costs
\(\eps^{\alpha-1}R_\eps D_h\).  Since
\[
    \frac{R_\eps}{\eps}
    =\eps^{\beta-\alpha}\lesssim\eps,
    \qquad
    \eps^{\alpha-1}R_\eps=\eps^\beta\lesssim\eps,
\]
both coordinate contributions are absorbed by the \(\eps D_h\)-term.
Applying the scaled product estimate to \eqref{eq: decomp phi} now proves
\eqref{eq: local compatibility dependence}.
\end{proof}

\subsection{Proof of \Cref{prop: invert outer}}\label{sec: invert outer}

We first record the linear estimate used in the fixed-point argument.

\begin{lemma}\label{lem: invertibility out}
For a shift tuple \(\h\) satisfying \eqref{eq: shift-size-compatibility}, let
\[
    V\coloneqq
    \chi_{\Ical,1}W''(\omega)
    +
   \kappa_W(1-\chi_{\Ical,1}).
\]
There exists \(\eps_*>0\), independent of \(\h\), such that, for every \(0<\eps\leq\eps_*\) and
every \(f\in C_{*,\eps}^{0,\gamma}(\R^2_{\p},\L_{\p})\), the equation
\[
    \eps^2\Delta\phi-V\phi=f
    \qquad\text{on }\R^2_{\p}
\]
has a unique solution
\(\phi=\mathcal Gf\in C_{*,\eps}^{2,\gamma}(\R^2_{\p},\L_{\p})\).
The solution operator \(\mathcal G\) satisfies
\begin{equation}\label{eq: bound on G}
\begin{split}
    \lVert\mathcal Gf\rVert_
    {C_{*,\eps}^{2,\gamma}(\R^2_{\p},\L_{\p})}
    &\lesssim
    \lVert f\rVert_{C_{*,\eps}^{0,\gamma}(\R^2_{\p},\L_{\p})},\\
    \lVert\mathcal Gf\rVert_{C_{\Gamma,\eps,\nu}^{2,\gamma}
    (\R^2_{\p},\L_{\p})}
    &\lesssim
    \lVert f\rVert_{C_{\Gamma,\eps,\nu}^{0,\gamma}
    (\R^2_{\p},\L_{\p})},
    \qquad
    \nu\in\left\{\frac{3\lambda}{4},\lambda\right\}.
\end{split}
\end{equation}
The weighted estimate applies whenever its right-hand side is finite.
\end{lemma}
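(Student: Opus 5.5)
The plan is to treat $-\eps^2\Delta+V$ as a uniformly coercive Schrödinger operator on sections, with potential bounded below by a positive constant. Everything then follows from the maximum principle on the double cover, plus local Schauder estimates in the trivializing and singular balls.

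\emph{Step 1: uniform positivity of $V$.} On $\spt\chi_{\Ical,1}$ the ansatz $\omega$ is exponentially close to a pure phase $\pm1$. Indeed, this support lies at distance $\gtrsim\eps^\alpha$ from $\Gamma$. By \eqref{eq: outer-potential-size} we have $|W''(\omega)-\kappa_W|\lesssim e^{-c/\eps^{1-\alpha}}$ there. Elsewhere $V=\kappa_W$. Hence $V\ge\kappa_W/2$ on $\R^2_{\p}$ for $\eps\le\eps_*$, uniformly in $\h$, and $V$ is even in the trivialization, so it is a genuine scalar coefficient.

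\emph{Step 2: existence and the $L^\infty$ bound.} I would solve on exhausting bounded domains, for instance balls $B_L$ with small disks around the punctures filled in on the square-root cover. In the square-root chart the lifted operator is $\Delta_\xi-4|\xi|^2V$, with a nonnegative potential, so the Dirichlet problem is uniquely solvable by Lax--Milgram. Oddness is preserved by uniqueness, so the solution descends to a section. The maximum principle for $|\phi|$, which is a well-defined function on the base since sections differ by signs, gives $\lVert\phi_L\rVert_\infty\le 2\kappa_W^{-1}\lVert f\rVert_\infty$. At a possible interior maximum at a puncture, $|\phi|$ vanishes because the odd lift is zero at the branch point, so the bound is unaffected. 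Letting $L\to\infty$ with local estimates yields a bounded solution. Uniqueness among bounded solutions follows from the same maximum principle applied to the difference, with barrier $1+a|x|^2$ and $a\to0$ to handle infinity.

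\emph{Step 3: Schauder and weighted estimates.} The $C^{2,\gamma}_{*,\eps}$ bound follows from the $L^\infty$ bound by rescaling each trivializing ball to unit size, where the equation becomes $\Delta\tilde\phi-\tilde V\tilde\phi=\tilde f$. Here $\tilde V$ is bounded in $C^{0,\gamma}$: by \eqref{eq: outer-potential-size} $W''(\omega)$ is exponentially close to $\kappa_W$, and the cutoff $\chi_{\Ical,1}$ only contributes scaled derivatives of order $\eps^{1-\alpha}$. On singular balls the ansatz equals $\kappa_W$ times a cutoff that is constant near the puncture. One uses the lifted equation $\Delta_\eta\hat\phi-4|\eta|^2\tilde V\hat\phi=4|\eta|^2\tilde f$ with smooth coefficients, as in \Cref{cor: schauder-tan}, and the zero value at the branch point.

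For the weighted norm with $\nu<\sqrt{\kappa_W/2}$, which holds for $\nu\le\lambda<\sqrt{\kappa_W}$ after shrinking $\eps_*$ so that $V\ge\kappa_W-o(1)$ rather than just $\kappa_W/2$, I would compare $|\phi|$ on each ball with a barrier. The barrier is a superposition built from $\cosh$-type functions of $\dist(\cdot,\Gamma)/\eps$. More concretely, fix $q$ and use the comparison function $A e^{-\nu'|x-q|/\eps}$ with $\nu<\nu'<\sqrt{\inf V}$, which satisfies $\eps^2\Delta w-Vw\le -c\,w$ away from $q$, together with $\lVert f\rVert_{C^{0,\gamma}_{\Gamma,\eps,\nu}}e^{-\nu\dist(\cdot,\Gamma)/\eps}$ bounds. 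Equivalently, I would write $\phi=\int G(x,y)f(y)$ and use the Green's function decay $|G(x,y)|\lesssim \eps^{-2}e^{-\nu'|x-y|/\eps}$, again obtained by the maximum principle. Then $\int e^{-\nu'|x-y|/\eps}e^{-\nu d(y)/\eps}dy\lesssim\eps^2 e^{-\nu d(x)/\eps}$ by the triangle inequality $d(y)\ge d(x)-|x-y|$ and $\nu<\nu'$. This gives the weighted $L^\infty$ bound, and the local Schauder estimates upgrade it to $C^{2,\gamma}$.

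The main obstacle is justifying the barrier and Green's function comparison near the punctures, where only sections, not functions, exist. I would handle it by working with $|\phi|$, which is a subsolution in the sense of distributions (Kato) away from $\p$ and vanishes at $\p$, so the comparison principle applies on $\R^2$ with the punctures as removable points.
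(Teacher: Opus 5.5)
Your proposal is correct. For existence, uniqueness and the unweighted Schauder bound it follows the paper's proof in substance: exhaustion on the square-root cover with the branch points filled in, constant barriers $\pm\lVert f\rVert_\infty/\inf V$, a growing supersolution to rule out bounded homogeneous solutions, and Schauder in the $\eta$-chart. (In that chart $V\equiv\kappa_W$ exactly, because $\chi_{\Ical,1}$ vanishes on an $\eps^\alpha$-neighbourhood of $\Gamma$; it is $V$, not the ansatz, that is constant there.)

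The genuine difference is the weighted estimate.

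\emph{The paper's route.} It uses the explicit barrier $b=\sum_j e^{-\nu\dist(\cdot,I_j)/\eps}$. Each $I_j$ is convex, so $\dist(\cdot,I_j)$ is a convex function with $\Delta\dist(\cdot,I_j)\ge0$ weakly. Hence $(-\eps^2\Delta+V)b\ge(V-\nu^2)b$, and one comparison finishes. Summing over $j$, rather than using $e^{-\nu\dist(\cdot,\Gamma)/\eps}=\max_j e^{-\nu\dist(\cdot,I_j)/\eps}$, avoids the convex kinks a maximum would create.

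\emph{Your route.} You integrate an exponentially decaying kernel against the weight and use $\dist(y,\Gamma)\ge\dist(x,\Gamma)-|x-y|$ with $\nu<\nu'$. This is blind to the geometry of $\Gamma$, so it would carry over to curved or nonconvex interfaces. The price is a kernel estimate, and two points need repair:
\begin{itemize}
\item[(i)] The bound $|G(x,y)|\lesssim\eps^{-2}e^{-\nu'|x-y|/\eps}$ is false near the diagonal in two dimensions. There is a logarithmic singularity; it is integrable and harmless once you include it.
\item[(ii)] Comparison with a single exponential $Ae^{-\nu'|x-q|/\eps}$ centred at one point does not by itself control a distributed source. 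What you actually need is a superposition of such exponentials.
\end{itemize}

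\emph{A clean fix.} You can avoid building a Green's function on $\L_{\p}$ altogether by using your own Kato remark. With $c=\inf V>\nu'^2$, one has $(-\eps^2\Delta+c)|\phi|\le|f|$ in $\mathcal D'(\R^2)$; the punctures are removable, since $|\phi|$ is bounded, continuous and vanishes there. Comparing bounded functions then gives $|\phi|\le G_c*|f|$, where $G_c(x,y)=(2\pi\eps^2)^{-1}K_0(\sqrt c\,|x-y|/\eps)$ is the free kernel. Your convolution estimate then yields the weighted $L^\infty$ bound. Equivalently, the function $\eps^{-2}\int e^{-\nu'|x-y|/\eps}e^{-\nu\dist(y,\Gamma)/\eps}\,dy$ is itself a supersolution comparable to $e^{-\nu\dist(x,\Gamma)/\eps}$, i.e.\ a convexity-free substitute for the paper's $b$.
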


\begin{proof}
The potential can be written as
\[
    V=\kappa_W+\chi_{\Ical,1}\bigl(W''(\omega)-\kappa_W\bigr).
\]
By \eqref{eq: outer-potential-size}, the second term is exponentially small
in the scaled \(C^{2,\gamma}\) norm. Since
\(\lambda<\sqrt{\kappa_W}\), after decreasing \(\eps_*\) we have
\[
    V-\lambda^2\geq c>0,
    \qquad
    \lVert V\rVert_{C_{*,\eps}^{2,\gamma}(\R^2_{\p})}\lesssim1 .
\]

Let \(\pi\colon\widetilde{\R^2_{\p}}\to\R^2_{\p}\) be the double cover from
\Cref{sec: preliminaries}. We complete it over each boundary point by adding
the branch point in the coordinate \(\zeta\mapsto p_i+\zeta^2\), and define
\[
    g=\rho\pi^*|dx|^2 ,
\]
where \(\rho\) is a smooth positive deck-invariant function equal to
\((4|\zeta|^2)^{-1}\) near the added points and to \(1\) away from them.
Since \(\pi^*|dx|^2=4|\zeta|^2|d\zeta|^2\) near a branch point,
\(g\) extends to a smooth complete metric on the completed cover
\(\widetilde M\).

Let \(\widetilde V=V\circ\pi\), and let \(\widetilde f\) and
\(\widetilde\phi\) denote the odd lifts of the right-hand side and the unknown.
The lifted equation is
\[
    \eps^2\Delta_g\widetilde \phi
    -
    \rho^{-1}\widetilde V\widetilde \phi
    =
    \rho^{-1}\widetilde f .
\]
Here \(\rho^{-1}\) extends smoothly by zero at the added points. Set
\[
    M=\lVert\widetilde f/\widetilde V\rVert_{L^\infty}.
\]
On deck-invariant zero-Dirichlet exhaustions, the constants \(M\) and
\(-M\) are respectively an upper and a lower barrier for the lifted equation.
The maximum principle and local compactness therefore give a bounded global
solution with \(\lVert\widetilde\phi\rVert_{L^\infty}\leq M\).

To prove uniqueness among bounded solutions, observe that outside a fixed
compact set one has \(\rho=1\) and \(\widetilde V\geq c_0>0\). If
\(r(q)=\sqrt{1+|q|^2}\), then, for \(a>0\) sufficiently small,
\[
    (-\eps^2\Delta_g+\widetilde V)e^{ar/\eps}
    \geq
    \bigl(c_0-a^2-O(\eps)\bigr)e^{ar/\eps}>0 .
\]
Comparison on expanding exhaustions with a multiple of this growing
supersolution shows that every bounded homogeneous solution vanishes.
Uniqueness and deck invariance of the exhaustions imply that the solution is
odd, and it therefore descends to a section.

On trivializing balls, the \(L^\infty\) estimate is upgraded by the standard
scaled Schauder estimates. Near a branch point, after writing
\(\zeta=\sqrt\eps\eta\), the equation becomes
\[
    \Delta_\eta\widetilde\phi_\eps
    -4|\eta|^2\widetilde V_\eps\widetilde\phi_\eps
    =
    4|\eta|^2\widetilde f_\eps
\]
on a fixed ball. The ordinary Schauder estimate in this chart controls the
inner part of the singular disk. Its remaining fixed annulus is covered by
trivializing balls, where the two scaled descriptions are uniformly
equivalent by the compatibility observation following
\Cref{def:singular-ball-norm}. This proves the first estimate in
\eqref{eq: bound on G}.

It remains to prove the weighted estimates. Fix
\(\nu\in\{3\lambda/4,\lambda\}\). For
\(d_j(q)\coloneqq\dist(q,I_j)\), set
\[
    b(q)\coloneqq
    \sum_{j=1}^N
    \exp\left(-\frac{\nu}{\eps}d_j(q)\right).
\]
Each segment is convex, so \(\Delta d_j\geq0\) weakly. Consequently,
\[
    (-\eps^2\Delta+V)
    \exp\left(-\frac{\nu}{\eps}d_j\right)
    \geq
    \left(V-\nu^2\right)
    \exp\left(-\frac{\nu}{\eps}d_j\right)
\]
in the weak sense. Thus \((-\eps^2\Delta+V)b\geq cb\), while
\[
    b(q)\simeq
    \exp\left(-\frac{\nu}{\eps}\dist(q,\Gamma)\right).
\]
The definition of the weighted norm gives
\(\lvert f\rvert\lesssim
\lVert f\rVert_{C_{\Gamma,\eps,\nu}^{0,\gamma}
(\R^2_{\p},\L_{\p})}b\). Applying the comparison
principle on the lifted zero-Dirichlet exhaustions, first with small disks about
the branch points removed, and using that the odd lift tends to zero there,
yields the corresponding weighted \(L^\infty\) estimate for \(\mathcal Gf\)
after passing to the limit. On a ball of radius \(O(\eps)\), the distance to \(\Gamma\)
changes by \(O(\eps)\), so the weights are uniformly comparable and the scaled
Schauder estimate gives the weighted bound on every trivializing ball. The
weight factor in the singular-ball term is one because \(p_i\in\Gamma\), so
the square-root estimate above applies. This proves the weighted estimates in
\eqref{eq: bound on G}.
\end{proof}
\begin{proof}[Proof of \Cref{prop: invert outer}]
Fix \(\h\) and \(\phi=(\phi_1,\dots,\phi_N)\), and set
\[
    \Phi_\phi\coloneqq \sum_{k=1}^N\chi_{k,2}\phi_k,
    \qquad
    \mathcal C_\phi\coloneqq
    \sum_{k=1}^N
    \left(
    2\eps^2\nabla\chi_{k,2}\cdot\nabla\phi_k
    +\eps^2\phi_k\Delta\chi_{k,2}
    \right).
\]
Then
\[
    E_\out(\phi,\psi)
    =-\chi_{\Ical,2}S(\omega)
    -\chi_{\Ical,1}\mathcal N_\omega(\psi+\Phi_\phi)
    -\mathcal C_\phi.
\]
For
\(m=0,2\) and \(\Omega\subset\R^2_{\p}\), write
\[
    \lVert u\rVert_{m,\out;\Omega}
    \coloneqq
    \lVert u\rVert_{C_{\Gamma,\eps,3\lambda/4}^{m,\gamma}
    (\Omega,\L_{\p})},
\]
and omit \(\Omega\) when it is \(\R^2_{\p}\). Let
\(T_\phi(\psi)\coloneqq\mathcal G(E_\out(\phi,\psi))\). By construction, the
fixed points of \(T_\phi\) in \(\mathscr B_{\mathrm{out}}(\eps)\) are
precisely the solutions of \eqref{eq: outer equation} in that ball.

We first prove that \(T_\phi\) is a self-map. On the supports of
\(\chi_{\Ical,1}\chi_{k,2}\), \(\nabla\chi_{k,2}\), and
\(\Delta\chi_{k,2}\),
\(\dist(q,\Gamma)=\dist(q,I_k)\gtrsim\eps^\alpha\). Hence
\eqref{eq: norm bound on phi} gives
\[
    e^{3\lambda\dist(q,\Gamma)/(4\eps)}
    \lVert\phi_k\rVert_{C_\eps^{2,\gamma}(B_\eps(q),\L_{\p})}
    \lesssim
    \exp\left(-\frac{\lambda}{4\eps}\dist(q,I_k)\right)
    \lesssim
    \exp\left(-\frac{c}{\eps^{1-\alpha}}\right)
\]
for some \(c>0\). Since the cut-offs vary on the scale \(\eps^\alpha\),
their fixed-profile construction gives
\[
    \lVert\nabla^k\chi_{j,2}\rVert_{L^\infty}
    +\eps^{\alpha\gamma}[\nabla^k\chi_{j,2}]_\gamma
    \lesssim\eps^{-k\alpha},
    \qquad k=1,2.
\]
The resulting polynomial costs are absorbed by the same exponential after
decreasing \(c\).
Set
\[
    \Omega_{\Ical,\eps}
    \coloneqq
    \left\{q\in\R^2_{\p}:
    \dist(q,\spt\chi_{\Ical,1})<3R_0\eps\right\}.
\]
The same exponential estimate controls \(\Phi_\phi\) on this set. Moreover,
\eqref{eq: outer-approximation-error-global} gives the corresponding bound
for \(\chi_{\Ical,2}S(\omega)\). The fixed \(\widetilde\lambda\) was chosen
smaller than half of the decay rates in these estimates, so
\begin{equation}\label{eq: outer-localized-smallness}
    \lVert\Phi_\phi\rVert_{0,\out;\Omega_{\Ical,\eps}}
    +\lVert\mathcal C_\phi\rVert_{0,\out}
    +\lVert\chi_{\Ical,2}S(\omega)\rVert_{0,\out}
    \lesssim r_{\mathrm{out}}(\eps)^2.
\end{equation}
The quadratic estimate for \(\mathcal N_\omega\), the weighted product estimate, and
\eqref{eq: outer-localized-smallness} give
\[
    \lVert\chi_{\Ical,1}\mathcal N_\omega(\psi+\Phi_\phi)\rVert_{0,\out}
    \lesssim
    \bigl(\lVert\psi\rVert_{2,\out}
    +r_{\mathrm{out}}(\eps)^2\bigr)^2.
\]
Thus, for \(\psi\in\mathscr B_{\mathrm{out}}(\eps)\),
\[
    \lVert E_\out(\phi,\psi)\rVert_{0,\out}
    \lesssim r_{\mathrm{out}}(\eps)^2.
\]
The weighted estimate in \Cref{lem: invertibility out} shows that
\(T_\phi(\psi)\in\mathscr B_{\mathrm{out}}(\eps)\) for small \(\eps\).

For \(\psi_1,\psi_2\in\mathscr B_{\mathrm{out}}(\eps)\), the weighted
estimate in
\Cref{lem: invertibility out} and the quadratic Lipschitz estimate give
\[
\begin{split}
    \lVert T_\phi(\psi_1)-T_\phi(\psi_2)\rVert_{2,\out}
    &\lesssim
    \lVert E_\out(\phi,\psi_1)-E_\out(\phi,\psi_2)\rVert_{0,\out}\\
    &\lesssim
    r_{\mathrm{out}}(\eps)
    \lVert\psi_1-\psi_2\rVert_{2,\out}.
\end{split}
\]
Thus \(T_\phi\) is a contraction on \(\mathscr B_{\mathrm{out}}(\eps)\) for \(\eps\)
sufficiently small. Together with the preceding self-map estimate, the
contraction mapping theorem gives a unique solution of
\eqref{eq: outer equation} in \(\mathscr B_{\mathrm{out}}(\eps)\).

We next compare two local tuples while keeping the shift fixed. Let
\(\psi^\ell=\Psi_{\h}(\phi^\ell)\), \(\ell=1,2\), and set
\[
    D\coloneqq
    \sup_j
    \lVert\phi_j^1-\phi_j^2\rVert_
    {C^{1,\gamma}_{I_j,\eps,\lambda}
    (\spt\chi_{j,2},\L_{\p})}.
\]
The same support estimates used in
\eqref{eq: outer-localized-smallness} give
\begin{equation}\label{eq: outer-localized-difference}
\begin{split}
    &\lVert\Phi_{\phi^1}-\Phi_{\phi^2}\rVert_
    {0,\out;\Omega_{\Ical,\eps}}
    +\lVert\mathcal C_{\phi^1}-\mathcal C_{\phi^2}\rVert_{0,\out}
    \lesssim r_{\mathrm{out}}(\eps)^2D.
\end{split}
\end{equation}
On \(\spt\chi_{\Ical,1}\), the two arguments of \(\mathcal N_\omega\) satisfy
\[
\begin{split}
    \lVert\psi^\ell+\Phi_{\phi^\ell}\rVert_
    {0,\out;\Omega_{\Ical,\eps}}
    &\lesssim r_{\mathrm{out}}(\eps),\\
    \lVert(\psi^1+\Phi_{\phi^1})
    -(\psi^2+\Phi_{\phi^2})\rVert_
    {0,\out;\Omega_{\Ical,\eps}}
    &\lesssim
    \lVert\psi^1-\psi^2\rVert_{2,\out}
    +r_{\mathrm{out}}(\eps)^2D.
\end{split}
\]
The quadratic Lipschitz estimate for \(\mathcal N_\omega\) therefore gives
\[
\begin{split}
    &\lVert E_\out(\phi^1,\psi^1)-E_\out(\phi^2,\psi^2)\rVert_{0,\out}\\
    &\qquad\lesssim
    r_{\mathrm{out}}(\eps)\lVert\psi^1-\psi^2\rVert_{2,\out}
    +r_{\mathrm{out}}(\eps)^2D.
\end{split}
\]
Since \(\psi^\ell=\mathcal G(E_\out(\phi^\ell,\psi^\ell))\), applying
\Cref{lem: invertibility out} and absorbing the first term yields
\[
    \lVert\psi^1-\psi^2\rVert_{2,\out}
    \lesssim r_{\mathrm{out}}(\eps)D.
\]

Finally, let both the shift and the local tuple vary. Write
\(\omega_i=\omega_{\h^i}\), and let \(V_i\) and
\(E_{\out,i}\) denote the corresponding quantities. Set
\[
    M\coloneqq
    \lVert\h^1-\h^2\rVert_{C^{2,\gamma}(\boldsymbol I)}.
\]
Equations \eqref{eq: outer-potential-dependence},
\eqref{eq: outer-approximation-error-dependence}, and
\eqref{eq: outer-ansatz-dependence}, with the choice of
\(\widetilde\lambda\) made above, give
\[
    \lVert V_1-V_2\rVert_{0,\out}
    +\lVert\chi_{\Ical,2}(S(\omega_1)-S(\omega_2))\rVert_{0,\out}
    +\lVert\chi_{\Ical,1}(\omega_1-\omega_2)\rVert_{2,\out}
    \lesssim r_{\mathrm{out}}(\eps)^2M.
\]
The same two argument estimates hold for the varying shifts. Taylor's theorem,
including the dependence of the quadratic remainder on \(\omega_i\), gives
\[
\begin{split}
    &\lVert\chi_{\Ical,1}
    [\mathcal N_{\omega_1}(\psi^1+\Phi_{\phi^1})
    -\mathcal N_{\omega_2}(\psi^2+\Phi_{\phi^2})]\rVert_{0,\out}\\
    &\qquad\lesssim
    r_{\mathrm{out}}(\eps)\lVert\psi^1-\psi^2\rVert_{2,\out}
    +r_{\mathrm{out}}(\eps)^2(M+D).
\end{split}
\]
Together with \eqref{eq: outer-localized-difference}, this yields
\[
\begin{split}
    &\lVert E_{\out,1}(\phi^1,\psi^1)
    -E_{\out,2}(\phi^2,\psi^2)
    +(V_1-V_2)\psi^2\rVert_{0,\out}\\
    &\qquad\lesssim
    r_{\mathrm{out}}(\eps)\lVert\psi^1-\psi^2\rVert_{2,\out}
    +r_{\mathrm{out}}(\eps)^2(M+D).
\end{split}
\]
Subtracting the two outer equations, applying the uniformly bounded inverse
for \(\eps^2\Delta-V_1\), and absorbing the first term proves
\eqref{eq: contractivity psi-phi} in full.
\end{proof}

\subsection{Proofs of the fixed-shift correction propositions}
\label{sec: fixed-shift-correction}

\begin{proof}[Proof of \Cref{prop: fixed-shift-correction}]
After decreasing \(\eps_*\), \Cref{lem: local assembly} gives
\[
    \sup_j
    \lVert\mathcal L_{\h}(\psi)_j\rVert_
    {C_{I_j,\eps,\lambda}^{2,\gamma}
    (\spt\chi_{j,2},\L_{\p})}
    \leq1
\]
for every \(\psi\in\mathscr B_{\mathrm{out}}(\eps)\). Hence
\Cref{prop: invert outer} shows that \(\mathcal K_{\h}\) maps
\(\mathscr B_{\mathrm{out}}(\eps)\) into itself. If
\(\psi^1,\psi^2\) belong to this ball, then
\eqref{eq: contractivity psi-phi} and
\eqref{eq: local compatibility dependence}, with the shift fixed, give
\[
\begin{split}
    &\lVert\mathcal K_{\h}(\psi^1)-\mathcal K_{\h}(\psi^2)\rVert_
    {C_{\Gamma,\eps,3\lambda/4}^{2,\gamma}
    (\R^2_{\p},\L_{\p})}\\
    &\qquad\lesssim
    r_{\mathrm{out}}(\eps)\eps^{-(1-\alpha)\tau}
    \lVert\psi^1-\psi^2\rVert_
    {C_{\Gamma,\eps,3\lambda/4}^{2,\gamma}
    (\R^2_{\p},\L_{\p})}.
\end{split}
\]
The coefficient tends to zero. Thus \(\mathcal K_{\h}\) is a contraction and
has a unique fixed point \(\psi(\h)\) in
\(\mathscr B_{\mathrm{out}}(\eps)\).

The bounds for \(v_j(\h)\) and \(\phi_j(\h)\) follow from
\Cref{prop: invertibility interior,lem: local assembly}. For the boundary
response, \eqref{eq: size phi_b} gives
\[
\begin{split}
    \lVert b_p(\h)\rVert_
    {C_{\ell_{\theta_{\h}(p)},\eps,\lambda}^{2,\gamma}
    (\R^2_*,\L)}
    &\lesssim
    \eps^{1-\alpha}R_\eps
    +\eps^{2\beta-(1-\alpha)\tau}
    \lesssim R_\eps.
\end{split}
\]
Indeed, \eqref{eq: boundary-decay-exponent-range} implies
\[
    2\beta-(1-\alpha)\tau-(\beta+1-\alpha)
    >\beta-2\geq0.
\]

We next improve the weight of the outer correction. Set
\[
    \Phi_{\h}\coloneqq
    \sum_{j=1}^N\chi_{j,2}\phi_j(\h),
    \qquad
    \mathcal C_{\h}\coloneqq
    \sum_{j=1}^N
    \left(
        2\eps^2\nabla\chi_{j,2}\cdot\nabla\phi_j(\h)
        +\eps^2\phi_j(\h)\Delta\chi_{j,2}
    \right).
\]
The local assembly estimate and the scaled cutoff bounds give
\[
\begin{split}
    \lVert\Phi_{\h}\rVert_
    {C_{\Gamma,\eps,\lambda}^{2,\gamma}
    (\R^2_{\p},\L_{\p})}
    &\lesssim R_\eps,\\
    \lVert\mathcal C_{\h}\rVert_
    {C_{\Gamma,\eps,\lambda}^{0,\gamma}
    (\R^2_{\p},\L_{\p})}
    &\lesssim\eps^{1-\alpha}R_\eps.
\end{split}
\]
Estimate \eqref{eq: outer-approximation-error-global} gives
\[
    \lVert\chi_{\Ical,2}S(\omega_{\h})\rVert_
    {C_{\Gamma,\eps,\lambda}^{0,\gamma}
    (\R^2_{\p},\L_{\p})}
    \lesssim
    \exp\left(-\frac{c}{\eps^{1-\alpha}}\right).
\]
Since \(2(3\lambda/4)>\lambda\),
\((3\lambda/4)+\lambda>\lambda\), and \(2\lambda>\lambda\), the weighted
product estimates give
\[
\begin{aligned}
    \lVert\psi(\h)^2\rVert_
    {C_{\Gamma,\eps,\lambda}^{0,\gamma}(\R^2_{\p})}
    &\lesssim
    \lVert\psi(\h)\rVert_
    {C_{\Gamma,\eps,3\lambda/4}^{2,\gamma}(\R^2_{\p},\L_{\p})}^2,\\
    \lVert\psi(\h)\Phi_{\h}\rVert_
    {C_{\Gamma,\eps,\lambda}^{0,\gamma}(\R^2_{\p})}
    &\lesssim
    \lVert\psi(\h)\rVert_
    {C_{\Gamma,\eps,3\lambda/4}^{2,\gamma}(\R^2_{\p},\L_{\p})}
    \lVert\Phi_{\h}\rVert_
    {C_{\Gamma,\eps,\lambda}^{2,\gamma}(\R^2_{\p},\L_{\p})},\\
    \lVert\Phi_{\h}^2\rVert_
    {C_{\Gamma,\eps,\lambda}^{0,\gamma}(\R^2_{\p})}
    &\lesssim
    \lVert\Phi_{\h}\rVert_
    {C_{\Gamma,\eps,\lambda}^{2,\gamma}(\R^2_{\p},\L_{\p})}^2.
\end{aligned}
\]
Together with the uniformly bounded Taylor coefficients in \(\mathcal N_{\omega_{\h}}\), these
estimates imply
\[
    \lVert\chi_{\Ical,1}\mathcal N_{\omega_{\h}}(\psi(\h)+\Phi_{\h})\rVert_
    {C_{\Gamma,\eps,\lambda}^{0,\gamma}
    (\R^2_{\p},\L_{\p})}
    \lesssim
    R_\eps^2+r_{\mathrm{out}}(\eps)^2.
\]
Applying \eqref{eq: bound on G} with weight \(\lambda\) to the outer equation
at the fixed point gives
\[
\begin{split}
    \lVert\psi(\h)\rVert_
    {C_{\Gamma,\eps,\lambda}^{2,\gamma}
    (\R^2_{\p},\L_{\p})}
    \lesssim{}&
    \eps^{1-\alpha}R_\eps+R_\eps^2
    +r_{\mathrm{out}}(\eps)^2
    +\exp\left(-\frac{c}{\eps^{1-\alpha}}\right)\\
    &\lesssim\eps^{1-\alpha}R_\eps,
\end{split}
\]
where
\[
    R_\eps^2+r_{\mathrm{out}}(\eps)^2
    +\exp\left(-\frac{c}{\eps^{1-\alpha}}\right)
    =o\bigl(\eps^{1-\alpha}R_\eps\bigr).
\]
This proves \eqref{eq: fixed-shift-outer-bound}.

We now verify the residual identity. Write
\[
    L_{\omega_{\h}}\coloneqq \eps^2\Delta-W''(\omega_{\h}).
\]
Repeating the calculation in
\Cref{lem: inner-bdy decomp} with the projected interior equation gives
\[
    L_{\omega_{\h}}\phi_j(\h)
    =
    E_j(\phi_j(\h),\psi(\h))
    -\eta_{j,1}c_j(\h)H'(t_{j,\h}).
\]
On the other hand, substituting the outer equation into the expansion of
\(S(\omega_{\h}+\varphi_{\h})\) gives
\[
    S(\omega_{\h}+\varphi_{\h})
    =
    \sum_{j=1}^N
    \chi_{j,2}
    \bigl(
        L_{\omega_{\h}}\phi_j(\h)
        -E_j(\phi_j(\h),\psi(\h))
    \bigr).
\]
Here the approximation errors cancel by
\(\chi_{\Ical,2}+\sum_j\chi_{j,2}=1\), the cutoff commutators cancel those in
\(E_\out\), and the nonlinear terms cancel because
\(\chi_{j,1}\chi_{j,2}=\chi_{j,1}\) and
\(\Phi_{\h}=\phi_j(\h)\) on \(\spt\chi_{j,1}\). Finally,
\[
    V-W''(\omega_{\h})
    =
    -\sum_{j=1}^N
    \chi_{j,1}[W''(\omega_{\h})-\kappa_W],
\]
so the terms multiplying \(\psi(\h)\) cancel as well. The preceding two
identities prove \eqref{eq: fixed-shift-residual}.

Finally, we prove the profile estimates. On
\(\spt\eta_{j,1}\), \eqref{eq: full-overlap-profile-comparison} controls the
boundary overlap, while the exponential convergence of the models controls
the outer cutoff region. Hence
\[
\begin{split}
    &\left\lVert
        \omega_{\h}(x,z)
        -H\left(\frac{z-h_j(x)}{\eps}\right)
    \right\rVert_
    {C_{I_j,\eps,\lambda}^{2,\gamma}
    (\spt\eta_{j,1},\L_{\p})}\\
    &\qquad\lesssim
    \eps^{2\beta}
    +\exp\left(-\frac{c}{\eps^{1-\alpha}}\right).
\end{split}
\]
On this region the cutoff geometry gives
\(\dist(\cdot,\Gamma)=\dist(\cdot,I_j)\), and hence
\[
\begin{aligned}
    \lVert\chi_{j,2}\phi_j(\h)\rVert_
    {C_{I_j,\eps,\lambda}^{2,\gamma}
    (\spt\eta_{j,1},\L_{\p})}
    &\lesssim R_\eps,\\
    \lVert\psi(\h)\rVert_
    {C_{I_j,\eps,\lambda}^{2,\gamma}
    (\spt\eta_{j,1},\L_{\p})}
    &\lesssim\eps^{1-\alpha}R_\eps.
\end{aligned}
\]
These estimates prove
\eqref{eq: fixed-shift-interior-profile}. Near \(p\in\partial I_j\), cutoff
nesting makes \(\omega_{\h}\) equal to the rotated boundary model on the
core of \(\spt\zeta_{p,1}\); their difference on the remaining cutoff region
is exponentially small by the radial convergence of \(U\). Thus
\[
\begin{split}
    &\left\lVert
        \omega_{\h}(x,z)
        -\mathcal T_p\left(
            U_{\theta_{\h}(p)}
            \left(\frac{x}{\eps},\frac{z}{\eps}\right)
        \right)
    \right\rVert_
    {C_{*,\eps}^{2,\gamma}(\spt\zeta_{p,1},\L_{\p})}\\
    &\qquad\lesssim
    \exp\left(-\frac{c}{\eps^{1-\alpha}}\right).
\end{split}
\]
The scaled multiplication estimates and the corresponding unweighted
restrictions of the local and outer bounds give
\[
\begin{aligned}
    \lVert\chi_{j,2}\phi_j(\h)\rVert_
    {C_{*,\eps}^{2,\gamma}(\spt\zeta_{p,1},\L_{\p})}
    &\lesssim R_\eps,\\
    \lVert\psi(\h)\rVert_
    {C_{*,\eps}^{2,\gamma}(\spt\zeta_{p,1},\L_{\p})}
    &\lesssim\eps^{1-\alpha}R_\eps.
\end{aligned}
\]
This proves
\eqref{eq: fixed-shift-boundary-profile}.
\end{proof}

The dependence estimates follow by comparing the fixed points corresponding
to two shifts and invoking the local and outer Lipschitz estimates above.

\begin{proof}[Proof of \Cref{prop: fixed-shift-dependence}]
Let \(\h^1,\h^2\in\mathcal B(K\eps^\beta)\), and set
\[
    M=\lVert\h^1-\h^2\rVert_{C^{2,\gamma}(\boldsymbol I)},
    \qquad
    D_\psi=
    \lVert\psi(\h^1)-\psi(\h^2)\rVert_
    {C_{\Gamma,\eps,3\lambda/4}^{2,\gamma}
    (\R^2_{\p},\L_{\p})}.
\]
Since both outer corrections are fixed points,
\eqref{eq: contractivity psi-phi} and
\eqref{eq: local compatibility dependence} give
\[
    D_\psi
    \lesssim
    r_{\mathrm{out}}(\eps)
    \left[
        \eps^{-(1-\alpha)\tau}D_\psi
        +
        \left(
            1+\eps+\eps^{\beta-(1-\alpha)\tau}
        \right)M
    \right].
\]
Absorbing the first term yields
\[
    D_\psi\lesssim r_{\mathrm{out}}(\eps)M.
\]
Equation \eqref{eq: Lip phiin} then gives
\[
    \lVert v_j(\h^1)-v_j(\h^2)\rVert_
    {\mathcal C_{\eps,\lambda}^{2,\gamma}(I_j\times\R)}
    \lesssim\eps M.
\]
Substituting these two estimates into \eqref{eq: Lip phib} gives
\[
    \lVert\widehat b_p(\h^1)-\widehat b_p(\h^2)\rVert_
    {C_{\ell_+,\lambda}^{2,\gamma}(\R^2_*,\L)}
    \lesssim
    \left(
        \eps^{2-\alpha}
        +\eps^{\beta-(1-\alpha)\tau}
    \right)M,
\]
because the remaining exponentially small term is smaller than either
displayed power. This proves \eqref{eq: compatible correction dependence}.
\end{proof}

\subsection{Proof of \Cref{prop: projected-h-error}}
\label{sec: projected-h-error}

\begin{proof}
Fix \(j\), and write
\[
    v_j=v_j(\h),
    \qquad
    b_p=b_p(\h),
    \qquad
    \psi=\psi(\h).
\]
All terms inside \(\mathfrak p_j\) below are read in the shifted interior
coordinates. Taking the \(H'\)-projection of the interior right-hand side in
\Cref{lem: inner-bdy decomp} gives
\begin{equation}\label{eq: expanded-projected-remainder}
\begin{split}
    R_j(\h)
    &=
    \eps\bigl(a_{j,\h}-\eta_{j,3}\bigr)h_j''
    +
    \mathfrak p_j\Bigg(
    \chi_{j,4}\eta_{j,4}\mathcal R_j(\phi_j(\h),\psi(\h))\\
    &\qquad
    -2\eps^2\chi_{j,4}\sum_{p\in\partial I_j}
    \nabla\zeta_{p,4}\cdot\nabla b_p
    -\eps^2\chi_{j,4}\sum_{p\in\partial I_j}
    \Delta\zeta_{p,4}b_p\\
    &\qquad
    -\chi_{j,4}\eta_{j,4}
    m_{j,h_j}^{\mathrm{bd}}[v_j]H'
    -\chi_{j,4}m_{j,h_j}^{\mathrm{int}}[v_j]H'
    \Bigg),
\end{split}
\end{equation}
Here \(a_{j,\h}\) is the coefficient from
\Cref{lem: approximation-remainder-size}, \(\phi_j(\h)\) is reconstructed by
\eqref{eq: decomp phi}, and the \(\h\)-dependence of the pulled-back cut-offs
is suppressed.

\emph{Support and noncommutator terms.}
The definition of \(\mathcal X_\eps\) gives
\(\spt h_j''\subset\spt\eta_{j,4}\). Thus the first term in
\eqref{eq: expanded-projected-remainder} has the required support. The
\(\mathcal R_j\)-term and the term containing
\(m_{j,h_j}^{\mathrm{bd}}[v_j]\)
carry the factor \(\eta_{j,4}\), while
\(m_{j,h_j}^{\mathrm{int}}[v_j]\) carries the factor \(h_j''\). They are
therefore also supported in \(\spt\eta_{j,4}\).
At each \(p\in\partial I_j\), use the inward coordinate \(x\geq0\) and
abbreviate \(h_j(p+xe_p)\) by \(h_j(x)\). The
\(\chi_{j,4}\)-weighted boundary commutators are supported where
\(\zeta_{p,4}\) varies. After multiplication by \(\chi_{j,4}\), the
derivatives of \(\zeta_{p,4}\) are horizontal, and their projected
\(x\)-support is contained in the interval \(I_\eps\) fixed in
\Cref{subs: cutoff}. On this support
\(\eta_{j,4}=1-\zeta_{p,4}\) after the other boundary
cutoff has vanished. Hence the projected \(x\)-support of these commutator
terms is contained in \(\spt\eta_{j,4}\). Since the cut-offs satisfy
\(\spt\eta_{j,4}\subset\{\eta_{j,3}=1\}\), this proves the support inclusion.

On \(\spt(\chi_{j,4}\eta_{j,4})\), cutoff nesting gives
\(\chi_{j,5}=\eta_{j,3}=1\). Hence the approximation part of
\(\chi_{j,4}\eta_{j,4}\mathcal R_j\) is
\(-\widetilde{\mathcal E}^{\mathrm{rem}}_{j,\h}\). Equation
\eqref{eq: projected-approximation-error-size} therefore gives, for every
\(m>0\),
\[
\begin{split}
    &\left\lVert
        \eps(a_{j,\h}-\eta_{j,3})h_j''
    \right\rVert_{C^{0,\gamma}(I_j)}\\
    &\qquad+
    \left\lVert
        \mathfrak p_j
        (\widetilde{\mathcal E}^{\mathrm{rem}}_{j,\h})
    \right\rVert_{C^{0,\gamma}(I_j)}
    \lesssim_m\eps^m.
\end{split}
\]

The remaining terms in \(\mathcal R_j\) contain either the nonlinear
remainder or the outer correction. The local assembly and fixed-shift bounds,
followed by the passage from the stretched H\"older seminorm to the ordinary
one in \(x\), give
\[
    \left\lVert
        \mathfrak p_j\bigl(
        \chi_{j,4}\eta_{j,4}\chi_{j,1}
        \mathcal N_{\omega_{\h}}(\psi+\phi_j(\h))
        \bigr)
    \right\rVert_{C^{0,\gamma}(I_j)}
    \lesssim\eps^{-\gamma}R_\eps^2.
\]
Since \(\psi\in\mathscr B_{\mathrm{out}}(\eps)\), the term containing
\([W''(\omega_{\h})-\kappa_W]\psi\) satisfies, for every \(m>0\),
\[
    \left\lVert
        \mathfrak p_j\bigl(
        \chi_{j,4}\eta_{j,4}\chi_{j,1}
        [W''(\omega_{\h})-\kappa_W]\psi
        \bigr)
    \right\rVert_{C^{0,\gamma}(I_j)}
    \lesssim_m\eps^m.
\]

Performing the \(t\)-integrals in the moment terms before taking the interval
norm gives
\[
\begin{aligned}
    \lVert m_{j,h_j}^{\mathrm{bd}}[v_j]\rVert_{C^{0,\gamma}(I_j)}
    &\lesssim_K
    \eps^{\beta-\gamma}R_\eps,\\
    \lVert m_{j,h_j}^{\mathrm{int}}[v_j]\rVert_{C^{0,\gamma}(I_j)}
    &\lesssim_K
    \eps^{\beta+1-\gamma}R_\eps.
\end{aligned}
\]
As \(R_\eps=\eps^{\beta+1-\alpha}\),
\[
\begin{aligned}
    \eps^{-\gamma}R_\eps^2
    &=\eps^{1+\beta}
      \eps^{\beta+1-2\alpha-\gamma},\\
    \eps^{\beta-\gamma}R_\eps
    &=\eps^{1+\beta}
      \eps^{\beta-\alpha-\gamma}.
\end{aligned}
\]
Both remaining exponents are positive. This proves the size estimate for all
noncommutator terms in \eqref{eq: expanded-projected-remainder}.

\emph{Boundary commutators.}
Fix a boundary point \(p\in\partial I_j\). By \Cref{subs: cutoff}, the
projected \(x\)-support of
\[
    \chi_{j,4}\nabla\zeta_{p,4}
    \quad\text{and}\quad
    \chi_{j,4}\Delta\zeta_{p,4}
\]
lies in \(I_\eps\). On this support, the derivatives of \(\zeta_{p,4}\) are
horizontal in the boundary coordinates. To compare their
segment projections with the tangential estimates from
\Cref{prop: invert boundary}, set
\[
\begin{split}
    P_{p,0}(x)&\coloneqq
    \frac1{A_H}\int_\R
    \chi_{j,4}b_p(x,h_j(x)+\eps t)H'(t)\,dt,\\
    P_{p,1}(x)&\coloneqq
    \frac1{A_H}\int_\R
    \chi_{j,4}\partial_xb_p(x,h_j(x)+\eps t)H'(t)\,dt,
\end{split}
\]
where \(\partial_xb_p\) is the boundary-horizontal derivative paired with
\(\nabla\zeta_{p,4}\) in the commutator.

Retaining the inward coordinate fixed above, write
\(\theta=\theta_{\h}(p)=\partial_{e_p}h_j(p)\),
\(\widehat b_p(X,s)=b_p(\eps R_{-\theta}(X,s))\), and recall that
\[
    \overline X_h(x)=
    \eps^{-1}\bigl(x\cos\theta+h_j(x)\sin\theta\bigr).
\]
If \(z=h_j(x)+\eps t\), then the unrotated boundary coordinates satisfy
\[
    X=\overline X_h(x)+t\sin\theta,
    \qquad
    s-t=t(\cos\theta-1)+r_h(x),
\]
where, with \(q(\theta)=\theta\cos\theta-\sin\theta\),
\[
\begin{split}
    r_h(x)
    &=\eps^{-1}\bigl(h_j(x)\cos\theta-x\sin\theta\bigr)\\
    &=\eps^{-1}xq(\theta)
      +\eps^{-1}\bigl(h_j(x)-\theta x\bigr)\cos\theta .
\end{split}
\]
On \(I_\eps\), Taylor's theorem and \(\h\in\mathcal X_\eps\) give
\[
    |h_j(x)-\theta x|\lesssim_K\eps^\beta x^{2+\gamma},
    \qquad
    |h_j'(x)-\theta|\lesssim_K\eps^\beta x^{1+\gamma}.
\]
Set
\[
    B_{p,0}(x)\coloneqq
    \frac1{A_H}\int_\R
    \widehat b_p(\overline X_h(x),s)H'(s)\,ds,
    \qquad
    B_{p,1}\coloneqq\partial_xB_{p,0}.
\]
These are the normalized tangential quantities from
\eqref{eq: decays tangential part}, composed with \(\overline X_h\). Set
\[
    E_{p,0}(\h)\coloneqq P_{p,0}-B_{p,0},
    \qquad
    E_{p,1}(\h)\coloneqq P_{p,1}-B_{p,1},
\]
and abbreviate these two quantities by \(E_{p,0}\) and \(E_{p,1}\) until
the shift comparison below. Then
\(q'(\theta)=-\theta\sin\theta=O(\theta^2)\), the mean value theorem,
\eqref{eq: size phi_b}, and the exponential decay of \(H'\) give, after
absorbing the cut-off tails,
\[
\begin{split}
    \lVert E_{p,0}\rVert_{L^\infty(I_\eps)}
    +\eps^\gamma[E_{p,0}]_{\gamma,I_\eps}
    &\lesssim_K\eps^\beta R_\eps,\\
    \lVert E_{p,1}\rVert_{L^\infty(I_\eps)}
    +\eps^\gamma[E_{p,1}]_{\gamma,I_\eps}
    &\lesssim_K\eps^{\beta-1}R_\eps.
\end{split}
\]
In particular,
\[
    \lVert E_{p,0}\rVert_{C^{0,\gamma}(I_\eps)}
    \lesssim_K\eps^{\beta-\gamma}R_\eps,
    \qquad
    \lVert E_{p,1}\rVert_{C^{0,\gamma}(I_\eps)}
    \lesssim_K\eps^{\beta-1-\gamma}R_\eps.
\]
Thus the projected Laplacian and gradient commutators are controlled by
\(\Delta\zeta_{p,4}(B_{p,0}+E_{p,0})\) and
\(\nabla\zeta_{p,4}(B_{p,1}+E_{p,1})\), respectively.

The tangential estimate \eqref{eq: decays tangential part} gives
\[
\begin{split}
    |b_{p,\infty}|
    &+\lVert B_{p,0}\rVert_{L^\infty(I_\eps)}
    +\eps^{\alpha\gamma}[B_{p,0}]_{\gamma,I_\eps}\\
    &+\eps^\alpha\lVert B_{p,1}\rVert_{L^\infty(I_\eps)}
    +\eps^{\alpha(1+\gamma)}[B_{p,1}]_{\gamma,I_\eps}
    \lesssim\eps^{2\beta-2+2\alpha}.
\end{split}
\]
The product rule and the cutoff derivative estimates therefore give
\[
\begin{split}
    &\lVert\eps^2\Delta\zeta_{p,4}B_{p,0}\rVert_{C^{0,\gamma}(I_\eps)}
    +\lVert\eps^2\nabla\zeta_{p,4}B_{p,1}\rVert_{C^{0,\gamma}(I_\eps)}\\
    &\qquad\lesssim
    \eps^{2-\alpha(2+\gamma)}
    \eps^{2\beta-2+2\alpha}
    =\eps^{2\beta-\alpha\gamma}.
\end{split}
\]
The corresponding calculation for the coordinate errors yields
\[
\begin{split}
    &\lVert\eps^2\Delta\zeta_{p,4}E_{p,0}\rVert_{C^{0,\gamma}(I_\eps)}
    +\lVert\eps^2\nabla\zeta_{p,4}E_{p,1}\rVert_{C^{0,\gamma}(I_\eps)}\\
    &\qquad\lesssim_K
    \eps^{1-\alpha+\beta-\gamma}R_\eps
    =\eps^{2\beta+2-2\alpha-\gamma}.
\end{split}
\]
Both exponents exceed \(1+\beta\), since
\[
    \beta-1-\alpha\gamma>0,
    \qquad
    \beta+1-2\alpha-\gamma>0.
\]
Summing over the two boundary points of \(I_j\) gives \eqref{eq: Rj size}.

\emph{Dependence on the shift.}
Set
\[
    M\coloneqq
    \lVert\h^1-\h^2\rVert_{C^{2,\gamma}(\boldsymbol I)},
    \qquad
    v_i\coloneqq v_j(\h^i),
    \qquad i=1,2.
\]
By \eqref{eq: compatible correction dependence},
\[
    \lVert v_i\rVert_{\mathcal C_{\eps,\lambda}^{2,\gamma}(I_j\times\R)}
    \lesssim R_\eps,
    \qquad
    \lVert v_1-v_2\rVert_{\mathcal C_{\eps,\lambda}^{2,\gamma}(I_j\times\R)}
    \lesssim \eps M.
\]
Equation \eqref{eq: projected-approximation-error-dependence} controls both
approximation terms in the difference of
\eqref{eq: expanded-projected-remainder}: for every \(m>0\), their combined
\(C^{0,\gamma}(I_j)\)-norm is \(O_m(\eps^mM)\).

For the moment terms, performing the \(t\)-integrals before passing to the
ordinary interval norm gives
\[
\begin{split}
    \lVert m_{j,h_j^1}^{\mathrm{bd}}[v_1]
    -m_{j,h_j^2}^{\mathrm{bd}}[v_2]\rVert_{C^{0,\gamma}(I_j)}
    &\lesssim_K
    \eps^{\beta-\gamma}\lVert v_1-v_2\rVert_
    {\mathcal C_{\eps,\lambda}^{2,\gamma}(I_j\times\R)}
    +\eps^{-\gamma}R_\eps M,\\
    \lVert m_{j,h_j^1}^{\mathrm{int}}[v_1]
    -m_{j,h_j^2}^{\mathrm{int}}[v_2]\rVert_{C^{0,\gamma}(I_j)}
    &\lesssim_K
    \eps^{\beta+1-\gamma}\lVert v_1-v_2\rVert_
    {\mathcal C_{\eps,\lambda}^{2,\gamma}(I_j\times\R)}
    +\eps^{1-\gamma}R_\eps M.
\end{split}
\]
Using
\(\lVert v_1-v_2\rVert_{
\mathcal C_{\eps,\lambda}^{2,\gamma}(I_j\times\R)}\lesssim\eps M\),
their total contribution to
\(R_j(\h^1)-R_j(\h^2)\) is
\[
    O_K\bigl(\eps^{\beta+1-\alpha-\gamma}M\bigr).
\]

For the nonlinear part of \(\mathcal R_j\), the weighted product and
composition estimates, \eqref{eq: compatible correction dependence}, and the
reconstruction formula give
\[
\begin{split}
    &\eps^{-\gamma}R_\eps
    \left(
    \lVert v_1-v_2\rVert_
    {\mathcal C_{\eps,\lambda}^{2,\gamma}(I_j\times\R)}
    +\sum_{p\in\partial I_j}
    \lVert\widehat b_p(\h^1)-\widehat b_p(\h^2)\rVert_
    {C_{\ell_+,\lambda}^{2,\gamma}(\R^2_*,\L)}
    \right)\\
    &\qquad\lesssim_K
    \eps^{-\gamma}R_\eps
    \left(
    \eps+\eps^{2-\alpha}
    +\eps^{\beta-(1-\alpha)\tau}
    \right)M
    \lesssim_K\eps^{\beta+1-\alpha-\gamma}M.
\end{split}
\]
Here \(\beta-(1-\alpha)\tau>0\) by
\eqref{eq: boundary-decay-exponent-range}. Variations of the pulled-back
cut-offs and coefficients are smaller, while all terms containing the outer
correction or its difference are exponentially small. Consequently, every
noncommutator term in the difference is bounded by
\[
    C_K\eps^{\beta+1-\alpha-\gamma}M
    =o\bigl(\eps^{\beta-\alpha\gamma}M\bigr),
\]
where the exponent gap is \((1-\alpha)(1-\gamma)>0\).

It remains to compare the boundary commutators. For \(i=1,2\), let
\(B_{p,0}^i\), \(B_{p,1}^i\), and \(b_{p,\infty}^i\) be the tangential
quantities above for \(\h^i\), and set
\(\theta_i\coloneqq\theta_{\h^i}(p)\). Substituting the
compatible-correction bounds into \eqref{eq: tangential dependence phib}
gives
\[
\begin{split}
    |b_{p,\infty}^1-b_{p,\infty}^2|
    &+\lVert B_{p,0}^1-B_{p,0}^2\rVert_{L^\infty(I_\eps)}
    +\eps^{\alpha\gamma}
      [B_{p,0}^1-B_{p,0}^2]_{\gamma,I_\eps}\\
    &+\eps^\alpha
      \lVert B_{p,1}^1-B_{p,1}^2\rVert_{L^\infty(I_\eps)}
    +\eps^{\alpha(1+\gamma)}
      [B_{p,1}^1-B_{p,1}^2]_{\gamma,I_\eps}\\
    &\lesssim_K
    \left(
        \eps^{1+(1-\alpha)(\tau-1)}
        +\eps^{\beta-2+2\alpha}
    \right)M\\
    &\lesssim_K
    \left(
        \eps^{2-\alpha}
        +\eps^{\beta-2+2\alpha}
    \right)M.
\end{split}
\]
The outer term omitted from the right-hand side is exponentially small. The
same cutoff product estimate used for the size bound now gives
\[
\begin{split}
    &\lVert\eps^2\Delta\zeta_{p,4}(B_{p,0}^1-B_{p,0}^2)\rVert_
    {C^{0,\gamma}(I_\eps)}\\
    &\quad+
    \lVert\eps^2\nabla\zeta_{p,4}(B_{p,1}^1-B_{p,1}^2)\rVert_
    {C^{0,\gamma}(I_\eps)}\\
    &\qquad\lesssim_K
    \left(
        \eps^{4-\alpha(3+\gamma)}
        +\eps^{\beta-\alpha\gamma}
    \right)M.
\end{split}
\]

The coordinate identities above, the Taylor estimates for
\((h_j^1-h_j^2)-(\theta_1-\theta_2)x\), and
\eqref{eq: compatible correction dependence} give
\[
\begin{split}
    \lVert E_{p,0}(\h^1)-E_{p,0}(\h^2)\rVert_{L^\infty(I_\eps)}
    +\eps^\gamma
    [E_{p,0}(\h^1)-E_{p,0}(\h^2)]_{\gamma,I_\eps}
    &\lesssim_K R_\eps M,\\
    \lVert E_{p,1}(\h^1)-E_{p,1}(\h^2)\rVert_{L^\infty(I_\eps)}
    +\eps^\gamma
    [E_{p,1}(\h^1)-E_{p,1}(\h^2)]_{\gamma,I_\eps}
    &\lesssim_K\eps^{-1}R_\eps M.
\end{split}
\]
In particular,
\[
\begin{aligned}
    \lVert E_{p,0}(\h^1)-E_{p,0}(\h^2)\rVert_{C^{0,\gamma}(I_\eps)}
    &\lesssim_K\eps^{-\gamma}R_\eps M,\\
    \lVert E_{p,1}(\h^1)-E_{p,1}(\h^2)\rVert_{C^{0,\gamma}(I_\eps)}
    &\lesssim_K\eps^{-1-\gamma}R_\eps M.
\end{aligned}
\]
Their contribution to the commutator difference is therefore
\[
    O_K\bigl(\eps^{1-\alpha-\gamma}R_\eps M\bigr)
    =O_K\bigl(\eps^{\beta+2-2\alpha-\gamma}M\bigr)
    =o\bigl(\eps^{\beta-\alpha\gamma}M\bigr),
\]
because the exponent gap is \((1-\alpha)(2-\gamma)>0\). Combining the
noncommutator, tangential, and coordinate estimates proves
\eqref{eq: Rj lip}. Its two exponents satisfy
\[
    \beta-\alpha\gamma>1,
    \qquad
    4-\alpha(3+\gamma)>1,
\]
by \(\beta\geq2\), \(\alpha<1/2\), and \(\gamma<1\).

The constants in the intermediate size bounds may depend on the fixed
\(K\). Every such term carries one of the strictly positive excess powers of
\(\eps\) displayed above, so this dependence is absorbed by decreasing
\(\eps_*(K)\). Thus the constant in \eqref{eq: Rj size} depends only on the
fixed data. For the sharp difference estimate we retain the dependence on the
fixed ball multiplier in the constant \(C_K\).
\end{proof}

\noindent{\bf Acknowledgments.}
MB was supported by the European Research Council under Grant
Agreement No 948029. MD has been supported by the Royal Society Research
Professorship grant RP-R1-180114 and by the ERC/UKRI Horizon Europe grant
ASYMEVOL, EP/Z000394/1.

\bibliographystyle{alpha}
\bibliography{Bib}

\end{document}